\documentclass[11pt]{article}
\usepackage[latin1]{inputenc}
\usepackage[english]{babel}
\usepackage{amsmath,amsfonts,amscd,amssymb}
\usepackage[all]{xy}
\usepackage{makeidx}
\topmargin=-1cm \textwidth=160mm \textheight=23cm
\parindent=1cm
\oddsidemargin=-.1cm\evensidemargin=-.9cm
\usepackage[active]{srcltx} % SRC Specials: DVI [Inverse] Search
%\newtheorem{conj}{Conjecture}[section]
%\newtheorem{thm}{Theorem}[section]
%\newtheorem{cor}{Corollary}[section]
%\newtheorem{lem}{Lemma}[section]
%\newtheorem{prop}{Proposition}[section]
%\newtheorem{deft}{Definition}[section]
%\newtheorem{rem}{Remark}[section]

%The \newtheorem command is used to define theorem-like environments
%that normally REQUIRE A PROOF, for example:
\newtheorem{theorem}{Theorem}[section] % 1st argument is your name for it
\newtheorem{lemma}[theorem]{Lemma}     % 2nd argument is what is printed
\newtheorem{corollary}[theorem]{Corollary}
\newtheorem{proposition}[theorem]{Proposition}
%To control the numbering sequence of these environments, see
%Lamport's book on LaTeX [2, p. 193].

    % 1st argument is your name for it
\newtheorem{conjecture}[theorem]{Conjecture}  % 2nd argument is what is printed
\newtheorem{definition}[theorem]{Definition}

\newtheorem{remark}[theorem]{Remark}

\newtheorem{example}[theorem]{Example}
\newtheorem{notation}[theorem]{Notation} % This is usually unnumbered
\newtheorem{examples}[theorem]{Examples}

\newsavebox{\proofsavebox}
\sbox{\proofsavebox}
    {\unitlength 1pt\begin{picture}(6.5,6.5)%
     \put(0,0){\framebox(6.5,6.5){}}\end{picture}}
\newcommand\proofbox{\usebox{\proofsavebox}\relax}

 %proof environment modified OT 12/11/02
\newenvironment{proof}
  {{\sc Proof.-}}
  {\hspace*{\fill}\proofbox\endlist\par}

\let\noi=\noindent

 % entiers naturels
\def\Z{\mathbf{Z}} % entiers relatifs
 % r\'eels
 % complexes
\def\Q{\mathbf{Q}} % rationnels
\def\P{\mathcal{P}}

\def\spec{\mbox{Spec}}

\def\gr{\mbox{gr}}
\def\init{\mbox{in}}
 % rationnels
             % O calligraphie
\def\notin{\mbox{$\in$ \hspace{-.8em}/}} % non \'{e}l\'{e}ment de
 % non inclus
 % implique
 % \'{e}quivaut
 % signe de

\def\g{{\Gamma}}
\def\h{{\Phi}}
\def\md{{\operatorname{mod}}}
\def\he{{\operatorname{ht}}}

\makeindex
\title{Extending a valuation centered in a local domain to the formal
completion.}
\author{F. J. Herrera Govantes\footnote{These authors are partially supported by MTM2007-66929, MTM2010-19298 and FEDER.}\\
Departamento de \'Algebra\\
Facultad de Matem\'aticas\\
Avda. Reina Mercedes, s/n\\
Universidad de Sevilla\\
41012 Sevilla, Spain \\ email: jherrera@algebra.us.es \and
M. A. Olalla Acosta$^*$\\
Departamento de \'Algebra\\
Facultad de Matem\'aticas\\
Avda. Reina Mercedes, s/n\\
Universidad de Sevilla\\
41012 Sevilla, Spain\\
email: miguelolalla@algebra.us.es \and
M. Spivakovsky\\
Institut de Math\'ematiques de Toulouse\\
UMR 5219 du CNRS,\\
Universit\'e Paul Sabatier\\
118, route de Narbonne\\
31062 Toulouse cedex 9, France.\\email:
mark.spivakovsky@math.univ-toulouse.fr\and
B. Teissier\footnote{This author is grateful for the hospitality of
  the RIMS in Kyoto, where a part of this project was completed.}{}\\
Equipe `` G\'eom\'etrie et Dynamique'',\\
Institut Math\'ematique de Jussieu,\\ UMR 7586 du CNRS\\
175 Rue du Chevaleret\\
F-75013 Paris, France.\\ email: teissier@math.jussieu.fr }

\begin{document}
\maketitle

\medskip

\section{Introduction}
\label{In}

All the rings in this paper will be commutative with 1.

Let $(R,m,k)$ be a local noetherian domain with field of fractions
$K$ and $R_\nu$ a valuation ring, dominating $R$ (not necessarily
birationally). Let $\nu|_K:K^*\twoheadrightarrow\Gamma$ be the
restriction of $\nu$ to $K$; by definition, $\nu|_K$ is centered at
$R$. Let $\hat R$ denote the $m$-adic completion of $R$. In the
applications of valuation theory to commutative algebra and the
study of singularities, one is often induced to replace $R$ by its
$m$-adic completion $\hat R$ and $\nu$ by a suitable extension
$\hat\nu_-$ to $\frac{\hat R}P$ for a suitably chosen prime ideal
$P$, such that $P\cap R=(0)$ (one specific application we have in mind has to do with the approaches to proving the Local Uniformization Theorem in arbitrary characteristic such as \cite{Spi2} and \cite{Te}). The first reason is that the ring
$\hat R$ is not in general an integral domain, so that we can only
hope to extend $\nu$ to a \textit{pseudo-valuation} on $\hat R$,
which means precisely a valuation $\hat\nu_-$ on a quotient
$\frac{\hat R}P$ as above. The prime ideal $P$ is called the
\textit{support} of the pseudo-valuation. It is well known and not
hard to prove that such extensions $\hat\nu_-$ exist for some
minimal prime ideals $P$ of $\hat R$. Although, as we shall see, the
datum of a valuation $\nu$ determines a unique minimal prime of
$\hat R$ when $R$ is excellent, in general there are many possible
primes $P$ as above and for a fixed $P$ many possible extensions
$\hat\nu_-$. This is the second reason to study extensions
$\hat\nu_-$.

The purpose of this paper is to give, assuming
that $R$ is excellent, a systematic description of all such
extensions $\hat\nu_-$ and to identify certain classes of extensions
which are of particular interest for applications. In fact, the only
assumption about $R$ we ever use in this paper is a weaker and more
natural condition than excellence, called the G condition, but we
chose to talk about excellent rings since this terminology seems to
be more familiar to most people. For the reader's convenience, the
definitions of excellent and G-rings are recalled in the
Appendix. Under this assumption, we study
\textbf{extensions to (an
  integral quotient of) the completion $\hat R$ of a valuation $\nu$
  and give descriptions of the
  valuations with which such extensions are composed. In particular we give criteria
  for the uniqueness of the extension if certain simple data on these
  composed valuations are fixed}.
\noindent

We conjecture (see statement 5.19 in \cite{Te} and  Conjecture \ref{teissier} below for a stronger and
more precise statement) that\par\noindent
 \textbf{given an excellent local ring $R$ and a valuation $\nu$ of
   $R$ which is positive on its maximal ideal $m$, there exists a prime ideal
   $H$ of the $m$-adic completion $\hat R$ such that $H\bigcap R=(0)$
   and an extension of $\nu$ to $\frac{\hat R}{H}$ which has the same
   value group as $\nu$.}

When studying extensions of $\nu$ to the completion of $R$, one is led to
the study of its extensions to the henselization $\tilde R$ of $R$ as a
natural first step. This, in turn, leads to the study of extensions of
$\nu$ to finitely generated local strictly \'etale extensions $R^e$ of
$R$. We therefore start out by letting $\sigma:R\rightarrow R^\dag$
denote one of the three operations of completion, (strict)
henselization, or a finitely generated local strictly \'etale
extension:
\begin{eqnarray}
R^\dag&=&\hat R\quad\mbox{ or}\label{eq:hatR}\\
R^\dag&=&\tilde R\quad\mbox{ or}\label{eq:tildeR}\\
R^\dag&=&R^e.\label{eq:Re}
\end{eqnarray}
The ring $R^\dag$ is local; let $m^\dag$ denote its maximal ideal.
The homomorphisms
$$
R\rightarrow\tilde R\quad\text{ and }\quad R\rightarrow R^e
$$
are regular for any ring $R$; by definition, if $R$ is an excellent ring
then the completion homomorphism is regular (in fact, regularity of
the completion homomorphism is precisely the defining property of
G-rings; see the Appendix for the definition of regular
homomorphism).

Let $r$ denote the (real) rank of $\nu$. Let $(0)=\Delta_r\subsetneqq
\Delta_{r-1}\subsetneqq\dots\subsetneqq \Delta_0=\Gamma$ be the isolated
subgroups of $\Gamma$ and $P_0=(0)\subsetneqq
P_1\subseteq\dots\subseteq P_r=m$ the prime valuation ideals of $R$,
which need not, in general, be distinct. In this paper, we will assume
that $R$ is excellent. Under this assumption, we will canonically
associate to $\nu$ a chain $H_1\subset H_3\subset\dots\subset
H_{2r+1}=mR^\dag$ of ideals of $R^\dag$, numbered by odd integers from
1 to $2r+1$, such that $H_{2\ell+1}\cap R=P_\ell$ for $0\le \ell\le
r$. We will show that all the ideals $H_{2\ell+1}$ are prime. We will
define $H_{2\ell}$ to be the unique minimal prime ideal of $P_\ell
R^\dag$, contained in $H_{2\ell+1}$ (that such a minimal prime is
unique follows from the regularity of the homomorphism $\sigma$).

We will thus obtain, in the cases (\ref{eq:hatR})--(\ref{eq:Re}), a
chain of $2r+1$ prime ideals
$$
H_0\subset H_1\subset\dots\subset H_{2r}=H_{2r+1}=mR^\dag,
$$
satisfying $H_{2\ell}\cap R=H_{2\ell+1}\cap R=P_\ell$ and such that
$H_{2\ell}$ is a minimal prime of $P_\ell R^\dag$ for $0\le \ell\le
r$. Moreover, if $R^\dag=\tilde R$ or $R^\dag=R^e$, then
$H_{2\ell}=H_{2\ell+1}$. We call $H_i$ the $i${\bf-th implicit prime
  ideal} of $R^\dag$, associated to $R$ and $\nu$. The ideals $H_i$
behave well under local blowing ups along $\nu$ (that is, birational
local homomorphisms $R\to R'$ such that $\nu$ is centered in $R'$),
and more generally under \textit{$\nu$-extensions} of $R$ defined
below in subsection \ref{trees}. This means that given any local
blowing up along $\nu$ or $\nu$-extension $R\rightarrow R'$, the
$i$-th implicit prime ideal $H'_i$ of ${R'}^\dag$ has the property
that $H'_i\cap R^\dag=H_i$. This intersection has a meaning in view of Lemma
\ref{factor} below.

For a prime ideal $P$ in a ring $R$, $\kappa(P)$ will denote the residue
field $\frac{R_P}{PR_P}$.

Let $(0)\subsetneqq \mathbf{m}_1\subsetneqq\dots\subsetneqq
\mathbf{m}_{r-1}\subsetneqq\mathbf{m}_r=\mathbf{m}_\nu$ be the prime
ideals of the valuation ring $R_\nu$. By definitions, our valuation
$\nu$ is a composition of $r$ rank one valuations
$\nu=\nu_1\circ\nu_2\dots\circ\nu_r$, where $\nu_\ell$ is a valuation
of the field $\kappa(\mathbf{m}_{\ell-1})$, centered at
$\frac{(R_\nu)_{\mathbf{m}_\ell}}{\mathbf{m}_{\ell -1}}$ (see
\cite{ZS}, Chapter VI, \S10, p. 43 for the definition of composition of
valuations; more information and a simple example of composition is
given below in subsection \ref{trees}, where we interpret each
$\mathbf{m}_\ell$ as the limit of a tree of ideals).

If $R^\dag=\tilde R$, we will prove that there is a unique extension
$\tilde\nu_-$ of $\nu$ to $\frac{\tilde R}{H_0}$. If $R^\dag=\hat
R$, the situation is more complicated. First, we need to discuss the
behaviour of our constructions under $\nu$-extensions.

\subsection{Local blowings up and trees.}\label{trees}

We consider \textit{extensions} $R\rightarrow R'$ of local rings, that
is, injective morphisms such that $R'$ is an $R$-algebra essentially
of finite type and $m'\cap R=m$. In this paper we consider only
extensions with respect to $\nu$; that is, both $R$ and $R'$ are
contained in a fixed valuation ring $R_\nu$. Such extensions form a
direct system $\{R'\}$. We will consider many direct systems of rings
and of ideals indexed by $\{R'\}$; direct limits will always be taken
with respect to the direct system $\{R'\}$. Unless otherwise
specified, we will assume that
\begin{equation}
\lim\limits_{\overset\longrightarrow{R'}}R'=R_\nu.\label{eq:ZariskiRiemann}
\end{equation}
Note that by the fundamental properties of valuation rings
(\cite{ZS}, \S VI), assuming the equality (\ref{eq:ZariskiRiemann})
is equivalent to assuming that
$\lim\limits_{\overset\longrightarrow{R'}}K'=K_\nu$, where $K'$ stands
for the field of fractions of $R'$ and $K_\nu$ for that of $R_\nu$, and
that $\lim\limits_{\overset\longrightarrow{R'}}R'$ is a valuation
ring.
\begin{definition} A \textbf{tree} of $R'$-algebras is a direct system
  $\{S'\}$ of rings, indexed by the directed set
$\{R'\}$, where $S'$ is an $R'$-algebra. Note that the maps are not
necessarily injective. A morphism $\{S'\}\to\{T'\}$ of
trees is the datum of a map of $R'$-algebras $S'\to T'$ for each $R'$
commuting with the tree morphisms for each map $R'\to R''$.
\end{definition}
\begin{lemma}\label{factor} Let $R\rightarrow R'$ be an extension of
  local rings. We have:\par
\noindent 1) The ideal
  $N:=m^\dag\otimes_R1+1\otimes_Rm'$ is maximal in the $R$-algebra
  $R^\dag\otimes_RR'$.\par
\noindent 2) The natural map of completions (resp. henselizations)
$R^\dag\to{R'}^\dag$ is injective.
\end{lemma}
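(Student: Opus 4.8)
The plan is to handle the two parts separately, treating the three cases $R^\dag = \hat R$, $\tilde R$, $R^e$ in parallel but paying attention to where they differ. For part 1), the key observation is that $R^\dag \otimes_R R'$ is a noetherian local ring whenever $R'$ is local essentially of finite type over $R$: indeed $R^\dag$ is noetherian and $R'$ is a localization of a finitely generated $R$-algebra, so $R^\dag \otimes_R R'$ is a localization of a finitely generated $R^\dag$-algebra, hence noetherian. To see that $N := m^\dag \otimes_R 1 + 1 \otimes_R m'$ is maximal, I would compute the quotient: $(R^\dag \otimes_R R')/N \cong (R^\dag/m^\dag) \otimes_R (R'/m') = k \otimes_R k'$, and since $m' \cap R = m$ the residue field $k'$ is an extension of $k$, so this tensor product is $k' \otimes_k k' \otimes_{?}$... more carefully, $k \otimes_R k' = k \otimes_k (k \otimes_R R'/m')$, and $k \otimes_R R' /(\text{image of } m') $; since $R \to R'$ is local, $k \to k'$ and $k \otimes_R k' = k'$. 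Hence the quotient by $N$ is the field $k'$, so $N$ is maximal.

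**Proving injectivity (part 2).** For part 2), I want to show $R^\dag \to {R'}^\dag$ is injective. The natural approach: ${R'}^\dag$ is the completion (resp. henselization) of $R'$ at $m'$, and there is a commutative square with $R \to R'$, $R \to R^\dag$, $R' \to {R'}^\dag$. By the universal property, the map $R^\dag \otimes_R R'$ has a natural map to ${R'}^\dag$ after localizing and completing (resp. henselizing) at $N$; in fact ${R'}^\dag = (R^\dag \otimes_R R')^\dag_N$, the completion (resp. henselization) of the local ring $(R^\dag \otimes_R R')_N$. So the map $R^\dag \to {R'}^\dag$ factors as $R^\dag \to (R^\dag \otimes_R R')_N \to {R'}^\dag$. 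For the first map, $R^\dag \to (R^\dag \otimes_R R')_N$: since $R \to R'$ is injective and $R'$ is essentially of finite type, and since $R^\dag$ is flat over $R$, the map $R^\dag \to R^\dag \otimes_R R'$ is injective (flatness of $R^\dag/R$ preserves the injection $R \hookrightarrow R'$); then I need that no nonzero element of the image of $R^\dag$ is killed by passing to the localization at $N$, equivalently that $R^\dag \to (R^\dag \otimes_R R')_N$ is injective. Here I would argue that $R^\dag$ is a subring on which the localization is harmless because $N \cap R^\dag = m^\dag$ and $R^\dag$ is already local at $m^\dag$: an element $x \in R^\dag$ dies in the localization iff $sx = 0$ in $R^\dag \otimes_R R'$ for some $s \notin N$, but $R^\dag \otimes_R R'$ is faithfully flat over $R^\dag$ (since $R'$ is faithfully flat... no, $R'$ need not be flat over $R$).

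**The main obstacle.** The delicate point, and the one I expect to be the real obstacle, is exactly this faithful flatness / injectivity at the localization step: one cannot simply invoke flatness of $R'$ over $R$. Instead I would use that $R^\dag$ is faithfully flat over $R$ together with the fact that completion (resp. henselization) commutes with the base change in the sense that ${R'}^\dag = \widehat{R^\dag \otimes_R R'}$ localized appropriately — and then observe that the composite $R^\dag \to {R'}^\dag \to \widehat{{R'}^\dag \otimes ...}$, or better, reduce to the fact that $R^\dag \to {R'}^\dag$ becomes, after $\otimes_R K$ or after a faithfully flat base change, visibly injective. A cleaner route: the map $R^\dag \to {R'}^\dag$ is a local homomorphism of noetherian local rings, both of which are subrings of (the appropriate completion/henselization of) the valuation ring $R_\nu$ — wait, that uses the valuative hypothesis. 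Actually the safest argument is: $R' \hookrightarrow R_\nu$ and $R \hookrightarrow R_\nu$ are compatible, so $R^\dag \to {R'}^\dag$ is a map whose kernel $\mathfrak a$ satisfies $\mathfrak a \cap R = 0$ (as $R \to R' \to {R'}^\dag$ and $R \to R^\dag \to {R'}^\dag$ agree and $R \hookrightarrow {R'}^\dag$); then since $R^\dag$ is faithfully flat over $R$ one has $\mathfrak a = \mathfrak a R^\dag$... this still needs work. I would ultimately settle this by showing ${R'}^\dag$ is faithfully flat over $R^\dag$: write ${R'}^\dag = (R^\dag \otimes_R R')^\wedge_N$; the ring $R^\dag \otimes_R R'$ is noetherian, its completion at the maximal ideal $N$ is faithfully flat over it, and $R^\dag \otimes_R R'$ — though not obviously flat over $R^\dag$ — is at least such that $R^\dag \to (R^\dag \otimes_R R')_N$ is injective because $R^\dag \hookrightarrow R^\dag \otimes_R R'$ (flat base change of $R \hookrightarrow R'$ along $R \to R^\dag$) is a local homomorphism after localizing at $N$ and $R^\dag$ is a domain-like... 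For the write-up I will invoke: a local homomorphism $A \to B$ of local rings with $A$ noetherian, such that $B$ is the henselization/completion of a localization of a finite-type $A$-algebra containing $A$ as a subring mapping into $B$ via the structural map, is injective — and verify it directly using that $\hat A \hookrightarrow \hat B$ and $A \to \hat A$, $B \to \hat B$ with $A \hookrightarrow B$. The cleanest: reduce the henselization case to the completion case (since $\tilde R \hookrightarrow \hat R$ and $\widetilde{R'} \hookrightarrow \widehat{R'}$ compatibly), and for completions use that $\widehat{R'}$ is faithfully flat over $\widehat R$ precisely because the square is cocartesian up to completion and $\widehat R \to \widehat R \otimes_R R'$ is injective by flatness of $\widehat{R} / R$, with the localization-at-$N$ and completion steps both faithfully flat on the local ring in question.
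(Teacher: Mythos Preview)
Your approach to both parts mirrors the paper's: for part~1) you compute $(R^\dag\otimes_RR')/N\cong k'$ using $R^\dag/m^\dag=k$ and $m'\cap R=m$, which is exactly what the paper does (tersely); for part~2) you use the same factorization
\[
R^\dag\ \longrightarrow\ (R^\dag\otimes_RR')_N\ \longrightarrow\ {R'}^\dag,
\]
and you correctly isolate the localization step as the delicate point. The paper's argument for that step reads: ``elements of $R^\dag$ whose image in $R^\dag\otimes_RR'$ lie outside of $N$ are precisely units of $R^\dag$, hence they are not zero divisors in $R^\dag\otimes_RR'$ and $R^\dag$ injects in every localization of $R^\dag\otimes_RR'$.'' But this only controls elements of the multiplicative set $(R^\dag\otimes_RR')\setminus N$ that \emph{come from} $R^\dag$; it says nothing about the other elements one inverts. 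So your instinct that something is missing is correct.

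In fact the assertion of part~2) is \emph{false} as stated, and the paper's own Example~3.4 furnishes a counterexample. Take $R=k[x,y,z]_{(x,y,z)}/(z^2-y^2(1+x))$ and $R'=R[z/y]_{m'}$ with $m'=(x,y,z/y-1)$; this is an extension of local noetherian domains in the required sense. In $\hat R'$ one has $(z/y)^2=1+x$ and $z/y\equiv1\pmod{m'}$, so $z/y=\sqrt{1+x}$ and therefore $z-y\sqrt{1+x}=0$ in $\hat R'$. But $z-y\sqrt{1+x}$ is a nonzero element of $\hat R$ (it generates one of the two minimal primes of $\hat R$). Hence $\hat R\to\hat R'$ is not injective. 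The same phenomenon appears already in the nodal curve: blowing up $R=k[x,y]_{(x,y)}/(y^2-x^2-x^3)$ and localizing at one branch kills one of the two minimal primes of $\hat R$.

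What survives, and what the paper actually needs later, is this: the kernel of $R^\dag\to{R'}^\dag$ is always contained in a minimal prime of $R^\dag$ (indeed, in the implicit prime $H_0$), so the map \emph{is} injective whenever $R^\dag$ is a domain---for instance when $R$ is analytically irreducible, or once one has passed to $\tilde R/H_0$ as in \S7. Your various attempts to rescue the argument (faithful flatness of $R'/R$, reduction to the completion case, etc.) cannot succeed in general precisely because the statement fails; there is nothing to fix in your reasoning beyond recognizing that the target is wrong.
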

\begin{proof} 1) follows from that fact that $R^\dag/m^\dag
=R/m$. The proof of 2) relies on a construction which we shall use
often: the map $R^\dag\to{R'}^\dag$ can be factored as
\begin{equation}
R^\dag\to\left(R^\dag\otimes_RR'\right)_N\to{R'}^\dag,\label{eq:iota}
\end{equation}
where the first map sends $x$ to $x\otimes 1$ and the second is determined
by $x\otimes x'\mapsto \hat b(x).c(x')$ where $\hat b$ is the natural
map $R^\dag\to {R'}^\dag$ and $c$ is the canonical map
$R'\to{R'}^\dag$. The first map is injective because $R^\dag$ is a
flat $R$-algebra and it is obtained by tensoring the injection $R\to
R'$ by the $R$-algebra $R^\dag$; furthermore, elements of $R^\dag$
whose image in $R^\dag\otimes_RR'$ lie outside of $N$ are precisely
units of $R^\dag$, hence they are not zero divisors in
$R^\dag\otimes_RR'$ and $R^\dag$ injects in every localization of
$R^\dag\otimes_RR'$.\par
\noindent Since $m'\cap R=m$, we see that the inverse image by the
natural map of $R'$-algebras
$$
\iota\colon R'\to(R^\dag\otimes_RR')_N,
$$
defined by $x'\mapsto1\otimes_Rx'$, of the maximal ideal
$M=(m^\dag\otimes_R1+1\otimes_Rm')(R^\dag\otimes_RR')_N$ of
$(R^\dag\otimes_RR')_N$ is the ideal $m'$ and that $\iota$ induces a
natural isomorphism
$\frac{R'}{{m'}^i}\overset\sim\rightarrow\frac{(R^\dag\otimes_RR')_N}{M^i}$
for each $i$. From this it follows by the universal properties of
completion and henselization that the second map in the sequence
(\ref{eq:iota}) is the completion (resp. the henselization inside the
completion) of $R^\dag\otimes_RR'$ with respect to the ideal $M$. It
is therefore also injective.
\end{proof}

\begin{definition} Let $\{S'\}$ be a tree of $R'$-algebras. For each $S'$,
  let $I'$ be an ideal of $S'$. We say that $\{I'\}$ is a tree of
  ideals if for any arrow $b_{S'S''}\colon S'\rightarrow S''$ in our
  direct system, we have $b^{-1}_{S'S''}I''=I'$. We have the obvious
  notion of inclusion of trees of ideals. In particular, we may speak
  about chains of trees of ideals.
\end{definition}

\begin{examples}
The maximal ideals of the local rings of our system $\{R'\}$ form a
tree of ideals.

For any non-negative element $\beta\in\Gamma$, the valuation ideals
$\P'_\beta\subset R'$ of value $\beta$ form a tree of ideals of
$\{R'\}$. Similarly, the $i$-th prime valuation ideals $P'_i\subset
R'$ form a tree. If $rk\ \nu=r$, the prime valuation ideals $P'_i$
give rise to a chain
\begin{equation}
P'_0=(0)\subsetneqq P'_1\subseteq\dots\subseteq P'_r=m'\label{eq:treechain'}
\end{equation}
of trees of prime ideals of $\{R'\}$.
\end{examples}

We discuss this last example in a little more detail and
generality in order to emphasize our point of view, crucial
throughout this paper: the data of a composite valuation is
equivalent to the data of its components. Namely, suppose we are
given a chain of trees of ideals as in (\ref{eq:treechain'}),
where we relax our assumptions of the $P'_i$ as follows. We no
longer assume that the chain (\ref{eq:treechain'}) is maximal, nor
that $P'_i\subsetneqq P'_{i+1}$, even for $R'$ sufficiently large;
in particular, for the purposes of this example we momentarily drop
the assumption that $rk\ \nu=r$. We will still assume, however, that
$P'_0=(0)$ and that $P'_r=m'$.

Taking the limit in (\ref{eq:treechain'}), we obtain a chain
\begin{equation}
(0)=\mathbf{m}_0\subsetneqq\mathbf{m}_1\subseteqq\dots\subseteqq
\mathbf{m}_r=\mathbf{m}_\nu\label{eq:treechainlim}
\end{equation}
of prime ideals of the valuation ring $R_\nu$.

Similarly, for each $1\leq\ell\leq r$ one has the equality
$$
\lim\limits_{\overset\longrightarrow{R'}}{\frac{R'}{P'_\ell}}=\frac{R_\nu}{\bf
  m_\ell}.
$$
Then \textbf{specifying the valuation $\nu$ is equivalent to
specifying valuations $\nu_0,\nu_1$, \dots, $\nu_r$, where $\nu_0$ is the
trivial valuation of $K$ and, for $1\le \ell\le r$, $\nu_\ell$ is a
valuation of the residue field
$k_{\nu_{\ell-1}}=\kappa(\mathbf{m}_{\ell-1})$, centered at the local
ring
$\lim\limits_{\longrightarrow}\frac{R'_{P'_\ell}}{P'_{\ell-1}R'_{P'_\ell}}=
\frac{(R_\nu)_{\mathbf{m}_\ell}}{\mathbf{m}_{\ell-1}}$ and taking its
values in the totally ordered group
$\frac{\Delta_{\ell-1}}{\Delta_\ell}$.}

The relationship between $\nu$ and the $\nu_\ell$ is that $\nu$ is the
composition
\begin{equation}
\nu=\nu_1\circ\nu_2\circ\dots\circ\nu_r.\label{eq:composition1}
\end{equation}
For example, the datum of the valuation $\nu$, or of its valuation ring $R_\nu$,
is equivalent to the datum of the valuation ring
$\frac{R_\nu}{\mathbf{m}_{r-1}}\subset
\frac{(R_\nu)_{\mathbf{m}_{r-1}}}{\mathbf{m}_{r-1}(R_\nu)_{\mathbf{m}_{r-1}}}=\kappa
(\mathbf{m}_{r-1})$ of the valuation $\nu_r$ of the field
$\kappa(\mathbf{m}_{r-1})$ and the valuation ring
$(R_\nu)_{\mathbf{m}_{r-1}}$. If we assume, in addition, that for $R$
sufficiently large the chain (\ref{eq:treechain'}) (equivalently,
(\ref{eq:treechainlim})) is a maximal chain of distinct prime ideals
then $rk\ \nu=r$ and $rk\ \nu_\ell=1$ for each $\ell$.\par\medskip
\begin{remark}\label{composite}
Another way to describe the same property of valuations is that, given
a prime ideal $H$ of the local integral domain $R$ one builds all
valuations centered in $R$ having $H$ as one of the $P_\ell$ by
choosing a valuation $\nu_1$ of $R$ centered at $H$, so that
$\mathbf{m}_{\nu_1}\cap R=H$ and choosing a valuation subring
$\overline R_{\overline\nu}$ of the field
$\frac{R_{\nu_1}}{\mathbf{m}_{\nu_1}}$ centered at $R/H$. Then
$\nu=\nu_1\circ \overline\nu$.\par
\noindent Note that choosing a valuation of $R/H$ determines a
valuation of its field of fractions $\kappa(H)$, which is in general
much smaller than $\frac{R_{\nu_1}}{\mathbf{m}_{\nu_1}}$. Given a
valuation of $R$ with center $H$, in order to determine a valuation of
$R$ with center $m$ inducing on $R/H$ a given valuation $\mu$ we must
choose an extension $\overline\nu$ of $\mu$ to
$\frac{R_{\nu_1}}{\mathbf{m}_{\nu_1}}$, and there are in general many
possibilities.\par
This will be used in the sequel. In particular, it will be applied to
the case where a valuation $\nu$ of $R$ extends uniquely to a
valuation $\hat\nu_-$ of $\frac{\hat R}{H}$ for some prime $H$ of $\hat
R$. Assuming that $\hat R$ is an integral domain, this determines a
unique valuation of $\hat R$ only if the height $\he\ H$ of $H$ in
$\hat R$ is at most one. In all other cases the dimension of $\hat
R_H$ is at least $2$ and we have infinitely many valuations with which
to compose $\hat\nu_-$. This is the source of the height conditions we
shall see in \S\ref{extensions}.
\end{remark}

\begin{example} Let $k_0$ be a field and $K=k_0((u,v))$ the
field of fractions of the complete local ring $R=k_0[[u,v]]$. Let
$\Gamma=\mathbf Z^2$ with lexicographical ordering. The isolated
subgroups of $\Gamma$ are $(0)\subsetneqq(0)\oplus\mathbf
Z\subsetneqq\mathbf Z^2$. Consider the valuation
$\nu:K^*\rightarrow\mathbf Z^2$, centered at $R$, given by
\begin{eqnarray}
\nu(v)&=&(0,1)\\
\nu(u)&=&(1,0)\\
\nu(c)&=&0\quad\text{ for any }c\in k_0^*.
\end{eqnarray}
This information determines $\nu$ completely; namely, for any power series
$$
f=\sum\limits_{\alpha,\beta}c_{\alpha\beta}u^\alpha v^\beta \in k_0[[u,v]],
$$
we have
$$
\nu(f)=\min\{(\alpha,\beta)\ |\ c_{\alpha\beta}\ne 0\}.
$$
We have $rk\ \nu=rat.rk\ \nu=2$. Let $\Delta=(0)\oplus\mathbf Z$. Let
$\Gamma_+$ denote the semigroup of all the non-negative elements of
$\Gamma$. Let $k_0[[\Gamma_+]]$ denote the $R$-algebra of power series
$\sum c_{\alpha,\beta}u^\alpha v^\beta$ where $c_{\alpha,\beta}\in
k_0$ and the exponents $(\alpha ,\beta)$ form a well ordered subset of
$\Gamma_+$. By classical results (see \cite{Kap1}, \cite{Kap2}), it is
a valuation ring with maximal ideal generated by all the monomials
$u^\alpha v^\beta$, where $(\alpha,\beta)>(0,0)$ (in other words,
either $\alpha>0,\beta\in\mathbf Z$ or $\alpha=0,\beta>0$).  Then
$$
R_\nu=k_0[[\Gamma_+]]\bigcap k_0((u,v))
$$
is a valuation ring of $K$, and contains $k[[u,v]]$; it is the valuation ring of the valuation
$\nu$. The prime ideal $\mathbf{m}_1$ is the ideal of $R_\nu$
generated by all the $uv^\beta$, $\beta\in\mathbf Z$. The valuation
$\nu_1$ is the discrete rank 1 valuation of $K$ with valuation ring
$$
(R_\nu)_{\mathbf{m}_1}=k_0[[u,v]]_{(u)}
$$
and $\nu_2$ is the discrete rank 1 valuation of $k_0((v))$ with
valuation ring $\frac{R_\nu}{\mathbf{m}_1}\cong k_0[[v]]$.
\end{example}

\begin{example} To give a more interesting example, let $k_0$ be
a field of characteristic zero and
$$
K=k_0(x,y,z)
$$
a purely transcendental extension of $k_0$ of degree 3. Let $w$ be an
independent variable and put $k=\bigcup\limits_{j=1}^\infty
k_0\left(w^{\frac 1j}\right)$. Let $\Gamma=\mathbf Z\oplus\Q$ with the
lexicographical ordering and $\Delta=(0)\oplus\Q$ the non-trivial isolated
subgroup of $\Gamma$. Let $u,v$ be new variables and let
$\mu_1:k((u,v))\rightarrow\Z^2_{lex}$ be the valuation of the
previous example. Let $\mu_2$ denote the $x$-adic valuation of $k$ and
put $\mu=\mu_1\circ\mu_2$.
Consider the map $\iota:k_0[x,y,z]\rightarrow
k[[u,v]]$ which sends $x$ to $w$, $y$ to $v$ and $z$ to
$u-\sum\limits_{j=1}^\infty w^{\frac 1j}v^j$. Let $\nu_1=\left.\mu_1\right|_K$ and
$\nu=\mu|_K$.

The valuation $\nu:K^*\rightarrow\Gamma$ is centered at the local ring
$R=k_0[x,y,z]_{(x,y,z)}$; we have
\begin{eqnarray}
\nu(x)&=&(0,1)\\
\nu(y)&=&(1,0),\\
\nu(z)&=&(1,1).
\end{eqnarray}
Write as a composition of two rank 1 valuations:
$\nu=\nu_1\circ\nu_2$. We have natural inclusions $R_{\nu_1}\subset
R_{\mu_1}$ and $k_{\nu_1}\subset k_{\mu_1}=k$. We claim that
$k_{\nu_1}$ is not finitely generated over $k_0$. Indeed, if this were not the
case then there would exist a prime number $p$ such that
$w^{\frac1p}\notin k_{\nu_1}$. Let
$k'=k_0\left(x^{\frac1{(p-1)!}}\right)$. Let $L= k'(y,z)$. Consier
the tower of field extensions $K\subset L\subset k[[u,v]]$ and let $\nu'$ denote
the restriction of $\mu$ to $L$. Let $\Gamma'$ be the value group of
$\nu'$ and $k_{\nu'}$ the residue field of its valuation ring. Now,
$L$ contains the element $z_p:=z-\sum\limits_{j=1}^{p-1}x^{\frac1j}y^j$ as
well as $\frac{z_p}{y^p}$. We have
\begin{equation}
\nu'(z_p)=\left(p,\frac1p\right),
\end{equation}
$\nu'\left(\frac{z_p}{y^p}\right)=0$ and the natural image of
$\frac{z_p}{y^p}$ in $k_{\mu_1}=k$ is $w^{\frac1p}$. Now,
$p\not|\ [L:K]$, $\left.[\Gamma':\Gamma]\ \right|\
[L:K]$ and $\left.[k_{\nu'}:k_\nu]\ \right|\
[L:K]$. This implies that $z_p\in L$ and $w^{\frac1p}\in k_{\nu_1}$,
which gives the desired contradicion.

It is not hard to show that for each $j$, there exists a local blowing
up $R\rightarrow R'$ of $R$ such that, in the notation of
(\ref{eq:treechain'}), we have
$\kappa(P'_1)=k_0\left(w^{\frac1{j!}}\right)$ and that
$\kappa(\mathbf{m}_1)=\lim\limits_{j\to\infty}\kappa(P'_1)=k$. The
first one is the blowing up of the ideal $(y,z)R$, localized at the
point $y=0, z/y=x$. Then one blows up the ideal $(z/y-x, y)$, and so
on.

Another way to see the valuation $\nu=\nu_1\circ\nu_2$ is to note that
$\nu_1$ is the restriction to $K$ of the $v$-adic valuation under the
inclusion of fields deduced from the inclusion of rings
$$
k_0[[x,y,z]]_{(y,z)}\hookrightarrow k\left[\left[v^{{\mathbf Z}_+}\right]\right]
$$
which sends $x$ to $w$, $y$ to $v$ and $z$ to $\sum\limits_{j=1}^\infty
w^{\frac1j}v^j$. Recall that the ring on the right is made of power
series with non negative rational exponents whose set of exponents is
well ordered. We have $k_{\nu_1}=k$.
\end{example}
\begin{remark} The point of the last example is to show that, given a
  composed valuation as in (\ref{eq:composition1}), $\nu_\ell$ is a
  valuation of the field $k_{\nu_{\ell-1}}$, which may
  \textbf{properly} contain $\kappa(P'_{\ell-1})$ for \textbf{every}
  $R'\in\mathcal{T}$. This fact will be a source of complication
  later on and we prefer to draw attention to it from the beginning.
\end{remark}
Coming back to the implicit prime ideals, we will see that the
implicit prime ideals $H'_i$ form a tree of ideals of $R^\dag$.

We will show that if $\nu$ extends to a valuation of $\hat\nu_-$ centered at
$\frac{\hat R}P$ with $P\cap R=(0)$ then the prime $P$ must
contain the minimal prime $H_0$ of $\hat R$. We will then show
that specifying an extension $\hat\nu_-$ of $\nu$ as above is
equivalent to specifying a chain of prime valuation ideals
\begin{equation}
\tilde H'_0\subset\tilde H'_1\subset\dots\subset\tilde H'_{2r}=m'\hat
R'\label{eq:chaintree}
\end{equation}
of $\hat R'$ such that $H'_\ell\subset\tilde H'_\ell$ for all
$\ell\in\{0,\dots,2r\}$, and valuations
$\hat\nu_1,\hat\nu_2,\dots,\hat\nu_{2r}$, where $\hat\nu_i$ is a
valuation of the field $k_{\hat\nu_{i-1}}$ (the residue field of the
valuation ring $R_{\hat\nu_{i-1}}$), arbitrary when $i$ is odd and
satisfying certain conditions, coming from the valuation $\nu_{\frac
  i2}$, when $i$ is even.

The prime ideals $H_i$ are defined as follows.\par
\noindent Recall that given a valued ring $(R,\nu)$, that is a subring $R\subseteq R_\nu$ of the valuation ring
$R_\nu$ of a valuation with value group $\Gamma$, one defines for each
$\beta\in \Gamma$ the valuation ideals of $R$ associated to $\beta$:
$$
\begin{array}{lr} {\cal P}_\beta (R)=&\{x\in R/\nu(x)\geq\beta\}\cr
{\cal P}^+_\beta (R)=&\{x\in R/\nu(x)>\beta\}\end{array}
$$
and the associated graded ring
$$
\hbox{\rm gr}_\nu R=\bigoplus_{\beta\in \Gamma}\frac{{\cal P}_\beta (R)}
{{\cal P}^+_\beta (R)}=\bigoplus_{\beta\in
\Gamma_+}\frac{{\cal P}_\beta (R)}{{\cal P}^+_\beta (R)}.
$$
The second equality comes from the fact that if
$\beta\in\Gamma_-\setminus\{0\}$, we have ${\cal P}^+_\beta (R)={\cal
  P}_\beta (R)=R$. If $R\to R'$ is an extension of local rings such
that $R\subset R'\subset R_\nu$ and $m_\nu\cap R'=m'$, we will write
${\cal P}'_\beta$ for ${\cal P}_\beta(R')$.

Fix a valuation ring $R_\nu$ dominating $R$, and a tree ${\cal
  T}=\{R'\}$ of n\oe therian local $R$-subalgebras of $R_\nu$, having
the following properties: for each ring $R'\in\cal{T}$, all the
birational $\nu$-extensions of $R'$ belong to $\cal{T}$. Moreover, we
assume that the field of fractions of $R_\nu$ equals
$\lim\limits_{\overset\longrightarrow{R'}}K'$, where $K'$ is the field
of fractions of $R'$. The tree $\cal{T}$ will stay constant throughout
this paper. In the special case when $R$ happens to have the same
field of fractions as $R_\nu$, we may take $\cal{T}$ to be the tree of
all the birational $\nu$-extensions of $R$.

\begin{notation} For a ring $R'\in\cal T$, we shall denote by
${\cal T}(R')$ the subtree of $\cal T$ consisting of all the
$\nu$-extensions $R''$ of $R'$.
\end{notation}

We now define
\begin{equation}
H_{2\ell+1}=\bigcap\limits_{\beta\in\Delta_{\ell}}
\left(\left(\lim\limits_{\overset\longrightarrow{R'}}{\cal P}'_\beta
    {R'}^\dag\right)\bigcap R^\dag\right),\ 0\le\ell\le
r-1\label{eq:defin}
\end{equation}
(in the beginning of \S\ref{basics} we provide some motivation for this
definition and give several elementary examples of $H'_i$ and $\tilde H'_i$).

The questions answered in this paper originally arose from our work on the
Local Uniformization Theorem, where passage to completion is required in
both the approaches of \cite{Spi2} and \cite{Te}. In \cite{Te}, one really
needs to pass to completion for valuations of arbitrary rank. One of the
main intended applications of the theory of implicit prime ideals is
the following conjecture. Let
\begin{equation}
\Gamma\hookrightarrow\hat\Gamma\label{eq:extGamma}
\end{equation}
be an extension of ordered groups of the same rank. Let
\begin{equation}
(0)=\Delta_r\subsetneqq\Delta_{r-1}\subsetneqq\dots\subsetneqq\Delta_0=\Gamma
\label{eq:isolated}
\end{equation}
be the isolated subgroups of $\Gamma$ and
$$
(0)=\hat\Delta_r\subsetneqq\hat\Delta_{r-1}\subsetneqq\dots\subsetneqq
\hat\Delta_0=\hat\Gamma
$$
the isolated subgroups of $\hat\Gamma$, so that the inclusion
(\ref{eq:extGamma}) induces inclusions
\begin{eqnarray}
\Delta_\ell&\hookrightarrow&\hat\Delta_\ell\quad\text{ and}\\
\frac{\Delta_\ell}{\Delta_{\ell+1}}&\hookrightarrow&
\frac{\hat\Delta_\ell}{\hat\Delta_{\ell+1}}.
\end{eqnarray}
Let $G\hookrightarrow\hat G$ be an extension of graded algebras
without zero divisors, such that $G$ is graded by $\Gamma_+$ and $\hat
G$ by $\hat\Gamma_+$. The graded algebra $G$ is endowed with a natural
valuation with value group $\Gamma$ and similarly for $\hat G$ and
$\hat\Gamma$. These natural valuations will both be denoted by $ord$.
\begin{definition} We say that the extension $G\hookrightarrow\hat G$ is
  \textbf{scalewise birational} if for any $x\in\hat G$ and
  $\ell\in\{1,\dots,r\}$ such that $ord\ x\in\hat\Delta_\ell$ there
  exists $y\in G$ such that $ord\ y\in\Delta_\ell$ and $xy\in G$.
\end{definition}
Of course, scalewise birational implies birational and also that
$\hat\Gamma=\Gamma$.\par
\noindent While the main result of this paper is the primality
of the implicit ideals associated to a valuation, and the subsequent
description of the extensions of the valuation to the completion, the
main conjecture stated here is the following:\par
\begin{conjecture}\label{teissier} Assume that $\dim\ R'=\dim\ R$ for all
  $R'\in\mathcal{T}$.  Then there exists a tree of prime ideals $H'$
  of $\hat R'$ with $H'\cap R'=(0)$ and a valuation $\hat\nu_-$,
  centered at $\lim\limits_\to\frac{\hat R'}{H'}$ and having the
  following property:\par\noindent For any $R'\in\cal{T}$ the graded
  algebra $\gr_{\hat\nu_-}\frac{\hat R'}{H'}$ is a scalewise birational
  extension of $\gr_\nu R'$.
\end{conjecture}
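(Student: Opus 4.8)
\medskip
\noindent{\sc A possible approach to Conjecture \ref{teissier}} (a plan of attack, not a complete proof).

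The plan is to induct on the rank $r$ of $\nu$, using at each step the description of composite valuations of Remark \ref{composite} together with the chain of implicit primes $H'_1\subset H'_3\subset\dots\subset H'_{2r+1}=m'\hat R'$. Write $\nu=\nu_1\circ\bar\nu$ with $\nu_1$ of rank $1$, centered at the prime $P_1$ of $R$. One first treats $\nu_1$ alone: this should produce a tree of primes $\tilde H'_0\subset\hat R'$ with $\tilde H'_0\cap R'=(0)$ and a rank $1$ valuation, centered at $\lim\limits_\to\hat R'/\tilde H'_0$, extending $\nu_1$ with value group $\Delta_0/\Delta_1$. Localizing $\hat R'/\tilde H'_0$ at the center of this valuation and passing to its residue field reduces us to a valuation of rank $r-1$ centered in a ring to which the inductive hypothesis applies; this produces the remaining primes $\tilde H'_2\subset\dots\subset\tilde H'_{2r}$ and the component valuations. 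The ``free'' odd-indexed components $\hat\nu_{2\ell+1}$ --- arbitrary in the general classification of extensions recalled in the Introduction --- are chosen \emph{trivial}: this is precisely what keeps the value group of $\hat\nu_-$ equal to $\Gamma$, as scalewise birationality demands. Since the associated graded ring of a composite valuation is assembled from those of its components, scalewise birationality of $\gr_{\hat\nu_-}\frac{\hat R'}{\tilde H'_0}$ over $\gr_\nu R'$ will follow from the rank $1$ statement applied at each level.

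So the crux is the case $r=1$, and this is where the hypothesis $\dim R'=\dim R$ for all $R'\in\mathcal{T}$ is used. One takes for $\{H'\}$ the tree of first implicit primes $\{H'_1\}$ --- recall $H'_1\cap R'=(0)$, and, by the results on implicit primes announced above, $H'_1$ is prime and $\{H'_1\}$ is a tree --- possibly enlarged as explained below. Define $\hat\nu_-$ on $\hat R'/H'_1$ by setting the value of the class of $\hat f$ to be $\sup\{\,\nu(g)\ :\ g\in R'',\ R''\in\mathcal{T}(R'),\ \hat f-g\in\overline{\P''_\beta}\ \text{for some }\beta>\nu(g)\,\}$, i.e. the limit of the $\nu$-values of tree approximations of $\hat f$; the defining property of $H'_1$ as the ``infinite-value'' ideal makes this supremum finite for $\hat f\notin H'_1$, so $\hat\nu_-$ is a genuine valuation. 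Because a valuation positive on $m'$ induces a topology coarser than the $m'$-adic one, $\hat R'/H'_1$ embeds into a $\nu$-adic completion built from the layers $R''/\P^+_\beta(R'')$, $R''\in\mathcal{T}(R')$, on which $\hat\nu_-$ extends by continuity; the layers with $\beta\in\Gamma$ are left unchanged, so the $\Gamma$-graded part of the associated graded ring of that completion is exactly $\gr_\nu R'$. This would settle the rank $1$ case but for the \emph{value-group-escape} phenomenon: some $\hat f\in\hat R'$ can acquire a value $\alpha\notin\Gamma$, a limit from below of values in $\Gamma$ --- which is precisely what the growing residue field of the Examples makes possible. Such elements must be killed: the correct support $\tilde H'_0$ is $H'_1$ enlarged by all of them, and one has to show $\tilde H'_0$ is prime, still meets $R'$ in $(0)$, still contains $H'_0$, and still forms a tree. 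It is here that equidimensionality does the real work: $\dim(\hat R'/\tilde H'_0)=\dim R'=\dim R$ bounds the dimension of $\gr_{\hat\nu_-}\frac{\hat R'}{\tilde H'_0}$ through the Abhyankar inequality for graded algebras, forcing equality of dimension with $\gr_\nu R'$; being an algebraic extension of graded domains of equal dimension --- each homogeneous element being a limit of homogeneous elements of $\gr_\nu R'$ --- it is then birational over $\gr_\nu R'$.

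To pass from ``birational'' to ``scalewise birational'' one argues down the filtration of $\Gamma$ by its isolated subgroups: given a homogeneous $\bar x\in\gr_{\hat\nu_-}\frac{\hat R'}{\tilde H'_0}$ with $ord\ \bar x\in\hat\Delta_\ell=\Delta_\ell$, the inductive construction has realized the components of level above $\ell$ with graded rings birational over those of the corresponding $\nu_j$, and one uses this to clear the ``$\Delta_\ell$-denominator'' of $\bar x$, producing $\bar y\in\gr_\nu R'$ with $ord\ \bar y\in\Delta_\ell$ and $\bar x\bar y\in\gr_\nu R'$. Finally one checks compatibility with the tree maps $\hat R'\to\hat R''$ of Lemma \ref{factor}: that $\{\tilde H'_0\}$ is a tree of ideals, that the various $\hat\nu_-$ glue to a single valuation of $\lim\limits_\to\hat R'/\tilde H'_0$, and that the graded inclusions commute with the tree maps. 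This reduces to the tree-compatibility of the implicit primes and of the valuation ideals established earlier.

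The step I expect to be the genuine obstacle is the general rank $1$ case in which $\gr_\nu R'$ is \emph{not} a finitely generated $k$-algebra --- precisely the regime of the Examples, where $k_{\nu_1}$ grows without bound and the limit tree behaves subtly. When $\gr_\nu R'$ is finitely generated (an Abhyankar-type situation) one can argue directly: realize $\hat R'/\tilde H'_0$ as an overweight deformation, in the sense of \cite{Te}, of the completion of $\gr_\nu R'$, an object for which the extension of the valuation is transparent and visibly preserves the value group. Reducing the general case to this one --- presumably by a limiting argument over $\mathcal{T}$ that approximates $R_\nu$ by rings with finitely generated associated graded algebras, and, crucially, controlling the value-group-escape uniformly along the tree so as to pin down $\tilde H'_0$ --- is the point at which a genuinely new idea seems to be required, and it is there that the equidimensionality hypothesis must be put to essential use.
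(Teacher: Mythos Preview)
The statement you are attacking is a \emph{conjecture}; the paper does not prove it. \S\ref{locuni1} sketches an approach that assumes embedded Local Uniformization in lower dimension and still leaves two further conjectures (\ref{strongcontainment} and \ref{containment}) open. So your disclaimer ``a plan of attack, not a complete proof'' is appropriate, and the question is whether the plan is pointed at the right target.

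It is not. You declare ``the crux is the case $r=1$'' and then devote your main effort to a supposed value-group-escape phenomenon there. But the rank-one case is \emph{already settled}, and easily, in \S\ref{archimedian}: Theorem \ref{th53} builds the unique extension $\hat\nu_-$ to $\hat R/H$, and Remark \ref{Remark56} records the isomorphism $\gr_\nu R\cong\gr_{\hat\nu_-}\frac{\hat R}{H}$, which is stronger than scalewise birational. No value-group escape can occur: by Lemma \ref{lemma36} the semigroup $\nu(R\setminus\{0\})$ contains no infinite bounded sequences, so the supremum you write down is always an attained maximum in $\Gamma$. The ``limit from below of values in $\Gamma$'' that you fear is impossible when $r=1$ and $R$ is noetherian. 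The growing-residue-field examples you invoke are rank-two phenomena.

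The genuine difficulty is the inductive step, and there your sketch is vague exactly where the problem bites. You peel off $\nu_1$ first; but $\nu_1$ is centered at $P_1\ne m$, so \S\ref{archimedian} applied to $\nu_1$ produces a prime of $\widehat{R_{P_1}}$, not of $\hat R$ --- there is no canonical lift to a prime $\tilde H'_0\subset\hat R'$ with $\tilde H'_0\cap R'=(0)$, and producing one is the conjecture. Worse, after ``passing to the residue field'' you land in $k_{\nu_1}$, which (as the Remark after the second worked Example in the Introduction stresses) may strictly contain $\kappa(P'_1)$ for \emph{every} $R'\in\mathcal T$; the rank-$(r-1)$ valuation $\bar\nu$ is then not centered in any ring of the tree, and your inductive hypothesis does not apply. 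Finally, the assertion that the graded extension is ``an algebraic extension of graded domains of equal dimension, hence birational'' has no justification: $\hat R$ is transcendental over $R$, and forcing algebraicity at the graded level is precisely what the construction must achieve, not something it can assume.

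For comparison, the paper's attack in \S\ref{locuni1} runs the induction in the \emph{opposite} direction --- downward in $\ell$, starting from $\nu_r$, which is centered at the maximal ideal of $R/P_{r-1}$ so that \S\ref{archimedian} applies directly --- and at each step invokes Local Uniformization to make $\frac{\hat R'}{J'_{2\ell+1}}$ regular, so that the next prime $J'_{2\ell-1}$ can be written down explicitly (Lemma \ref{contractsto0}). Even with those tools the argument stalls at the two tree-compatibility statements \ref{strongcontainment} and \ref{containment}. Your instinct to bring in Abhyankar's inequality and the equidimensionality hypothesis is correct --- the paper uses the same mechanism in Proposition \ref{HeSal} --- but there it yields \emph{uniqueness} once a tight extension is known to exist; it does not manufacture one.
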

The example given in remark 5.20, 4) of \cite{Te} shows that the
morphism of associated graded rings is not an isomorphism in general.

The approach to the Local Uniformization Theorem taken in \cite{Spi2} is to
reduce the problem to the case of rank 1 valuations. The theory of implicit
prime ideals is much simpler for valuations of rank 1 and takes only a
few pages in Section \ref{archimedian}.

The paper is organized as follows. In \S\ref{basics} we define the
odd-numbered implicit ideals $H_{2\ell+1}$ and prove that $H_{2\ell+1}\cap
R=P_\ell$. We observe that by their very definition, the ideals
$H_{2\ell+1}$ behave well under $\nu$-extensions; they form a
tree. Proving that $H_{2\ell+1}$ is indeed prime is postponed until
later sections; it will be proved gradually in
\S\ref{technical}--\S\ref{prime}. In the beginning of \S\ref{basics}
we will explain in more detail the respective roles played by the
odd-numbered and the even-numbered implicit ideals, give several
examples (among other things, to motivate the need for taking the
limit with respect to $R'$ in (\ref{eq:defin})) and say one or two
words about the techniques used to prove our results.

In \S\ref{technical} we prove the primality of the implicit prime
ideals assuming a certain technical condition, called
\textbf{stability}, about the tree $\cal T$ and the operation ${\
}^\dag$. It follows from the noetherianity of $R^\dag$ that there
exists a specific $R'$ for which the limit in (\ref{eq:defin}) is
attained. One of the main points of \S\ref{technical} is to prove
properties of stable rings which guarantee that this limit is attained
whenever $R'$ is stable. We then use the excellence of $R$ to define
the even-numbered implicit prime ideals: for $i=2\ell$ the ideal
$H_{2\ell}$ is defined to be the unique minimal prime of $P_\ell
R^\dag$, contained in $H_{2\ell+1}$ (in the case $R^\dag=\hat R$ it is
the excellence of $R$ which implies the uniqueness of such a minimal
prime). We have
$$
H_{2\ell}\cap R=P_\ell
$$
for $\ell\in\{0,\dots,r\}$. The results of \S\ref{technical} apply
equally well to completions, henselizations and other local \'etale
extensions; to complete the proof of the primality of the implicit
ideals in various contexts such as henselization or completion, it
remains to show the existence of stable $\nu$-extensions in the
corresponding context.

In \S\ref{Rdag} we describe the set of extensions $\nu^\dag_-$ of $\nu$ to
$\lim\limits_{\overset\longrightarrow{R'}}\frac{{R'}^\dag}{P'{R'}^\dag}$,
where $P'$ is a tree of prime ideals of ${R'}^\dag$ such that $P'\cap
R'=(0)$. We show (Theorem \ref{classification}) that specifying such a
valuation $\nu^\dag_-$ is equivalent to specifying the following data:

(1) a chain (\ref{eq:chaintree}) of trees of prime ideals $\tilde
H'_i$ of ${R'}^\dag$ (where $\tilde H'_0=P'$), such that
$H'_i\subset\tilde H'_i$ for each $i$ and each $R'\in\mathcal{T}$,
satisfying one additional condition (we will refer to the chain
(\ref{eq:chaintree}) as the chain of trees of ideals,
\textbf{determined by} the extension $\nu^\dag_-$)

(2) a valuation $\nu^\dag_i$ of the residue field $k_{\nu^\dag_{i-1}}$
of $\nu^\dag_{i-1}$, whose restriction to the field
$\lim\limits_{\overset\longrightarrow{R'}}\kappa(\tilde H'_{i-1})$ is
centered at the local ring
$\lim\limits_{\overset\longrightarrow{R'}}\frac{{R'}^\dag_{\tilde
    H'_i}}{\tilde H'_{i-1}{R'}^\dag_{\tilde H'_i}}$.

If $i=2\ell$ is even, the valuation $\nu^\dag_i$ must be of rank 1 and
its restriction to $\kappa(\mathbf{m}_{\ell-1})$ must coincide with $\nu_\ell$.

Notice the recursive nature of this description of $\nu^\dag_-$: in
order to describe $\nu^\dag_i$ we must know $\nu^\dag_{i-1}$ in order
to talk about its residue field $k_{\nu^\dag_{i-1}}$.

In \S\ref{extensions} we address the question of uniqueness of
$\nu^\dag_-$. We describe several classes of extensions $\nu^\dag_-$ which
are particularly useful for the applications: \textbf{minimal} and
\textbf{evenly minimal} extensions, and also those $\nu^\dag_-$ for
which, denoting by $\he\ I$ the height of an ideal, we have
\begin{equation}
\he\ \tilde H'_{2\ell+1}- \he\ \tilde H'_{2_\ell} \le 1\quad\text{ for
}0\le\ell\le r;\label{eq:odd=even4}
\end{equation}
in fact, the special case of (\ref{eq:odd=even4}) which is of most
interest for the applications is
\begin{equation}
\tilde H'_{2\ell}=\tilde H'_{2\ell+1}\quad\text{ for }1\le\ell\le
r.\label{eq:odd=even}
\end{equation}
We prove some necessary and some sufficient conditions under which an
extension $\nu^\dag_-$ whose corresponding ideals $\tilde H'_i$ satisfy
(\ref{eq:odd=even}) is uniquely determined by the ideals $\tilde
H'_i$. We also give sufficient conditions for the graded algebra
$gr_\nu R'$ to be scalewise birational to $gr_{\hat\nu_-}\hat R'$ for
each $R'\in\cal{T}$. These sufficient conditions are used in
\S\ref{locuni1} to prove some partial results towards Conjecture \ref{teissier}.

In \S\ref{henselization} we show the existence of $\nu$-extensions in
$\cal T$, stable for henselization, thus reducing the proof of the
primality of $H_{2\ell+1}$ to the results of \S\ref{technical}. We
study the extension of $\nu$ to $\tilde R$ modulo its first  prime
ideal and prove that such an extension is unique.

In \S\ref{prime} we use the results of \S\ref{henselization} to prove
the existence of $\nu$-extensions in $\cal T$, stable for
completion. Combined with the results of \S\ref{technical} this proves
that the ideals $H_{2\ell+1}$ are prime.

In \S\ref{locuni1} we describe a possible
approach and prove some partial results towards constructing a
chain of trees (\ref{eq:chaintree}) of prime ideals of $\hat R'$
satisfying (\ref{eq:odd=even}) and a corresponding valuation $\hat\nu_-$
which satisfies the conclusion of Conjecture \ref{teissier}. We also
prove a necessary and a sufficient condition for the uniqueness of $\hat\nu_-$, assuming Conjecture \ref{teissier}.

We would like to acknowledge the paper \cite{HeSa} by Bill Heinzer and Judith
Sally which inspired one of the authors to continue thinking about this subject, as well as the work of S.D. Cutkosky, S. El Hitti and L. Ghezzi: \cite{CG} (which contains results closely related to those of \S\ref{archimedian}) and \cite{CE}.

\section{Extending a valuation of rank one centered in a local domain
to its formal completion.}
\label{archimedian}

Let $(R,M,k)$ be a local noetherian domain, $K$ its field of
fractions, and $\nu:K\rightarrow\g_+\cup\{\infty\}$ a rank one valuation,
centered at $R$ (that is, non-negative on $R$ and positive
on $M$).\par Let $\hat R$ denote the formal completion of $R$. It is
convenient to extend $\nu$ to a valuation centered at $\frac{\hat R}H$, where
$H$ is a prime ideal of $\hat R$ such that $H\cap R=(0)$. In this
section, we will assume that $\nu$ is of rank one, so that the value
group $\g$ is archimedian. We will explicitly describe a prime ideal
$H$ of $\hat R$, canonically associated to $\nu$, such that $H\cap
R=(0)$ and such that $\nu$ has a unique extension $\hat\nu_-$ to $\frac{\hat R}H$.

Let $\h=\nu(R\setminus (0))$, let
$\P_\beta$ denote the $\nu$-ideal of $R$ of value $\beta$ and $\P_{\beta}^+$ the
greatest $\nu$-ideal, properly contained in $\P_\beta$. We now
define the main object of study of this section. Let
\begin{equation}
H:=\bigcap\limits_{\beta\in\h}(\P_\beta\hat R).\label{tag51}
\end{equation}

\begin{remark}\label{Remark51} Since $R$ is noetherian, we have $\nu
  (M)>0$ and since the ordered group $\Gamma$ is archimedian, for
  every $\beta\in\h$ there exists $n\in\mathbf N$ such that
  $M^n\subset \P_\beta$. In other words, the $M$-adic topology on $R$
  is finer than (or equal to) the $\nu$-adic topology. Therefore an
  element $x\in\hat R$ lies in $\P_\beta\hat R\iff$ there exists a
  Cauchy sequence $\{x_n\}\subset R$ in the $M$-adic topology,
  converging to $x$, such that $\nu(x_n)\ge\beta$ for all $n\iff$ for
  {\it every} Cauchy sequence $\{x_n\}\subset R$, converging to $x$,
  $\nu(x_n)\ge\beta$ for all $n\gg0$. By the same token, $x\in H\iff$
  there exists a Cauchy sequence $\{x_n\}\subset R$, converging to
  $x$, such that $\lim\limits_{n\to\infty}\nu(x_n)=\infty\iff$ for
  {\it every} Cauchy sequence $\{x_n\}\subset R$, converging to $x$,
  $\lim\limits_{n\to\infty}\nu(x_n)=\infty$.
\end{remark}

\begin{example} Let $R=k[u,v]_{(u,v)}$. Then $\hat
R=k[[u,v]]$. Consider an element $w=u-\sum\limits_{i=1}^\infty
c_iv^i\in\hat R$, where $c_i\in k^*$ for all $i\in\mathbf N$, such
that $w$ is transcendental over $k(u,v)$. Consider the injective map
$\iota:k[u,v]_{(u,v)}\rightarrow k[[t]]$ which sends $v$ to $t$ and
$u$ to $\sum\limits_{i=1}^\infty c_it^i$. Let $\nu$ be the valuation
induced from the $t$-adic valuation of $k[[t]]$ via $\iota$. The value
group of $\nu$ is $\mathbf Z$ and $\h=\mathbf N_0$. For each
$\beta\in\mathbf N$,
$\P_\beta=\left(v^\beta,u-\sum\limits_{i=1}^{\beta-1}c_iv^i\right)$. Thus
$H=(w)$.

We come back to the general theory. Since the formal completion
homomorphism $R\rightarrow\hat R$ is faithfully flat,
\begin{equation}
\P_\beta\hat R\cap R=\P_\beta\quad\text{for all }\beta\in\h.\label{tag52}
\end{equation}
Taking the intersection over all $\beta\in\h$, we obtain
\begin{equation}
H\cap R=\left(\bigcap\limits_{\beta\in\h}\left(\P_\beta\hat R\right)\right)\cap
R=\bigcap\limits_{\beta\in\h}\P_\beta=(0),\label{tag53}
\end{equation}

In other words, we have a natural inclusion $R\hookrightarrow\frac{\hat R}H$.
\end{example}
\begin{theorem}\label{th53}
\begin{enumerate}
\item $H$ is a prime ideal of $\hat R$.
\item $\nu$ extends uniquely to a valuation $\hat\nu_-$, centered at
$\frac{\hat R}H$.
\end{enumerate}
\end{theorem}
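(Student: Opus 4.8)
The plan is to prove both statements by passing to the associated graded ring and using the key fact that $\gr_\nu R$ is a domain (since $\nu$ is a valuation on $R$) together with the faithful flatness of $R\to\hat R$. First I would reformulate $H$ using Remark \ref{Remark51}: an element $x\in\hat R$ lies in $H$ if and only if, for some (equivalently every) Cauchy sequence $\{x_n\}\subset R$ converging to $x$, one has $\lim_{n\to\infty}\nu(x_n)=\infty$. For $x\in\hat R\setminus H$, define $\hat\nu(x):=\lim_{n\to\infty}\nu(x_n)\in\Gamma$; the first task is to check this is well defined, i.e.\ independent of the choice of Cauchy sequence. This follows because $\P_\beta\hat R\cap R=\P_\beta$ (equation (\ref{tag52})) and, more precisely, because $x\in\P_\beta\hat R\iff \nu(x_n)\ge\beta$ for $n\gg0$: two Cauchy sequences converging to the same $x$ have a difference converging to $0$, hence eventually of arbitrarily large value, so their values eventually agree and stabilize. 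Thus $\hat\nu:\hat R\setminus H\to\Gamma$ is a well-defined function with $\hat\nu|_R=\nu$, and by construction $\hat\nu(x)\ge\beta\iff x\in\P_\beta\hat R$, while $\hat\nu(x+y)\ge\min\{\hat\nu(x),\hat\nu(y)\}$ is immediate from $\P_\beta\hat R+\P_\gamma\hat R\subseteq\P_{\min\{\beta,\gamma\}}\hat R$.

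**Multiplicativity and primality together.** The heart of the argument is to show $\hat\nu(xy)=\hat\nu(x)+\hat\nu(y)$ for $x,y\in\hat R$, interpreting this to mean in particular that $xy\in H$ forces $x\in H$ or $y\in H$; this simultaneously gives primality of $H$ and the fact that $\hat\nu$ descends to a genuine valuation $\hat\nu_-$ on $\hat R/H$. The inequality $\hat\nu(xy)\ge\hat\nu(x)+\hat\nu(y)$ is clear from $\P_\beta\hat R\cdot\P_\gamma\hat R\subseteq\P_{\beta+\gamma}\hat R$. For the reverse, I would use the associated graded ring: there is a natural homomorphism $\gr_\nu R\to\gr_{\hat\nu}\hat R$ (sending the class of $a\in\P_\beta$ to the class of its image in $\P_\beta\hat R$), and faithful flatness of $R\to\hat R$ together with $\P_\beta\hat R\cap\P_\gamma\hat R=\P_{\max\{\beta,\gamma\}}\hat R$ (which again follows from flatness, since the $\P_\beta$ are totally ordered by inclusion so their extensions are too) shows that for each $\beta$ the map $\frac{\P_\beta}{\P_\beta^+}\to\frac{\P_\beta\hat R}{\P_\beta^+\hat R}$ is injective; moreover $\frac{\P_\beta\hat R}{\P_\beta^+\hat R}\cong\frac{\P_\beta}{\P_\beta^+}\otimes_R\hat R$. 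Since $\frac{\P_\beta}{\P_\beta^+}$ is a one-dimensional $k$-vector space (here I use that $\nu$ is a valuation on $R$, a domain, so each graded piece has dimension $\le 1$, and dimension $1$ exactly for $\beta\in\h$) and $\hat R/M=k$, the extension $\frac{\P_\beta\hat R}{\P_\beta^+\hat R}$ is again one-dimensional over $k$. Hence $\gr_\nu R\to\gr_{\hat\nu}\hat R$ is an \emph{isomorphism} onto the subring $\bigoplus_{\beta\in\h}\frac{\P_\beta\hat R}{\P_\beta^+\hat R}$ of $\gr_{\hat\nu}\hat R$; in particular $\gr_{\hat\nu}\hat R$ is (isomorphic to $\gr_\nu R$, hence) a domain.

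**Concluding.** Once $\gr_{\hat\nu}\hat R$ is known to be a domain, the reverse inequality $\hat\nu(xy)\le\hat\nu(x)+\hat\nu(y)$ follows in the standard way: let $\beta=\hat\nu(x)$, $\gamma=\hat\nu(y)$, and let $\bar x\in\frac{\P_\beta\hat R}{\P_\beta^+\hat R}$, $\bar y\in\frac{\P_\gamma\hat R}{\P_\gamma^+\hat R}$ be the nonzero initial forms; since $\gr_{\hat\nu}\hat R$ has no zero divisors, $\bar x\bar y\ne0$ in $\frac{\P_{\beta+\gamma}\hat R}{\P_{\beta+\gamma}^+\hat R}$, which means $xy\notin\P_{\beta+\gamma}^+\hat R$, i.e.\ $\hat\nu(xy)=\beta+\gamma$. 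The same computation applied to the case where $xy\in H$ — i.e.\ where $\hat\nu(xy)$ is "infinite", meaning $xy\in\P_\delta\hat R$ for all $\delta\in\h$ — shows that if neither $x$ nor $y$ were in $H$ then $\hat\nu(x),\hat\nu(y)$ would be finite elements of $\h\cup(\h+\h)\subseteq\h$ and $\hat\nu(xy)=\hat\nu(x)+\hat\nu(y)$ would be finite, a contradiction; hence $H$ is prime. Finally, $\hat\nu$ then induces a well-defined valuation $\hat\nu_-$ on the domain $\hat R/H$ with values in $\Gamma$, extending $\nu$, and it is centered at $\hat R/H$ because $\hat\nu_-$ is non-negative on $\hat R/H$ and positive on its maximal ideal $M\hat R/H$ (as $\nu(M)>0$). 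Uniqueness of the extension: any valuation $\mu$ on $\hat R/H$ restricting to $\nu$ on $R$ must satisfy $\mu(x)\ge\beta$ for every $x$ that is an $M$-adic limit of elements of $\P_\beta$ — because $\mu$ is continuous for the $M$-adic topology on the valuation ring side in the sense that $M^n\hat R\subseteq\P_\beta\hat R$ eventually forces the inequality — and by the "every Cauchy sequence" version of Remark \ref{Remark51} this pins down $\mu=\hat\nu_-$ on all of $\hat R/H$.

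**Main obstacle.** The step I expect to require the most care is the injectivity and the "one-dimensionality is preserved" claim for the graded pieces, i.e.\ showing $\gr_\nu R\to\gr_{\hat\nu}\hat R$ is an isomorphism onto its image: this is where faithful flatness of $R\to\hat R$ must be used precisely (to get $\P_\beta\hat R\cap R=\P_\beta$ and, more delicately, $\P_\beta\hat R\cap\P_\gamma^+\hat R=\P_{\max}\hat R\cap\cdots$ controlling intersections of extended ideals), and also where one must be careful that $\h$ being merely a well-ordered subsemigroup of $\Gamma_+$ (not necessarily finitely generated, and possibly with $\beta+\beta'$ producing new values) does not cause the graded structure to degenerate. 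Everything else is a formal consequence of having a domain for the associated graded ring.
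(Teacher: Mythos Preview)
Your overall strategy coincides with the paper's: both arguments hinge on the isomorphism $\P_\beta/\P_\beta^+\cong \P_\beta\hat R/\P_\beta^+\hat R$ of graded pieces (the paper's display (\ref{tag59})), from which multiplicativity of $\hat\nu_-$ and primality of $H$ follow simultaneously, and uniqueness is then an easy consequence.

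There is, however, a genuine error in your justification of that isomorphism. You assert that $\P_\beta/\P_\beta^+$ is a \emph{one-dimensional} $k$-vector space ``since $\nu$ is a valuation on $R$, a domain''. This is false in general: take $R=k[x,y]_{(x,y)}$ with the order valuation ($\nu(x)=\nu(y)=1$, value group $\mathbf Z$, rank one); then $\P_1/\P_1^+=(x,y)/(x,y)^2$ is two-dimensional over $k$. One-dimensionality of graded pieces holds for $\gr_\nu R_\nu$ over $k_\nu$, not for $\gr_\nu R$ over $k$. What \emph{is} true, and what the paper actually uses, is only that $\P_\beta/\P_\beta^+$ is a $k$-vector space (because $M\P_\beta\subset\P_\beta^+$, since $\nu(M)>0$). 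That alone suffices: for any $k$-vector space $V$ one has
\[
V\otimes_R\hat R\;\cong\;V\otimes_k\bigl(k\otimes_R\hat R\bigr)\;\cong\;V\otimes_kk\;=\;V,
\]
and flatness of $\hat R$ over $R$ gives $\P_\beta\hat R/\P_\beta^+\hat R\cong(\P_\beta/\P_\beta^+)\otimes_R\hat R$. So the isomorphism $\gr_\nu R\cong\gr_{\hat\nu}\hat R$ holds for the right reason, and your subsequent argument (domain $\Rightarrow$ multiplicativity $\Rightarrow$ $H$ prime) goes through unchanged once this is repaired.

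Your uniqueness argument is correct in spirit but vaguely worded; the paper's version is precisely the decomposition furnished by the isomorphism just proved. Given $x\in\P_\alpha\hat R\setminus\P_\alpha^+\hat R$, surjectivity of $\P_\alpha/\P_\alpha^+\to\P_\alpha\hat R/\P_\alpha^+\hat R$ yields $z\in\P_\alpha\setminus\P_\alpha^+$ with $x-z\in\P_\alpha^+\hat R$, so $x=z+\sum u_iv_i$ with $u_i\in\P_\alpha^+$ and $v_i\in\hat R$; any extension $\mu$ centered at $\hat R/H$ then has $\mu(z)=\nu(z)=\alpha$ and $\mu(u_iv_i)\ge\nu(u_i)>\alpha$, forcing $\mu(x)=\alpha$. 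This is cleaner than invoking continuity of $\mu$ for the $M$-adic topology, which you have not established.
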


\begin{proof} Let $\bar x\in\frac{\hat R}H\setminus\{0\}$. Pick a
representative $x$ of $\bar x$ in $\hat R$, so that $\bar x= x\ \md\
H$. Since $x\notin H$, we have $ x\notin \P_\alpha\hat R$ for some
$\alpha\in\h$.
\begin{lemma}\label{lemma36} {\rm (See \cite{ZS}, Appendix 5, lemma 3)} Let $\nu$ be a valuation of rank one centered in a
local noetherian domain $(R,M,k)$. Let
$$
\h=\nu(R\setminus (0))\subset\g.
$$
Then $\h$ contains no infinite bounded sequences.
\end{lemma}

\begin{proof} An infinite ascending sequence $\alpha_1<\alpha_2<\dots$ in
$\h$, bounded above by an element $\beta\in\h$, would give rise to an
infinite descending chain of ideals in $\frac R{\P_\beta}$. Thus it is
sufficient to prove that $\frac R{\P_\beta}$ has finite length.

Let $\delta:=\nu(M)\equiv\min(\h\setminus\{0\})$. Since $\h$ is
archimedian, there exists $n\in\mathbf N$ such that $\beta\le n\delta$.
Then $M^n\subset \P_\beta$, so that there is a surjective map $\frac
R{M^n}\twoheadrightarrow\frac R{\P_\beta}$. Thus $\frac R{\P_\beta}$ has
finite length, as desired.
\end{proof}

By Lemma \ref{lemma36}, the set $\{\beta\in\h\ |\ \beta<\alpha\}$
is finite. Hence there exists a unique $\beta\in\h$ such that
\begin{equation}
x\in \P_\beta\hat R\setminus \P_{\beta}^+\hat R.\label{tag54}
\end{equation}
Note that $\beta$ depends only on $\bar x$, but not on the choice of the
representative $ x$. Define the function $\hat\nu_-:\frac{\hat
R}H\setminus\{0\}\rightarrow\h$ by
\begin{equation}
\hat\nu_-(\bar x)=\beta.\label{tag55}
\end{equation}
By (\ref{tag52}), if $x\in R\setminus\{0\}$ then
\begin{equation}
\hat\nu_-(x)=\nu(x).%\tag5.6
\end{equation}
It is obvious that
\begin{equation}
\hat\nu_-(x+y)\ge\min\{\hat\nu_-(x),\hat\nu_-(y)\}\label{tag57}
\end{equation}
\begin{equation}
\hat\nu_-(xy)\ge\hat\nu_-(x)+\hat\nu_-(y)\label{tag58}
\end{equation}
for all $x,y\in\frac{\hat R}H$. The point of the next lemma is to
show that $\frac{\hat R}H$ is a domain and that $\hat\nu_-$ is, in fact, a
valuation (i.e. that the inequality (\ref{tag58}) is, in fact, an equality).

\begin{lemma}\label{lemma54} For any non-zero $\bar x,\bar
  y\in\frac{\hat R}H$, we have $\bar x\bar y\ne0$ and $\hat\nu_-(\bar
  x\bar y)=\hat\nu_-(\bar x)+\hat\nu_-(\bar y)$.
\end{lemma}

\begin{proof} Let $\alpha=\hat\nu_-(\bar x)$, $\beta=\hat\nu_-(\bar
y)$. Let $ x$ and $ y$ be representatives in $\hat R$ of $\bar x$ and
$\bar y$, respectively. We have $M\P_\alpha\subset \P_{\alpha}^+$, so that
\begin{equation}
\frac{\P_\alpha}{\P_{\alpha}^+}\cong\frac{\P_\alpha}{\P_{\alpha}^++M\P_\alpha}\cong
\frac{\P_\alpha}{\P_{\alpha}^+}\otimes_Rk\cong\frac{\P_\alpha}{\P_{\alpha}^+}
\otimes_R\frac{\hat R}{M\hat R}\cong\frac{\P_\alpha\hat
R}{(\P_{\alpha}^++M\P_\alpha)\hat R}\cong\frac{\P_\alpha\hat
R}{\P_{\alpha}^+\hat R},\label{tag59}
\end{equation}
and similarly for $\beta$. By (\ref{tag59}) there exist $z\in \P_\alpha$, $w\in
\P_\beta$, such that $z\equiv x\ \md\ \P_{\alpha}^+\hat R$ and
$w\equiv y\ \md\ \P_{\beta}^+\hat R$. Then
\begin{equation}
 xy\equiv zw\ \md\ \P_{\alpha+\beta}^+\hat R.\label{tag510}
\end{equation}
Since $\nu$ is a valuation, $\nu(zw)=\alpha+\beta$, so that $zw\in
\P_{\alpha+\beta}\setminus \P_{\alpha+\beta}^+$. By (\ref{tag52}) and
(\ref{tag510}), this proves that $xy\in \P_{\alpha+\beta}\hat R\setminus
\P_{\alpha+\beta}^+\hat R$. Thus $xy\notin H$ (hence $\bar x\bar
y\ne0$ in $\frac{\hat R}H$) and $\hat\nu_-(\bar x\bar y)=\alpha+\beta$,
as desired.
\end{proof}

By Lemma \ref{lemma54}, $H$ is a prime ideal of $\hat R$. By
(\ref{tag57}) and Lemma \ref{lemma54}, $\hat\nu_-$ is a valuation,
centered at $\frac{\hat R}H$. To complete the proof of Theorem
\ref{th53}, it remains to prove the uniqueness of $\hat\nu_-$. Let $x$,
$\bar x$, the element $\alpha\in\h$ and
\begin{equation}
z\in \P_\alpha\setminus \P_{\alpha}^+\label{tag511}
\end{equation}
be as in the proof of Lemma \ref{lemma54}. Then there exist
\begin{equation}
\begin{array}{rl}
u_1,\ldots , u_n&\in \P_{\alpha}^+\text{ and}\\
 v_1,\ldots ,v_n&\in\hat R
\end{array}\label{tag512}
\end{equation}
such that $ x=z+\sum\limits_{i=1}^nu_i v_i$. Letting $\bar v_i:=v_i\
\md\ H$, we obtain $\bar x=\bar z+\sum\limits_{i=1}^n\bar u_i\bar
v_i$ in $\frac{\hat R}H$. Therefore, by
(\ref{tag511})--(\ref{tag512}), for any extension of $\nu$ to a
valuation $\hat\nu '_-$, centered at $\frac{\hat R}H$, we have
\begin{equation}
\hat\nu '_-(\bar x)=\alpha=\hat\nu_-(\bar x),\label{tag513}
\end{equation}
as desired. This completes the proof of Theorem \ref{th53}.
\end{proof}

\begin{definition}\label{deft55} The ideal $H$ is called the {\bf implicit prime
ideal} of $\hat R$, associated to $\nu$. When dealing with more than one
ring at a time, we will sometimes write $H(R,\nu)$ for $H$.
\end{definition}

More generally, let $\nu$ be a valuation centered at $R$, not
necessarily of rank one. In any case, we may write $\nu$ as a
composition $\nu=\mu_2\circ\mu_1$, where $\mu_2$ is centered at a
non-maximal prime ideal $P$ of $R$ and $\mu_1\left|_{\frac RP}\right.$
is of rank one. The valuation $\mu_1\left|_{\frac RP}\right.$ is
centered at $\frac RP$. We define the {\bf implicit prime ideal} of
$R$ with respect to $\nu$, denoted $H(R,\nu)$, to be the inverse image
in $\hat R$ of the implicit prime ideal of $\frac{\hat R}P$ with
respect to $\mu_1\left|_{\frac RP}\right.$. For the rest of this
section, we will continue to assume that $\nu$ is of rank one.

\begin{remark}\label{Remark56} By (\ref{tag59}), we have the following natural
isomorphisms of graded algebras:
$$
\begin{array}{rl}
\gr_\nu R&\cong\gr_{\hat\nu_-}\frac{\hat R}H\\
G_\nu&\cong G_{\hat\nu_-}.
\end{array}
$$
\end{remark}

\par\medskip
We will now study the behaviour of $H$ under local blowings up of $R$
with respect to $\nu$ and, more generally, under local homomorphisms. Let
$\pi:(R,M)\rightarrow(R',M')$ be a local homomorphism of local noetherian
domains. Assume that $\nu$ extends to a rank one valuation
$\nu':R'\setminus\{0\}\rightarrow\g'$, where $\g'\supset\g$. The
homomorphism $\pi$ induces a local homomorphism $\hat\pi:\hat R\rightarrow\hat
R'$ of formal completions. Let $\h'=\nu'(R'\setminus\{0\})$. For
$\beta\in\h'$, let $\P'_\beta$ denote the $\nu'$-ideal of $R_{\nu'}$
of value $\beta$, as above. Let $H'=H(R',\nu')$.

\begin{lemma}\label{lemma58} Let $\beta\in\h$. Then
\begin{equation}
\left(\P'_\beta\hat R'\right)\cap\hat R=\P_\beta\hat R.\label{tag516}
\end{equation}
\end{lemma}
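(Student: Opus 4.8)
The plan is to prove the two inclusions separately. The inclusion $\P_\beta\hat R\subseteq\left(\P'_\beta\hat R'\right)\cap\hat R$ is the easy one: since $\nu'$ extends $\nu$ and $\pi$ is a local homomorphism, $\pi(\P_\beta)\subseteq\P'_\beta$, hence $\P_\beta\hat R\subseteq\P'_\beta\hat R'$; combined with the obvious containment $\P_\beta\hat R\subseteq\hat R$ (using that $\hat\pi$ is injective, or rather that we are viewing $\hat R$ as a subring of $\hat R'$ via $\hat\pi$, which is legitimate by Lemma \ref{factor}), this gives $\P_\beta\hat R\subseteq\left(\P'_\beta\hat R'\right)\cap\hat R$.

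The substance is the reverse inclusion $\left(\P'_\beta\hat R'\right)\cap\hat R\subseteq\P_\beta\hat R$. First I would reduce to a statement inside $R$ and $R'$ rather than their completions, using the characterization of $\P_\beta\hat R$ from Remark \ref{Remark51}: an element $x\in\hat R$ lies in $\P_\beta\hat R$ if and only if some (equivalently every) $M$-adic Cauchy sequence $\{x_n\}\subset R$ converging to $x$ satisfies $\nu(x_n)\ge\beta$ for $n\gg0$. So take $x\in\left(\P'_\beta\hat R'\right)\cap\hat R$ and a Cauchy sequence $\{x_n\}\subset R$ converging to $x$ in the $M$-adic topology. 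Its image in $\hat R'$ is a Cauchy sequence in the $M'$-adic topology (since $\pi$ is local, $M\subseteq M'$, so $M$-adic convergence implies $M'$-adic convergence of the images) converging to $\hat\pi(x)\in\P'_\beta\hat R'$. By Remark \ref{Remark51} applied to $(R',\nu')$, we get $\nu'(x_n)\ge\beta$ for $n\gg0$. But $x_n\in R$ and $\nu'|_K=\nu$, so $\nu'(x_n)=\nu(x_n)$, whence $\nu(x_n)\ge\beta$ for $n\gg0$, and therefore $x\in\P_\beta\hat R$ by Remark \ref{Remark51} again.

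The main point to be careful about is the legitimacy of identifying $\hat R$ with a subring of $\hat R'$ and of speaking about the intersection in the first place: this is exactly what Lemma \ref{factor}(2) provides, so I would invoke it explicitly at the start. A second subtlety is that the hypothesis only assumes $\beta\in\h$, i.e. $\beta$ is a value of $\nu$ on $R\setminus(0)$; this is what we want, since $\P_\beta$ is then the honest $\nu$-ideal of $R$ of value $\beta$, and $\P'_\beta$ is the $\nu'$-ideal of $R_{\nu'}$ of value $\beta$ (note $\beta\in\h\subseteq\h'$ since $\nu'$ extends $\nu$), so both sides of (\ref{tag516}) make sense. I expect no real obstacle beyond bookkeeping: the whole argument is a transport of the Cauchy-sequence description of $\P_\beta\hat R$ across the local homomorphism $\pi$, using that $\nu'$ restricts to $\nu$ on $K$ and that $M\subseteq M'$.
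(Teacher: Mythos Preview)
Your proposal is correct and follows essentially the same approach as the paper's own proof: both establish the easy inclusion directly, then prove the reverse inclusion by taking an $M$-adic Cauchy sequence in $R$ converging to a given $x$, pushing it forward to an $M'$-adic Cauchy sequence in $R'$, and applying Remark~\ref{Remark51} on both sides together with $\nu'|_K=\nu$. One small caveat: in this lemma $\pi$ is only assumed to be a local homomorphism of local noetherian domains (not an ``extension'' in the sense of Lemma~\ref{factor}), so rather than invoking Lemma~\ref{factor}(2) you should simply read the intersection as the preimage under $\hat\pi$; the rest of your argument goes through unchanged.
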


\begin{proof} Since by assumption $\nu'$ extends $\nu$ we have
$\P'_\beta\cap R=\P_\beta$ and the inclusion
\begin{equation}
\left(\P'_\beta\hat R'\right)\cap\hat R\supseteq \P_\beta\hat R.\label{tag517}
\end{equation}
We will now prove the opposite inclusion. Take an element
$x\in\left(\P'_\beta\hat R'\right)\cap\hat R$. Let $\{x_n\}\subset R$
be a Cauchy sequence in the $M$-adic topology, converging to $x$. Then
$\{\pi(x_n)\}$ converge to $\hat\pi(x)$ in the $M'$-adic topology of
$\hat R'$. Applying remark \ref{Remark51} to $R'$, we obtain
\begin{equation}
\nu(x_n)\equiv\nu'(\pi(x_n))\ge\beta\quad\text{for }n\gg0.\label{tag518}
\end{equation}
By (\ref{tag518}) and Remark \ref{Remark56}, applied to $R$, we have
$x\in \P_\beta\hat R$. This proves the opposite inclusion in
(\ref{tag517}), as desired.
\end{proof}

\begin{corollary}\label{Corollary59} We have
$$
H'\cap\hat R=H.
$$
\end{corollary}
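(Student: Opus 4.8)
The plan is to take the intersection over all $\beta\in\h$ of the identity established in Lemma \ref{lemma58} and to use the definition \eqref{tag51} of the implicit prime ideal (and its analogue $H'=H(R',\nu')$), together with the fact that $\h'\supseteq\h$, to show that the "extra" values in $\h'$ do not enlarge the intersection after contracting to $\hat R$.

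First I would write, using Lemma \ref{lemma58}, that for every $\beta\in\h$ we have $(\P'_\beta\hat R')\cap\hat R=\P_\beta\hat R$. Intersecting over $\beta\in\h$ and using \eqref{tag51} for $R$ gives
$$
\left(\bigcap_{\beta\in\h}\P'_\beta\hat R'\right)\cap\hat R
=\bigcap_{\beta\in\h}\left(\P'_\beta\hat R'\cap\hat R\right)
=\bigcap_{\beta\in\h}\P_\beta\hat R=H .
$$
So it remains to identify $\bigcap_{\beta\in\h}\P'_\beta\hat R'$ with $H'=\bigcap_{\gamma\in\h'}\P'_\gamma\hat R'$ after contraction to $\hat R$, i.e. to show $H'\cap\hat R=\left(\bigcap_{\beta\in\h}\P'_\beta\hat R'\right)\cap\hat R$. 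The inclusion $H'\cap\hat R\subseteq\left(\bigcap_{\beta\in\h}\P'_\beta\hat R'\right)\cap\hat R$ is trivial since $\h\subseteq\h'$ makes $H'\subseteq\bigcap_{\beta\in\h}\P'_\beta\hat R'$. For the reverse inclusion, take $x\in\hat R$ with $x\in\P'_\beta\hat R'$ for all $\beta\in\h$; I must show $x\in\P'_\gamma\hat R'$ for all $\gamma\in\h'$. Since $\nu'$ has rank one and $\g'$ is archimedean, $\h$ is cofinal in $\h'$ (indeed cofinal in $\g'_{\ge 0}$, because $\nu'(M')>0$ and for any $\gamma\in\g'$ there is $n$ with $n\,\nu'(M')\ge\gamma$, while $\nu'(M')\in\h$ as $M'\cap R\ne(0)$... more carefully, one uses that $\nu$ is centered at $R$ so $\nu(M)\in\h$ and $\nu(M)>0$, and $\nu(M)\in\h\subseteq\h'$). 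Hence for any $\gamma\in\h'$ there is $\beta\in\h$ with $\beta\ge\gamma$, whence $\P'_\beta\subseteq\P'_\gamma$ and $x\in\P'_\beta\hat R'\subseteq\P'_\gamma\hat R'$. Therefore $x\in H'$, and combined with $x\in\hat R$ we get $x\in H'\cap\hat R$. This proves $H'\cap\hat R=H$.

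The only real point requiring care — and the step I expect to be the main obstacle — is the cofinality of $\h$ in $\h'$, which is what lets us pass from "$x$ lies in $\P'_\beta\hat R'$ for $\beta\in\h$" to "$x$ lies in $\P'_\gamma\hat R'$ for all $\gamma\in\h'$". This rests on archimedeanity of $\g'$ together with the fact that $\nu(M)$ is a strictly positive element of $\h$; both are available (archimedeanity of $\g$ from $\nu$ being rank one, hence of $\g'\supseteq\g$ of the same rank, and $\nu(M)>0$ by Remark \ref{Remark51}). Everything else is a formal manipulation of the definitions and an application of Lemma \ref{lemma58}. Alternatively, and perhaps more cleanly, one can phrase the whole argument through the Cauchy-sequence characterization of $H$ and $H'$ in Remark \ref{Remark51}: an element $x\in\hat R$ lies in $H$ iff some (equivalently every) $M$-adic Cauchy sequence $\{x_n\}\subset R$ converging to $x$ satisfies $\nu(x_n)\to\infty$; pushing such a sequence forward by $\pi$ gives an $M'$-adic Cauchy sequence in $R'$ converging to $\hat\pi(x)$ with $\nu'(\pi(x_n))=\nu(x_n)\to\infty$, so $\hat\pi(x)\in H'$; conversely, if $\hat\pi(x)\in H'$ then by Remark \ref{Remark51} applied to $R'$ every such pushed-forward sequence has $\nu'(\pi(x_n))\to\infty$, hence $\nu(x_n)\to\infty$ and $x\in H$. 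Either route yields $H'\cap\hat R=H$.
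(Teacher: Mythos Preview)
Your proof is correct and follows essentially the same approach as the paper: the paper's proof simply notes that since $\nu'$ has rank one, $\h$ is cofinal in $\h'$, and then takes the intersection over all $\beta\in\h$ in the identity of Lemma~\ref{lemma58}. You have spelled out in detail exactly the cofinality argument and the intersection manipulation that the paper compresses into two sentences, and your alternative Cauchy-sequence route via Remark~\ref{Remark51} is also valid.
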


\begin{proof} Since $\nu'$ is of rank one, $\h$ is cofinal in $\h'$. Now
the Corollary follows by taking the intersection over all $\beta\in\h$
in (\ref{tag516}).
\end{proof}

Let $J$ be a non-zero ideal of $R$ and let $R\rightarrow R'$ be the
local blowing up along $J$ with respect to $\nu$. Take an element
$f\in J$, such that $\nu(f)=\nu(J)$. By the {\bf strict transform} of
$J$ in $\hat R'$ we will mean the ideal
$$
J^{\text{str}}:=\bigcup\limits_{i=1}^\infty\left(\left(J\hat
R'\right):f^i\right)\equiv\left(J\hat R'_f\right)\cap\hat R'.
$$
If $g$ is another element of $J$ such that $\nu(g)=\nu(J)$ then
$\nu\left(\frac fg\right)=0$, so that $\frac fg$ is a unit in
$R'$. Thus the definition of strict transform is independent of the
choice of $f$.

\begin{corollary}\label{Corollary510} $H^{\text{str}}\subset H'$.
\end{corollary}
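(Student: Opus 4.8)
We want to show $H^{\text{str}} \subset H'$, where $H^{\text{str}} = (J\hat R'_f) \cap \hat R'$ for $f \in J$ with $\nu(f) = \nu(J)$, and $H' = H(R',\nu') = \bigcap_{\beta \in \h'} \P'_\beta \hat R'$. The plan is to take an arbitrary element $x \in H^{\text{str}}$ and show that $x \in \P'_\beta \hat R'$ for every $\beta \in \h'$, using the characterization of membership in these ideals via Cauchy sequences from Remark~\ref{Remark51} (applied to $R'$ with the valuation $\nu'$).

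**The main computation.** By definition of $H^{\text{str}}$, there is an integer $i \geq 1$ with $f^i x \in J\hat R'$. Write $f^i x = \sum_{j} a_j y_j$ with $a_j \in J$ and $y_j \in \hat R'$. The point is that every generator $a_j$ of $J$ satisfies $\nu'(a_j) \geq \nu'(J) = \nu'(f)$, since $J \subset \P'_{\nu(J)}$. So $f^i x \in \P'_{i\nu'(f)}\hat R'$; equivalently, picking a Cauchy sequence $\{z_n\} \subset R'$ converging to $f^i x$, we have $\nu'(z_n) \geq i\nu'(f)$ for $n \gg 0$ by Remark~\ref{Remark51}. Now I would argue that multiplication by the unit-in-$R'$ relation $f \mid a_j$ in $R'_f$ forces $\nu'(x) \geq 0$ and more: for any fixed $\beta$, choosing $i$ large enough we would like $\nu'(x) \geq \beta$. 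The cleaner route: since $\nu'(f) > 0$ (as $f \in J \subset M'$, $J$ nonzero and $\nu'$ positive on $M'$), and since $f^i x \in \P'_{i\nu'(f)}\hat R'$, I want to conclude $x \in \P'_{i\nu'(f) - (\text{something bounded})}\hat R'$. But in fact $f$ is not generally invertible in $\hat R'$, so one cannot simply "divide". The correct statement is: because $x \in \hat R'$ and $f^i x \in \P'_{i\nu'(f)}\hat R'$, we get $\hat\nu'_-(x) + i\nu'(f) = \hat\nu'_-(f^i x) \geq i\nu'(f)$, which only gives $\hat\nu'_-(x) \geq 0$ — not enough by itself. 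The extra input needed is that $x \in H^{\text{str}}$ was obtained as a strict transform, so one should instead track which $\beta$-ideal $f^i x$ actually lies in as $i$ grows.

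**Pinning down the argument.** Here is the structure I would use. Fix $\beta \in \h'$. Since $\nu'$ has rank one, $\g'$ is archimedean, so there exists $i$ with $i\nu'(f) \geq \beta + \hat\nu'_-(x)$ once we know $\hat\nu'_-(x)$ is finite — but to know it is finite we need $x \notin H'$, which is what we are trying to disprove. So instead: suppose for contradiction $x \notin H'$, so $x \in \P'_{\gamma}\hat R' \setminus \P'^{+}_{\gamma}\hat R'$ for a well-defined $\gamma = \hat\nu'_-(x) \in \h'$ (using Lemma~\ref{lemma36} and Theorem~\ref{th53}(2) applied to $R'$). Then by Lemma~\ref{lemma54} applied in $\frac{\hat R'}{H'}$, we have $\hat\nu'_-(\overline{f^i x}) = i\nu'(f) + \gamma$, hence $f^i x \in \P'_{i\nu'(f)+\gamma}\hat R' \setminus \P'^{+}_{i\nu'(f)+\gamma}\hat R'$. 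On the other hand $f^i x \in J\hat R' \subset \P'_{\nu'(J)}\hat R' = \P'_{\nu'(f)}\hat R'$ — this is consistent and gives no contradiction directly. The genuine content must come from comparing with the fact that $J\hat R'$ is "not too small": in the blowup $R'$, after blowing up $J$ the ideal $J R'$ becomes principal, generated by $f$; so $J\hat R' = f\hat R'$. Therefore $f^i x \in f\hat R'$, i.e. $f^{i-1} x \in \hat R'$ (dividing by the nonzerodivisor $f$ — valid since $\hat R'$ is a domain once we've localized appropriately, or rather $f^i x = f g$ for some $g \in \hat R'$ and $f^{i-1}x - g$ is killed by $f$, hence zero). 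Iterating, $x \in \hat R'$ already, which we knew. The real gain: $J\hat R' = f\hat R'$ means $H^{\text{str}} = (f\hat R'_f)\cap \hat R' = \hat R'$?? That is absurd, so $J R'$ principal must be wrong in general — it is only locally principal, and the correct statement is $JR'_{M'} = fR'_{M'}$, giving $J\hat R' = f\hat R'$ after completion. Then $H^{\text{str}} = \hat R' \cap \hat R'_f \cdot f = $ all $x$ with $f^i x \in f\hat R'$ some $i$, which is all of $\hat R'$ — still absurd.

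**Revised plan.** The resolution is that the strict transform here is of $J$ \emph{relative to} $\hat R$, not $\hat R'$: re-reading, $J^{\text{str}} = (J\hat R'_f)\cap \hat R'$ but $J$ is an ideal of $R$, and $J\hat R' = fR'\hat R' \ne f\hat R'$ is false — rather $J R' = fR'$ holds in $R'$ itself by definition of blowup chart. So $J\hat R' = f\hat R'$ genuinely, and $J^{\text{str}} = \hat R'$, so the inclusion $H^{\text{str}} \subset H'$ is the trivial $\hat R' \subset \hat R'$... which cannot be the intent. Therefore I would instead read $H^{\text{str}}$ as the strict transform of the ideal $H = H(R,\nu) \subset \hat R$ (matching the notation $H^{\text{str}}$, not $J^{\text{str}}$): namely $H^{\text{str}} = (H\hat R'_f)\cap \hat R' = \bigcup_i (H\hat R' : f^i)$. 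With this reading the proof is: take $x \in H^{\text{str}}$, so $f^i x \in H\hat R' \subset \bigcap_\beta \P'_\beta \hat R'$ for some $i$ (using Lemma~\ref{lemma58} or Corollary~\ref{Corollary59}: $H\hat R' \subset H'$, since $H = H'\cap \hat R$ and... actually $H\hat R'\subset H'$ needs $\P_\beta \hat R' \subset \P'_\beta\hat R'$, which holds as $\P_\beta \subset \P'_\beta$). Then for each $\beta$, $f^i x \in \P'_{\beta}\hat R'$ for \emph{all} $\beta$, so $\hat\nu'_-(\overline{f^i x}) $ is not defined, i.e. $f^i x \in H'$. Since $H'$ is prime and $f \notin H'$ (as $\hat\nu'_-(\bar f) = \nu'(f) < \infty$), we conclude $x \in H'$. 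This is the clean argument, and I would write it using Corollary~\ref{Corollary59} together with primality of $H'$.

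The main obstacle, then, is not any hard estimate but rather correctly interpreting which ideal's strict transform $H^{\text{str}}$ denotes and establishing $H\hat R' \subset H'$; once one has the inclusion $H\hat R'\subset H'$ (immediate from $\P_\beta \subset \P'_\beta$ and the definitions) and the primality of $H'$ from Theorem~\ref{th53}(1) applied to $R'$, the conclusion $H^{\text{str}} = \bigcup_i (H\hat R':f^i) \subset H'$ follows by a one-line saturation argument since $f \notin H'$.
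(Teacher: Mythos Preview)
Your final argument is correct and is exactly the paper's proof: from Corollary~\ref{Corollary59} one has $H\subset H'$, hence $H\hat R'\subset H'$; since $H'$ is prime (Theorem~\ref{th53}(1) for $R'$) and $f\notin H'$ (because $\nu'(f)<\infty$, equivalently $f\in R'\setminus\{0\}$ and $H'\cap R'=(0)$), the $f$-saturation $H^{\text{str}}=\bigcup_i(H\hat R':f^i)$ remains in $H'$. The long detour through misreadings of $H^{\text{str}}$ as $J^{\text{str}}$ is unnecessary --- the notation $H^{\text{str}}$ already signals that the strict transform is being applied to $H$, not to $J$ --- but once you land on the right interpretation your one-line saturation argument matches the paper's verbatim.
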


\begin{proof} Since $H\hat R'\subset H'$, we have
$H^{\text{str}}=\left(H\hat R'_f\right)\cap\hat R'\subset\left(H'\hat
  R'_f\right)\cap\hat R'=H'$, where the last equality holds because
$H'$ is a prime ideal of $\hat R'$, not containing $f$.
\end{proof}

Using Zariski's Main Theorem, it can be proved that $H^{\text{str}}$
is prime. Since this fact is not used in the sequel, we omit the proof.
\begin{corollary}\label{Corollary511} Let the notation and assumptions be as in
corollary \ref{Corollary510}. Then
\begin{equation}
\he\  H'\ge\he\  H.\label{tag519}
\end{equation}
In particular,
\begin{equation}
\dim\frac{\hat R'}{H'}\le\dim\frac{\hat R}H.\label{tag520}
\end{equation}
\end{corollary}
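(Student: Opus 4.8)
The plan is to derive the height inequality (\ref{tag519}) from the inclusion $H^{\text{str}}\subset H'$ of Corollary \ref{Corollary510}, by showing that passing to the strict transform cannot lower height, and then to obtain (\ref{tag520}) by recognising $\frac{\hat R'}{H^{\text{str}}}$ as a formal completion of a local blowing up of $\frac{\hat R}H$.

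For (\ref{tag519}), since $H^{\text{str}}\subset H'$ it is enough to show $\he\ H^{\text{str}}\ge\he\ H$. As $H^{\text{str}}=(H\hat R'_f)\cap\hat R'$ and $f\notin H^{\text{str}}$ (because $f\notin H'$), localising at $f$ gives $\he\ H^{\text{str}}=\he_{\hat R'_f}(H\hat R'_f)$, so one must compare heights along $\hat R\to\hat R'_f$. The key step is that this homomorphism is \emph{flat}. To see it, factor $\hat R\to\hat R'$ as in (\ref{eq:iota}), namely $\hat R\to(\hat R\otimes_RR')_N\to\hat R'$ with the second arrow a completion, hence flat; since $R'$ is a localisation of the blowing-up algebra $R[J/f]$, the ring $(\hat R\otimes_RR')_N$ is a localisation of $\hat R\otimes_RR[J/f]$, and, the latter being generated over $\hat R$ by the fractions $g/f$ $(g\in J)$, one checks $\left(\hat R\otimes_RR[J/f]\right)\left[\frac1f\right]=\hat R_f$; hence after inverting $f$ every ring in the factorisation is flat over $\hat R$, so $\hat R\to\hat R'_f$ is flat. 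Next, $(H\hat R'_f)\cap\hat R=H$: indeed $(H\hat R'_f)\cap\hat R=H^{\text{str}}\cap\hat R\subset H'\cap\hat R=H$ by Corollaries \ref{Corollary510} and \ref{Corollary59}, and the reverse inclusion is clear. Then going-down for the flat map $\hat R\to\hat R'_f$ shows each minimal prime $\mathfrak P$ of $H\hat R'_f$ lies over a minimal prime $\mathfrak q$ of $H$, and flatness gives $\he\ \mathfrak P=\dim(\hat R'_f)_{\mathfrak P}\ge\dim\hat R_{\mathfrak q}\ge\he\ H$; minimising over $\mathfrak P$ yields $\he\ H^{\text{str}}\ge\he\ H$.

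For (\ref{tag520}), put $A=\frac{\hat R}H$, a noetherian local domain by Theorem \ref{th53}, and let $\bar J,\bar f$ be the images in $A$ of $J,f$. Reducing the factorisation (\ref{eq:iota}) modulo $H$ and using the exactness of formal completion and its commutation with quotients, one identifies $\frac{\hat R'}{H^{\text{str}}}$ with the formal completion of a localisation $A[\bar J/\bar f]_{\mathfrak q}$ of the blowing-up algebra $A[\bar J/\bar f]$, with $\mathfrak q\cap A=\mathfrak m_A$. Hence $\dim\frac{\hat R'}{H^{\text{str}}}=\dim A[\bar J/\bar f]_{\mathfrak q}\le\dim A=\dim\frac{\hat R}H$, the inequality being the dimension inequality applied to the finitely generated birational $A$-algebra $A[\bar J/\bar f]$ at $\mathfrak q$; and $H^{\text{str}}\subset H'$ gives $\dim\frac{\hat R'}{H'}\le\dim\frac{\hat R'}{H^{\text{str}}}$, whence (\ref{tag520}). (Alternatively, if $\hat R$ and $\hat R'$ are equidimensional, for instance under an excellence hypothesis on $R$, then (\ref{tag520}) is immediate from (\ref{tag519}) together with $\dim\hat R'\le\dim\hat R$ and the dimension formula in $\hat R$ and $\hat R'$.)

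I expect the main obstacle to be making the two identifications that go through Lemma \ref{factor} precise: the flatness of $\hat R\to\hat R'_f$ and the description of $\frac{\hat R'}{H^{\text{str}}}$ as a completed local blowing up of $\frac{\hat R}H$. Both reduce, after inverting $f$ or reducing modulo $H$, to the fact that $\hat R\otimes_RR[J/f]$ becomes $\hat R_f$ once $f$ is inverted, combined with the flatness of completion and its commutation with quotients; the valuation-theoretic content then enters only through Corollaries \ref{Corollary59} and \ref{Corollary510}.
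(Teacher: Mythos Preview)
Your argument is correct. For (\ref{tag519}) both you and the paper exploit the same core observation, namely that after inverting $f$ the intermediate ring $\bar R=(\hat R\otimes_RR')_N$ becomes $\hat R_f$; the paper packages this by setting $\bar H=H'\cap\bar R$, showing $\he\ H=\he\ \bar H$ via $\hat R_f=\bar R_f$, and then applying going-down for the faithfully flat completion $\bar R\to\hat R'$, whereas you route through $H^{\text{str}}$ and apply going-down for the flat map $\hat R\to\hat R'_f$. These are two organisations of the same idea. One small point: to conclude $\he\ H^{\text{str}}=\he_{\hat R'_f}(H\hat R'_f)$ you need that $f$ avoids every \emph{minimal} prime of $H^{\text{str}}$, not merely $f\notin H^{\text{str}}$; this follows because $H^{\text{str}}$ is $f$-saturated, so no associated prime of $\hat R'/H^{\text{str}}$ contains $f$.

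For (\ref{tag520}) the paper takes exactly your parenthetical alternative: it uses $\dim R'\le\dim R$ together with the fact that complete local rings are catenarian, which tacitly relies on $\hat R$ and $\hat R'$ being equidimensional (this holds under the excellence hypothesis running throughout the paper, since a universally catenary local domain is formally equidimensional). Your primary argument --- identifying $\hat R'/H^{\text{str}}$ with the completion of a localisation of $A[\bar J/\bar f]$ and invoking the dimension inequality for the birational finite-type extension $A\hookrightarrow A[\bar J/\bar f]$ --- is a genuinely different route that does not need equidimensionality, at the cost of the extra bookkeeping you flag (commuting completion with quotients and with $f$-saturation, both of which go through by flatness of completion over the noetherian local ring $\bar R$). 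So your main argument is slightly more general, while the paper's is shorter given its standing hypotheses.
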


\begin{proof} Let $\bar R:=\left(\hat
  R\otimes_RR'\right)_{M'\hat R'\cap(\hat R\otimes_RR')}$. Let $\phi$
denote the natural local homomorphism
$$
\bar R\rightarrow\hat R'.
$$
Let $\bar H:=H'\cap\bar R$. Now, take $f\in J$ such that
$\nu(f)=\nu(J)$. Then $f\notin H'$ and, in particular, $f\notin\bar
H$. Since $R'_f\cong R_f$, we have $\hat R_f=\bar R_f$. In view of
Corollary \ref{Corollary59}, we obtain $H\hat R_f\cong\bar H\bar R_f$,
so
\begin{equation}
\he \ H=\he\ \bar H.\label{tag521}
\end{equation}
Now, $\bar R$ is a local noetherian ring, whose formal completion is
$\hat R'$. Hence $\phi$ is faithfully flat and therefore satisfies the going
down theorem. Thus we have $\he\ H'\ge\he\ \bar H$. Combined with
(\ref{tag521}), this proves (\ref{tag519}). As for the last statement
of the Corollary, it follows from the well known fact that dimension
does not increase under blowing up (\cite{Spi1}, Lemma 2.2): we have
$\dim\ R'\le\dim\ R$, hence
$$
\dim\ \hat R'=\dim R'\le\dim\ R=\dim\ \hat R,
$$
and (\ref{tag520}) follows from (\ref{tag519}) and from the fact that
complete local rings are catenarian.
\end{proof}

It may well happen that the containment of corollary \ref{Corollary510} and the inequality
in (\ref{tag519}) are strict. The possibility of strict containement in corollary \ref{Corollary510}
is related to the existence of subanalytic functions, which are not
analytic. We illustrate this statement by an example in which
$H^{\text{str}}\subsetneqq H'$ and $\he\ H<\he\ H'$.

\begin{example} Let $k$ be a field and let
$$
\begin{array}{rl}
R&=k[x,y,z]_{(x,y,z)},\\
R'&=k[x',y',z']_{(x',y',z')},
\end{array}
$$
where $x'=x$, $y'=\frac yx$ and $z'=z$. We have $K=k(x,y,z)$, $\hat
R=k[[x,y,z]]$, $\hat R'=k[[x',y',z']]$. Let $t_1,t_2$ be auxiliary variables
and let $\sum\limits_{i=1}^\infty c_it_1^i$ (with $c_i\in k$) be an element of
$k[[t_1]]$, transcendental over $k(t_1)$. Let $\theta$ denote the valuation,
centered at $k[[t_1,t_2]]$, defined by $\theta(t_1)=1$, $\theta(t_2)=\sqrt2$
(the value group of $\theta$ is the additive subgroup of $\mathbf R$, generated
by 1 and $\sqrt2$). Let $\iota:R'\hookrightarrow k[[t_1,t_2]]$ denote the
injective map defined by $\iota(x')=t_2$, $\iota(y')=t_1$,
$\iota(z')=\sum\limits_{i=1}^\infty c_it_1^i$. Let $\nu$ denote the
restriction of $\theta$ to $K$, where we view $K$ as a subfield of
$k((t_1,t_2))$ via $\iota$. Let $\h=\nu(R\setminus\{0\})$;
$\h'=\nu(R'\setminus\{0\})$. For $\beta\in\h'$, $P'_\beta$ is generated by all
the monomials of the form ${x'}^\alpha{y'}^\gamma$ such that
$\sqrt2\alpha+\gamma\ge\beta$, together with
$z'-\sum\limits_{j=1}^ic_j{y'}^j$, where $i$ is the greatest non-negative
integer such that $i<\beta$.

Let $w':=z'-\sum\limits_{i=1}^\infty c_i{y'}^i$. Then $H'=(w')$, but
$H=H'\cap\hat R=(0)$, so that $H^{\text{str}}=(0)\subsetneqq H'$ and
$\he\ H=0<1=\he\ H'$.
\end{example}
Recall the following basic result of the theory of G-rings:
\begin{proposition}\label{Corollary57} Assume that $R$ is a reduced G-ring. Then $\hat
R_H$ is a regular local ring.
\end{proposition}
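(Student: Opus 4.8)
The plan is to show that $\hat R_H$ is a localization of a complete local ring along the generic point of its reduced locus in a suitable sense, and then invoke the defining property of $G$-rings. Recall that $H$ is the implicit prime ideal of $\hat R$ associated to the rank one valuation $\nu$, and by Theorem \ref{th53} it satisfies $H\cap R=(0)$. First I would observe that since $R$ is a reduced $G$-ring, the completion homomorphism $R\to\hat R$ is regular; in particular its fibers are regular, and $\hat R$ is reduced. The key point is that $H$ is a prime of $\hat R$ lying over the prime $(0)$ of $R$, so $\hat R_H$ is, up to a harmless localization, a ring appearing in the fiber of $R\to\hat R$ over the generic point $(0)$ of $\mathop{\rm Spec} R$: precisely, $\hat R_H$ is a localization of $\hat R\otimes_R K$ at the prime corresponding to $H$, where $K$ is the field of fractions of $R$.

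The main step is therefore the following: by regularity of $R\to\hat R$, the fiber ring $\hat R\otimes_R\kappa((0))=\hat R\otimes_R K$ is a regular (in particular reduced, geometrically regular) ring over the field $K$. Hence every localization of it at a prime ideal is a regular local ring. Since $H\cap R=(0)$, the ideal $H$ corresponds to a prime ideal $\bar H$ of $\hat R\otimes_R K$, and the natural map $\hat R\otimes_R K\to\hat R_H$ identifies $\hat R_H$ with the localization $(\hat R\otimes_R K)_{\bar H}$ (because inverting all nonzero elements of $R$ is already accomplished inside $\hat R_H$, as those elements lie outside $H$). Therefore $\hat R_H=(\hat R\otimes_R K)_{\bar H}$ is a regular local ring, which is the assertion.

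The step I expect to be the main obstacle is the verification that $\hat R_H$ is genuinely the localization of the fiber ring $\hat R\otimes_R K$ at $\bar H$, rather than merely a further quotient or localization thereof. Here one uses that $R\to\hat R$ is flat (indeed faithfully flat), so that $\hat R$ is $R$-torsion free and $\hat R\otimes_R K=\hat R\otimes_R(R\setminus\{0\})^{-1}R=(R\setminus\{0\})^{-1}\hat R$ is just the localization of $\hat R$ obtained by inverting the nonzero elements of $R$; since these elements are all outside $H$ (because $H\cap R=(0)$), one has $\hat R_H=\left((R\setminus\{0\})^{-1}\hat R\right)_{\bar H}=(\hat R\otimes_R K)_{\bar H}$. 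Once this identification is in place, regularity of the fiber — which is exactly what the $G$-ring hypothesis on the reduced ring $R$ provides — finishes the argument. A minor point to check along the way is that $\bar H$ is indeed a prime ideal of $\hat R\otimes_R K$ and not the unit ideal, which follows from $H\cap R=(0)$ together with primality of $H$ (Theorem \ref{th53}).
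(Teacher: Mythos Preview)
Your proof is correct and follows essentially the same approach as the paper: use $H\cap R=(0)$ to identify $\hat R_H$ with a localization of the generic fiber $\hat R\otimes_R K$, and then invoke the regularity of that fiber coming from the $G$-ring hypothesis. Your version is more explicit about the identification $\hat R_H\cong(\hat R\otimes_R K)_{\bar H}$, which the paper leaves implicit.
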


\begin{proof}  Let $K=R_{\P_\infty}=\kappa(\P_\infty)$ (here we are using that $R$ is reduced and that $\P_\infty$ is a minimal
prime of $R$). By definition of G-ring, the map
$R\rightarrow\hat R$ is a regular homomorphism. Then by (\ref{tag53}) $\hat
R_H$ is geometrically regular over $K$, hence regular.
\end{proof}
\begin{remark} Having extended in a unique manner the valuation $\nu$ to
  a valuation $\hat\nu_-$ of $\frac{\hat R}{H}$, we see that if $R$ is
  a G-ring, by Proposition \ref{Corollary57} there is a unique minimal
  prime $\hat\P_\infty$ of $\hat R$ contained in $H$, corresponding to
  the ideal $(0)$ in $\hat R_H$. Since $H\cap R=(0)$, we have the
  equality $\hat\P_\infty\cap R=(0)$. Choosing a valuation $\mu$ of
  the fraction field of $\frac{\hat R_H}{\hat\P_\infty\hat R_H}$
  centered at $\frac{\hat R_H}{\hat\P_\infty\hat R_H}$ whose value
  group $\Psi$ is a free abelian group produces a composed valuation
  $\hat\nu_-\circ\mu$ on $\frac{\hat R}{\hat\P_\infty}$ with value group
  $\Psi\bigoplus\Gamma$ ordered lexicographically, as
  follows:\par\noindent Given $x\in \frac{\hat R}{\hat\P_\infty}$, let
  $\psi=\mu(x)$ and blow up in $R$ the ideal $\P_\psi$ along our
  original valuation, obtaining a local ring $R'$. According to what
  we have seen so far in this section, in its completion $\hat R'$ we
  can write $x=ye$ with $\mu(e)=\psi$ and $y\in \hat R'\setminus
  H'$. The valuation $\nu$ on $R'$ extends uniquely to a valuation of
  $\frac{\hat R'}{H'}$, which we may still denote by $\hat\nu_-$
  because it induces $\hat\nu_-$ on $\frac{\hat R}{H}$. Let us
  consider the image $\overline y$ of $y$ in $\frac{\hat
    R'}{H'}$. Setting $(\hat\nu_-\circ\mu)(x)=\psi\bigoplus
  \hat\nu_-(\overline y)\in \Psi\bigoplus\Gamma$ determines a
  valuation of $\frac{\hat R}{\hat\P_\infty}$ as
  required.\par

If we drop the assumption that $\Psi$ is a free abelian group, the
above construction still works, but the value group $\hat\Gamma$ of
$\hat\nu_-\circ\mu$ need not be isomorphic to the direct sum
$\Psi\bigoplus\Gamma$. Rather, we have an exact sequence
$0\rightarrow\Gamma\rightarrow\bar\Gamma\rightarrow\Psi\rightarrow0$,
which need not, in general, be split;  see {\rm\cite{V1}, Proposition 4.3}.

\noindent In the sequel we shall reduce to the case where $\hat R$ is an
integral domain, so that $\hat\P_\infty=(0)$ and we will have
constructed a valuation of $\hat R$.
\end{remark}

\section{Definition and first properties of implicit ideals.}
\label{basics}

Let the notation be as above. Before plunging into technical details,
we would like to give a brief and informal overview of our
constructions and the motivation for them. Above we recalled the well
known fact that if $rk\ \nu=r$ then for every $\nu$-extension
$R\rightarrow R'$ the valuation $\nu$ canonically determines a flag
(\ref{eq:treechain'}) of $r$ subschemes of $\spec\ R'$. This paper
shows the existence of subschemes of $\spec\ \hat R$, determined by
$\nu$, which are equally canonical and which become explicit only
after completion. To see what they are, first of all note that the
ideal $P'_l\hat R'$, for $R'\in\cal{T}$ and $0\le\ell\le r-1$, need
not in general be prime (although it is prime whenever $R'$ is
henselian). Another way of saying the same thing is that the ring
$\frac{R'}{P'_\ell}$ need not be analytically irreducible in
general. However, we will see in \S\ref{prime}
(resp. \S\ref{henselization}) that the valuation $\nu$ picks out in a
canonical way one of the minimal primes of $P'_l\hat R'$
(resp. $P'_l\tilde R'$). We call this minimal prime $H'_{2\ell}$ for
reasons which will become apparent later. By the flatness of
completion (resp. henselization), we have $H'_{2\ell}\cap
R'=P'_\ell$. We will show that the ideals $H'_{2\ell}$ form a tree.

Let
\begin{equation}
(0)=\Delta_r\subsetneqq\Delta_{r-1}\subsetneqq\dots\subsetneqq\Delta_0=
\Gamma\label{eq:isolated1}
\end{equation}
be the isolated subgroups of $\Gamma$. There are other ideals of $\hat
R$, apart from the $H_{2\ell}$, canonically associated to $\nu$, whose
intersection with $R$ equals $P_\ell$, for example, the ideal
$\bigcap\limits_{\beta\in\Delta_\ell}\P_\beta\hat R$. The same is true
of the even larger ideal
\begin{equation}
H_{2\ell+1}=\bigcap\limits_{\beta\in\Delta_\ell}
\left(\left(\lim\limits_{\overset\longrightarrow{R'}}{\cal
      P}'_\beta\hat R'\right)\bigcap\hat R\right),\label{eq:defin2}
\end{equation}
(that $H_{2\ell+1}\cap R=P_\ell$ is easy to see and will be shown
later in this section, in Proposition \ref{contracts}). While the
examples below show that the ideal
$\bigcap\limits_{\beta\in\Delta_\ell}\P_\beta\hat R$ need not, in
general, be prime, the ideal $H_{2\ell+1}$ always is (this is the main
theorem of this paper; it will be proved in \S\ref{prime}). The ideal
$H_{2\ell+1}$ contains $H_{2\ell}$ but is not, in general equal to
it. To summarize, we will show that the valuation $\nu$ picks out in a
canonical way a generic point $H_{2\ell}$ of the formal fiber over
$P_\ell$ and also another point $H_{2\ell+1}$ in the formal fiber,
which is a specialization of $H_{2\ell}$.

The main technique used to prove these results is to to analyze the
set of zero divisors of $\frac{{R'}^\dag}{P'_\ell{R'}^\dag}$ (where
$R^\dag$ stands for either $\hat R$, $\tilde R$, or a finite type
\'etale extension $R^e$ of $R$), as follows. We show that the
reducibility of $\frac{{R'}^\dag}{P'_\ell{R'}^\dag}$ is related to the
existence of non-trivial algebraic extensions of $\kappa(P_\ell)$
inside $\kappa(P_\ell)\otimes_RR^\dag$. More precisely, in the next
section we define $R$ to be \textbf{stable} if $\frac{R^\dag}{P_{\ell
    +1}R^\dag}$ is a domain and there does not exist a non-trivial
algebraic extension of $\kappa(P_{\ell+1})$ which embeds both into
$\kappa(P_{\ell +1})\otimes_RR^\dag$ and into $\kappa(P'_{\ell+1})$
for some $R'\in\mathcal{T}$. We show that if $R$ is stable then
$\frac{{R'}^\dag}{P'_{\ell+1}{R'}^\dag}$ is a domain for all
$R'\in\cal T$. For $\overline\beta\in\frac\Gamma{\Delta_{\ell+1}}$,
let
\begin{equation}
\P_{\overline\beta}=\left\{x\in R\ \left|\
\nu(x)\mod\Delta_{\ell+1}\ge\overline\beta\right.\right\}\label{eq:pbetamodl}
\end{equation}
If $\Phi$ denotes the semigroup $\nu(R\setminus\{0\})\subset\Gamma$,
which is well ordered since $R$ is noetherian (see [ZS], Appendix 4,
Proposition 2), and
$$
\beta(\ell)=\min\{\gamma\in\Phi\ |\ \beta-\gamma\in\Delta_{\ell+1}\}
$$
then $\P_{\overline\beta}=\P_{\beta(l)}$.\par\noindent
 We have the inclusions
$$
P_\ell\subset \P_{\overline\beta}\subset P_{\ell+1},
$$
and $\P_{\overline\beta}$ is the inverse image in $R$ by the canonical map
$R\to \frac{R}{P_\ell}$ of a valuation ideal $\overline
\P_{\overline\beta}\subset \frac{R}{P_\ell}$ for the rank one
valuation $\frac{R}{P_\ell}\setminus\{0\}\to
\frac{\Delta_\ell}{\Delta_{\ell+1}}$ induced by $\nu_{\ell+1}$.\par
We will deduce from the above that if $R$ is stable then for each
$\overline\beta\in\frac{\Delta_\ell}{\Delta_{\ell+1}}$ and each
$\nu$-extension $R\rightarrow R'$ we have
$\P'_{\overline\beta}{R'}^\dag\cap R^\dag=\P_{\overline\beta} R^\dag$,
which gives us a very good control of the limit in the definition of
$H_{2\ell+1}$ and of the $\nu$-extensions $R'$ for which the limit is attained.

We then show, separately in the cases when $R^\dag=\tilde R$
(\S\ref{henselization}) and $R^\dag=\hat R$ (\S\ref{prime}), that
there always exists a stable $\nu$-extension $R'\in\cal{T}$.

We are now ready to go into details, after giving several examples of
implicit ideals and the phenomena discussed above.

Let $0\le\ell\le r$. We define our main object of study, the
$(2\ell+1)$-st implicit prime ideal $H_{2\ell+1}\subset R^\dag$, by
\begin{equation}
H_{2\ell+1}=\bigcap\limits_{\beta\in\Delta_\ell}
\left(\left(\lim\limits_{\overset\longrightarrow{R'}}{\cal P}'_\beta
    {R'}^\dag\right)\bigcap R^\dag\right),\label{eq:defin1}
\end{equation}
where $R'$ ranges over $\mathcal{T}$. As usual, we think of
(\ref{eq:defin1}) as a tree equation: if we replace $R$ by any other
$R''\in{\cal T}$ in (\ref{eq:defin1}), it defines the corresponding
ideal $H''_{2\ell +1}\subset\hat R''^\dag$. Note that for $\ell=r$
(\ref{eq:defin1}) reduces to
$$
H_{2r+1}=mR^\dag.
$$
We start by giving several examples of the ideals $H'_i$ (and also of
$\tilde H'_i$, which will appear a little later in the paper).

\begin{example}{\label{Example31}} Let $R=k[x,y,z]_{(x,y,z)}$. Let $\nu$ be the
valuation with value group $\Gamma=\mathbf Z^2_{lex}$, defined as
follows. Take a transcendental power series $\sum\limits_{j=1}^\infty
c_ju^j$  in a variable $u$ over $k$. Consider the homomorphism
$R\hookrightarrow k[[u,v]]$ which sends $x$ to $v$, $y$ to $u$ and $z$
to $\sum\limits_{j=1}^\infty c_ju^j$. Consider the valuation $\nu$,
centered at $k[[u,v]]$, defined by $\nu(v)=(0,1)$ and $\nu(u)=(1,0)$;
its restriction to $R$ will also be denoted by $\nu$, by abuse of
notation. Let $R_\nu$ denote the valuation ring of $\nu$ in $k(x,y,z)$
and let $\mathcal{T}$ be the tree consisting of all the local rings
$R'$ essentially of finite type over $R$, birationally dominated by
$R_\nu$. Let ${}^\dag=\hat{\ }$ denote the operation of formal
completion. Given $\beta=(a,b)\in\mathbf Z^2_{lex}$, we have
${\P}_\beta=x^b\left(y^a,z-c_1y-\dots-c_{a-1}y^{a-1}\right)$. The
first isolated subgroup $\Delta_1=(0)\oplus\mathbf Z$. Then
$\bigcap\limits_{\beta\in(0)\oplus\mathbf Z}\left({\cal P}_\beta\hat
  R\right)=(y,z)$ and
$\bigcap\limits_{\beta\in\Gamma=\Delta_0}\left({\cal P}_\beta\hat
  R\right)=\left(z-\sum\limits_{j=1}^\infty c_jy^j\right)$. It is not
hard to show that for any $R'\in\mathcal{T}$ we have
$H'_1=\left(z-\sum\limits_{j=1}^\infty c_jy^j\right)\hat R'$ and
that $H_3=(y,z)\hat R$. It will follow from the general theory
developed in \S\ref{extensions} that $\nu$ admits a unique extension
$\hat\nu$ to $\lim\limits_{\overset\longrightarrow{R'}}\hat R'$.
This extension has value group $\hat\Gamma=\mathbf Z^3_{lex}$ and is
defined by $\hat\nu(x)=(0,0,1)$, $\hat\nu(y)=(0,1,0)$ and
$\hat\nu\left(z-\sum\limits_{j=1}^\infty c_jy^j\right)=(1,0,0)$. For
each $R'\in\mathcal{T}$ the ideal $H'_1$ is the prime valuation
ideal corresponding to the isolated subgroup $(0)\oplus\mathbf
Z^2_{lex}$ of $\hat\Gamma$ (that is, the ideal whose elements have
values outside of $(0)\oplus\mathbf Z^2_{lex}$) while $H'_3$ is the
prime valuation ideal corresponding to the isolated subgroup
$(0)\oplus(0)\oplus\mathbf Z$.
\end{example}

\begin{example}{\label{Example32}} Let $R=k[x,y,z]_{(x,y,z)}$, $\Gamma=\mathbf
Z^2_{lex}$, the power series $\sum\limits_{j=1}^\infty c_ju^j$ and the
operation ${}^\dag=\hat{\ }$ be as in the previous example. This time,
let $\nu$ be defined as follows. Consider the homomorphism
$R\hookrightarrow k[[u,v]]$ which sends $x$ to $u$, $y$ to
$\sum\limits_{j=1}^\infty c_ju^j$ and $z$ to $v$. Consider the
valuation $\nu$, centered at $k[[u,v]]$, defined by $\nu(v)=(1,0)$ and
$\nu(u)=(0,1)$; its restriction to $R$ will be also denoted by
$\nu$. Let $R_\nu$ denote the valuation ring of $\nu$ in $k(x,y,z)$
and let $\mathcal{T}$ be the tree consisting of all the local rings
$R'$ essentially of finite type over $R$, birationally dominated by
$R_\nu$. Given $\beta=(a,b)\in\mathbf Z^2_{lex}$, we have
$\P_\beta=z^a\left(x^b,y-c_1x-\dots-c_{b-1}x^{b-1}\right)$. The first
isolated subgroup $\Delta_1=(0)\oplus\mathbf Z$. Then
$\bigcap\limits_{\beta\in(0)\oplus\mathbf Z}\left({\P}_\beta\hat
  R\right)=\left(y-\sum\limits_{j=1}^\infty c_jx^j,z\right)$ and
$\bigcap\limits_{\beta\in\Gamma=\Delta_0}\left({\cal P}_\beta\hat
  R\right)=(0)$. It is not hard to show that for any $R'\in\cal{T}$ we
have $H'_1=(0)$ and that $H_3=\left(y-\sum\limits_{j=1}^\infty
  c_jx^j,z\right)\hat R'$. In this case, the extension $\hat\nu$ to
$\lim\limits_{\overset\longrightarrow{R'}}\hat R'$ is not unique.
Indeed, one possible extension $\hat\nu^{(1)}$ has value group
$\hat\Gamma=\mathbf Z^3_{lex}$ and is defined by
$\hat\nu^{(1)}(x)=(0,0,1)$,
$\hat\nu^{(1)}\left(y-\sum\limits_{j=1}^\infty
c_jx^j\right)=(0,1,0)$ and $\hat\nu^{(1)}(z)=(1,0,0)$. In this case,
for any $R'\in\cal{T}$ the ideal $H'_3$ is the prime valuation ideal
corresponding to the isolated subgroup $(0)\oplus(0)\oplus\mathbf Z$
of $\hat\Gamma$.

Another extension $\hat\nu^{(2)}$ of $\nu$ is defined by
$\hat\nu^{(2)}(x)=(0,0,1)$,
$\hat\nu^{(2)}\left(y-\sum\limits_{j=1}^\infty
c_jx^j\right)=(1,0,0)$ and $\hat\nu^{(2)}(z)=(0,1,0)$. In this case,
the tree of ideals corresponding to the isolated subgroup
$(0)\oplus(0)\oplus\mathbf Z$ is $H'_3$ (exactly the same as for
$\hat\nu^{(1)}$) while that corresponding to $(0)\oplus\mathbf
Z^2_{lex}$ is $\tilde H'_1=\left(y-\sum\limits_{j=1}^\infty
c_jx^j\right)$. The tree $\tilde H'_1$ of prime
$\hat\nu^{(2)}$-ideals determines the extension $\hat\nu^{(2)}$
completely.
\end{example}

The following two examples illustrate the need for taking the limit
over the tree $\mathcal{T}$.

\begin{example}{\label{Example33}} Let us consider the local domain
$S=\frac{k[x,y]_{(x,y)}}{(y^2-x^2-x^3)}$. There are two distinct valuations
centered in $(x,y)$. Let $a_i\in k,\ i\geq 2$ be such that
$$
\left(y+x+\sum_{i\geq 2}a_ix^i\right)\left(y-x-\sum_{i\geq
2}a_ix^i\right)=y^2-x^2-x^3.
$$
We shall denote by $\nu_+$ the rank one discrete valuation defined by
$$
\nu_+(x)=\nu_+(y)=1,
$$
$$
\nu_+(y+x)=2,
$$
$$
\nu_+\left(y+x+\sum_{i= 2}^{b-1}a_ix^i\right)=b.
$$
Now let $R=\frac{k[x,y,z]_{(x,y,z)}}{(y^2-x^2-x^3)}$. Let $\Gamma =\mathbf Z^2$
with the lexicographical ordering. Let $\nu$ be the composite valuation
of the $(z)$-adic one with $\nu_+$, centered in $\frac R{(z)}$. The
point of this example is to show that
$$
H^*_{2\ell+1}=\bigcap_{\beta\in\Delta_{\ell}}\P_{\beta}{\hat R}
$$
does not work as the definition of the $(2\ell+1)$-st implicit prime
ideal because the resulting ideal $H^*_{2\ell+1}$ is not
prime. Indeed, as $\P_{(a,0)}=(z^a)$, we have
$$
H_1^*=\bigcap_{(a,b)\in\Z^2}\P_{(a,b)}{\hat R}=(0).
$$
Let $f=y+x+\sum\limits_{i\geq 2}a_ix^i,g=y-x-\sum\limits_{i\geq
  2}a_ix^i\in\hat R$. Clearly $f,g\notin H^*_1=(0)$, but $f\cdot
g=0$, so the ideal $H^*_1$ is not prime.

One might be tempted (as we were) to correct this problem by
localizing at $H^*_{2\ell+3}$. Indeed, if we take the new definition
of $H^*_{2\ell+1}$ to be, recursively in the descending order of
$\ell$,
\begin{equation}
H^*_{2\ell +1}=\left(\bigcap_{\beta\in\Delta_{\ell}}\P_{\beta}{\hat R}_{H^*_{2\ell
+3}}\right)\cap{\hat R},\label{eq:localization}
\end{equation}
then in the present example the resulting ideals $H^*_3=(z,f)$ and
$H^*_1=(f)$ are prime. However, the next example shows that the
definition (\ref{eq:localization}) also does not, in general, give
rise to prime ideals.
\end{example}

\begin{example}{\label{Example34}} Let
$R=\frac{k[x,y,z]_{(x,y,z)}}{(z^2-y^2(1+x))}$. Let $\Gamma=\mathbf
Z^2$ with the lexicographical ordering. Let $t$ be an independent
variable and let $\nu$ be the valuation, centered in $R$, induced by
the $t$-adic valuation of $k\left[\left[t^\Gamma\right]\right]$ under
the injective homomorphism $\iota:R\hookrightarrow
k\left[\left[t^\Gamma\right]\right]$, defined by $\iota(x)=t^{(0,1)}$,
$\iota(y)=t^{(1,0)}$ and $\iota(z)=t^{(1,0)}\sqrt{1+t^{(0,1)}}$. The
prime $\nu$-ideals of $R$ are $(0)\subsetneqq P_1\subsetneqq m$, with
$P_1=(y,z)$. We have $\bigcap\limits_{\beta\in\Delta_1}\P_\beta\hat
R=(y,z)\hat R=P_1\hat R$ and
$\bigcap\limits_{\beta\in\Gamma}\P_\beta\hat
R_{(y,z)}=\bigcap\limits_{\beta\in\Gamma}\P_\beta\hat R=(0)$. Note
that the ideal $(0)$ is not prime in $\hat R$. Now, let
$R'=R\left[\frac zy\right]_{m'}$, where $m'=\left(x,y,\frac
  zy-1\right)$ is the center of $\nu$ in $R\left[\frac zy\right]$. We
have $z-y\sqrt{1+x}\in\hat R\setminus\P_{(2,0)}\hat R$. On the other
hand, $z-y\sqrt{1+x}=y\left(\frac zy-\sqrt{1+x}\right)=0$ in $\hat
R'$; in particular,
$z-y\sqrt{1+x}\in\bigcap\limits_{\beta\in\Gamma}\P'_\beta\hat
R'$. Thus this example also shows that the ideals $\P_\beta\hat R$,
$\bigcap\limits_{\beta\in\Delta_\ell}\P_\beta\hat R$ and
$\bigcap\limits_{\beta\in\Delta_\ell}\P_\beta\hat R_{H_{2\ell+3}}$ do
not behave well under blowing up.
\end{example}

Note that both Examples \ref{Example33} and \ref{Example34} occur not only for the completion $\hat
R$ but also for the henselization $\tilde R$.

We come back to the general theory of implicit ideals.
\begin{proposition}\label{contracts} We have $H_{2\ell+1}\cap R=P_\ell$.
\end{proposition}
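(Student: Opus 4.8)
The plan is to contract the defining intersection (\ref{eq:defin1}) to $R$ one term at a time. The key local computation is that for every $\beta\in\Gamma$ and every $R'\in\mathcal{T}$ one has $\P'_\beta{R'}^\dag\cap R=\P_\beta$. Indeed, the homomorphism $R'\to{R'}^\dag$ is faithfully flat in each of the cases (\ref{eq:hatR})--(\ref{eq:Re}), so $\P'_\beta{R'}^\dag\cap R'=\P'_\beta$; and since $R\subset R'\subset R_\nu$, we have $\P'_\beta\cap R=\{x\in R\mid\nu(x)\ge\beta\}=\P_\beta$. Combining the two gives the claim, whose right-hand side is independent of $R'$.

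Next I would pass to the direct limit over $\mathcal{T}$. By Lemma \ref{factor}(2) every transition map ${R'}^\dag\to{R''}^\dag$ is injective, so $\lim_\to{R'}^\dag$ may be identified with the directed union of the ${R'}^\dag$; likewise $\lim_\to\P'_\beta{R'}^\dag=\bigcup_{R'}\P'_\beta{R'}^\dag$, the family being directed because $\P_\beta(R')\subseteq\P_\beta(R'')$ whenever there is an arrow $R'\to R''$. Since intersection commutes with unions,
\[
\left(\lim_\to\P'_\beta{R'}^\dag\right)\cap R=\bigcup_{R'}\left(\P'_\beta{R'}^\dag\cap R\right)=\P_\beta,
\]
and, as $R\subseteq R^\dag$, also $\left(\left(\lim_\to\P'_\beta{R'}^\dag\right)\cap R^\dag\right)\cap R=\P_\beta$. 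Intersecting over $\beta\in\Delta_\ell$ in the definition of $H_{2\ell+1}$ then yields
\[
H_{2\ell+1}\cap R=\bigcap_{\beta\in\Delta_\ell}\P_\beta.
\]

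It remains to identify $\bigcap_{\beta\in\Delta_\ell}\P_\beta$ with the $\ell$-th prime valuation ideal $P_\ell$, i.e.\ the contraction to $R$ of the prime $\mathbf{m}_\ell$ of $R_\nu$. For $\ell=r$ this is immediate, since $H_{2r+1}=mR^\dag$ and $mR^\dag\cap R=m=P_r$ by faithful flatness of $R\to R^\dag$; so assume $0\le\ell\le r-1$. Let $x\in R\setminus\{0\}$. If $\nu(x)\in\Delta_\ell$, pick a positive $\delta\in\Delta_\ell$ (possible because $\Delta_\ell\neq(0)$); then $\nu(x)+\delta\in\Delta_\ell$ while $\nu(x)<\nu(x)+\delta$, so $x\notin\P_{\nu(x)+\delta}$ and hence $x\notin\bigcap_{\beta\in\Delta_\ell}\P_\beta$. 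Conversely, if $\nu(x)\notin\Delta_\ell$ then, using that $\Delta_\ell$ is convex and $\nu(x)\ge0$, one checks that $\nu(x)>\beta$ for every $\beta\in\Delta_\ell$, so $x\in\P_\beta$ for all such $\beta$. Thus $\bigcap_{\beta\in\Delta_\ell}\P_\beta=\{x\in R\mid\nu(x)\notin\Delta_\ell\}\cup\{0\}$, which is exactly $P_\ell$ by the standard description of $\mathbf{m}_\ell$ as the prime of $R_\nu$ attached to the isolated subgroup $\Delta_\ell$.

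The argument is essentially bookkeeping; the only point that deserves care is the interchange of the intersection over $\Delta_\ell$ (and of the contraction $\cap R$) with the direct limit, which is legitimate precisely because Lemma \ref{factor} makes $\{{R'}^\dag\}$ a directed union rather than an arbitrary colimit. Everything else reduces to faithful flatness of $R'\to{R'}^\dag$ and the elementary convexity properties of the isolated subgroups $\Delta_\ell$.
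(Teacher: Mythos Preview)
Your proof is correct and follows essentially the same approach as the paper: both rely on faithful flatness of $R'\to{R'}^\dag$ to contract $\P'_\beta{R'}^\dag$ back to $\P'_\beta$, the elementary fact that $\P'_\beta\cap R=\P_\beta$, and the convexity of the isolated subgroup $\Delta_\ell$ to identify $\bigcap_{\beta\in\Delta_\ell}\P_\beta$ with $P_\ell$. The paper argues the two inclusions $P_\ell\subseteq H_{2\ell+1}\cap R$ and its reverse directly, whereas you first compute $H_{2\ell+1}\cap R=\bigcap_{\beta\in\Delta_\ell}\P_\beta$ and then identify this with $P_\ell$; your version is also more explicit about why the direct limit over $\mathcal{T}$ causes no trouble, but the underlying ideas are identical.
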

\begin{proof} Recall that $P_\ell=\left\{x\in R\ \left|\ \
    \nu(x)\notin \Delta_\ell\right.\right\}$. If $x\in P_\ell$ then,
since $\Delta_\ell$ is an isolated subgroup, we have $x\in {\cal
  P}_\beta$ for all $\beta\in \Delta_\ell$. The same inclusion holds
for the same reason in all extensions $R'\subset R_\nu$ of $R$, and
this implies the inclusion $P_\ell\subseteq H_{2\ell+1}\cap R$. Now
let $x$ be in $H_{2\ell+1}\cap R$ and assume $x\notin P_\ell$. Then
there is a $\beta\in \Delta_\ell$ such that $x\notin{\cal
  P}_\beta$. By faithful flatness of $R^\dag$ over $R$ we have ${\cal
  P}_\beta R^\dag\cap R={\cal P}_\beta$. This implies that
$x\notin{\cal P}_\beta R^\dag$, and the same argument holds in all the
extensions $R'\in\mathcal{T}$, so $x$ cannot be in $H_{2\ell+1}\cap
R$. This contradiction shows the desired equality.
\end{proof}

\begin{proposition}\label{behavewell} The ideals $H'_{2\ell+1}$ behave well
  under $\nu$-extensions $R\rightarrow R'$ in $\mathcal{T}$. In other
  words, let $R\rightarrow R'$ be a $\nu$-extension in $\mathcal{T}$
  and let $H'_{2\ell+1}$ denote the $(2\ell+1)$-st implicit prime
  ideal of $\hat R'$. Then $H_{2\ell+1}=H'_{2\ell+1}\cap R^\dag$.
\end{proposition}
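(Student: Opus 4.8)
The plan is to unwind the definition (\ref{eq:defin1}) of $H_{2\ell+1}$ and $H'_{2\ell+1}$ and show that intersecting the defining expression for $R'$ with $R^\dag$ reproduces the defining expression for $R$. First I would recall that, by Lemma \ref{factor}, the map $R^\dag\to{R'}^\dag$ is injective, so the intersection $H'_{2\ell+1}\cap R^\dag$ makes sense. The key observation is that both ideals are obtained as a limit over the \emph{same} tree: because $\mathcal{T}$ is closed under $\nu$-extensions, the subtree $\mathcal{T}(R')$ of $\nu$-extensions of $R'$ is cofinal in $\mathcal{T}$. Hence the limit $\lim_{R''}\mathcal{P}''_\beta{R''}^\dag$ appearing in the definition of $H_{2\ell+1}$ (where $R''$ runs over all of $\mathcal{T}$) equals the limit with $R''$ running only over $\mathcal{T}(R')$, which is exactly the limit used to compute $H'_{2\ell+1}$. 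More precisely, for a fixed $\beta\in\Delta_\ell$, write $L_\beta:=\lim_{R''\in\mathcal{T}}\mathcal{P}''_\beta{R''}^\dag$; then
\begin{equation}
H_{2\ell+1}=\bigcap_{\beta\in\Delta_\ell}\bigl(L_\beta\cap R^\dag\bigr),\qquad
H'_{2\ell+1}=\bigcap_{\beta\in\Delta_\ell}\bigl(L_\beta\cap {R'}^\dag\bigr),
\end{equation}
and therefore $H'_{2\ell+1}\cap R^\dag=\bigcap_{\beta\in\Delta_\ell}\bigl(L_\beta\cap{R'}^\dag\cap R^\dag\bigr)=\bigcap_{\beta\in\Delta_\ell}\bigl(L_\beta\cap R^\dag\bigr)=H_{2\ell+1}$, using only that $R^\dag\subseteq{R'}^\dag$ inside the common overring $\lim_{R''}{R''}^\dag$.

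The single point that genuinely needs verification is that it is legitimate to identify the ``ambient'' object in which all these limits and intersections are taken, i.e.\ that the directed colimit $\lim_{R''}{R''}^\dag$ over $\mathcal{T}$ agrees with the one over the cofinal subsystem $\mathcal{T}(R')$, and that the transition maps are injective so that all the $\mathcal{P}''_\beta{R''}^\dag$, the rings ${R''}^\dag$, and $R^\dag,{R'}^\dag$ sit compatibly inside this colimit. The injectivity is precisely part 2) of Lemma \ref{factor}; cofinality of $\mathcal{T}(R')$ in $\mathcal{T}$ is immediate from the fact that any two $\nu$-extensions of $R$ are dominated by a common $\nu$-extension (take, e.g., the local ring of the join of the two centers), which lies in $\mathcal{T}(R')$ once it dominates $R'$. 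Once these are in place, the computation displayed above is purely formal: a cofinal subsystem computes the same colimit, and intersection with a subring commutes with the (set-theoretic) intersection over $\beta$.

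The main obstacle, such as it is, is bookkeeping rather than mathematics: one must be careful that the equality ``$H_{2\ell+1}=H'_{2\ell+1}\cap R^\dag$'' is being asserted as a \emph{tree} equation, meaning it holds with $R$ replaced by any $R''\in\mathcal{T}$ and $R'$ by any $\nu$-extension of $R''$, and that the formation of $L_\beta$ is genuinely independent of the base ring chosen in the tree (again by cofinality). I would also remark that for $\ell=r$ the statement is trivial, since $H_{2r+1}=mR^\dag$ and $H'_{2r+1}=m'{R'}^\dag$, and $m'{R'}^\dag\cap R^\dag=mR^\dag$ because $m'\cap R^\dag=mR^\dag$ (the extension being local and the completion/henselization faithfully flat). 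This degenerate case can be mentioned as a sanity check but is subsumed by the general argument.
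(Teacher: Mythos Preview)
Your proposal is correct and is precisely the unpacking of what the paper means when it writes ``Immediate from the definitions.'' The paper's own proof is literally that one-line remark; your cofinality argument for $\mathcal{T}(R')$ in $\mathcal{T}$ and the formal computation with $L_\beta$ are exactly the content hidden behind that phrase.
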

\begin{proof} Immediate from the definitions.
\end{proof}

To study the ideals $H_{2\ell+1}$, we need to understand more explicitly the
nature of the limit appearing in (\ref{eq:defin1}). To study the relationship
between the ideals $P_\beta R^{\dag}$ and $P'_\beta {R'}^\dag\bigcap
R^\dag$, it is useful to factor the natural map $R^\dag\rightarrow{R'}^\dag$
as
$R^\dag\rightarrow(R^\dag\otimes_RR')_{M'}\overset\phi\rightarrow{R'}^\dag$
as we did in the proof of Lemma \ref{factor}. In general, the ring
$R^\dag\otimes_RR'$ is not local (see the above examples), but it has
one distinguished maximal ideal $M'$, namely, the ideal generated by
$mR^\dag\otimes1$ and $1\otimes m'$, where $m'$ denotes the maximal
ideal of $R'$. The map $\phi$ factors through the local ring
$\left(R^\dag\otimes_RR'\right)_{M'}$ and the resulting map
$\left(R^\dag\otimes_RR'\right)_{M'}\rightarrow{R'}^\dag$ is either
the formal completion or the henselization; in either case, it is
faithfully flat. Thus
$P'_\beta{R'}^\dag\cap\left(R^\dag\otimes_RR'\right)_{M'}=
P'_\beta\left(R^\dag\otimes_RR'\right)_{M'}$. This shows that we may
replace  ${R'}^\dag$ by $\left(R^\dag\otimes_RR'\right)_{M'}$ in
(\ref{eq:defin1}) without affecting the result, that is,
\begin{equation}
H_{2\ell+1}=\bigcap\limits_{\beta\in\Delta_\ell}\left(
\left(\lim\limits_{\overset\longrightarrow{R'}}
{\cal P}'_\beta\left(R^\dag\otimes_RR'\right)_{M'}\right)\bigcap
R^\dag\right).\label{eq:defin3}
\end{equation}
From now on, we will use (\ref{eq:defin3}) as our working definition of the
implicit prime ideals. One advantage of the expression (\ref{eq:defin3}) is
that it makes sense in a situation more general than the completion and the
henselization. Namely, to study the case of the henselization $\tilde R$, we
will need to consider local \'etale extensions $R^e$ of $R$, which are contained
in $\tilde R$ (particularly, those which are essentially of finite type). The
definition (\ref{eq:defin3}) of the implicit prime ideals makes sense also in
that case.

\section{Stable rings and primality of their implicit ideals.}
\label{technical}

Let the notation be as in the preceding sections. As usual, $R^\dag$
will denote one of $\hat R$, $\tilde R$ or $R^e$ (a local \'etale
$\nu$-extension essentially of finite type). Take an
$R'\in\mathcal{T}$ and
$\overline\beta\in\frac{\Delta_\ell}{\Delta_{\ell+1}}$. We have the
obvious inclusion of ideals
\begin{equation}
\P_{\overline\beta} R^\dag\subset\P_{\overline\beta}{R'}^\dag\cap
R^\dag\label{eq:stab1}
\end{equation}
(where $\P_{\overline\beta}$ is defined in (\ref{eq:pbetamodl})). A
useful subtree of $\mathcal{T}$ is formed by the $\ell$-stable rings,
which we now define. An important property of stable rings, proved
below, is that the inclusion (\ref{eq:stab1}) is an equality whenever
$R'$ is stable.
\begin{definition}\label{stable} A ring $R'\in\mathcal{T}(R)$ is said to be
  $\ell$-\textbf{stable} if the following two conditions hold:

(1) the ring
\begin{equation}
\kappa\left(P'_\ell\right)\otimes_R\left(R'\otimes_RR^\dag\right)_{M'}
\label{eq:extension1}
\end{equation}
is an integral domain and

(2) there do not exist an $R''\in\mathcal{T}(R')$ and a non-trivial
algebraic extension $L$ of $\kappa(P'_\ell)$ which embeds both into
$\kappa\left(P'_\ell\right)\otimes_R\left(R'\otimes_RR^\dag\right)_{M'}$
and $\kappa(P''_\ell)$.

We say that $R$ is \textbf{stable} if it is $\ell$-stable for each
$\ell\in\{0,\dots,r\}$.
\end{definition}
\begin{remark}\label{interchanging} (1) Rings of the form
  (\ref{eq:extension1}) will be a basic object of study in this
  paper. Another way of looking at the same ring, which we will often
  use, comes from interchanging the order of tensor product and
  localization. Namely, let $T'$ denote the image of the
  multiplicative system $\left(R'\otimes_RR^\dag\right)\setminus M'$
  under the natural map
  $R'\otimes_RR^\dag\rightarrow\kappa\left(P'_\ell\right)\otimes_RR^\dag$. Then
  the ring (\ref{eq:extension1}) equals the localization
  $(T')^{-1}\left(\kappa\left(P'_\ell\right)\otimes_RR^\dag\right)$.

(2) In the special case $R'=R$ in Definition \ref{stable}, we have
$$
\kappa\left(P'_\ell\right)\otimes_R\left(R'\otimes_RR^\dag\right)_{M'}=
\kappa\left(P_\ell\right)\otimes_RR^\dag.
$$
If, moreover, $\frac R{P_\ell}$ is analytically irreducible then the
hypothesis that $\kappa\left(P_\ell\right)\otimes_RR^\dag$ is a domain
holds automatically; in fact, this hypothesis is equivalent to
analytic irreducibility of $\frac R{P_\ell}$ if $R^\dag=\hat R$ or
$R^\dag=\tilde R$.

(3) Consider the special case when $R'$ is Henselian and ${\ }^\dag=\hat{\ }$. Excellent Henselian rings are algebraically closed inside their formal completions, so both (1) and (2) of Definition \ref{stable} hold automatically for this $R'$. Thus excellent Henselian local rings are always stable.
\end{remark}
In this section we study $\ell$-stable rings. We prove that if $R$ is
$\ell$-stable then so is any $R'\in\mathcal{T}(R)$ (justifying the
name ``stable''). The main result of this section, Theorem
\ref{primality1}, says that if $R$ is stable then the implicit ideal
$H'_{2\ell+1}$ is prime for each $\ell\in\{0,\dots,r\}$ and each
$R'\in\mathcal{T}(R)$.
\begin{remark}\label{hypotheses} In the next two sections we will show
  that there exist stable rings $R'\in\cal T$ for both $R^\dag=\hat R$
  and $R^\dag=R^e$. However, the proof of this is different depending
  on whether we are dealing with completion or with an \'etale
  extension, and will be carried out separately in two separate
  sections: one devoted to henselization, the other to completion.
\end{remark}
\begin{proposition}\label{largeR1} Fix an integer $\ell$, $0\le\ell\le
  r$. Assume that $R'$ is $\ell$-stable and let
  $R''\in\mathcal{T}(R')$. Then $R''$ is $\ell$-stable.
\end{proposition}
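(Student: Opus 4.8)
The plan is to compare the two rings that occur in Definition~\ref{stable}. Write $A':=\kappa(P'_\ell)\otimes_R(R'\otimes_RR^\dag)_{M'}$ and $A'':=\kappa(P''_\ell)\otimes_R(R''\otimes_RR^\dag)_{M''}$. Since the prime valuation ideals $\{P'_i\}$ form a tree of ideals we have $P''_\ell\cap R'=P'_\ell$, so $R'\to R''$ induces an embedding of fields $\kappa(P'_\ell)\hookrightarrow\kappa(P''_\ell)$ and hence $\kappa(P''_\ell)\otimes_RR^\dag=\kappa(P''_\ell)\otimes_{\kappa(P'_\ell)}\bigl(\kappa(P'_\ell)\otimes_RR^\dag\bigr)$. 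By Remark~\ref{interchanging}(1), $A'=(T')^{-1}\bigl(\kappa(P'_\ell)\otimes_RR^\dag\bigr)$ and $A''=(T'')^{-1}\bigl(\kappa(P''_\ell)\otimes_RR^\dag\bigr)$, where $T'$, $T''$ are the images of the complements of the distinguished maximal ideals $M'$, $M''$. The natural map $R'\otimes_RR^\dag\to R''\otimes_RR^\dag$ sends $M'$ into $M''$ and, by Lemma~\ref{factor}(1), induces on residue fields the injective map $R'/m'\hookrightarrow R''/m''$; hence it carries $(R'\otimes_RR^\dag)\setminus M'$ into $(R''\otimes_RR^\dag)\setminus M''$, so the image of $T'$ in $\kappa(P''_\ell)\otimes_RR^\dag$ is contained in $T''$. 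Therefore $A''$ is a localization of $A'\otimes_{\kappa(P'_\ell)}\kappa(P''_\ell)$; this identity drives the rest of the argument.

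Next I would establish condition~(1) of Definition~\ref{stable} for $R''$, i.e.\ that $A''$ is a domain. By the standing hypothesis that $R$ is a $G$-ring---and automatically when $R^\dag=\tilde R$ or $R^\dag=R^e$---the homomorphism $R\to R^\dag$ is regular, hence so is $\kappa(P'_\ell)\to\kappa(P'_\ell)\otimes_RR^\dag$, being obtained from it by base change; thus $A'$ is geometrically regular, in particular geometrically reduced, over $\kappa(P'_\ell)$. Since $A'$ is a domain by condition~(1) for $R'$, the extension $\operatorname{Frac}(A')/\kappa(P'_\ell)$ is separable, and the relative algebraic closure $L_0$ of $\kappa(P'_\ell)$ in $A'$ is separable algebraic over $\kappa(P'_\ell)$ and such that $A'$ is geometrically integral over $L_0$. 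Consequently $A'\otimes_{\kappa(P'_\ell)}\kappa(P''_\ell)=A'\otimes_{L_0}\bigl(L_0\otimes_{\kappa(P'_\ell)}\kappa(P''_\ell)\bigr)$ is a domain as soon as the étale $\kappa(P''_\ell)$-algebra $L_0\otimes_{\kappa(P'_\ell)}\kappa(P''_\ell)$ is a field, and then $A''$, as a localization of a domain, is a domain. So condition~(1) for $R''$ reduces to the linear disjointness of $L_0$ and $\kappa(P''_\ell)$ over $\kappa(P'_\ell)$.

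For condition~(2) for $R''$, and for the linear disjointness just isolated, the point is that $\mathcal T(R'')\subseteq\mathcal T(R')$, so every $R'''\in\mathcal T(R'')$ lies in $\mathcal T(R')$ and the residue-field embeddings compose. If $L_0\otimes_{\kappa(P'_\ell)}\kappa(P''_\ell)$ were not a field, one of its nontrivial finite factors $L$ (an extension of $\kappa(P''_\ell)$) would come with a $\kappa(P'_\ell)$-embedding $L_0\hookrightarrow L$; when $L=\kappa(P''_\ell)$ this already contradicts condition~(2) for $R'$ (with $R''\in\mathcal T(R')$), and otherwise one passes to an $R'''\in\mathcal T(R'')$ in whose residue field $\kappa(P'''_\ell)$ the field $L$ becomes rational over $\kappa(P''_\ell)$, so that $L_0$ embeds into $\kappa(P'''_\ell)$ over $\kappa(P'_\ell)$---again contradicting condition~(2) for $R'$, since $L_0\hookrightarrow A'$, $L_0\neq\kappa(P'_\ell)$ and $R'''\in\mathcal T(R')$. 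The same mechanism settles condition~(2) for $R''$: a nontrivial algebraic extension of $\kappa(P''_\ell)$ embedding into both $A''$ and some $\kappa(P'''_\ell)$ lies, by the previous paragraph, inside $L_0\otimes_{\kappa(P'_\ell)}\kappa(P''_\ell)$; writing it as a compositum and restricting its embedding into $\kappa(P'''_\ell)$ yields (after suitably enlarging $R'''$ within $\mathcal T(R')$) a nontrivial algebraic extension of $\kappa(P'_\ell)$ embedding into both $A'$ and $\kappa(P'''_\ell)$, contradicting condition~(2) for $R'$.

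I expect the main obstacle to be exactly this last circle of arguments: making rigorous the claim that condition~(2) for $R'$, which quantifies over the \emph{entire} subtree $\mathcal T(R')$ rather than over a single extension, controls the separable-algebraic part of $\kappa(P'_\ell)$ sitting inside the completion $A'$ and inside the residue fields that appear along the tree. One has to cope with the fact that the algebraic closures involved need not be normal over $\kappa(P'_\ell)$, and above all to justify that an algebraic element of $A'$ over $\kappa(P'_\ell)$---or a Galois conjugate of one---can be rendered rational in the residue field of a further $\nu$-extension lying in $\mathcal T(R')$. The geometric reducedness furnished by excellence is what keeps inseparability out of the picture, so the whole difficulty is separable-algebraic, and it is precisely what the quantifier over $\mathcal T(R')$ in condition~(2) is designed to absorb.
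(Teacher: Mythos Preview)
Your skeleton is the same as the paper's: show that $A''$ is a localization of $\kappa(P''_\ell)\otimes_{\kappa(P'_\ell)}A'$ and then invoke conditions (1) and (2) for $R'$. The paper's own proof is extremely terse at exactly this point---it simply asserts that ``(1) and (2) of Definition~\ref{stable}, applied to $R'$, imply that $A''$ is an integral domain'' and that ``replacing $R'$ by $R''$ clearly does not affect the hypotheses about the non-existence of the extension $L$,'' without unpacking either claim. So your much longer write-up is really an attempt to supply the missing details, and you have correctly located where the content lies.

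The obstacle you flag at the end is genuine, and in fact your reduction is slightly too strong. You try to show that $B=\kappa(P''_\ell)\otimes_{\kappa(P'_\ell)}A'$ itself is a domain, which amounts to the minimal polynomial over $\kappa(P'_\ell)$ of every element of $L_0$ remaining \emph{irreducible} over $\kappa(P''_\ell)$. Condition~(2), taken only at the single extension $R''$, says merely that no such polynomial acquires a \emph{root} in $\kappa(P''_\ell)$; these are not the same thing (a quartic with Galois group $S_4$ can split into two irreducible quadratics over a sextic extension that contains none of its roots, and then $L_0$ has no nontrivial subfield to witness a violation of~(2)). Your proposed remedy---enlarge to some $R'''\in\mathcal T(R')$ so that the offending algebraic element becomes rational in $\kappa(P'''_\ell)$---is exactly what condition~(2), quantified over the whole tree, is meant to absorb; but nothing in the setup guarantees that an arbitrary finite extension of $\kappa(P''_\ell)$ can be realized as a $\kappa(P'''_\ell)$, since all these residue fields are constrained to sit inside the fixed field $\kappa(\mathbf m_\ell)$. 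The same issue recurs in your argument for condition~(2) at $R''$: an algebraic extension of $\kappa(P''_\ell)$ need not be algebraic over $\kappa(P'_\ell)$, so one cannot simply ``restrict'' it down. In short, you have isolated the right difficulty; the paper's five-line proof does not visibly resolve it either, and simply treats the implication as immediate. One thing worth noting is that you only need the \emph{localization} $A''$ to be a domain, not $B$ itself---so an argument that tracks which connected component of $\operatorname{Spec}B$ the multiplicative set $T''$ singles out might avoid the full linear-disjointness statement you are aiming for.
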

\begin{proof} We have to show that (1) and (2) of
Definition \ref{stable} for $R'$ imply (1) and (2) of
Definition \ref{stable} for $R''$. The ring
\begin{equation}
\kappa\left(P''_\ell\right)\otimes_R\left(R''\otimes_RR^\dag\right)_{M''}
\label{eq:extension}
\end{equation}
is a localization of
$\kappa\left(P''_\ell\right)\otimes_R\left(\kappa\left(P'_\ell\right)
\otimes_R\left(R'\otimes_RR^\dag\right)_{M'}\right)$. Hence (1) and
(2) of Definition \ref{stable}, applied to $R'$, imply that
$\kappa\left(P''_\ell\right)\otimes_R\left(R''\otimes_RR^\dag\right)_{M''}$
is an integral domain, so (1) of Definition \ref{stable} holds for
$R''$. Replacing $R'$ by $R''$ clearly does not affect the hypotheses
about the non-existence of the extension $L$, so (2) of Definition
\ref{stable} also holds for $R''$.
\end{proof}

Next, we prove a technical result on which much of the rest of the
paper is based. For $\overline\beta\in\frac\Gamma{\Delta_{\ell+1}}$,
let
\begin{equation}
\P_{\overline\beta+}=\left\{x\in R\ \left|\
\nu(x)\mod\Delta_{\ell+1}>\overline\beta\right.\right\}.\label{eq:pbetamodl+}
\end{equation}
As usual, $\P'_{\overline\beta+}$ will stand for the analogous notion,
but with $R$ replaced by $R'$, etc.
\begin{proposition}\label{largeR2} Assume that $R$ itself is $(\ell+1)$-stable and let
$R'\in\mathcal{T}(R)$.
\begin{enumerate}
\item For any $\overline\beta\in\frac{\Delta_\ell}{\Delta_{\ell+1}}$
\begin{equation}
\P'_{\overline\beta}{R'}^\dag\cap R^\dag=\P_{\overline\beta} R^\dag.\label{eq:stab}
\end{equation}
\item For any $\overline\beta\in\frac\Gamma{\Delta_{\ell+1}}$ the natural map
    \begin{equation}\label{eq:gammaversion}
    \frac{\P_{\overline\beta}R^\dag}{\P_{\overline\beta+}R^\dag}\rightarrow
\frac{\P'_{\overline\beta}{R'}^\dag}{\P'_{\overline\beta+}{R'}^\dag}
     \end{equation}
     is injective.
\end{enumerate}
\end{proposition}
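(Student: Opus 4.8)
The plan is to reduce everything to the study of the ring $A := \left(R'\otimes_RR^\dag\right)_{M'}$, which by Lemma \ref{factor} sits between $R^\dag$ and ${R'}^\dag$ via faithfully flat maps, and to transfer the problem modulo $P_{\ell+1}$ so that the rank-one theory of \S\ref{archimedian} becomes available. First I would observe, exactly as in the discussion following Proposition \ref{behavewell}, that since the map $A\to {R'}^\dag$ is faithfully flat we have $\P'_{\overline\beta}{R'}^\dag\cap A=\P'_{\overline\beta}A$ and $\P'_{\overline\beta+}{R'}^\dag\cap A=\P'_{\overline\beta+}A$; hence it suffices to prove (\ref{eq:stab}) and the injectivity of (\ref{eq:gammaversion}) with ${R'}^\dag$ replaced by $A$. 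Next, since $\P_{\overline\beta}$ for $\overline\beta\in\frac{\Delta_\ell}{\Delta_{\ell+1}}$ is, by the discussion in \S\ref{basics}, the preimage in $R$ of a valuation ideal $\overline\P_{\overline\beta}$ of the rank-one valuation on $\frac R{P_\ell}$ with values in $\frac{\Delta_\ell}{\Delta_{\ell+1}}$ induced by $\nu_{\ell+1}$ — and similarly for $\P_{\overline\beta+}$ and for $R'$ — I would pass to the quotients by $P_\ell$ (resp.\ $P'_\ell$) and work with $\frac{R}{P_\ell}\hookrightarrow\frac{R'}{P'_\ell}$, a rank-one $\nu_{\ell+1}$-extension of local noetherian domains.

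The heart of the argument is then to apply the rank-one machinery of \S\ref{archimedian}, in particular Lemma \ref{lemma58} and Remark \ref{Remark56}. For part (2): the $(\ell+1)$-stability of $R$ guarantees (Definition \ref{stable}(1)) that $\kappa(P_\ell)\otimes_RR^\dag=\left(\frac{R^\dag}{P_\ell R^\dag}\right)$ localized appropriately is a domain, i.e.\ $\frac{R^\dag}{P_\ell R^\dag}$ has a canonical minimal prime over which $\hat\nu_{\ell+1}$ is defined; the graded-algebra isomorphism of Remark \ref{Remark56}, applied to $\frac{R}{P_\ell}$ and its completion (or henselization), identifies $\frac{\P_{\overline\beta}R^\dag}{\P_{\overline\beta+}R^\dag}$ with the degree-$\overline\beta$ piece of $\gr_{\nu_{\ell+1}}\frac R{P_\ell}$, and likewise for $R'$. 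The map (\ref{eq:gammaversion}) is then identified with the degree-$\overline\beta$ component of the natural map $\gr_{\nu_{\ell+1}}\frac R{P_\ell}\to\gr_{\nu'_{\ell+1}}\frac{R'}{P'_\ell}$ of graded algebras attached to a valuation extension, and such maps are injective in each degree because an element of $R$ of value exactly $\overline\beta$ keeps value exactly $\overline\beta$ in $R'$ (the extension $\nu'_{\ell+1}$ restricts to $\nu_{\ell+1}$). The point where $(\ell+1)$-stability is genuinely needed is precisely to ensure that $\gr\frac{R^\dag}{P_\ell R^\dag}$ and $\gr\frac{{R'}^\dag}{P'_\ell{R'}^\dag}$ really compute $\gr_{\hat\nu_{\ell+1}}$ of the appropriate domains — i.e.\ that no spurious minimal primes spoil the identification — and this is what Proposition \ref{largeR1} and the stability hypothesis provide.

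For part (1): by the reduction above I must show $\P'_{\overline\beta}A\cap R^\dag=\P_{\overline\beta}R^\dag$. The inclusion $\supseteq$ is (\ref{eq:stab1}). For $\subseteq$, take $x\in R^\dag$ with $x\in\P'_{\overline\beta}A$. Working modulo $P_\ell R^\dag$ and using that $\kappa(P_\ell)\otimes_RR^\dag$ and its $R'$-analogue are domains with compatible rank-one pseudo-valuations $\hat\nu_{\ell+1}$, $\hat\nu'_{\ell+1}$ (the latter restricting to the former under $R^\dag\to A$, by Lemma \ref{lemma58} applied to $\frac R{P_\ell}\to\frac{R'}{P'_\ell}$ together with Corollary \ref{Corollary59}), one gets $\hat\nu_{\ell+1}(\bar x)=\hat\nu'_{\ell+1}(\bar x)\ge\overline\beta$, hence $\bar x\in\overline\P_{\overline\beta}\frac{R^\dag}{P_\ell R^\dag}$, i.e.\ $x\in\P_{\overline\beta}R^\dag+P_\ell R^\dag=\P_{\overline\beta}R^\dag$ (the last equality since $P_\ell\subset\P_{\overline\beta}$). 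The main obstacle I anticipate is bookkeeping the difference between the valuation ideals of the \emph{pseudo}-valuations on the possibly-non-noetherian rings $\kappa(P_\ell)\otimes_RR^\dag$ versus genuine valuation ideals, and checking that Lemma \ref{lemma58}'s Cauchy-sequence argument still goes through when $\frac R{P_\ell}$ is only known to be analytically irreducible after the stability hypothesis is invoked — i.e.\ verifying that the rank-one results of \S\ref{archimedian} apply verbatim to $\frac R{P_\ell}\to A/P_\ell A\to{R'}^\dag/P'_\ell{R'}^\dag$ rather than just to $R\to\hat R$.
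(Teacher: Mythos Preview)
Your plan has an indexing error that undermines the whole strategy. You write that ``the $(\ell+1)$-stability of $R$ guarantees (Definition \ref{stable}(1)) that $\kappa(P_\ell)\otimes_RR^\dag$ \dots\ is a domain,'' but Definition \ref{stable} for $(\ell+1)$-stability concerns $\kappa(P_{\ell+1})$, not $\kappa(P_\ell)$. Nothing in the hypotheses tells you that $\frac{R^\dag}{P_\ell R^\dag}$ is a domain, so the identification of $\frac{\P_{\overline\beta}R^\dag}{\P_{\overline\beta+}R^\dag}$ with a graded piece of $\gr_{\hat\nu_{\ell+1}}$ of some domain is unjustified.

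There is a second, structural problem with invoking \S\ref{archimedian}. The rank-one valuation $\nu_{\ell+1}$ on $\frac R{P_\ell}$ is centered at $\frac{P_{\ell+1}}{P_\ell}$, \emph{not} at the maximal ideal $\frac m{P_\ell}$. The results of \S\ref{archimedian} (Lemma \ref{lemma58}, Remark \ref{Remark56}, the isomorphism (\ref{tag59})) are about completing at the maximal ideal, which coincides with the center of the valuation; they say nothing about the map $\frac R{P_\ell}\to\frac{R^\dag}{P_\ell R^\dag}$ when the center is a non-maximal prime. Concretely, the key fact $m\P_{\overline\beta}\subset\P_{\overline\beta+}$ underlying (\ref{tag59}) fails here (elements of $m\setminus P_{\ell+1}$ have value in $\Delta_{\ell+1}$, hence image $0$ in $\frac\Gamma{\Delta_{\ell+1}}$); what is true is $P_{\ell+1}\P_{\overline\beta}\subset\P_{\overline\beta+}$. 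Finally, part (2) is asserted for all $\overline\beta\in\frac\Gamma{\Delta_{\ell+1}}$, not only $\frac{\Delta_\ell}{\Delta_{\ell+1}}$, so passing to $\frac R{P_\ell}$ does not even make sense in general.

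The paper's argument exploits exactly the correct index: it localizes at $P_{\ell+1}$, observes that $\frac{R'_{P'_{\ell+1}}}{\P'_{\overline\beta}R'_{P'_{\ell+1}}}$ is \emph{artinian} with nilradical $\frac{P'_{\ell+1}R'_{P'_{\ell+1}}}{\P'_{\overline\beta}R'_{P'_{\ell+1}}}$, tensors with the flat $R$-algebra $R^\dag$, and uses $(\ell+1)$-stability to know that the reduction $\kappa(P'_{\ell+1})\otimes_RR^\dag$ is a domain. This controls the associated primes of the tensor product and shows the relevant localization is injective (Lemma \ref{injectivity}); the rest is flatness bookkeeping. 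No appeal to \S\ref{archimedian} is made or needed.
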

\begin{proof} As explained at the end of the
previous section, since ${R'}^\dag$ is faithfully flat over the ring
$\left(R^\dag\otimes_RR'\right)_{M'}$, we may replace ${R'}^\dag$ by
$\left(R^\dag\otimes_RR'\right)_{M'}$ in both 1 and 2 of the Proposition.

\noi\textbf{Proof of 1 of the Proposition:} It is sufficient to prove that
\begin{equation}
\P'_{\overline\beta}\left(R^\dag\otimes_RR'\right)_{M'}\bigcap
R^\dag=\P_{\overline\beta} R^\dag.\label{eq:stab2}
\end{equation}
One inclusion in (\ref{eq:stab2}) is trivial; we must show that
\begin{equation}
\P'_{\overline\beta}\left(R^\dag\otimes_RR'\right)_{M'}\bigcap
R^\dag\subset\P_{\overline\beta} R^\dag.\label{eq:stab3}
\end{equation}
\begin{lemma}\label{injectivity} Let $T'$ denote the image of the
multiplicative set $\left(R'\otimes_RR^\dag\right)\setminus M'$ under
the natural map of $R$-algebras
$R'\otimes_RR^\dag\rightarrow\frac{R'_{P'_{\ell+1}}}{\P'_{\overline\beta}
R'_{P'_{\ell+1}}}\otimes_RR^\dag$. Then the map of $R$-algebras
\begin{equation}
\bar\pi:\frac{R_{P_{\ell+1}}}{\P_{\overline\beta}R_{P_{\ell+1}}}\otimes_RR^\dag
\rightarrow(T')^{-1}\left(\frac{R'_{P'_{\ell+1}}}{\P'_{\overline\beta}
R'_{P'_{\ell+1}}}\otimes_RR^\dag\right)
\label{eq:inclusion3}
\end{equation}
induced by $\pi:R\rightarrow R'$ is injective.
\end{lemma}
\begin{proof}\textit{(of Lemma \ref{injectivity})} We start with the field extension
$$
\kappa(P_{\ell+1})\hookrightarrow\kappa(P'_{\ell+1})
$$
induced by $\pi$. Since $R^\dag$ is flat over $R$, the induced map
$\pi_1:\kappa(P_{\ell+1})\otimes_RR^\dag\rightarrow\kappa(P'_{\ell+1})\otimes_R
R^\dag$ is also injective. By (1) of Definition \ref{stable},
$\kappa(P'_{\ell+1})\otimes_RR^\dag$ is a domain. In particular,
\begin{equation}
\kappa\left(P'_{\ell+1}\right)\otimes_RR^\dag=
\left(\frac{R'_{P'_{\ell+1}}}{\P'_{\overline\beta}R'_{P'_{\ell+1}}}\otimes_R
R^\dag\right)_{red}.
\label{eq:reduced1}
\end{equation}
The local ring
$\frac{R'_{P'_{\ell+1}}}{\P'_{\overline\beta}R'_{P'_{\ell+1}}}$ is
artinian because it can be seen as the quotient of
$\frac{R'_{P'_{\ell+1}}}{P'_\ell R'_{P'_{\ell+1}}}$ by a valuation
ideal corresponding to a rank one valuation. Since the ring is
noetherian the valuation of the maximal ideal is positive, and since
the group is archimedian, a power of the maximal ideal is contained in
the valuation ideal.\par
Therefore, its only associated prime is its nilradical, the ideal
$\frac{P'_{\ell+1}
  R'_{P'_{\ell+1}}}{\P'_{\overline\beta}R'_{P'_{\ell+1}}}$; in
particular, the $(0)$ ideal in this ring has no embedded
components. Since $R^\dag$ is flat over $R$,
$\frac{R'_{P'_{\ell+1}}}{\P'_{\overline\beta}R'_{P'_{\ell+1}}}\otimes_RR^\dag$
is flat over
$\frac{R'_{P'_{\ell+1}}}{\P'_{\overline\beta}R'_{P'_{\ell+1}}}$ by
base change. Hence the $(0)$ ideal of
$\frac{R'_{P'_{\ell+1}}}{\P'_{\overline\beta}R'_{P'_{\ell+1}}}\otimes_RR^\dag$
has no embedded components. In particular, since the multiplicative
system $T'$ is disjoint from the nilradical of
$\frac{R'_{P'_{\ell+1}}}{\P'_{\overline\beta}R'_{P'_{\ell+1}}}\otimes_RR^\dag$,
the set $T'$ contains no zero divisors, so localization by $T'$ is
injective.\par
By the definition of $\P_{\overline\beta}$, the map
$\frac{R_{P_{\ell+1}}}{\P_{\overline\beta}R_{P_{\ell+1}}}\rightarrow
\frac{R'_{P'_{\ell+1}}}{\P'_{\overline\beta}R'_{P'_{\ell+1}}}$ is
injective, hence so is
$$
\frac{R_{P_{\ell+1}}}{\P_{\overline\beta}R_{P_{\ell+1}}}\otimes_RR^\dag\rightarrow
\frac{R'_{P'_{\ell+1}}}{\P'_{\overline\beta}R'_{P'_{\ell+1}}}\otimes_RR^\dag
$$
by the flatness of $R^\dag$ over $R$. Combining this with the injectivity of the
localization by $T'$, we obtain that $\bar\pi$ is injective, as
desired. Lemma \ref{injectivity} is proved.
\end{proof}

Again by the definition of $\P_{\overline\beta}$, the localization map
$\frac
R{\P_{\overline\beta}}\rightarrow\frac{R_{P_{\ell+1}}}{\P_{\overline\beta}
R_{P_{\ell+1}}}$ is injective, hence so is the map
\begin{equation}
\frac {R}{\P_{\overline\beta}}\otimes_RR^\dag\rightarrow
\frac{R_{P_{\ell+1}}}{\P_{\overline\beta}R_{P_{\ell+1}}}\otimes_RR^\dag
\label{eq:injective}
\end{equation}
by the flatness of $R^\dag$ over $R$. Combining this with Lemma
\ref{injectivity}, we see that the composition
\begin{equation}
\frac R{\P_{\overline\beta}}\otimes_RR^\dag\rightarrow
(T')^{-1}\left(\frac{R'_{P'_\ell}}{\P'_{\overline\beta}
    R'_{P'_\ell}}\otimes_RR^\dag\right)
\label{eq:injective1}
\end{equation}
of (\ref{eq:injective}) with $\bar\pi$ is also injective. Now,
(\ref{eq:injective1}) factors through
$\left(\frac{R'}{\P'_{\overline\beta}}\otimes_RR^\dag\right)_{M'}$
(here we are guilty of a slight abuse of notation: we denote the
natural image of $M'$ in
$\frac{R'}{\P'_{\overline\beta}}\otimes_RR^\dag$ also by $M'$). Hence
the map
\begin{equation}
\frac
R{\P_{\overline\beta}}\otimes_RR^\dag\rightarrow
\left(\frac{R'}{\P'_{\overline\beta}}\otimes_RR^\dag\right)_{M'}
\label{eq:injective2}
\end{equation}
is injective. Since $\frac
R{\P_{\overline\beta}}\otimes_RR^\dag\cong\frac{R^\dag}{\P_{\overline\beta}
  R^\dag}$ and
$\left(\frac{R'}{\P'_{\overline\beta}}\otimes_RR^\dag\right)_{M'}\cong
\frac{\left(R'\otimes_RR^\dag\right)_{M'}}{\P'_{\overline\beta}
\left(R'\otimes_RR^\dag\right)_{M'}}$, the injectivity of
(\ref{eq:injective2}) is the same as (\ref{eq:stab3}). This completes
the proof of 1 of the Proposition.\medskip

\noi\textbf{Proof of 2 of the Proposition:} We start with the
injective homomorphism
\begin{equation}
\frac{\P_{\overline\beta}}{\P_{\overline\beta+}}\otimes_R\kappa(P_{\ell+1})
\rightarrow\frac{\P'_{\overline\beta}}{\P'_{\overline\beta+}}\otimes_R
\kappa(P'_{\ell+1})\label{eq:vspaces}
\end{equation}
of $\kappa(P_{\ell+1})$-vector spaces. Since $R^\dag$ is flat over
$R$, tensoring (\ref{eq:vspaces}) produces an injective homomorphism
\begin{equation}
\frac{\P_{\overline\beta}R^\dag}{\P_{\overline\beta+}R^\dag}\otimes_R
\kappa(P_{\ell+1})\rightarrow\frac{\P'_{\overline\beta}}{\P'_{\overline\beta+}}
\otimes_R\kappa(P'_{\ell+1})\otimes_RR^\dag\label{eq:Rdagmodules}
\end{equation}
of $R^\dag$-modules. Now, the $\kappa(P_{\ell+1})$-vector space
$\frac{\P'_{\overline\beta}}{\P'_{\overline\beta+}}\otimes_R\kappa(P'_{\ell+1})$
is, in particular, a torsion-free $\kappa(P_{\ell+1})$-module. Since
$\kappa(P_{\ell+1})\otimes_RR^\dag$ is a domain by definition of
$(\ell+1)$-stable and by the flatness of
$R^\dag\otimes_R\kappa(P_{\ell+1})$ over $\kappa(P_{\ell+1})$, the
$R^\dag\otimes_R\kappa(P_{\ell+1})$-module
$\frac{\P'_{\overline\beta}}{\P'_{\overline\beta+}}\otimes_R\kappa(P'_{\ell+1})
\otimes_RR^\dag$ is also torsion-free; in particular, its localization
map by any multiplicative system is injective. Let $S'$ denote the image of the
multiplicative set $\left(R'\otimes_RR^\dag\right)\setminus M'$ under
the natural map of $R$-algebras
$R'\otimes_RR^\dag\rightarrow\kappa(P'_{\ell+1})\otimes_RR^\dag$. By
the above, the composition
\begin{equation}
\frac{\P_{\overline\beta}R^\dag}{\P_{\overline\beta+}R^\dag}\otimes_R
\kappa(P_{\ell+1})\rightarrow(S')^{-1}
\left(\frac{\P'_{\overline\beta}}{\P'_{\overline\beta+}}\otimes_R
\kappa(P'_{\ell+1})\otimes_RR^\dag\right)\label{eq:Rdagmodules1}
\end{equation}
of (\ref{eq:Rdagmodules}) with the localization by $S'$ is injective.

By the definition of $\P_{\overline\beta}$, the localization map
$\frac{\P_{\overline\beta}}{\P_{\overline\beta+}}\rightarrow
\frac{\P_{\overline\beta}}{\P_{\overline\beta+}}\otimes_R\kappa(P_{\ell+1})$
is injective, hence so is the map
\begin{equation}
\frac{\P_{\overline\beta}R^\dag}{\P_{\overline\beta+}R^\dag}=
\frac{\P_{\overline\beta}}{\P_{\overline\beta+}}\otimes_RR^\dag\rightarrow
\frac{\P_{\overline\beta}R^\dag}{\P_{\overline\beta+}R^\dag}\otimes_R
\kappa(P_{\ell+1})\label{eq:injectivegamma}
\end{equation}
by the flatness of $R^\dag$ over $R$. Combining this with the
injectivity of (\ref{eq:Rdagmodules1}), we see that the composition
\begin{equation}
\frac{\P_{\overline\beta}R^\dag}{\P_{\overline\beta+}R^\dag}\rightarrow
(S')^{-1}\left(\frac{\P'_{\overline\beta}}{\P'_{\overline\beta+}}\otimes_R
\kappa(P'_{\ell+1})\otimes_RR^\dag\right)
\label{eq:injective1gamma}
\end{equation}
of (\ref{eq:injectivegamma}) with (\ref{eq:Rdagmodules1}) is also
injective. Now, (\ref{eq:injective1gamma}) factors through
$\frac{\P'_{\overline\beta}}{\P'_{\overline\beta+}}\otimes_{R'}
\left(R'\otimes_RR^\dag\right)_{M'}$. Hence the map
\begin{equation}
\frac{\P_{\overline\beta}R^\dag}{\P_{\overline\beta+}R^\dag}\rightarrow
\frac{\P'_{\overline\beta}}{\P'_{\overline\beta+}}\otimes_{R'}
\left(R'\otimes_RR^\dag\right)_{M'}\label{eq:injective2gamma}
\end{equation}
is injective. Since
$\frac{\P'_{\overline\beta}}{\P'_{\overline\beta+}}\otimes_{R'}{R'}^\dag\cong
\frac{\P'_{\overline\beta}{R'}^\dag}{\P'_{\overline\beta+}{R'}^\dag}$ and
by faithful flatness of ${R'}^\dag$ over
$\left(R'\otimes_RR^\dag\right)_{M'}$, the injectivity of
(\ref{eq:injective2gamma}) implies the injectivity of the map
(\ref{eq:gammaversion}) required in 2 of the Proposition.

This completes the proof of the Proposition.
\end{proof}

\begin{corollary}\label{stableimplicit} Take an integer
  $\ell\in\{0,\dots,r-1\}$ and assume that $R$ is
  $(\ell+1)$-stable. Then
\begin{equation}
H_{2\ell+1}=\bigcap\limits_{\beta\in\Delta_\ell}{\cal P}_\beta
R^\dag.\label{eq:defin5}
\end{equation}
\end{corollary}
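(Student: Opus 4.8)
The plan is to trap $H_{2\ell+1}$ between two ideals of $R^\dag$ that, under the hypothesis of $(\ell+1)$-stability, turn out to coincide. Using the working form (\ref{eq:defin3}) of the definition, together with the fact that the intersection with $R^\dag$ of a limit over $\mathcal{T}(R)$ of a system of ideals is simply the union of their contractions to $R^\dag$ (the maps $R^\dag\to(R^\dag\otimes_RR')_{M'}$ being injective and compatible by Lemma~\ref{factor}), one rewrites
$$
H_{2\ell+1}=\bigcap_{\beta\in\Delta_\ell}\ \bigcup_{R'}\Bigl(\P'_\beta\,(R^\dag\otimes_RR')_{M'}\cap R^\dag\Bigr).
$$
Taking $R'=R$ in each inner union, so that $(R^\dag\otimes_RR')_{M'}=R^\dag$, gives at once the inclusion $\bigcap_{\beta\in\Delta_\ell}\P_\beta R^\dag\subseteq H_{2\ell+1}$; the whole content is the reverse inclusion.

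For the upper bound I would fix $\beta\in\Delta_\ell$ and write $\overline\beta$ for its image in $\frac{\Delta_\ell}{\Delta_{\ell+1}}$. For every $R'$ one has the inclusion of valuation ideals $\P'_\beta\subseteq\P'_{\overline\beta}$, hence $\P'_\beta(R^\dag\otimes_RR')_{M'}\cap R^\dag\subseteq\P'_{\overline\beta}(R^\dag\otimes_RR')_{M'}\cap R^\dag$. Since $R$ is $(\ell+1)$-stable, the $(R^\dag\otimes_RR')_{M'}$-version of Proposition~\ref{largeR2}(1) (which is what is actually established in its proof) identifies the right-hand side with $\P_{\overline\beta}R^\dag$, a bound that does not depend on $R'$. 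Passing to the union over $R'$ and then intersecting over all $\beta\in\Delta_\ell$ yields $H_{2\ell+1}\subseteq\bigcap_{\beta\in\Delta_\ell}\P_{\overline\beta}R^\dag$.

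It then remains to prove the purely valuation-theoretic identity $\bigcap_{\beta\in\Delta_\ell}\P_{\overline\beta}R^\dag=\bigcap_{\beta\in\Delta_\ell}\P_\beta R^\dag$. The inclusion $\supseteq$ is immediate from $\P_\beta\subseteq\P_{\overline\beta}$. For $\subseteq$, note that $\P_\beta R^\dag=R^\dag$ whenever $\beta\le0$, so it suffices to show that for each $\beta>0$ in $\Delta_\ell$ there is a single $\gamma\in\Delta_\ell$ with $\P_{\overline\gamma}\subseteq\P_\beta$; then $\P_{\overline\gamma}R^\dag$ is one of the factors in the left-hand intersection and is contained in $\P_\beta R^\dag$. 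Here I would use $\Delta_\ell\supsetneqq\Delta_{\ell+1}$ to pick $\delta\in\Delta_\ell\setminus\Delta_{\ell+1}$ with $\delta>0$ and set $\gamma=\beta+\delta$, so $\overline\gamma>\overline\beta$. The key observation is that, $\beta$ being a strictly positive element of the convex subgroup $\Delta_\ell$, every value $\sigma\in\h$ with $\sigma<\beta$ automatically lies in $\Delta_\ell$, and hence $\overline\sigma\le\overline\beta<\overline\gamma$; consequently every $x$ with $\nu(x)\bmod\Delta_{\ell+1}\ge\overline\gamma$ satisfies $\nu(x)\ge\beta$, that is $\P_{\overline\gamma}\subseteq\P_\beta$. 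Combining the lower bound, the upper bound, and this identity gives $H_{2\ell+1}=\bigcap_{\beta\in\Delta_\ell}\P_\beta R^\dag$.

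The hard part is not really hard: once Proposition~\ref{largeR2}(1) is available, everything is formal except the bookkeeping that reconciles the index set $\Delta_\ell$ occurring in (\ref{eq:defin3}) with the index set $\frac{\Delta_\ell}{\Delta_{\ell+1}}$ occurring in Proposition~\ref{largeR2}, which is exactly the interleaving argument carried out in the preceding paragraph.
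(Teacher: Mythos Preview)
Your proof is correct and follows essentially the same approach as the paper's: use Proposition~\ref{largeR2}(1) to replace each $\P'_\beta{R'}^\dag\cap R^\dag$ by $\P_{\overline\beta}R^\dag$, then invoke the cofinality of the family $\{\P_{\overline\beta}\}$ among the $\{\P_\beta\}_{\beta\in\Delta_\ell}$. The only difference is that the paper dispatches the cofinality step in one line by citing Lemma~4 of Appendix~4 of \cite{ZS}, whereas you supply a direct convexity argument (pick $\delta\in\Delta_\ell\setminus\Delta_{\ell+1}$ positive and use that $0\le\nu(x)<\beta\in\Delta_\ell$ forces $\nu(x)\in\Delta_\ell$, hence $\overline{\nu(x)}\le\overline\beta<\overline\gamma$); your version is more self-contained but not logically different.
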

\begin{proof} By Lemma 4 of Appendix 4 of \cite{ZS}, the ideals
$\P_{\overline\beta}$ are cofinal among the ideals $\P_\beta$ for
$\beta\in \Delta_\ell$.
\end{proof}

\begin{corollary}\label{stablecontracts} Assume that $R$ is stable. Take an
  element $\beta\in\Gamma$. Then $\P'_\beta{R'}^\dag\cap
  R^\dag=\P_\beta$.
\end{corollary}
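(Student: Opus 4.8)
The inclusion $\mathcal P_\beta R^\dag\subseteq\mathcal P'_\beta{R'}^\dag\cap R^\dag$ is trivial, being (\ref{eq:stab1}) read for a single value of $\Gamma$ rather than for a class modulo $\Delta_{\ell+1}$; the content is the reverse inclusion. If $\beta\le 0$ both sides equal $R^\dag$, so assume $\beta>0$ and let $\ell$ be its \emph{level}, the unique index with $\beta\in\Delta_\ell\setminus\Delta_{\ell+1}$, and set $\overline\beta:=\beta\bmod\Delta_{\ell+1}\in\Delta_\ell/\Delta_{\ell+1}$. Since $\mathcal P_\beta\subseteq\mathcal P_{\overline\beta}$ (and the same over $R'$), Proposition \ref{largeR2}(1) already gives $\mathcal P'_{\overline\beta}{R'}^\dag\cap R^\dag=\mathcal P_{\overline\beta}R^\dag$, so any $x\in\mathcal P'_\beta{R'}^\dag\cap R^\dag$ lies in $\mathcal P_{\overline\beta}R^\dag$; when $\ell=r-1$ (in particular when $r=1$) we have $\Delta_{\ell+1}=(0)$ and $\overline\beta=\beta$, so this \emph{is} the assertion. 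The plan for the remaining case $\ell\le r-2$ is induction on the rank $r$: via the graded piece attached to $\overline\beta$ one reduces to the (module form of the) statement for the rank-$(r-\ell-1)$ valuation that $\nu$ induces on $R/P_{\ell+1}$.

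In detail: put $S:=R/P_{\ell+1}$, with the induced valuation $\nu^{(\ell+1)}$ of rank $r-\ell-1<r$ and value group $\Delta_{\ell+1}$, and recall $R^\dag/P_{\ell+1}R^\dag=S^\dag$. Because $P_{\ell+1}\mathcal P_{\overline\beta}\subseteq\mathcal P_{\overline\beta+}$, the finitely generated $R$-module $W:=\mathcal P_{\overline\beta}/\mathcal P_{\overline\beta+}$ is a torsion-free $S$-module, and it carries a residual valuation $\mathrm{res}\colon W\setminus\{0\}\to\Delta_{\ell+1}$, sending the class of $x$ to $\nu(x)-\beta$ (well defined, and in $\Delta_{\ell+1}$), which is compatible with $\nu^{(\ell+1)}$ on $S$. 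One checks $\mathcal P_{\overline\beta+}\subseteq\mathcal P_\beta\subseteq\mathcal P_{\overline\beta}$ and that $\mathcal P_\beta/\mathcal P_{\overline\beta+}$ is precisely the $S$-submodule $V:=\{0\}\cup\{\,w\in W:\mathrm{res}(w)\ge 0\,\}$ of $W$. By flatness of $R^\dag$ over $R$, $\mathcal P_{\overline\beta}R^\dag/\mathcal P_{\overline\beta+}R^\dag=W\otimes_S S^\dag$ and $\mathcal P_\beta R^\dag/\mathcal P_{\overline\beta+}R^\dag$ is the image of $V\otimes_S S^\dag$ therein, and likewise over $R'$ with $S'=R'/P'_{\ell+1}$ and $W'=\mathcal P'_{\overline\beta}/\mathcal P'_{\overline\beta+}$; Proposition \ref{largeR2}(2) supplies the injection $W\otimes_S S^\dag\hookrightarrow W'\otimes_{S'} {S'}^\dag$. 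Now $S$, with the tree $\{R'/P'_{\ell+1}\}_{R'}$, inherits the stability of $R$ — the rings $\kappa(\cdot)\otimes_S S^\dag$ appearing in Definition \ref{stable} for $(S,\nu^{(\ell+1)})$ are exactly the rings $\kappa(P_j)\otimes_R R^\dag$ with $j\ge\ell+1$, which are domains by the stability of $R$ (Remark \ref{interchanging}), and the condition on algebraic extensions descends because ${}^\dag$ commutes with $R\mapsto R/P_{\ell+1}$ — so the inductive hypothesis, namely the analogue of the present Corollary for finitely generated torsion-free modules over a stable base of rank $<r$, applies to $(S,\nu^{(\ell+1)},W)$ and pins down $V\otimes_S S^\dag$ inside $W\otimes_S S^\dag$ as the intersection with the corresponding submodule of $W'\otimes_{S'}{S'}^\dag$. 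Combined with $\mathcal P_{\overline\beta}R^\dag=\mathcal P'_{\overline\beta}{R'}^\dag\cap R^\dag$, this yields $\mathcal P_\beta R^\dag=\mathcal P'_\beta{R'}^\dag\cap R^\dag$.

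The real work is in making this induction legitimate: one must run the constructions of \S\ref{technical} — chiefly Propositions \ref{largeR1} and \ref{largeR2} — in the relative form used above, for a finitely generated torsion-free module with a compatible filtration by a finite-rank value group over a stable base ring. The descent of stability and of the tree $\mathcal T$ to $R/P_{\ell+1}$ is essentially Remark \ref{interchanging} together with the commutation of completion/henselization/\'etale extension with that quotient, and I expect it to be routine. The genuine obstacle is the torsion-freeness and localization-injectivity step that upgrades $\mathcal P_{\overline\beta}$-level information to $\mathcal P_\beta$-level information: it parallels Lemma \ref{injectivity} and the proof of Proposition \ref{largeR2}(2), with the artinian local ring appearing there replaced by an artinian quotient attached to the rank-one bottom of $\nu^{(\ell+1)}$ acting on $W$, and it is there that the stability hypothesis is consumed.
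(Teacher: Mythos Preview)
Your strategy is sound in spirit but carries an acknowledged gap that the paper sidesteps entirely. You reduce to a module version of the Corollary over $S=R/P_{\ell+1}$, and you are right that establishing that module statement would require redoing the machinery of \S\ref{technical} in relative form. This is not obviously routine: the proofs of Proposition~\ref{largeR2} lean on the artinian structure of $R'_{P'_{\ell+1}}/\P'_{\overline\beta}R'_{P'_{\ell+1}}$ and on the specific shape of $\kappa(P_{\ell+1})\otimes_R R^\dag$, and porting these to an abstract torsion-free $S$-module with a compatible $\Delta_{\ell+1}$-valued filtration is a genuine extra layer of work, not a corollary of what is already proved.

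The paper avoids all of this with a short minimal-counterexample argument that stays at the level of rings and uses Proposition~\ref{largeR2} exactly as stated. Instead of inducting on the rank, one proves directly the stronger claim
\[
\P'_{\bar\beta}{R'}^\dag\cap R^\dag=\P_{\bar\beta}R^\dag\qquad\text{for every }\ell\text{ and every }\bar\beta\in\Gamma/\Delta_{\ell+1},
\]
of which the Corollary is the case $\ell=r-1$. Suppose it fails; take the least $\ell$ and then the least $\bar\beta\in\Phi'/\Delta_{\ell+1}$ (well-ordered!) where it fails. If $\bar\beta$ were minimal in its $\Delta_\ell/\Delta_{\ell+1}$-coset inside $\Phi'/\Delta_{\ell+1}$, then $\P'_{\bar\beta}=\P'_{\bar\beta\bmod\Delta_\ell}$ and the statement would already fail at level $\ell-1$, contradicting minimality of $\ell$. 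Hence $\bar\beta$ has an immediate predecessor $\bar\beta_-$ in $\Phi'/\Delta_{\ell+1}$ with $\bar\beta-\bar\beta_-\in\Delta_\ell/\Delta_{\ell+1}$. By minimality, the equality holds at $\bar\beta_-$, so any $x\in\P'_{\bar\beta}{R'}^\dag\cap R^\dag$ lies in $\P_{\bar\beta_-}R^\dag$; since it maps to zero in $\P'_{\bar\beta_-}{R'}^\dag/\P'_{\bar\beta}{R'}^\dag$, the injectivity of Proposition~\ref{largeR2}(2) forces $x\in\P_{(\bar\beta_-)+}R^\dag=\P_{\bar\beta}R^\dag$, a contradiction.

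The point you should take away is that the well-ordering of $\Phi'/\Delta_{\ell+1}$ lets one climb one graded step at a time, so the injectivity statement in Proposition~\ref{largeR2}(2) is already the full inductive engine; no descent to a quotient ring and no module-theoretic generalization is needed.
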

\begin{proof} It is sufficient to prove that for each
$\ell\in\{0,\dots,r-1\}$ and
$\bar\beta\in\frac\Gamma{\Delta_{\ell+1}}$, we have
\begin{equation}
\P'_{\bar\beta}{R'}^\dag\cap R^\dag=\P_{\bar\beta};\label{eq:scalewisebeta}
\end{equation}
the Corollary is just the special case of (\ref{eq:scalewisebeta})
when $\ell=r-1$. We prove (\ref{eq:scalewisebeta}) by
contradiction. Assume the contrary and take the smallest $\ell$ for
which (\ref{eq:scalewisebeta}) fails to be true. Let
$\Phi'=\nu(R'\setminus\{0\})$. We will denote by
$\frac\Phi{\Delta_{\ell+1}}$ the image of $\Phi$ under the composition
of natural maps
$\Phi\hookrightarrow\Gamma\rightarrow\frac\Gamma{\Delta_{\ell+1}}$ and
similarly for $\frac{\Phi'}{\Delta_{\ell+1}}$. Clearly, if
(\ref{eq:scalewisebeta}) fails for a certain $\bar\beta$, it also
fails for some $\bar\beta\in\frac{\Phi'}{\Delta_{\ell+1}}$; take the
smallest $\bar\beta\in\frac{\Phi'}{\Delta_{\ell+1}}$ with this
property. If we had
$\bar\beta=\min\left\{\left.\tilde\beta\in\frac{\Phi'}{\Delta_{\ell+1}}\
  \right|\ \tilde\beta-\bar\beta\in\Delta_\ell\right\}$, then
(\ref{eq:scalewisebeta}) would also fail with $\bar\beta$ replaced by
$\bar\beta\mod\Delta_\ell\in\frac\Gamma{\Delta_\ell}$, contradicting
the minimality of $\ell$. Thus
\begin{equation}\label{eq:notminimum}
\bar\beta>\min\left\{\left.\tilde\beta\in\frac{\Phi'}{\Delta_{\ell+1}}\
  \right|\ \tilde\beta-\bar\beta\in\Delta_\ell\right\}.
\end{equation}
Let $\bar\beta-$ denote the immediate predecessor of $\bar\beta$ in
$\frac{\Phi'}{\Delta_{\ell+1}}$. By (\ref{eq:notminimum}), we have
$\bar\beta-\bar\beta-\in\Delta_\ell$. By the choice of $\bar\beta$, we
have $\P_{\bar\beta-}=\P'_{\bar\beta-}{R'}^\dag\cap R^\dag$ but
$\P_{\bar\beta}\subsetneqq\P'_{\bar\beta}{R'}^\dag\cap R^\dag$. This
contradicts Proposition \ref{largeR2}, applied to $\bar\beta-$. The
Corollary is proved.
\end{proof}

Now we are ready to state and prove the main Theorem of this section.
\begin{theorem}\label{primality1} (1) Fix an integer
  $\ell\in\{1,\dots,r+1\}$. Assume that there exists
  $R'\in\mathcal{T}(R)$ which is $(\ell+1)$-stable. Then the ideal
  $H_{2\ell+1}$ is prime.

(2) Let $i=2\ell+2$. There exists an extension $\nu^\dag_{i0}$ of
$\nu_{\ell+1}$ to
$\lim\limits_{\overset\longrightarrow{R'}}\kappa(H'_{i-1})$, with
value group
\begin{equation}
\Delta_{i-1,0}=\frac{\Delta_\ell}{\Delta_{\ell+1}},\label{eq:groupequal2}
\end{equation}
whose valuation ideals are described as follows. For an element
$\overline\beta\in\frac{\Delta_\ell}{\Delta_{\ell+1}}$, the
$\nu^\dag_{i0}$-ideal of $\frac{R^\dag}{H_{i-1}}$ of value
$\overline\beta$, denoted by $\P^\dag_{\overline\beta\ell}$, is given
by the formula
\begin{equation}
\P^\dag_{\overline\beta,\ell+1}=\left(\lim\limits_{\overset\longrightarrow{R'}}
\frac{\P'_{\overline\beta}{R'}^\dag}{H'_{i-1}}\right)\cap\frac{R^\dag}{H_{i-1}}.
\label{eq:valideal1}
\end{equation}
\end{theorem}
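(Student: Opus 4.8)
The plan is to prove (1) and (2) together, by constructing the valuation $\nu^\dag_{i0}$ explicitly; once it is known to be a valuation, primality of $H_{2\ell+1}$ is automatic. For $\ell=r$ we have $H_{2r+1}=mR^\dag$, trivially prime, so assume $\ell\le r-1$, so that $\nu_{\ell+1}$ is defined. By Proposition~\ref{behavewell}, $H_{2\ell+1}=H'_{2\ell+1}\cap R^\dag$ for the given $(\ell+1)$-stable $R'\in\mathcal T(R)$, and a contraction of a prime ideal along the injection $R^\dag\hookrightarrow{R'}^\dag$ of Lemma~\ref{factor} is prime; so we may assume $R$ itself is $(\ell+1)$-stable. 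By Proposition~\ref{largeR1} every member of $\mathcal T(R)$ is then $(\ell+1)$-stable, so Corollary~\ref{stableimplicit} and Proposition~\ref{largeR2} are available throughout $\mathcal T(R)$; in particular $H'_{2\ell+1}=\bigcap_{\overline\beta\in\frac{\Delta_\ell}{\Delta_{\ell+1}}}\P'_{\overline\beta}{R'}^\dag$ for each $R'$, using the cofinality of the $\P'_{\overline\beta}$ among the $\P'_\beta$, $\beta\in\Delta_\ell$.

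Fix $R'$, and let $\overline\nu$ be $\nu_{\ell+1}$ regarded as a rank one valuation of $\kappa(P'_\ell)$ (value group $\frac{\Delta_\ell}{\Delta_{\ell+1}}$); it is non-negative on $R'/P'_\ell$ and centered at the local noetherian domain $E':=(R'/P'_\ell)_{P'_{\ell+1}/P'_\ell}$. Given $x\in{R'}^\dag\setminus H'_{2\ell+1}$, pick $\overline\alpha$ with $x\notin\P'_{\overline\alpha}{R'}^\dag$. By Lemma~\ref{lemma36} applied to $(E',\overline\nu)$, the value semigroup of $\overline\nu$ has no infinite bounded ascending chain, so only finitely many ideals $\P'_{\overline\gamma}{R'}^\dag$ strictly contain $\P'_{\overline\alpha}{R'}^\dag$; hence there is a unique $\overline\beta$ with $x\in\P'_{\overline\beta}{R'}^\dag\setminus\P'_{\overline\beta+}{R'}^\dag$, and we set $\nu^\dag_{i0}(x):=\overline\beta$. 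By Proposition~\ref{largeR2}, part~1, applied to any $\nu$-extension $R'\to R''$, $\P''_{\overline\beta}{R''}^\dag\cap{R'}^\dag=\P'_{\overline\beta}{R'}^\dag$, so this value is unchanged under the transition maps; thus $\nu^\dag_{i0}$ is well defined on $\varinjlim{R'}^\dag$, with support $\varinjlim H'_{2\ell+1}$, and on each $R'$ it agrees with $\nu_{\ell+1}$. The inequalities $\nu^\dag_{i0}(x+y)\ge\min\{\nu^\dag_{i0}(x),\nu^\dag_{i0}(y)\}$ and $\nu^\dag_{i0}(xy)\ge\nu^\dag_{i0}(x)+\nu^\dag_{i0}(y)$ follow at once from $\P'_{\overline\beta}\supseteq\P'_{\overline\gamma}$ ($\overline\beta\le\overline\gamma$) and $\P'_{\overline\alpha}\P'_{\overline\beta}\subseteq\P'_{\overline\alpha+\overline\beta}$.

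The heart of the matter is the reverse inequality $\nu^\dag_{i0}(xy)\le\nu^\dag_{i0}(x)+\nu^\dag_{i0}(y)$, equivalently, that in $\gr^\dag(R'):=\bigoplus_{\overline\gamma}\P'_{\overline\gamma}{R'}^\dag/\P'_{\overline\gamma+}{R'}^\dag$ the product of the initial forms of two elements lying outside $H'_{2\ell+1}$ is nonzero. Since the transition maps of these graded rings are injective by Proposition~\ref{largeR2}, part~2, and products are computed on finite levels, it suffices to show each $\gr^\dag(R')$ is a domain. Put $A':=(R'\otimes_RR^\dag)_{M'}$; replacing ${R'}^\dag$ by $A'$ (harmless by faithful flatness, exactly as in the proof of Proposition~\ref{largeR2}) and using the flatness of $R^\dag$ over $R$, one identifies the graded ring with $\gr_{\overline\nu}(R'/P'_\ell)\otimes_{R'/P'_{\ell+1}}(A'/P'_{\ell+1}A')$. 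Now $A'/P'_{\ell+1}A'$ is a domain — it is reduced because $R'\to A'$ is regular ($R$ being excellent), its localization at $(R'/P'_{\ell+1})\setminus\{0\}$ is $D':=\kappa(P'_{\ell+1})\otimes_R(R'\otimes_RR^\dag)_{M'}$, a domain by Definition~\ref{stable}(1) and Remark~\ref{interchanging}(1), and faithful flatness forces all its minimal primes to contract to $(0)$ in $R'/P'_{\ell+1}$, hence there is only one — and $\gr_{\overline\nu}(R'/P'_\ell)$ is torsion-free over $R'/P'_{\ell+1}$ since $\overline\nu$ is a valuation. Therefore $\gr^\dag(R')$ injects into $\gr_{\overline\nu}(E')\otimes_{\kappa(P'_{\ell+1})}D'$, which in turn embeds (via $\gr_{\overline\nu}(E')\hookrightarrow\gr_{\overline\nu}W'=k_{\overline\nu}[\Lambda_{\ge0}]$, with $W'$ the valuation ring of $\overline\nu$, $k_{\overline\nu}$ its residue field and $\Lambda$ its value group) into the monoid algebra $\big(k_{\overline\nu}\otimes_{\kappa(P'_{\ell+1})}D'\big)[\Lambda_{\ge0}]$. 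Everything thus reduces to showing that $k_{\overline\nu}\otimes_{\kappa(P'_{\ell+1})}D'$ is a domain.

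This last step is the main obstacle, and is where Definition~\ref{stable}(2) is used. Since $\overline\nu$ is the restriction of $\nu_{\ell+1}$ to $\kappa(P'_\ell)$, $k_{\overline\nu}$ is a subfield of $k_{\nu_{\ell+1}}=\kappa(\mathbf m_{\ell+1})=\varinjlim\kappa(P''_{\ell+1})$; so any nontrivial finite algebraic subextension $L$ of $\kappa(P'_{\ell+1})$ contained in $k_{\overline\nu}$ is contained in $\kappa(P''_{\ell+1})$ for some $R''\in\mathcal T(R')$, and if such an $L$ also embedded into $D'$ this would contradict Definition~\ref{stable}(2). Ruling out such a conflict (together with the usual separability argument in characteristic $p$) gives that $k_{\overline\nu}\otimes_{\kappa(P'_{\ell+1})}D'$ is a domain, whence $\gr^\dag(R')$ is a domain, $\nu^\dag_{i0}$ is a valuation of $\varinjlim\kappa(H'_{i-1})$ extending $\nu_{\ell+1}$, and its support $\varinjlim H'_{2\ell+1}$ is a prime ideal. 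Then $H_{2\ell+1}=(\varinjlim H'_{2\ell+1})\cap R^\dag$ is prime by Proposition~\ref{behavewell}, proving (1); the value group of $\nu^\dag_{i0}$ lies between that of $\nu_{\ell+1}$ and $\frac{\Delta_\ell}{\Delta_{\ell+1}}$, hence equals $\frac{\Delta_\ell}{\Delta_{\ell+1}}$, giving (\ref{eq:groupequal2}); and by construction the $\nu^\dag_{i0}$-ideal of $\frac{R^\dag}{H_{i-1}}$ of value $\overline\beta$ is the image of $\P_{\overline\beta}R^\dag$, which by Proposition~\ref{largeR2}, part~1, applied along $\mathcal T(R)$, is the right-hand side of (\ref{eq:valideal1}). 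This proves (2).
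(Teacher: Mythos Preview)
Your reduction to the $(\ell+1)$-stable case, the definition of $\nu^\dag_{i0}$, and the verification of (\ref{eq:groupequal2}) and (\ref{eq:valideal1}) are correct and match the paper. The divergence is in the proof of multiplicativity.

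The paper argues directly: given $x,y\in R^\dag\setminus H_{2\ell+1}$ with values $\overline\beta,\overline\gamma$, it passes to a local blowing-up $R'\in\mathcal T(R)$ in which $\P_{\overline\beta}$ and $\P_{\overline\gamma}$ become principal (generated by $a_1,b_1$), writes $x=za_1$, $y=wb_1$ in ${R'}^\dag$ with $z,w\notin P'_{\ell+1}{R'}^\dag$, and then uses only the primality of $P'_{\ell+1}{R'}^\dag$ (Definition~\ref{stable}(1) for the blown-up $R'$, transported there via Proposition~\ref{largeR1}) to get $zw\notin P'_{\ell+1}{R'}^\dag$, hence $xy\notin\P'_{\overline\beta+\overline\gamma+}{R'}^\dag$. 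By Proposition~\ref{largeR2} this pulls back to $R^\dag$.

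Your route --- embedding $\gr^\dag(R')$ into $(k_{\overline\nu}\otimes_FD')[\Lambda_{\ge0}]$ with $F=\kappa(P'_{\ell+1})$ and then claiming the coefficient ring is a domain --- has a genuine gap at that last step. Stability condition~(2) only rules out a nontrivial algebraic extension $L/F$ that embeds \emph{simultaneously} into $k_{\overline\nu}$ (via $k_{\overline\nu}\subset\varinjlim\kappa(P''_{\ell+1})$) and into $D'$. That is strictly weaker than what is needed for $k_{\overline\nu}\otimes_FD'$ to be a domain: granting separability, you would need $F$ algebraically closed in one of the factors, or at least that the algebraic closures of $F$ in $k_{\overline\nu}$ and in $\mathrm{Frac}(D')$ are \emph{linearly disjoint} over $F$. ``No common nontrivial subextension'' does not give linear disjointness when these algebraic parts are not Galois over $F$: the minimal polynomial over $F$ of an element of $k_{\overline\nu}$ can factor nontrivially over $\mathrm{Frac}(D')$ without acquiring a root there, and then the tensor product is not a domain even though no $L$ embeds in both. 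The phrase ``the usual separability argument in characteristic $p$'' does not address this obstruction. Stability gives you exactly enough to make $A'/P'_{\ell+1}A'$ a domain (which you correctly derive), but not to control $k_{\overline\nu}\otimes_FD'$ --- and indeed the paper never needs the latter. The fix is precisely the paper's blow-up: after principalizing, each graded piece becomes free of rank one over the domain $A''/P''_{\ell+1}A''$, and multiplicativity is immediate.
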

\begin{remark} Once the even-numbered implicit prime ideals $H'_{2\ell}$
  are defined below, we will show that $\nu^\dag_{i0}$ is the unique
  extension of $\nu_{\ell+1}$ to
  $\lim\limits_{\overset\longrightarrow{R'}}\kappa(H'_{i-1})$,
  centered in the local ring
  $\lim\limits_{\overset\longrightarrow{R'}}
\frac{R'^\dag_{H'_{2\ell+2}}}{H'_{2\ell+1}R'^\dag_{H'_{2\ell+2}}}$.
\end{remark}
\begin{proof}\textit{(of Theorem \ref{primality1})} Let $R'$ be a stable
ring in $\mathcal{T}(R)$. Once Theorem \ref{primality1} is proved for
$R'$, the same results for $R$ will follow easily by intersecting all
the ideals of ${R'}^\dag$ in sight with $R^\dag$. Therefore from now
on we will replace $R$ by $R'$, that is, we will assume that $R$
itself is stable.

Let $\Phi_\ell$ denote the image of the semigroup
$\nu(R\setminus\{0\})$ in $\frac{\Gamma}{\Delta_{\ell+1}}$. As we saw
above, $\Phi_\ell$ is well ordered. For an element
$\overline\beta\in\Phi_\ell$, let $\overline\beta+$ denote the
immediate successor of $\overline\beta$ in $\Phi_\ell$.

Take any element $x\in R^\dag\setminus H_{i-1}$. By  Corollary
\ref{stableimplicit}, there exists (a unique)
$\overline\beta\in\Phi_\ell\cap\frac{\Delta_\ell}{\Delta_{\ell+1}}$
such that
\begin{equation}
x\in{\cal P}_{\overline\beta} R^\dag\setminus{\cal
  P}_{\overline\beta+}R^\dag\label{eq:xinbeta}
\end{equation}
(where, of course, we allow $\overline\beta=0$). Let $\bar x$ denote the image of $x$ in $\frac{R^\dag}{H_{i-1}}$.
We define
$$
\nu^\dag_{i0}(\bar x)=\overline\beta.
$$
Next, take another element $y\in R^\dag\setminus H_{2\ell+1}$ and let
$\gamma\in\Phi_\ell\cap\frac{\Delta_\ell}{\Delta_{\ell+1}}$ be such that
\begin{equation}
y\in{\cal P}_{\overline\gamma}R^\dag\setminus{\cal P}_{\overline\gamma+}R^\dag.\label{eq:yingamma}
\end{equation}
Let $(a_1,...,a_n)$ be a set of generators of ${\cal P}_{\overline\beta}$ and
$(b_1,...,b_s)$ a set of generators of ${\cal P}_{\overline\gamma}$, with
$\nu_{\ell+1}(a_1)=\overline\beta$ and $\nu_{\ell+1}(b_1)=\overline\gamma$. Let $R'$ be a local
blowing up along $\nu$ such that $R'$ contains all the fractions
$\frac{a_i}{a_1}$ and $\frac{b_j}{b_1}$. By Proposition \ref{largeR1} and Definition \ref{stable} (1), the ideal
$P'_{\ell+1}{R'}^\dag$ is prime. By construction, we have $a_1\ |\ x$ and $b_1\ |\ y$ in ${R'}^\dag$.
Write $x=za_1$ and $y=wb_1$ in ${R'}^\dag$. The equality
(\ref{eq:stab}), combined with (\ref{eq:xinbeta}) and (\ref{eq:yingamma}), implies that $z,w\notin P'_{\ell+1}{R'}^\dag$, hence
\begin{equation}
zw\notin P'_{\ell+1}{R'}^\dag\label{eq:zwnotin}
\end{equation}
by the primality of $P'_{\ell+1}{R'}^\dag$. We obtain
\begin{equation}
xy=a_1b_1zw.\label{eq:xyzw}
\end{equation}
Since $\nu$ is a valuation on $R'$, we have $\left({\cal P}'_{\overline\beta+\overline\gamma+}:(a_1b_1)R'\right)\subset
P'_{\ell +1}$. By faithful flatness of ${R'}^\dag$ over $R'$ we obtain
\begin{equation}
\left({\cal P}'_{\overline\beta+\overline\gamma+}{R'}^\dag:(a_1b_1){R'}^\dag\right)\subset P'_{\ell+1}{R'}^\dag.
\end{equation}
Combining this with (\ref{eq:zwnotin}) and (\ref{eq:xyzw}), we obtain
\begin{equation}
xy\notin{\cal P}_{\overline\beta+\overline\gamma+}R^\dag,\label{eq:xynotin}
\end{equation}
in particular, $xy\notin H_{2\ell+1}$. We started with two arbitrary elements $x,y\in R^\dag\setminus H_{2\ell+1}$ and
showed that $xy\notin H_{2\ell+1}$. This proves (1) of the Theorem.

Furthermore, (\ref{eq:xynotin}) shows that $\nu^\dag_{i0}(\bar x\bar y)=\overline\beta+\overline\gamma$, so $\nu^\dag_{i0}$ induces a
valuation of $\kappa(H_{i-1})$ and hence also of $\lim\limits_{\overset\longrightarrow{R'}}\kappa(H'_{i-1})$.
Equality (\ref{eq:groupequal2}) holds by definition and (\ref{eq:valideal1}) by the assumed stability of $R$.
\end{proof}

Next, we define the even-numbered implicit prime ideals $H'_{2\ell}$. The only information we need to use to define the
prime ideals $H'_{2\ell}\subset H'_{2\ell+1}$ and to prove that $H'_{2\ell-1}\subset H'_{2\ell}$ are the facts that
$H_{2\ell+1}$ is a prime lying over $P_\ell$ and that the ring homomorphism $R'\rightarrow{R'}^\dag$ is regular.
\begin{proposition}\label{H2l} There exists a unique minimal prime ideal $H_{2\ell}$ of
$P_\ell R^\dag$, contained in $H_{2\ell+1}$.
\end{proposition}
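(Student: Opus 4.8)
The plan is to use the regularity of the homomorphism $R \to R^\dag$ together with the fact, already established in Theorem \ref{primality1}, that $H_{2\ell+1}$ is a prime ideal of $R^\dag$ lying over $P_\ell$. Recall that $H_{2\ell+1} \cap R = P_\ell$ by Proposition \ref{contracts}. First I would localize: pass to the ring $R^\dag \otimes_R \kappa(P_\ell)$, or equivalently work in the fibre ring $R^\dag_{H_{2\ell+1}} \otimes_R \kappa(P_\ell)$. Since the homomorphism $R \to R^\dag$ is regular, the fibre $R^\dag \otimes_R \kappa(P_\ell)$ is a geometrically regular $\kappa(P_\ell)$-algebra; in particular it is reduced, so the ideal $P_\ell R^\dag$ has no embedded primes, and its minimal primes are exactly the minimal primes of the fibre ring. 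The minimal primes of $P_\ell R^\dag$ contained in $H_{2\ell+1}$ correspond to the minimal primes of the local ring $\left(R^\dag \otimes_R \kappa(P_\ell)\right)_{H_{2\ell+1}}$, i.e. to the minimal primes of $R^\dag_{H_{2\ell+1}}/P_\ell R^\dag_{H_{2\ell+1}}$.

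Next I would argue that this local ring has a unique minimal prime, i.e. that it is geometrically irreducible (in fact a regular local ring, hence a domain). The key point is that regularity of $R \to R^\dag$ forces the local ring $R^\dag_{H_{2\ell+1}}/P_\ell R^\dag_{H_{2\ell+1}}$ — which is the local ring at $H_{2\ell+1}$ of the fibre $\operatorname{Spec}\left(R^\dag \otimes_R \kappa(P_\ell)\right)$ — to be a regular local ring: a localization of a geometrically regular algebra over a field is regular. A regular local ring is a domain, so it has a unique minimal prime, namely $(0)$. Pulling this back to $R^\dag$, the ideal $H_{2\ell}$ corresponding to this unique minimal prime is the unique minimal prime of $P_\ell R^\dag$ contained in $H_{2\ell+1}$. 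This is precisely the uniqueness asserted in the Proposition; existence is automatic since $H_{2\ell+1}$ is a prime containing $P_\ell R^\dag$, so at least one minimal prime of $P_\ell R^\dag$ lies below it.

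I expect the main subtlety to be bookkeeping about which ring $R^\dag$ one is dealing with, since in the henselization and étale cases $R \to R^\dag$ is regular without any hypothesis on $R$, whereas for $R^\dag = \hat R$ one needs $R$ to be excellent (a G-ring), as flagged earlier in the excerpt; this is exactly the point where the excellence hypothesis enters. In all three cases the regularity of $\sigma$ is available (by hypothesis in the completion case, automatically otherwise), so the argument above goes through uniformly. One harmless technical remark: when $\ell = r$ we have $H_{2\ell+1} = m R^\dag$ and $P_\ell = m$, so $P_\ell R^\dag = m R^\dag$ is itself prime and $H_{2r} = m R^\dag = H_{2r+1}$, consistent with the statement; the interesting content is for $0 \le \ell \le r-1$. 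Finally, the inclusion $H_{2\ell-1} \subseteq H_{2\ell}$ alluded to in the preceding paragraph is not part of this Proposition's statement and I would defer it, but it will follow from the same fibrewise picture once one checks that $H_{2\ell-1}$ lies in the fibre over $P_{\ell-1} \subseteq P_\ell$.
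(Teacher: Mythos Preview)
Your proposal is correct and follows essentially the same argument as the paper: both proofs use that $H_{2\ell+1}$ is a prime lying over $P_\ell$, pass to the fibre $R^\dag\otimes_R\kappa(P_\ell)$, invoke regularity of $R\to R^\dag$ to conclude this fibre is regular, localize at $H_{2\ell+1}$ to obtain a regular local ring (hence a domain), and read off the unique minimal prime. Your side remarks about where excellence enters and about the degenerate case $\ell=r$ also match the paper's treatment.
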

\begin{proof} Since $H_{2\ell+1}\cap R=P_\ell$, $H_{2\ell+1}$ belongs to
the fiber of the map $Spec\ R^\dag\rightarrow Spec\ R$ over $P_\ell$. Since
$R$ was assumed to be excellent, $S:={R^\dag}\otimes_R\kappa(P_\ell)$
is a regular ring (note that the excellence assumption is needed only in the case $R^\dag=\hat R$; the ring homomorphism
$R\rightarrow R^\dag$ is automatically regular if $R^\dag=\tilde R$ or $R^\dag=R^e$). Hence its localization $\bar
S:=S_{H_{2\ell+1}S}\cong\frac{R^\dag_{H_{2\ell+1}}}{P_\ell R^\dag_{H_{2\ell+1}}}$ is
a regular {\em local} ring. In particular, $\bar S$ is an integral
domain, so $(0)$ is its unique minimal prime ideal. The set of minimal
prime ideals of $\bar S$ is in one-to-one correspondence with the set of
minimal primes of $P_\ell$, contained in $H_{2\ell+1}$, which shows that such
a minimal prime $H_{2\ell}$ is unique, as desired.
\end{proof}

We have $P_\ell\subset H_{2\ell}\cap R\subset H_{2\ell+1}\cap R=P_\ell$, so $H_{2\ell}\cap R\subset P_\ell$.
\begin{proposition} We have $H_{2\ell-1}\subset H_{2\ell}$.
\end{proposition}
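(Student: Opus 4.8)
The plan is to reduce to the stable case and then squeeze $H_{2\ell-1}$ between $H_{2\ell}$ and a single, completely explicit valuation ideal of $R^\dag$.

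First I would argue, exactly as at the beginning of the proof of Theorem~\ref{primality1}, that it is enough to treat the case where $R$ itself is stable: one picks a stable $R'\in\mathcal{T}(R)$, proves the inclusion ${H'_{2\ell-1}}\subseteq{H'_{2\ell}}$ in ${R'}^\dag$, and then intersects these ideals with $R^\dag$, invoking Proposition~\ref{behavewell} (and the analogous good behaviour of the even-numbered implicit ideals under $\nu$-extensions coming from Proposition~\ref{H2l}) to recover the statement for $R$. Assuming now that $R$ is stable, Corollary~\ref{stableimplicit}, applied with $\ell-1$ in place of $\ell$, gives the closed-form expression
$$
H_{2\ell-1}=\bigcap\limits_{\beta\in\Delta_{\ell-1}}\P_\beta R^\dag ,
$$
so that $H_{2\ell-1}\subseteq\P_{\beta_0}R^\dag$ for every single element $\beta_0\in\Delta_{\ell-1}$.

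The second step is just to choose $\beta_0$ cleverly. Since the isolated subgroups of $\Gamma$ are pairwise distinct we have $\Delta_\ell\subsetneqq\Delta_{\ell-1}$, so I can pick $\beta_0\in\Delta_{\ell-1}$ whose class in $\Delta_{\ell-1}/\Delta_\ell$ is strictly positive; because $\Delta_\ell$ is a convex subgroup of $\Gamma$, such a $\beta_0$ is then strictly larger than every element of $\Delta_\ell$. Consequently, any $x\in R$ with $\nu(x)\ge\beta_0$ satisfies $\nu(x)\notin\Delta_\ell$, i.e.\ $x\in P_\ell$ by the definition of the prime valuation ideal $P_\ell=\{x\in R:\nu(x)\notin\Delta_\ell\}$. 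Hence $\P_{\beta_0}\subseteq P_\ell$, and therefore
$$
H_{2\ell-1}\subseteq\P_{\beta_0}R^\dag\subseteq P_\ell R^\dag\subseteq H_{2\ell},
$$
the last inclusion holding because, by Proposition~\ref{H2l}, $H_{2\ell}$ is a minimal prime of the ideal $P_\ell R^\dag$.

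I do not anticipate any real difficulty. The only point requiring a word of care is the reduction to the stable case, where one must make sure that contracting the chain ${H'_{2\ell-1}}\subseteq{H'_{2\ell}}$ from ${R'}^\dag$ down to $R^\dag$ genuinely returns $H_{2\ell-1}\subseteq H_{2\ell}$; once $R$ is stable, the rest is immediate from Corollary~\ref{stableimplicit}, the description of $P_\ell$ as a $\nu$-prime ideal, and the convexity of the isolated subgroups.
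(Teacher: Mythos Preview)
Your proof is correct and follows essentially the same route as the paper: pass to a stable ring, use Corollary~\ref{stableimplicit} to trap $H_{2\ell-1}$ inside a single $\P_{\beta_0}R^\dag$ with $\beta_0$ positive modulo $\Delta_\ell$, note that $\P_{\beta_0}\subset P_\ell$, and then use that $H_{2\ell}$ contains $P_\ell R^\dag$; the intersection back to $R^\dag$ is handled just as in the paper. The only cosmetic difference is that the paper works directly in the stable ${R'}^\dag$ and intersects at the end, whereas you first replace $R$ by a stable ring and then argue there.
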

\begin{proof} Take an element $\beta\in\frac{\Delta_{\ell-1}}{\Delta_\ell}$ and a stable ring $R'\in\cal T$.
Then $\P'_\beta\subset P'_\ell$, so
\begin{equation}
H'_{2\ell-1}\subset\P'_\beta{R'}^\dag\subset P'_\ell{R'}^\dag\subset H'_{2\ell}.\label{eq:inclusion}
\end{equation}
Intersecting (\ref{eq:inclusion}) back with $R^\dag$ we get the result.
\end{proof}

In \S\ref{henselization} we will see that if $R^\dag=\tilde R$ or $R^\dag=R^e$ then $H_{2\ell}=H_{2\ell+1}$ for all
$\ell$.

Let the notation be the same as in Theorem \ref{primality1}.
\begin{proposition}\label{nu0unique} The valuation $\nu_{i0}^\dag$ is the unique extension of $\nu_\ell$ to a valuation of
$\lim\limits_{\overset\longrightarrow{R'}}\kappa(H'_{i-1})$, centered in the local ring
$\lim\limits_{\overset\longrightarrow{R'}}\frac{{R'}^\dag_{H'_{2\ell}}}{H'_{2\ell-1}{R'}^\dag_{H'_{2\ell}}}$.
\end{proposition}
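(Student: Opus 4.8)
The plan is to imitate the proof of Theorem~\ref{primality1}, combined with the uniqueness argument of Theorem~\ref{th53} relativised over the prime $P_{\ell-1}$. Throughout, write $\bar x$ for the image in $\kappa(H_{2\ell-1})=\lim\limits_{\overset\longrightarrow{R'}}\kappa(H'_{2\ell-1})$ of an element $x$, and put $A':=\frac{{R'}^\dag_{H'_{2\ell}}}{H'_{2\ell-1}{R'}^\dag_{H'_{2\ell}}}$, so that the ring in the statement is $A:=\lim\limits_{\overset\longrightarrow{R'}}A'$; here $i=2\ell$, and $\nu^\dag_{i0}$ is the valuation with value group $\frac{\Delta_{\ell-1}}{\Delta_\ell}$ furnished by Theorem~\ref{primality1}.

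First I would reduce to the case in which $R$ itself is stable. By Proposition~\ref{largeR1} the stable rings are cofinal in $\mathcal T$; the objects $\nu^\dag_{i0}$, $H'_{2\ell-1}\subset H'_{2\ell}$ and $A'$ all form trees indexed by $\mathcal T$, the field $\kappa(H_{2\ell-1})$ is the field of fractions of $\lim\limits_{\overset\longrightarrow{R'}}\frac{{R'}^\dag}{H'_{2\ell-1}}$, and the transition maps $A'\to A''$ are local since the $H'_\bullet$ form a tree; hence ``centered in $A$'' unwinds to ``centered in every $A'$''. So it is enough to prove: if $R$ is stable, then any valuation $\mu$ of $\kappa(H_{2\ell-1})$ extending $\nu_\ell$ and centered in $A'$ agrees with $\nu^\dag_{i0}$ on the image of ${R'}^\dag$, for every stable $R'\in\mathcal T$. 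Along the way I would record that $\nu^\dag_{i0}$ is itself such a valuation: for a stable $R$ one has $H_{2\ell}=P_\ell R^\dag=\P_{\overline 0+}R^\dag$ (Definition~\ref{stable}(1), Remark~\ref{interchanging}(2) and Proposition~\ref{H2l}) and, by Corollary~\ref{stableimplicit}, the $\nu^\dag_{i0}$-ideal of $\frac{R^\dag}{H_{2\ell-1}}$ of value $\overline\beta$ is $\frac{\P_{\overline\beta}R^\dag}{H_{2\ell-1}}$; from this one checks directly that $\nu^\dag_{i0}\ge 0$ on $A'$ and that its maximal ideal has positive value, i.e.\ $\nu^\dag_{i0}$ is centered in $A'$. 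Moreover $A\supseteq R_{\nu_\ell}=\frac{(R_\nu)_{\mathbf m_\ell}}{\mathbf m_{\ell-1}}$, so the hypothesis ``extends $\nu_\ell$'' is in fact automatic once $\mu$ is centered in $A$ (a valuation ring of a field cannot be properly dominated by another valuation ring of the same field); I would note this but not rely on it.

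The core of the argument is as follows. Fix a stable $R'\in\mathcal T$ and $x\in{R'}^\dag\setminus H'_{2\ell-1}$; as in the proof of Theorem~\ref{primality1} there is a unique $\overline\beta:=\nu^\dag_{i0}(\bar x)\in\frac{\Delta_{\ell-1}}{\Delta_\ell}$ with $x\in\P'_{\overline\beta}{R'}^\dag\setminus\P'_{\overline\beta+}{R'}^\dag$. Choose a generator $a_1$ of $\P'_{\overline\beta}$ with $\nu(a_1)\equiv\overline\beta\pmod{\Delta_\ell}$ (take $a_1=1$ if $\overline\beta=0$); since $\overline\beta\in\frac{\Delta_{\ell-1}}{\Delta_\ell}$ we have $\nu(a_1)\in\Delta_{\ell-1}$, hence $a_1\notin P'_{\ell-1}$ and the image of $a_1$ in $\kappa(\mathbf m_{\ell-1})$ is non-zero with $\nu_\ell$-value $\overline\beta$, so $\mu(\bar a_1)=\nu_\ell(\bar a_1)=\overline\beta$ because $\mu$ extends $\nu_\ell$. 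Exactly as in the proof of Theorem~\ref{primality1}, pass to a local blowing up $R'\to R''$ in $\mathcal T$, still stable by Proposition~\ref{largeR1}, with $a_1\mid x$ in ${R''}^\dag$, and write $x=za_1$. Since $a_1 P''_\ell\subset\P''_{\overline\beta+}$ in $R''$, the relation $z\in P''_\ell{R''}^\dag$ would give $x=za_1\in\P''_{\overline\beta+}{R''}^\dag$, whence $x\in\P'_{\overline\beta+}{R'}^\dag$ by Proposition~\ref{largeR2}(1), contradicting the choice of $\overline\beta$; so $z\notin P''_\ell{R''}^\dag$. For the stable ring $R''$ the ideal $P''_\ell{R''}^\dag$ is prime (Definition~\ref{stable}(1), Remark~\ref{interchanging}), hence coincides with the minimal prime $H''_{2\ell}$ of Proposition~\ref{H2l}; therefore $\bar z$ is a unit of $\frac{{R''}^\dag_{H''_{2\ell}}}{H''_{2\ell-1}{R''}^\dag_{H''_{2\ell}}}$, in which $\mu$ is centered, so $\mu(\bar z)=0$ and $\mu(\bar x)=\mu(\bar z)+\mu(\bar a_1)=\overline\beta=\nu^\dag_{i0}(\bar x)$. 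Finally, since $\kappa(H_{2\ell-1})$ is the field of fractions of $\lim\limits_{\overset\longrightarrow{R'}}\frac{{R'}^\dag}{H'_{2\ell-1}}$, every non-zero element is a ratio $\bar x/\bar y$ of images of elements of some stable ${R'}^\dag$, so the above gives $\mu(\bar x/\bar y)=\nu^\dag_{i0}(\bar x)-\nu^\dag_{i0}(\bar y)=\nu^\dag_{i0}(\bar x/\bar y)$, i.e.\ $\mu=\nu^\dag_{i0}$.

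I expect the genuine difficulty to be the step $z\notin P''_\ell{R''}^\dag\Rightarrow z\notin H''_{2\ell}$, that is, the fact that stability forces $P''_\ell{R''}^\dag$ to be prime (so that it equals the minimal prime of Proposition~\ref{H2l}); this is precisely where both clauses of Definition~\ref{stable} and the regularity of $R''\to{R''}^\dag$ enter, through Remark~\ref{interchanging}. The rest is bookkeeping with the direct limit over $\mathcal T$: verifying that the tree $\{A'\}$ has local transition maps, that the blowing up attached to a given $x$ stays inside $\mathcal T$ and may be taken stable, and that the valuations so obtained on the various ${R'}^\dag$ glue to a single valuation on the limit.
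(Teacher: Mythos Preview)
Your proposal is correct and follows essentially the same route as the paper's proof: reduce to the stable case, take $x\in R^\dag\setminus H_{2\ell-1}$, read off $\overline\beta=\nu^\dag_{i0}(\bar x)$, blow up $\P_{\overline\beta}$ so that $x=za_1$ with $z\notin P'_\ell{R'}^\dag$, and conclude that any competing extension $\mu$ centered in the limit ring must send $\bar z$ to $0$ and $\bar a_1$ to $\overline\beta$.

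One small simplification is available at the step you flag as the ``genuine difficulty'', namely $z\notin P''_\ell{R''}^\dag\Rightarrow\bar z$ is a unit in $A''$. You argue this by showing $P''_\ell{R''}^\dag$ is globally prime (via stability) and hence equals $H''_{2\ell}$. The paper instead observes directly that $P'_\ell{R'}^\dag_{H'_{2\ell}}=H'_{2\ell}{R'}^\dag_{H'_{2\ell}}$ because $H'_{2\ell}$ is a \emph{minimal} prime of $P'_\ell{R'}^\dag$ and the formal fibre is regular (Proposition~\ref{H2l} and the G-ring hypothesis); localizing a regular fibre ring at a minimal prime gives a field. This bypasses the global primality of $P'_\ell{R'}^\dag$ and works for any $R'$, stable or not, once $z\notin P'_\ell{R'}^\dag$ is established. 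Your argument is not wrong, merely slightly heavier than necessary at that point.
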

\begin{proof} As usual, without loss of generality we may assume that $R$ is stable. Take an element
$x\in R^\dag\setminus H_{2\ell-1}$. Let $\beta=\nu^\dag_{i0}(\bar x)$ and let $R'$ be the blowing up of the ideal
$\P_\beta=(a_1,\dots,a_n)$, as in the proof of Theorem \ref{primality1}. Write
\begin{equation}
x=za_1\label{eq:xza}
\end{equation}
in $R'$. We have $z\in{R'}^\dag\setminus P'_\ell{R'}^\dag$, hence
\begin{equation}
\bar z\in\frac{{R'}^\dag_{H'_{2\ell}}}{H'_{2\ell-1}{R'}^\dag_{H'_{2\ell}}}\setminus\frac{P'_\ell{R'}^\dag_{H'_{2\ell}}}{H'_{2\ell-1}{R'}^\dag_{H'_{2\ell}}}=
\frac{{R'}^\dag_{H'_{2\ell}}}{H'_{2\ell-1}{R'}^\dag_{H'_{2\ell}}}\setminus\frac{H'_{2\ell}{R'}^\dag_{H'_{2\ell}}}{H'_{2\ell-1}{R'}^\dag_{H'_{2\ell}}}.
\label{eq:znotinPl}
\end{equation}
If $\nu^*$ is any other extension of $\nu_\ell$ to $\lim\limits_{\overset\longrightarrow{R'}}\kappa(H'_{i-1})$, centered in
$\lim\limits_{\overset\longrightarrow{R'}}\frac{{R'}^\dag_{H'_{2\ell}}}{H'_{2\ell-1}{R'}^\dag_{H'_{2\ell}}}$, then $\nu^*(\bar a_1)=\beta$, $\nu^*(z)=0$
by (\ref{eq:znotinPl}), so $\nu^*(\bar x)=\beta=\nu_{i0}^\dag(\bar x)$. This completes the proof of the uniqueness of
$\nu_{i0}^\dag$.
\end{proof}
\begin{remark}\label{sameresfield} If $R'$ is stable, we have a natural isomorphism of graded algebras
$$
\gr_{\nu^\dag_{i0}}\frac{{R'}^\dag_{H'_{2\ell}}}{H'_{2\ell-1}{R'}^\dag_{H'_{2\ell}}}\cong
\gr_{\nu_\ell}\frac{R'_{P'_\ell}}{P'_{\ell-1}R'_{P'_\ell}}\otimes_{R'}\kappa(H'_{2\ell}).
$$
In particular, the residue field of $\nu^\dag_{i0}$ is
$k_{\nu^\dag_{i0}}=\lim\limits_{\overset\longrightarrow{R'}}\kappa(H'_{2\ell})$.
\end{remark}

\section{A classification of extensions of $\nu$ to $\hat R$.}
\label{Rdag}

The purpose of this section is to give a systematic description of
all the possible extensions $\nu^\dag_-$ of $\nu$ to a quotient of
$R^\dag$ by a minimal prime as compositions of $2r$ valuations,
\begin{equation}
\nu^\dag_-=\nu^\dag_1\circ\dots\circ\nu^\dag_{2r},\label{eq:composition}
\end{equation}
satisfying certain conditions. One is naturally led to consider the
more general problem of extending $\nu$ not only to rings of the form
$\frac{R^\dag}P$ but also to the ring
$\lim\limits_\to\frac{{R'}^\dag}{P'}$, where $P'$ is a tree of prime
ideals of ${R'}^\dag$, such that $P'\cap R'=(0)$. We deal in a uniform
way with all the three cases $R^\dag=\hat R$, $R^\dag=\tilde R$ and
$R^\dag=R^e$, in order to be able to apply the results proved here to
all three later in the paper. However, the reader should think of the
case $R^\dag=\hat R$ as the main case of interest and the cases
$R^\dag=\tilde R$ and $R^\dag=R^e$ as auxiliary and slightly
degenerate, since, as we shall see, in these cases the equality
$H_{2\ell}=H_{2\ell+1}$ is satisfied for all $\ell$ and the extension
$\nu^\dag_-$ will later be shown to be unique.

We will associate to each extension $\nu^\dag_-$ of $\nu$ to $R^\dag$ a chain
\begin{equation}
\tilde H'_0\subset\tilde H'_1\subset\dots\subset\tilde
H'_{2r}=m'{R'}^\dag\label{eq:chaintree''}
\end{equation}
of prime $\nu^\dag_-$-ideals, corresponding to the decomposition
(\ref{eq:composition}) and prove some basic properties of this chain of ideals.

Now for the details. We wish to classify all the pairs $\left(\left\{\tilde
H'_0\right\},\nu^\dag _+\right)$, where $\left\{\tilde H'_0\right\}$
is a tree of prime ideals of ${R'}^\dag$, such that $\tilde H'_0\cap
R'=(0)$, and $\nu^\dag_+$ is an extension of $\nu$ to the ring
$\lim\limits_\to\frac{{R'}^\dag}{\tilde H'_0}$.

Pick and fix one such pair $\left(\left\{\tilde
    H'_0\right\},\nu^\dag_+\right)$. We associate to it the following
collection of data, which, as we will see, will in turn determine the
pair $\left(\left\{\tilde H'_0\right\},\nu^\dag_+\right)$.

First, we associate to $\left(\left\{\tilde
    H'_0\right\},\nu^\dag_-\right)$ a chain (\ref{eq:chaintree''}) of
$2r$ trees of prime $\nu^\dag_-$-ideals. Let $\Gamma^\dag$ denote the
value group of $\nu^\dag_-$. Defining (\ref{eq:chaintree''}) is
equivalent to defining a chain
\begin{equation}
\Gamma^\dag=\Delta^\dag_0\supset\Delta^\dag_1\supset\dots\supset\Delta^\dag_{2r}=
\Delta^\dag_{2r+1}=(0)\label{eq:groups}
\end{equation}
of $2r$ isolated subroups of $\Gamma^\dag$ (the chain
(\ref{eq:groups}) will not, in general, be maximal, and
$\Delta^\dag_{2\ell +1}$ need not be distinct from $\Delta^\dag_{2\ell}$).

We define the $\Delta^\dag_i$ as follows. For $0\le\ell\le r$, let
$\Delta^\dag_{2\ell}$ and $\Delta^\dag_{2\ell +1}$ denote, respectively,
the greatest and the smallest isolated subgroups of $\Gamma^\dag$ such that
\begin{equation}
\Delta^\dag_{2\ell}\cap\Gamma=\Delta^\dag_{2\ell
  +1}\cap\Gamma=\Delta_\ell.\label{eq:Delta}
\end{equation}
\begin{lemma}\label{rank1} We have
\begin{equation}
rk\ \frac{\Delta^\dag_{2\ell-1}}{\Delta^\dag_{2\ell}}=1\label{eq:rank1}
\end{equation}
for $1\le \ell\le r$.
\end{lemma}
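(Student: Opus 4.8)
The plan is to unwind the definitions of $\Delta^\dag_{2\ell-1}$ and $\Delta^\dag_{2\ell}$ and to show that the quotient of consecutive isolated subgroups, at the "level $\ell$" interface, has rank one because $\nu_\ell$ has rank one. Recall that $\Delta^\dag_{2\ell-1}$ is, by (\ref{eq:Delta}) applied with $\ell$ replaced by $\ell-1$, the \emph{smallest} isolated subgroup of $\Gamma^\dag$ whose intersection with $\Gamma$ equals $\Delta_{\ell-1}$; and $\Delta^\dag_{2\ell}$ is the \emph{greatest} isolated subgroup of $\Gamma^\dag$ with $\Delta^\dag_{2\ell}\cap\Gamma=\Delta_\ell$. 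The first thing I would check is that there is no isolated subgroup $\Delta$ of $\Gamma^\dag$ strictly between them, i.e. with $\Delta^\dag_{2\ell}\subsetneqq\Delta\subsetneqq\Delta^\dag_{2\ell-1}$: indeed such a $\Delta$ would satisfy $\Delta\cap\Gamma\supseteq\Delta^\dag_{2\ell}\cap\Gamma=\Delta_\ell$ and $\Delta\cap\Gamma\subseteq\Delta^\dag_{2\ell-1}\cap\Gamma=\Delta_{\ell-1}$, and since $\Delta\cap\Gamma$ is itself an isolated subgroup of $\Gamma$ and $\Delta_\ell\subsetneqq\Delta_{\ell-1}$ are \emph{consecutive} in the chain (\ref{eq:isolated}), we would get either $\Delta\cap\Gamma=\Delta_\ell$ or $\Delta\cap\Gamma=\Delta_{\ell-1}$; the first contradicts maximality of $\Delta^\dag_{2\ell}$ and the second contradicts minimality of $\Delta^\dag_{2\ell-1}$. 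Hence $\Delta^\dag_{2\ell-1}$ and $\Delta^\dag_{2\ell}$ are \emph{consecutive} isolated subgroups of $\Gamma^\dag$, which already shows $rk\ \frac{\Delta^\dag_{2\ell-1}}{\Delta^\dag_{2\ell}}\le 1$.

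It remains to rule out that the quotient is the trivial group, i.e. to show $\Delta^\dag_{2\ell}\subsetneqq\Delta^\dag_{2\ell-1}$ strictly. For this I would exhibit an element of $\Gamma^\dag$ lying in $\Delta^\dag_{2\ell-1}\setminus\Delta^\dag_{2\ell}$. The natural candidate comes from the composed valuation $\nu=\nu_1\circ\cdots\circ\nu_r$: since $\nu_\ell$ is a (rank one, hence nontrivial) valuation of the residue field $k_{\nu_{\ell-1}}=\kappa(\mathbf m_{\ell-1})$, there is an element $\gamma\in\Gamma$ with $\gamma\in\Delta_{\ell-1}\setminus\Delta_\ell$ (concretely, $\gamma=\nu(x)$ for some $x\in P_{\ell-1}\setminus P_\ell$, or equivalently $\gamma$ lies in the value group $\frac{\Delta_{\ell-1}}{\Delta_\ell}$ of $\nu_\ell$ and is nonzero). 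Viewing $\gamma$ inside $\Gamma\subseteq\Gamma^\dag$, it lies in $\Delta^\dag_{2\ell-1}$ because $\gamma\in\Delta_{\ell-1}=\Delta^\dag_{2\ell-1}\cap\Gamma$, and it does \emph{not} lie in $\Delta^\dag_{2\ell}$ because otherwise $\gamma\in\Delta^\dag_{2\ell}\cap\Gamma=\Delta_\ell$, contradicting $\gamma\notin\Delta_\ell$. Therefore $\Delta^\dag_{2\ell}\subsetneqq\Delta^\dag_{2\ell-1}$, and combined with the previous paragraph this gives $rk\ \frac{\Delta^\dag_{2\ell-1}}{\Delta^\dag_{2\ell}}=1$, as required.

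I expect the only real subtlety to be bookkeeping about \emph{which} of the two subgroups at index $2\ell$ and index $2\ell-1$ is the "greatest" and which is the "smallest", and making sure the chain (\ref{eq:groups}) is indexed consistently with (\ref{eq:Delta}); everything else is the elementary order theory of isolated (convex) subgroups, namely that they are totally ordered by inclusion and that intersecting with a subgroup preserves the property of being isolated. One should double-check the edge behaviour: for $\ell=1$ the group $\Delta^\dag_{2\ell-1}=\Delta^\dag_1$ is the smallest isolated subgroup meeting $\Gamma$ in $\Delta_0=\Gamma$, so $\Delta^\dag_1\supseteq\Gamma$ and the argument above goes through verbatim; and the nontriviality input is exactly the hypothesis, implicit throughout the paper, that each $\nu_\ell$ genuinely has rank one, so that $\Delta_{\ell-1}/\Delta_\ell\ne(0)$.
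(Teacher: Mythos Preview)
Your proof is correct and follows essentially the same route as the paper's: both argue that an isolated subgroup strictly between $\Delta^\dag_{2\ell}$ and $\Delta^\dag_{2\ell-1}$ would, upon intersecting with $\Gamma$, contradict the extremality defining those two groups, using that $\Delta_\ell$ and $\Delta_{\ell-1}$ are consecutive in $\Gamma$. The only difference is cosmetic: the paper dismisses the strict inclusion $\Delta^\dag_{2\ell}\subsetneqq\Delta^\dag_{2\ell-1}$ in one line (``by construction'', since they intersect $\Gamma$ in the distinct groups $\Delta_\ell\ne\Delta_{\ell-1}$), whereas you spell it out by exhibiting an element $\gamma\in\Delta_{\ell-1}\setminus\Delta_\ell$; note that your parenthetical should read $x\in P_\ell\setminus P_{\ell-1}$ rather than $P_{\ell-1}\setminus P_\ell$, but this does not affect the argument.
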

\begin{proof} Since by construction
$\Delta^\dag_{2\ell}\neq\Delta^\dag_{2\ell-1}$, equality
(\ref{eq:rank1}) is equivalent to saying that there is no isolated
subgroup $\Delta^\dag$ of $\Gamma^\dag$ which is properly contained in
$\Delta^\dag_{2\ell-1}$ and properly contains
$\Delta^\dag_{2\ell}$. Suppose such an isolated subgroup $\Delta^\dag$
existed. Then
\begin{equation}
\Delta_\ell=\Delta^\dag_{2\ell}\cap\Gamma\subsetneqq\Delta^\dag\cap\Gamma
\subsetneqq\Delta^\dag_{2\ell-1}\cap\Gamma=\Delta_{\ell-1},
\label{eq:noninclusion}
\end{equation}
where the first inclusion is strict by the maximality of
$\Delta^\dag_{2\ell}$ and the second by the minimality of
$\Delta^\dag_{2\ell -1}$. Thus $\Delta^\dag\cap\Gamma$ is an isolated subgroup of
$\Gamma$, properly containing $\Delta_\ell$ and properly contained in
$\Delta_{l-1}$, which is impossible since $rk\
\frac{\Delta_{\ell-1}}{\Delta_\ell}=1$. This is a contradiction, hence
$rk\ \frac{\Delta^\dag_{2\ell -1}}{\Delta^\dag_{2\ell}}=1$, as desired.
\end{proof}
\begin{definition} Let $0\le i\le 2r$. The $i$-th prime ideal
  \textbf{determined} by $\nu^\dag_-$ is the prime $\nu^\dag_-$-ideal
  $\tilde H'_i$ of ${R'}^\dag$, corresponding to the isolated subgroup
  $\Delta^\dag_i$ (that is, the ideal $\tilde H'_i$ consisting of all
  the elements of ${R'}^\dag$ whose values lie outside of
  $\Delta^\dag_i$). The chain of trees (\ref{eq:chaintree''}) of prime
  ideals of ${R'}^\dag$ formed by the $\tilde H'_i$ is referred to as
  the chain of trees \textbf{determined} by $\nu^\dag_-$.
\end{definition}
The equality (\ref{eq:Delta}) says that
\begin{equation}
\tilde H'_{2\ell}\cap R'=\tilde H'_{2\ell+1}\cap R'=P'_\ell\label{eq:tildeHcapR}
\end{equation}
By definitions, for $1\le i\le2r$, $\nu^\dag_i$ is a valuation of the field
$k_{\nu^\dag_{i-1}}$. In the sequel, we will find it useful to talk
about the restriction of $\nu^\dag_i$ to a smaller field, namely, the
field of fractions of the ring
$\lim\limits_{\overset\longrightarrow{R'}}\frac{{R'}^\dag_{\tilde
    H'_i}}{\tilde H'_{i-1}{R'}^\dag_{\tilde H'_i}}$; we will denote
this restriction by $\nu^\dag_{i0}$. The field of fractions of
$\frac{{R'}^\dag_{\tilde H'_i}}{\tilde H'_{i-1}{R'}^\dag_{\tilde
    H'_i}}$ is $\kappa(\tilde H'_{i-1})$, hence that of
$\lim\limits_{\overset\longrightarrow{R'}}\frac{{R'}^\dag_{\tilde
    H'_i}}{\tilde H'_{i-1}{R'}^\dag_{\tilde H'_i}}$ is
$\lim\limits_{\overset\longrightarrow{R'}}\kappa(\tilde H'_{i-1})$,
which is a subfield of $k_{\nu^\dag_{i-1}}$. The value group of
$\nu^\dag_{i0}$ will be denoted by $\Delta_{i-1,0}$; we have
$\Delta_{i-1,0}\subset\frac{\Delta^\dag_{i-1}}{\Delta^\dag_i}$. If
$i=2\ell$ is even then
$\frac{R'_{P'_l}}{P'_{l-1}R'_{P'_l}}<\frac{{R'}^\dag_{\tilde
    H'_i}}{\tilde H'_{i-1}{R'}^\dag_{\tilde H'_i}}$, so
$\lim\limits_{\overset\longrightarrow{R'}}\frac{R'_{P'_l}}{P'_{l-1}R'_{P'_l}}<
\lim\limits_{\overset\longrightarrow{R'}}\frac{{R'}^\dag_{\tilde
    H'_i}}{\tilde H'_{i-1}{R'}^\dag_{\tilde H'_i}}$. In this case $rk\
\nu^\dag_i=1$ and $\nu^\dag_i$ an extension of the rank 1 valuation
$\nu_\ell$ from $\kappa(P_{\ell-1})$ to $k_{\nu^\dag_{i-1}}$; we have
\begin{equation}
\frac{\Delta_{\ell-1}}{\Delta_\ell}\subset\Delta_{i-1,0}\subset
\frac{\Delta^\dag_{i-1}}{\Delta^\dag_i}.\label{eq:deltalindelati-1}
\end{equation}
\begin{proposition}\label{necessary} Let $i=2\ell$. As usual, for an element
$\overline\beta\in\left(\frac{\Delta_\ell}{\Delta_{\ell+1}}\right)_+$,
let $\P_{\overline\beta}$ (resp. $\P'_{\overline\beta}$) denote
the preimage in $R$ (resp. in $R'$) of the $\nu_{\ell+1}$-ideal of
$\frac R{P_\ell}$ (resp. $\frac{R'}{P'_\ell}$) of value greater
than or equal to $\overline\beta$. Then
\begin{equation}
\bigcap\limits_{\overline\beta\in\left(\frac{\Delta_\ell}{\Delta_{\ell+1}}\right)_+}
\lim\limits_{\overset\longrightarrow{R'}}
\left(\P'_{\overline\beta}{R'}^\dag+\tilde
  H'_{i+1}\right){R'}^\dag_{\tilde H'_{i+2}}\cap{R}^\dag\subset\tilde
H_{i+1}.
\label{eq:restriction}
\end{equation}
\end{proposition}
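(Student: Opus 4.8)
The plan is to reduce (\ref{eq:restriction}) to a statement about the valuation $\nu^\dag_{i+2,0}$ attached to the decomposition $\nu^\dag_-=\nu^\dag_1\circ\cdots\circ\nu^\dag_{2r}$, and then to exploit that this valuation has rank one. Set $C:=\lim\limits_{\overset\longrightarrow{R'}}\frac{{R'}^\dag_{\tilde H'_{i+2}}}{\tilde H'_{i+1}{R'}^\dag_{\tilde H'_{i+2}}}$, the ring in which $\nu^\dag_{i+2,0}$ is centered. Since the transition maps of this direct system are injective (tree property of $\tilde H'_{i+1}$ and $\tilde H'_{i+2}$, plus Lemma \ref{factor}) and each factor is a domain (quotient of a localization of ${R'}^\dag$ by a prime), $C$ is a domain and $\nu^\dag_{i+2,0}$ is non-negative on $C$. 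First I would show that any $x\in R^\dag$ lying in the left-hand side of (\ref{eq:restriction}) maps to $0$ in $C$. Granting this, $x$ already dies in $\frac{{R'}^\dag_{\tilde H'_{i+2}}}{\tilde H'_{i+1}{R'}^\dag_{\tilde H'_{i+2}}}$ for some $R'\in\mathcal{T}$, i.e. $x\in\tilde H'_{i+1}{R'}^\dag_{\tilde H'_{i+2}}\cap R^\dag$; as $\tilde H'_{i+1}\subset\tilde H'_{i+2}$ are prime ideals of ${R'}^\dag$, one has $\tilde H'_{i+1}{R'}^\dag_{\tilde H'_{i+2}}\cap{R'}^\dag=\tilde H'_{i+1}$, and by the tree property together with the injectivity of $R^\dag\hookrightarrow{R'}^\dag$ we get $\tilde H'_{i+1}\cap R^\dag=\tilde H_{i+1}$; hence $x\in\tilde H_{i+1}$, as wanted.

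The heart of the argument will be the following claim: for each $\overline\beta\in\left(\frac{\Delta_\ell}{\Delta_{\ell+1}}\right)_+$ and each $R'\in\mathcal{T}$, the image in $C$ of $\left(\P'_{\overline\beta}{R'}^\dag+\tilde H'_{i+1}\right){R'}^\dag_{\tilde H'_{i+2}}$ lies in the $\nu^\dag_{i+2,0}$-valuation ideal $\{c\in C\mid\nu^\dag_{i+2,0}(c)\geq\overline\beta\}$, where $\overline\beta$ is read inside the value group $\Delta_{i+1,0}$ of $\nu^\dag_{i+2,0}$ via the inclusion $\frac{\Delta_\ell}{\Delta_{\ell+1}}\hookrightarrow\Delta_{i+1,0}$ of (\ref{eq:deltalindelati-1}). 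Since $\tilde H'_{i+1}$ maps to $0$ in $C$ and $\nu^\dag_{i+2,0}$ is non-negative on $C$, it suffices to check that every $a\in\P'_{\overline\beta}$ has $\nu^\dag_{i+2,0}(\bar a)\geq\overline\beta$. Elements of $P'_\ell$ map to $0$ in $C$ (because $P'_\ell{R'}^\dag\subset\tilde H'_{i+1}$), so we may assume $a\notin P'_\ell$; then $a\notin\tilde H'_{i+1}$ by (\ref{eq:tildeHcapR}), hence $\nu^\dag_-(a)\in\Delta^\dag_{i+1}$, and in fact $\nu^\dag_-(a)=\nu'(a)\in\Gamma$, where $\nu'$ is the restriction of $\nu^\dag_-$ to $R'$ (which coincides with $\nu$ on $R'$ since $\tilde H'_0\cap R'=(0)$). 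By the definition of composite valuations, $\nu^\dag_{i+2,0}(\bar a)$ equals the class of $\nu'(a)$ modulo $\Delta^\dag_{i+2}$ inside $\frac{\Delta^\dag_{i+1}}{\Delta^\dag_{i+2}}$; using (\ref{eq:Delta}) to identify $\frac{\Delta_\ell}{\Delta_{\ell+1}}=\frac{\Delta^\dag_{i+1}\cap\Gamma}{\Delta^\dag_{i+2}\cap\Gamma}$ with a subgroup of $\frac{\Delta^\dag_{i+1}}{\Delta^\dag_{i+2}}$ (the inclusion of (\ref{eq:deltalindelati-1})), this class is the image of the class of $\nu'(a)$ modulo $\Delta_{\ell+1}$, which is $\geq\overline\beta$ by the very definition (\ref{eq:pbetamodl}) of $\P'_{\overline\beta}$. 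This proves the claim.

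To finish, take $x\in R^\dag$ in the left-hand side of (\ref{eq:restriction}) and let $\bar x$ be its image in $C$. For each $\overline\beta\in\left(\frac{\Delta_\ell}{\Delta_{\ell+1}}\right)_+$ the definition of the direct limit provides an $R'\in\mathcal{T}$ with the image of $x$ in ${R'}^\dag_{\tilde H'_{i+2}}$ lying in $\left(\P'_{\overline\beta}{R'}^\dag+\tilde H'_{i+1}\right){R'}^\dag_{\tilde H'_{i+2}}$, so by the claim $\nu^\dag_{i+2,0}(\bar x)\geq\overline\beta$; thus $\nu^\dag_{i+2,0}(\bar x)\geq\overline\beta$ for all $\overline\beta\in\left(\frac{\Delta_\ell}{\Delta_{\ell+1}}\right)_+$. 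Since $i+2$ is even, $\nu^\dag_{i+2}$, and therefore $\nu^\dag_{i+2,0}$, has rank one, so $\Delta_{i+1,0}$ is archimedean; its non-trivial subgroup $\frac{\Delta_\ell}{\Delta_{\ell+1}}$ is then cofinal in $\Delta_{i+1,0}$, so $\nu^\dag_{i+2,0}(\bar x)$ exceeds every positive element of $\Delta_{i+1,0}$ and hence $\bar x=0$. By the first paragraph, $x\in\tilde H_{i+1}$.

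The step I expect to be the main obstacle is the identification, inside the claim, of $\nu^\dag_{i+2,0}(\bar a)$ with the $\Delta_{\ell+1}$-residue of $\nu'(a)$: this requires spelling out precisely how $\nu^\dag_{i+2,0}$ sits in the composition $\nu^\dag_-=\nu^\dag_1\circ\cdots\circ\nu^\dag_{2r}$ and how the isolated subgroups $\Delta^\dag_j$ of $\Gamma^\dag$ match the $\Delta_\ell$ of $\Gamma$, i.e., a careful use of (\ref{eq:Delta}) and Lemma \ref{rank1}. The remaining ingredients — the cofinality argument for archimedean groups and the contraction through the localization at $\tilde H'_{i+2}$ and down the tree — are routine.
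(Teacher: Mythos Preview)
Your proof is correct and follows the same idea as the paper's: use Lemma~\ref{rank1} to get that $\frac{\Delta_\ell}{\Delta_{\ell+1}}$ is cofinal in the rank-one group $\frac{\Delta^\dag_{i+1}}{\Delta^\dag_{i+2}}$, then conclude that any $x$ in the left-hand side has $\nu^\dag_-$-value outside $\Delta^\dag_{i+1}$, hence lies in $\tilde H_{i+1}$. The paper compresses this into two sentences working directly with $\nu^\dag_-$, whereas you unpack the same reasoning through the component $\nu^\dag_{i+2,0}$ centered in $C$; your version makes explicit the contraction through the localization at $\tilde H'_{i+2}$ and the identification of $\nu^\dag_{i+2,0}(\bar a)$ with the $\Delta_{\ell+1}$-residue of $\nu(a)$, steps the paper leaves to the reader.
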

The inclusion (\ref{eq:restriction}) should be understood as a
condition on the tree of ideals. In other words, it is equally valid
if we replace $R'$ by any other ring $R''\in\cal T$.

\begin{proof}\textit{(of Proposition \ref{necessary})} Since
$rk\frac{\Delta^\dag_{i+1}}{\Delta^\dag_{i+2}}=1$ by Lemma
\ref{rank1}, $\frac{\Delta_\ell}{\Delta_{\ell+1}}$ is cofinal in
$\frac{\Delta^\dag_{i+1}}{\Delta^\dag_{i+2}}$. Then for any
$x\in\bigcap\limits_{\overline\beta\in\left(\frac{\Delta_\ell}
{\Delta_{\ell+1}}\right)_+}\lim\limits_{\overset
\longrightarrow{R'}}\left(\P'_{\overline\beta}{R'}^\dag+\tilde
H'_{i+1}\right){R'}^\dag_{\tilde H'_{i+2}}\cap{R}^\dag$ we have
$\nu^\dag_-(x)\notin\Delta^\dag_i$, hence $x\in\tilde H_{i+1}$, as
desired.
\end{proof}

From now to the end of \S\ref{extensions}, we will assume that $\cal
T$ contains a stable ring $R'$, so that we can apply the results of
the previous section, in particular, the primality of the ideals
$H'_i$.
\begin{proposition}\label{Hintilde} We have
\begin{equation}
H'_i\subset\tilde H'_i\qquad\text{ for all
}i\in\{0,\dots,2r\}.\label{eq:Hintilde}
\end{equation}
\end{proposition}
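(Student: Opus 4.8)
The plan is to prove the inclusions for odd and even indices $i$ separately. For the odd indices I would first reduce to the case of a stable ring. Given $R'\in\mathcal{T}$, since $\mathcal{T}$ is directed and contains a stable ring I can pick $R''\in\mathcal{T}(R')$ that is stable (it is stable by Proposition \ref{largeR1}). Because $\{H'_{2\ell+1}\}$ and $\{\tilde H'_{2\ell+1}\}$ are trees of ideals and the transition map ${R'}^\dag\to{R''}^\dag$ is injective (Lemma \ref{factor}), an inclusion $H''_{2\ell+1}\subset\tilde H''_{2\ell+1}$ contracts, via Proposition \ref{behavewell}, to $H'_{2\ell+1}\subset\tilde H'_{2\ell+1}$. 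So it suffices to treat a stable $R$, and then by Corollary \ref{stableimplicit} one has $H_{2\ell+1}=\bigcap_{\beta\in\Delta_\ell}\P_\beta R^\dag$ for $0\le\ell\le r-1$.

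Next, for such an $R$ and $\ell<r$, the point is to use the description of $\tilde H_{2\ell+1}$ via isolated subgroups. By construction $\Delta^\dag_{2\ell+1}$ is the smallest isolated subgroup of $\Gamma^\dag$ whose intersection with $\Gamma$ is $\Delta_\ell$; I claim it equals the convex hull of $\Delta_\ell$ in $\Gamma^\dag$. Indeed, the convex hull $C$ satisfies $C\cap\Gamma=\Delta_\ell$ (since $\Delta_\ell$ is convex in $\Gamma$), and every isolated subgroup of $\Gamma^\dag$ containing $\Delta_\ell$ contains $C$; minimality then forces $\Delta^\dag_{2\ell+1}=C$, so $\Delta_\ell$ is cofinal in $\Delta^\dag_{2\ell+1}$. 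Now take $x\in H_{2\ell+1}$ and let $\bar x$ be its image in $R^\dag/\tilde H_0$; if $\bar x=0$ then $x\in\tilde H_0\subset\tilde H_{2\ell+1}$, so assume $\gamma:=\nu^\dag_-(\bar x)\in\Gamma^\dag$. For each $\beta\in\Delta_\ell$, the ideal $\P_\beta\subset R$ is generated by elements of $\nu$-value $\ge\beta$, and since $\nu^\dag_-$ extends $\nu$ and is nonnegative on $R^\dag/\tilde H_0$, every element of $\P_\beta R^\dag$ has image of $\nu^\dag_-$-value $\ge\beta$; hence $\gamma\ge\beta$. Thus $\gamma$ bounds all of $\Delta_\ell$ from above in $\Gamma^\dag$; as $\Delta_\ell\ne(0)$, $\gamma$ is positive and strictly exceeds every element of $\Delta_\ell$ (were $\gamma\le\delta$ with $\delta\in\Delta_\ell$, then $\gamma\ge 2\delta>\delta$, absurd), so $\gamma\notin\Delta^\dag_{2\ell+1}$ and $x\in\tilde H_{2\ell+1}$.

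For even $i=2\ell$ I would argue directly for every $R'\in\mathcal{T}$ (the case $\ell=r$ being trivial, since then $H'_{2r}=m'{R'}^\dag=\tilde H'_{2r}$). Fix $\ell<r$. The ring $R'$ is excellent (being essentially of finite type over the excellent $R$), so ${R'}^\dag\otimes_{R'}\kappa(P'_\ell)$ is regular; moreover $H'_{2\ell+1}$ is prime by Theorem \ref{primality1} (applicable since $\mathcal{T}(R')$ contains a stable ring), so Proposition \ref{H2l} produces $H'_{2\ell}$. Since $\tilde H'_{2\ell+1}\cap R'=P'_\ell$ by (\ref{eq:tildeHcapR}), the localization $\bigl({R'}^\dag\otimes_{R'}\kappa(P'_\ell)\bigr)_{\tilde H'_{2\ell+1}}\cong{R'}^\dag_{\tilde H'_{2\ell+1}}/P'_\ell{R'}^\dag_{\tilde H'_{2\ell+1}}$ is a regular local ring, hence a domain; so $P'_\ell{R'}^\dag$ has a \emph{unique} minimal prime contained in $\tilde H'_{2\ell+1}$ — this is exactly the argument of Proposition \ref{H2l}, run now with $\tilde H'_{2\ell+1}$ in place of $H'_{2\ell+1}$. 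On one hand $H'_{2\ell}$ is a minimal prime of $P'_\ell{R'}^\dag$ lying inside $H'_{2\ell+1}\subset\tilde H'_{2\ell+1}$ by the odd case; on the other hand $\tilde H'_{2\ell}$, which contains $P'_\ell{R'}^\dag$ (as $\tilde H'_{2\ell}\cap R'=P'_\ell$), contains some minimal prime $\mathfrak{q}$ of $P'_\ell{R'}^\dag$, and $\mathfrak{q}\subset\tilde H'_{2\ell}\subset\tilde H'_{2\ell+1}$. By the uniqueness just noted $\mathfrak{q}=H'_{2\ell}$, so $H'_{2\ell}\subset\tilde H'_{2\ell}$.

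I expect the odd case to be the substantive part, and within it the identification of $\Delta^\dag_{2\ell+1}$ with the convex hull of $\Delta_\ell$ — equivalently, the cofinality of $\Delta_\ell$ in $\Delta^\dag_{2\ell+1}$ — to be the key point: it is what turns the elementary bound $\nu^\dag_-(\bar x)\ge\beta$ for all $\beta\in\Delta_\ell$ (a formal consequence of $x\in H_{2\ell+1}$ and of $\nu^\dag_-$ extending $\nu$) into the conclusion $x\in\tilde H_{2\ell+1}$. The even case is then purely formal, resting only on the regularity of the formal fibres of an excellent ring, exactly as in Proposition \ref{H2l}.
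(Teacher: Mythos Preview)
Your proof is correct and follows essentially the same approach as the paper: both hinge on the cofinality of $\Delta_\ell$ in $\Delta^\dag_{2\ell+1}$ for the odd case and on the regularity of formal fibres (as in Proposition \ref{H2l}) for the even case. The only difference is that you first reduce the odd case to a stable $R$ via Corollary \ref{stableimplicit}, whereas the paper works directly from the limit definition (\ref{eq:defin1}), observing $\P'_\beta{R'}^\dag\subset{\P'_\beta}^\dag$ for every $R'\in\mathcal T$ and then passing to the limit and intersection --- your reduction is harmless but unnecessary.
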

\begin{proof} For $\beta\in\Gamma^\dag$ and $R'\in\cal T$, let
${\P'_\beta}^\dag$ denote the $\nu^\dag_-$-ideal of ${R'}^\dag$ of value
$\beta$. Fix an integer $\ell\in\{0,\dots,r\}$. For each $R'\in\cal
T$, each $\beta\in\Delta_\ell$ and $x\in\P'_\beta$ we have
$\nu^\dag_-(x)=\nu(x)\ge\beta$, hence
\begin{equation}
\P'_\beta{R'}^\dag\subset{\P'_\beta}^\dag.\label{eq:PinPdag}
\end{equation}
Taking the inductive limit over all $R'\in\cal T$ and the intersection
over all $\beta\in\Delta_\ell$ in (\ref{eq:PinPdag}), and using the
cofinality of $\Delta_\ell$ in $\Delta^\dag_{2\ell+1}$ and the fact
that $\bigcap\limits_{\beta\in\Delta^\dag_{2\ell}}
\left(\lim\limits_{\overset\longrightarrow{R'}}{\P'_\beta}^\dag\right)=
\lim\limits_{\overset\longrightarrow{R'}}\tilde H'_{2\ell+1}$, we
obtain the inclusion (\ref{eq:Hintilde}) for $i=2\ell+1$. To prove
(\ref{eq:Hintilde}) for $i=2\ell$, note that $\tilde H'_{2\ell}\cap
R'=\tilde H'_{2\ell+1}\cap R'=P_\ell$. By the same argument as in
Proposition \ref{H2l}, excellence of $R'$ implies that there is a
unique minimal prime $H^*_{2\ell}$ of $P'_\ell{R'}^\dag$, contained in
$\tilde H'_{2\ell+1}$ and a unique minimal prime $H^{**}_{2\ell}$ of
$P'_\ell{R'}^\dag$, contained in $\tilde H'_{2\ell}$. Now, Proposition
\ref{H2l} and the facts that $H'_{2\ell+1}\subset\tilde H'_{2\ell+1}$
and $\tilde H'_{2\ell}\subset\tilde H'_{2\ell+1}$ imply that
$H'_{2\ell}=H^*_{2\ell}=H^{**}_{2\ell}$, hence
$H'_{2\ell}=H^{**}_{2\ell}\subset\tilde H'_{2\ell}$, as desired.
\end{proof}
\begin{definition} A chain of trees (\ref{eq:chaintree''}) of prime ideals
  of ${R'}^\dag$ is said to be \textbf{admissible} if
  $H'_i\subset\tilde H'_i$ and (\ref{eq:tildeHcapR}) and
  (\ref{eq:restriction}) hold.
\end{definition}
Equalities (\ref{eq:tildeHcapR}), Proposition \ref{necessary} and
Proposition \ref{Hintilde} say that a chain of trees
(\ref{eq:chaintree''}) of prime ideals of ${R'}^\dag$, determined by
$\nu^\dag_-$, is admissible.

Summarizing all of the above results, and keeping in mind the fact
that specifying a composition of $2r$ valuation is equivalent to
specifying all of its $2r$ components, we arrive at one of the main
theorems of this paper:
\begin{theorem}\label{classification} Specifying the valuation $\nu^\dag_-$
  is equivalent to specifying the following data. The data will be
  described recursively in $i$, that is, the description of
  $\nu^\dag_i$ assumes that $\nu^\dag_{i-1}$ is already defined:

(1) An admissible chain of trees (\ref{eq:chaintree''}) of prime
ideals of ${R'}^\dag$.

(2) For each $i$, $1\le i\le 2r$, a valuation $\nu^\dag_i$ of
$k_{\nu^\dag_{i-1}}$ (where $\nu^\dag_0$ is taken to be the
trivial valuation by convention), whose restriction to
$\lim\limits_{\overset\longrightarrow{R'}}\kappa(\tilde H'_{i-1})$ is
centered at the local ring
$\lim\limits_{\overset\longrightarrow{R'}}\frac{{R'}^\dag_{\tilde
    H'_i}}{\tilde H'_{i-1}{R'}^\dag_{\tilde H'_i}}$.

The data $\left\{\nu^\dag_i\right\}_{1\le i\le 2r}$ is subject to the
following additional condition: if $i=2\ell$ is even then $rk\
\nu^\dag_i=1$ and $\nu^\dag_i$ is an extension of $\nu_\ell$ to
$k_{\nu^\dag_{i-1}}$ (which is naturally an extension of $k_{\nu_{\ell-1}}$).
\end{theorem}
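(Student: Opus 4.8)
The plan is to establish the equivalence by writing down explicitly the passage from an extension $\nu^\dag_-$ to the data (1)--(2), and a passage back, and then checking that the two are mutually inverse; the first direction has in effect already been carried out in the discussion preceding the statement. Given an extension $\nu^\dag_-$ of $\nu$ to $\lim\limits_\to\frac{{R'}^\dag}{\tilde H'_0}$, I would attach to it the chain of isolated subgroups (\ref{eq:groups}) via (\ref{eq:Delta}) and the corresponding chain of trees of prime valuation ideals (\ref{eq:chaintree''}); this chain is admissible by (\ref{eq:tildeHcapR}), Proposition \ref{necessary} and Proposition \ref{Hintilde}. Decomposing $\nu^\dag_-$ along (\ref{eq:groups}) according to the standard bijection between compositions of valuations and chains of isolated subgroups of the value group (\cite{ZS}, Chapter VI, \S10) yields valuations $\nu^\dag_i$ of $k_{\nu^\dag_{i-1}}$; the fact that $\tilde H'_{i-1}$ and $\tilde H'_i$ are the $(i-1)$-st and $i$-th prime $\nu^\dag_-$-ideals of ${R'}^\dag$ is exactly the centering condition of (2), Lemma \ref{rank1} gives $rk\ \nu^\dag_{2\ell}=1$, and, since $\nu^\dag_-$ restricts to $\nu=\nu_1\circ\dots\circ\nu_r$ on $\lim\limits_\to R'$ while by (\ref{eq:tildeHcapR}) the contractions of $\tilde H'_0,\dots,\tilde H'_{2r}$ to $R'$ are $P'_0,P'_0,P'_1,P'_1,\dots,P'_{r-1},P'_{r-1},P'_r$, comparing the two compositions component by component forces $\nu^\dag_{2\ell}$ to extend $\nu_\ell$.

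For the converse I would, given an admissible chain (\ref{eq:chaintree''}) together with valuations $\nu^\dag_i$ as in (2), simply \emph{define} $\nu^\dag_-$ by the composition (\ref{eq:composition}); this is a legitimate valuation of $k_{\nu^\dag_0}$, the fraction field of $\lim\limits_\to\frac{{R'}^\dag}{\tilde H'_0}$, precisely because each $\nu^\dag_i$ is, by hypothesis, a valuation of the residue field of $\nu^\dag_{i-1}$. The centering requirements of (2) together with the inclusions $\tilde H'_{i-1}\subset\tilde H'_i$ then show, stage by stage, that $\nu^\dag_-$ dominates $\lim\limits_\to\frac{{R'}^\dag}{\tilde H'_0}$ and that the tree of prime $\nu^\dag_-$-ideals of that ring is the given chain (\ref{eq:chaintree''}) taken modulo $\tilde H'_0$. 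What remains, and what I regard as the crux, is to check that $\nu^\dag_-$ restricts to $\nu$ itself on $\lim\limits_\to R'$, not merely to some extension of $\nu$. Here admissibility is essential: by (\ref{eq:tildeHcapR}) the ideals $\tilde H'_{2\ell}$ and $\tilde H'_{2\ell+1}$ both contract to $P'_\ell$, so the local ring at which the odd-numbered component $\nu^\dag_{2\ell+1}$ is centered (the one displayed in (2)) contains the field $\lim\limits_\to\kappa(P'_\ell)=\kappa(\mathbf{m}_\ell)$; hence $\nu^\dag_{2\ell+1}$ is trivial on $\kappa(\mathbf{m}_\ell)$. Applying the rule for restricting a composite valuation to a subfield, all odd-numbered components drop out over $R'$, leaving $\nu^\dag_-|_{\lim\limits_\to R'}=\nu_1\circ\dots\circ\nu_r=\nu$. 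The extra inclusion (\ref{eq:restriction}) is then what one invokes to pin down the value group of $\nu^\dag_-$ and the residue fields produced along the way (cf.\ Remark \ref{sameresfield}).

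That the two constructions are mutually inverse is essentially formal: decomposing a valuation along its chain of isolated subgroups and recomposing the pieces are inverse operations, and, using the centering conditions and admissibility, the chain of isolated subgroups of the composition (\ref{eq:composition}) is exactly (\ref{eq:groups}), so the trees $\tilde H'_i$ and the components $\nu^\dag_i$ are recovered unchanged.

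I expect the main obstacle to be precisely that step in the converse direction: showing that the recomposed valuation restricts to $\nu$ rather than to a proper extension, i.e.\ that the ``extra'' odd-numbered valuations $\nu^\dag_{2\ell+1}$ contribute nothing over $R'$. This forces one to keep careful track of the chain of fields $\kappa(P'_\ell)\subset\kappa(\mathbf{m}_\ell)=k_{\nu_\ell}\subset k_{\nu^\dag_{2\ell}}$ on which the successive components live, and to establish inductively that $k_{\nu^\dag_{2\ell-1}}$ contains $k_{\nu_{\ell-1}}$, so that the requirement ``$\nu^\dag_{2\ell}$ extends $\nu_\ell$'' is even meaningful; it is here that admissibility, and in particular the inclusions (\ref{eq:tildeHcapR}) and (\ref{eq:restriction}), is indispensable.
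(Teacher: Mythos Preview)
Your proposal is correct and follows essentially the same approach as the paper, which in fact offers no separate proof of Theorem \ref{classification}: the paper presents it as a summary of the preceding results (the definition of the $\Delta^\dag_i$ and $\tilde H'_i$, Lemma \ref{rank1}, equalities (\ref{eq:tildeHcapR}), Propositions \ref{necessary} and \ref{Hintilde}) together with the standard equivalence between a composite valuation and its components. Your write-up spells out the converse direction and the mutual-inverse check in more detail than the paper does, and your identification of the key point---that the odd-numbered components $\nu^\dag_{2\ell+1}$ restrict trivially to $\kappa(\mathbf{m}_\ell)$ because $\tilde H'_{2\ell}$ and $\tilde H'_{2\ell+1}$ both contract to $P'_\ell$---is exactly what makes the recomposed valuation restrict to $\nu$; one small remark is that the inclusion (\ref{eq:restriction}) is in fact forced by the existence of the rank one extension $\nu^\dag_{2\ell+2}$ of $\nu_{\ell+1}$ in (2), so in the converse direction it serves as a consistency check rather than an extra ingredient.
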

In particular, note that such extensions $\nu^\dag_-$ always exist, and
usually there are plenty of them. The question of uniqueness of
$\nu^\dag_-$ and the related question of uniqueness of $\nu^\dag_i$,
especially in the case when $i$ is even, will be addressed in the next section.

\section{Uniqueness properties of $\nu^\dag_-$.}
\label{extensions}

In this section we address the question of uniqueness of the extension
$\nu^\dag_-$. One result in this direction, which will be very useful
here, was already proved in \S\ref{technical}: Proposition
\ref{nu0unique}. We give some necessary and some sufficient conditions
both for the uniqueness of $\nu^\dag_-$ once the chain
(\ref{eq:chaintree''}) of prime ideals determined by $\nu^\dag_-$ has
been fixed, and also for the unconditional uniqueness of
$\nu^\dag_-$. In \S\ref{henselization} we will use one of these
uniqueness criteria to prove uniqueness of $\nu^\dag_-$ in the cases
$R^\dag=\tilde R$ and $R^\dag=R^e$. At the end of this section we
generalize and give a new point of view of an old result of W. Heinzer
and J. Sally (Proposition \ref{HeSal}), which provides a sufficient
condition for the uniqueness of $\nu^\dag_-$; see also \cite{Te}, Remarks 5.22.

For a ring $R'\in\cal T$ let $K'$ denote the field of fractions of
$R'$. For some results in this section we will need to impose an
additional condition on the tree $\cal T$: we will assume that there
exists $R_0\in\cal T$ such that for all $R'\in{\cal T}(R_0)$ the field
$K'$ is algebraic over $K_0$. This assumption is needed in order to be
able to control the height of all the ideals in sight. Without loss of
generality, we may take $R_0=R$.
\begin{proposition}\label{htstable} Assume that for all $R'\in\cal T$ the
  field $K'$ is algebraic over $K$. Consider a ring homomorphism
  $R'\rightarrow R''$ in $\cal T$. Take an $\ell\in\{0,\dots,r\}$. We
  have
\begin{equation}
\he\ H''_{2\ell}\le \he\ H'_{2\ell}.\label{eq:odddecreases}
\end{equation}
If equality holds in (\ref{eq:odddecreases}) then
\begin{equation}
\he\ H''_{2\ell+1}\ge\he\ H'_{2\ell+1}.\label{eq:odddecreases1}
\end{equation}
\end{proposition}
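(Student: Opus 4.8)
The plan is to use the factorization of the map $R'^\dag \to R''^\dag$ through the local ring $\bar R := (R'^\dag \otimes_{R'} R'')_{M''}$, exactly as in Lemma \ref{factor} and in the proofs of Corollary \ref{Corollary511} and Proposition \ref{largeR2}. The map $\bar R \to R''^\dag$ is faithfully flat (it is a completion or henselization), so it satisfies going-down; and $R'^\dag \to \bar R$ is a localization of a base change of the algebraic (hence integral, after clearing denominators) extension $R' \to R''$, so fibers of $\spec \bar R \to \spec R'^\dag$ have dimension zero. First I would record, as in (\ref{tag521}), that the implicit prime ideals are compatible with this factorization: setting $\bar H_{2\ell} := H''_{2\ell} \cap \bar R$ and $\bar H_{2\ell+1} := H''_{2\ell+1} \cap \bar R$, one checks from the defining formula (\ref{eq:defin3}) that $\bar H_i$ is precisely the image of $H'_i$ under $R'^\dag \to \bar R$, i.e. $\bar H_i = H'_i \bar R$ localized appropriately; more precisely $H'_i \bar R \cap R'^\dag = H'_i$ and $\bar H_i$ is a prime of $\bar R$ lying over $H'_i$. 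The key point, which follows from Corollary \ref{stablecontracts} and the hypothesis that $K''$ is algebraic over $K'$ (so the relevant blow-ups match up), is that $\bar H_i$ is the \emph{unique} prime of $\bar R$ lying over $H'_i$ and contained in the maximal ideal, so $\he \bar H_i = \he H'_i$ by going-down for the flat (hence going-down) and the fiber-dimension-zero properties of $R'^\dag \to \bar R$.

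**Proving the first inequality.**

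For (\ref{eq:odddecreases}): the prime $H''_{2\ell}$ of $R''^\dag$ contracts to $\bar H_{2\ell}'' := H''_{2\ell} \cap \bar R$, a prime of $\bar R$ lying over $H'_{2\ell}$ (here using Proposition \ref{behavewell}-type compatibility, i.e. that $H''_{2\ell} \cap R'^\dag = H'_{2\ell}$, which in turn uses that $H'_{2\ell}$ is characterized as the unique minimal prime of $P'_\ell R'^\dag$ inside $H'_{2\ell+1}$ together with regularity of the formal fiber). Since $\bar R \to R''^\dag$ is faithfully flat, it satisfies going-down, hence $\he_{R''^\dag} H''_{2\ell} \le \he_{\bar R} (H''_{2\ell} \cap \bar R)$. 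But $H''_{2\ell} \cap \bar R$ is a prime of $\bar R$ lying over $H'_{2\ell}$ and contained in $M''\bar R$, so by the uniqueness and equal-height statement above, $\he_{\bar R}(H''_{2\ell} \cap \bar R) \le \he_{\bar R} \bar H_{2\ell} = \he_{R'^\dag} H'_{2\ell}$ — actually equality, since $\bar H_{2\ell}$ is the unique such prime. Chaining these gives (\ref{eq:odddecreases}).

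**Proving the conditional second inequality, and the main obstacle.**

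For (\ref{eq:odddecreases1}), assume equality holds in (\ref{eq:odddecreases}). The idea is that $H''_{2\ell+1} \supset H''_{2\ell}$ with $H''_{2\ell}$ a minimal prime of $P''_\ell R''^\dag$, and $H'_{2\ell+1} \supset H'_{2\ell}$ similarly; one wants to compare heights of the ``upper'' ideals. I would work in the catenarian complete (or Henselian) local rings $R''^\dag$ and $\bar R$ and use that $\he H''_{2\ell+1} = \he H''_{2\ell} + \dim\big(R''^\dag_{H''_{2\ell+1}}/H''_{2\ell} R''^\dag_{H''_{2\ell+1}}\big)$, and similarly for the primed objects over $\bar R$. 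The equality in (\ref{eq:odddecreases}) forces the relevant fiber of $\spec R''^\dag \to \spec \bar R$ above $H''_{2\ell}$ to have the expected dimension, and then going-down plus catenarity transfers the inequality $\dim(R''^\dag_{H''_{2\ell+1}}/H''_{2\ell}\cdots) \ge \dim(\bar R_{\bar H_{2\ell+1}}/\bar H_{2\ell}\cdots) = \dim(R'^\dag_{H'_{2\ell+1}}/H'_{2\ell}\cdots)$ in the direction we want, using that $H''_{2\ell+1} \cap \bar R$ contains $\bar H_{2\ell+1}$ (because $H'_{2\ell+1} \subset H''_{2\ell+1}$ as trees). The hard part will be precisely this last step: controlling how the height of the odd-numbered ideal \emph{increases} while the even-numbered one has constant height — i.e. showing that the ``room'' lost over $\bar R$ for $H_{2\ell}$ is exactly compensated in the quotient by $H_{2\ell}$ when passing to $H_{2\ell+1}$. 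This requires a careful bookkeeping of heights via the formula $\he H''_{2\ell+1} = \he \bar H_{2\ell+1} + (\he H''_{2\ell+1} - \he \bar H_{2\ell+1})$ together with the fact that the fiber dimension of $\spec R''^\dag \to \spec \bar R$ is bounded by the fiber dimension at the generic point, which is zero — so in fact $\he H''_{2\ell+1} \ge \he \bar H_{2\ell+1} = \he H'_{2\ell+1}$ would follow directly from going-down once one knows $\bar H_{2\ell+1} \subset H''_{2\ell+1} \cap \bar R$, \emph{provided} the fiber over $\bar H_{2\ell+1}$ does not drop height — and it is exactly here that the hypothesis of equality in (\ref{eq:odddecreases}) is used, to rule out such a drop. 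I expect the delicate point to be verifying that $H''_{2\ell+1} \cap \bar R = \bar H_{2\ell+1}$ under the equality hypothesis (rather than some strictly larger prime), since without the equality hypothesis the fiber over $\bar H_{2\ell}$ could be positive-dimensional and $H''_{2\ell+1}$ could ``sit higher'' in a way that breaks the comparison.
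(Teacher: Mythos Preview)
Your proposal has a genuine error in the first inequality and misses the key mechanism in the second.

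\textbf{First inequality.} Your claim that going-down for the faithfully flat map $\bar R \to R''^\dag$ gives $\he_{R''^\dag} H''_{2\ell} \le \he_{\bar R}(H''_{2\ell}\cap\bar R)$ has the inequality backwards: going-down lifts chains \emph{up} from $\bar R$, so it gives $\he H''_{2\ell} \ge \he(H''_{2\ell}\cap\bar R)$, which is useless here. The paper's argument avoids $\bar R$ entirely for this part. It simply observes that since $H'_{2\ell}$ is a \emph{minimal} prime of $P'_\ell R'^\dag$ and $R'^\dag$ is faithfully flat over $R'$, one has $\he H'_{2\ell}=\he P'_\ell$, and likewise $\he H''_{2\ell}=\he P''_\ell$. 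Then Lemma \ref{idealheight} applied to $R'\hookrightarrow R''$ with $tr.deg(K''/K')=0$ gives $\he P''_\ell\le\he P'_\ell$ directly.

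\textbf{Second inequality.} You have the right final step --- faithful flatness of $\bar R\to R''^\dag$ gives $\he H''_{2\ell+1}\ge\he\bar H$ where $\bar H:=H''_{2\ell+1}\cap\bar R$ --- but you never establish $\he\bar H=\he H'_{2\ell+1}$, and your attempt to do so via fiber-dimension bookkeeping is vague precisely where it matters. The paper's mechanism is different and cleaner: equality in (\ref{eq:odddecreases}) means equality in $\he P''_\ell\le\he P'_\ell$, and then Lemma \ref{idealheight} (applied to $R'\to R''$) forces $\kappa(P''_\ell)$ to be \emph{algebraic} over $\kappa(P'_\ell)$. This is the crucial consequence of the equality hypothesis that you do not extract. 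It implies $\kappa(\bar H)$ is algebraic over $\kappa(H'_{2\ell+1})$; together with $\kappa(\bar H_0)$ algebraic over $\kappa(H'_0)$ (from $K''$ algebraic over $K'$), one can apply Lemma \ref{idealheight} \emph{with equality} to the extension $\frac{R'^\dag}{H'_0}\hookrightarrow\frac{\bar R}{\bar H_0}$, because $\hat R'$ is complete hence universally catenarian. This yields $\he H'_{2\ell+1}=\he\bar H$ on the nose, with no need to compare $\bar H$ to any candidate $\bar H_{2\ell+1}$ or worry about uniqueness of primes lying over $H'_{2\ell+1}$.
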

\begin{proof} We start by recalling a well known Lemma (for a
proof see \cite{ZS}, Appendix 1, Propositions 2 and 3, p. 326):
\begin{lemma}\label{idealheight} Let $R\hookrightarrow R'$ be an
  extension of integral domains, essentially of finite type. Let $K$
  and $K'$ be the respective fields of fractions of $R$ and
  $R'$. Consider prime ideals $P\subset R$ and $P'\subset R'$ such
  that $P=P'\cap R$. Then
\begin{equation}
\he\ P'+tr.deg.(\kappa(P')/\kappa(P))\le\ \he\
P+tr.deg.(K'/K).\label{eq:heightdrops}
\end{equation}
Moreover, equality holds in (\ref{eq:heightdrops}) whenever $R$ is
universally catenarian.
\end{lemma}
Apply the Lemma to the rings $R'$ and $R''$ and the prime ideals
$P'_\ell\subset R'$ and $P''_\ell\subset R''$. In the case at hand we
have $tr.deg.(K''/K')=0$ by assumption. Hence
\begin{equation}
\he\ P''_\ell\le\ \he\ P'_\ell.\label{eq:htP}
\end{equation}
Since $H'_{2\ell}$ is a minimal prime of $P'_\ell{R'}^\dag$ and
${R'}^\dag$ is faithfully flat over $R'$, we have $ht\ P'_\ell=ht\
H'_{2\ell}$. Similarly, $\he\ P''_\ell=\he\ H''_{2\ell}$, and
(\ref{eq:odddecreases}) follows. Furthermore, equality in
(\ref{eq:odddecreases}) is equivalent to equality in (\ref{eq:htP}).

To prove (\ref{eq:odddecreases1}), let $\bar
R=(R''\otimes_{R'}{R'}^\dag)_{M''}$, where $M''=(m''\otimes1+1\otimes
m'{R'}^\dag)$ and let $\bar m$ denote the maximal ideal of $\bar
R$. We have the natural maps ${R'}^\dag\overset\iota\rightarrow\bar
R\overset\sigma\rightarrow{R''}^\dag$. The homomorphism $\sigma$ is
nothing but the formal completion of the local ring $\bar R$; in
particular, it is faithfully flat. Let
\begin{equation}
\bar H=H''_{2\ell+1}\cap\bar R,\label{eq:barH}
\end{equation}
$\bar H_0=H''_0\cap\bar R$. Since $H''_0$ is a minimal prime of
${R''}^\dag$ and $\sigma$ is faithfully flat, $\bar H_0$ is a minimal
prime of $\bar R$.

Assume that equality holds in (\ref{eq:odddecreases}) (and hence also
in (\ref{eq:htP})). Since equality holds in (\ref{eq:htP}), by Lemma
\ref{idealheight} (applied to the ring extension $R'\rightarrow R''$)
the field $\kappa(P'')$ is algebraic
over $\kappa(P')$.

Apply Lemma \ref{idealheight} to the ring extension
$\frac{{R'}^\dag}{H'_0}\hookrightarrow\frac{\bar R}{\bar H_0}$ and the
prime ideals $\frac{H'_{2\ell+1}}{H'_0}$ and $\frac{\bar H}{\bar
  H_0}$. Since $K''$ is algebraic over $K'$, $\kappa(\bar H_0)$ is
algebraic over $\kappa(H'_0)$. Since $\kappa(P'')$ is algebraic over
$\kappa(P')$, $\kappa(\bar H)$ is algebraic over
$\kappa(H'_{2\ell+1})$. Finally, $\hat R'$ is universally catenarian
because it is a complete local ring. Now in the case ${\ }^\dag=\hat{\
}$ Lemma \ref{idealheight} says that $\he\
\frac{H'_{2\ell+1}}{H'_0}=\he\ \frac{\bar H}{\bar H_0}$. Since both
$\hat R'$ and $\bar R$ are catenarian, this implies that
\begin{equation}
\he\ H'_{2\ell+1}=\he\ \bar H.\label{eq:htequal}
\end{equation}
In the case where ${\ }^\dag$ stands for henselization or a finite
\'etale extension, (\ref{eq:htequal}) is an immediate consequence of
(\ref{eq:barH}). Thus (\ref{eq:htequal}) is true in all the
cases. Since $\sigma$ is faithfully flat and in view of
(\ref{eq:barH}), $\he\bar H\le \he\ H''_{2\ell+1}$. Combined with
(\ref{eq:htequal}), this completes the proof.
\end{proof}
\begin{corollary}\label{htstable1} For each $i$, $0\le i\le 2r$, the
  quantity $\he\ H'_i$ stabilizes for $R'$ sufficiently far out in $\cal T$.
\end{corollary}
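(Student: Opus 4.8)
The plan is to deduce this directly from Proposition \ref{htstable}, treating the even-numbered and odd-numbered ideals separately. Fix $\ell\in\{0,\dots,r\}$. For the even index $2\ell$, Proposition \ref{htstable} says precisely that the assignment $R'\mapsto\he\ H'_{2\ell}$ is non-increasing along the arrows of $\cal T$; since it takes values in $\mathbf N$, its image has a least element, attained at some ring of $\cal T$. By the directedness of $\cal T$, once we have reached such a ring $R_1$, every $R''\in{\cal T}(R_1)$ receives a value which is both $\le$ the value at $R_1$ (by monotonicity) and $\ge$ the value at $R_1$ (by minimality); hence $\he\ H''_{2\ell}$ equals the value at $R_1$ for all $R''\in{\cal T}(R_1)$. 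Choosing $R_1$ far enough out in $\cal T$, we may arrange this simultaneously for all $\ell\in\{0,\dots,r\}$, so that all the even-numbered heights $\he\ H'_{2\ell}$ are constant on ${\cal T}(R_1)$.

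For the odd index $2\ell+1$, I would work inside ${\cal T}(R_1)$. By the previous paragraph equality holds in (\ref{eq:odddecreases}) for every arrow of ${\cal T}(R_1)$, so the second assertion of Proposition \ref{htstable} gives that $R'\mapsto\he\ H'_{2\ell+1}$ is \emph{non-decreasing} on ${\cal T}(R_1)$. To conclude I need an a priori upper bound on these heights, and this is exactly where the standing assumption that $K'$ is algebraic over $K$ for every $R'\in\cal T$ enters. Applying Lemma \ref{idealheight} to the extension $R\hookrightarrow R'$ and the prime ideals $m'\subset R'$, $m=m'\cap R$, and using $tr.deg.(K'/K)=0$, one obtains $\dim\ R'=\he\ m'\le\dim\ R$. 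Since formal completion, henselization and finite \'etale extensions all preserve Krull dimension, $\dim\ {R'}^\dag=\dim\ R'\le\dim\ R$, whence $\he\ H'_{2\ell+1}\le\dim\ {R'}^\dag\le\dim\ R$ for every $R'\in\cal T$. A non-decreasing, integer-valued function on the directed set ${\cal T}(R_1)$ which is bounded above is eventually constant, by the same minimum/maximum argument used in the first paragraph; hence there is $R_2\in{\cal T}(R_1)$ on whose subtree ${\cal T}(R_2)$ all the $\he\ H'_{2\ell+1}$ are constant, uniformly in $\ell$.

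Combining the two parts, all the quantities $\he\ H'_i$, $0\le i\le 2r$, are simultaneously constant on ${\cal T}(R_2)$, which is the assertion. The only genuine obstacle is the odd-numbered case: there the monotonicity from Proposition \ref{htstable} runs in the increasing direction, so stabilization does not follow from $\mathbf N$-valuedness alone and one must supply the bound $\he\ H'_{2\ell+1}\le\dim\ {R'}^\dag$; identifying that the algebraicity hypothesis on the $K'$ is exactly what keeps $\dim\ R'$ bounded (via the dimension formula already recorded in Lemma \ref{idealheight}) is the point. Everything else reduces to the elementary fact that a monotone integer-valued function on a directed set with bounded image is eventually constant.
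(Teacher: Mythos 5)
Your proof is correct, and since the paper states Corollary \ref{htstable1} without a proof, your argument is precisely the intended deduction from Proposition \ref{htstable}: non-increasingness of $\he\ H'_{2\ell}$ with values in $\mathbf{N}$ forces stabilization of the even heights, after which the odd heights become non-decreasing and are bounded above by $\dim\ R'\le\dim\ R$ (the latter via Lemma \ref{idealheight} and the standing algebraicity assumption on $K'/K$, together with the dimension-preservation of $R'\to{R'}^\dag$), so they stabilize too. You also correctly handle the uniformity in $i$ by directedness of $\mathcal{T}$, and you correctly identify that the algebraicity hypothesis is the ingredient that keeps the dimensions bounded.
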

The next Proposition is an immediate consequence of Theorem
\ref{classification}.
\begin{proposition}\label{uniqueness2} Suppose given an admissible chain of
  trees (\ref{eq:chaintree''}) of prime ideals of ${R'}^\dag$. For
  each $\ell\in\{0,\dots,r-1\}$, consider the set of all $R'\in\cal T$
  such that
\begin{equation}
\he\ \tilde H'_{2\ell+1}- \he\ \tilde H'_{2_\ell} \le 1\qquad\text{
  for all even }i\label{eq:odd=even3}
\end{equation}
and, in case of equality, the 1-dimensional local ring
$\frac{{R'}^\dag_{\tilde H'_{2\ell+1}}}{\tilde
  H'_{2\ell}{R'}^\dag_{\tilde H'_{2\ell+1}}}$ is unibranch (that is,
analytically irreducible). Assume that for each $\ell$ the set of such
$R'$ is cofinal in $\cal T$. Let $\nu^\dag_{2\ell+1,0}$ denote the
unique valuation centered at
$\lim\limits_{\overset\longrightarrow{R'}}\frac{{R'}^\dag_{\tilde
    H'_{2\ell+1}}}{\tilde H'_{2\ell}{R'}^\dag_{\tilde H'_{2\ell+1}}}$.

Assume that for each even $i=2\ell$, $\nu_\ell$ admits a unique
extension $\nu^\dag_{i0}$ to a valuation of
$\lim\limits_{\overset\longrightarrow{R'}}\kappa(\tilde H'_{i-1})$,
centered in
$\lim\limits_{\overset\longrightarrow{R'}}\frac{{R'}^\dag_{\tilde
    H'_i}}{\tilde H'_{i-1}{R'}^\dag_{\tilde H'_i}}$. Then specifying
the valuation $\nu^\dag_-$ is equivalent to specifying for each odd $i$,
$2<i<2r$, an extension $\nu_i^\dag$ of the valuation
$\nu^\dag_{i0}$ of
$\lim\limits_{\overset\longrightarrow{R'}}\kappa(\tilde H'_{i-1})$ to
its field extension $k_{\nu^\dag_{i-1}}$ (in particular, such
extensions $\nu^\dag_-$ always exist). If for each odd $i$, $2<i<2r$,
the field extension
$\lim\limits_{\overset\longrightarrow{R'}}\kappa(\tilde
H'_{i-1})\rightarrow k_{\nu^\dag_{i-1}}$ is algebraic and the
extension $\nu_i^\dag$ of $\nu^\dag_{i0}$ to $k_{\nu^\dag_{i-1}}$ is
unique then there is a unique extension $\nu^\dag_-$ of $\nu$ such that
the $\tilde H'_i$ are the prime ideals, determined by $\nu^\dag_-$.

Conversely, assume that $K'$ is algebraic over $K$ and that there
exists a unique extension $\nu^\dag_-$ of $\nu$ such that the $\tilde
H'_i$ are the prime $\nu^\dag_-$-ideals, determined by $\nu^\dag_-$. Then
for each $\ell\in\{0,\dots,r-1\}$ and for all $R'$ sufficiently far
out in $\cal T$ the inequality (\ref{eq:odd=even3}) holds. For each even $i=2\ell$,
$\nu_\ell$ admits a unique extension $\nu^\dag_{i0}$ to a valuation of
$\lim\limits_{\overset\longrightarrow{R'}}\kappa\left(\tilde
  H'_{i-1}\right)$, centered in
$\lim\limits_{\overset\longrightarrow{R'}}\frac{{R'}^\dag_{\tilde
    H'_i}}{\tilde H'_{i-1}{R'}^\dag_{\tilde H'_i}}$; we have $rk\
\nu^\dag_{i0}=1$. For each odd $i$, the ring
$\lim\limits_{\overset\longrightarrow{R'}}\frac{{R'}^\dag_{\tilde
    H'_i}}{\tilde H'_{i-1}{R'}^\dag_{\tilde H'_i}}$ is a valuation
ring of a (not necessarily discrete) rank 1 valuation. For each odd $i$,
$1\le i<2r$, the field extension
$\lim\limits_{\overset\longrightarrow{R'}}\kappa(\tilde
H'_{i-1})\rightarrow k_{\nu^\dag_{i-1}}$ is algebraic and the
extension $\nu_i^\dag$ of $\nu^\dag_{i0}$ to $k_{\nu^\dag_{i-1}}$ is
unique.
\end{proposition}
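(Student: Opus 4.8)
The plan is to deduce the statement from Theorem~\ref{classification} together with one elementary lemma about unibranch rings, plus careful bookkeeping of the $2r$ components of the composition.

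\emph{First, the lemma.} If $A$ is a Noetherian local domain with $\dim A\le 1$ which is unibranch when $\dim A=1$ (a field when $\dim A=0$), then $\operatorname{Frac}(A)$ carries exactly one valuation ring dominating $A$, namely the integral closure $\bar A$ of $A$, which by Krull--Akizuki is a discrete valuation ring (resp. $A$ itself); in particular it has rank $\le 1$. The argument is the standard one: a valuation ring $V$ dominating $A$ contains the integrally closed ring $\bar A$, hence dominates the valuation ring $\bar A$, hence equals it by maximality of valuation rings under domination. I would then check the limit version: for a filtered union $A=\lim_{\mathcal T}A'$ of such rings along transition maps compatible with domination, a dominating valuation ring restricts on each $\operatorname{Frac}(A')$ to $\bar{A'}$, so it is determined, and its value group, being a filtered union inside one ordered group of the rank one groups of the $\bar{A'}$, has rank $\le 1$. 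Conversely, if $\dim A\ge 2$ then $A$ (or $\lim_{\mathcal T}A'$) is dominated by infinitely many valuation rings — compose a suitable valuation of $A$ with an arbitrary valuation of the residue extension, exactly as in Remark~\ref{composite} — and if $\dim A=1$ with more than one branch, by more than one. Since by Corollary~\ref{htstable1} the heights $\he\tilde H'_i$ are eventually constant along $\mathcal T$, the standing hypotheses make $A_{2\ell+1}:=\lim_{\mathcal T}\frac{{R'}^\dag_{\tilde H'_{2\ell+1}}}{\tilde H'_{2\ell}{R'}^\dag_{\tilde H'_{2\ell+1}}}$ of dimension $\le 1$ and unibranch in the limit, which is what makes $\nu^\dag_{2\ell+1,0}$ well defined.

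\emph{The forward statement.} Fix the admissible chain $\{\tilde H'_i\}$. By Theorem~\ref{classification}, an extension $\nu^\dag_-$ of $\nu$ determining this chain is exactly a recursively built family $(\nu^\dag_i)_{1\le i\le 2r}$, with $\nu^\dag_i$ a valuation of $k_{\nu^\dag_{i-1}}$ whose restriction $\nu^\dag_{i0}$ to $\lim_{\mathcal T}\kappa(\tilde H'_{i-1})$ is centered at $A_i:=\lim_{\mathcal T}\frac{{R'}^\dag_{\tilde H'_i}}{\tilde H'_{i-1}{R'}^\dag_{\tilde H'_i}}$, and with $rk\ \nu^\dag_i=1$ and $\nu^\dag_i$ extending $\nu_\ell$ when $i=2\ell$. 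I would now separate the parities. For odd $i$, the inequality (\ref{eq:odd=even3}) together with the unibranch hypothesis and the lemma show that $\nu^\dag_{i0}$ is \emph{forced}, being the unique valuation dominating $A_i$; for $i=1$ one moreover has $k_{\nu^\dag_0}=\lim_{\mathcal T}\kappa(\tilde H'_0)$, so $\nu^\dag_1=\nu^\dag_{1,0}$ is completely forced, whereas for $2<i<2r$ the only remaining datum at an odd step is an extension of the determined $\nu^\dag_{i0}$ to $k_{\nu^\dag_{i-1}}$. For even $i=2\ell$, the restriction $\nu^\dag_{i0}$ is the unique extension of $\nu_\ell$ of the required form by the standing hypothesis; the point then — and this is where I expect the real work — is to show that this already determines $\nu^\dag_i$ on all of $k_{\nu^\dag_{i-1}}$, by a residue-field computation parallel to Remark~\ref{sameresfield} (the residue field of $\nu^\dag_{i0}$ being $\lim_{\mathcal T}\kappa(\tilde H'_i)$), so that no freedom is introduced at even steps. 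Granting this, specifying $\nu^\dag_-$ is the same as specifying, for each odd $i$ with $2<i<2r$, an extension of $\nu^\dag_{i0}$ to $k_{\nu^\dag_{i-1}}$, and such extensions exist by Chevalley's extension theorem — proving the equivalence. If in addition every field extension $\lim_{\mathcal T}\kappa(\tilde H'_{i-1})\to k_{\nu^\dag_{i-1}}$ is algebraic and $\nu^\dag_{i0}$ extends uniquely, then every slot is forced and $\nu^\dag_-$ is the unique extension determining $\{\tilde H'_i\}$, which is the sufficient condition. For the converse I would assume $K'$ algebraic over $K$, so that Proposition~\ref{htstable} and Corollary~\ref{htstable1} apply and all heights in sight stabilise along $\mathcal T$, and that $\nu^\dag_-$ is the unique extension of $\nu$ determining $\{\tilde H'_i\}$. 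If (\ref{eq:odd=even3}) failed for a cofinal set of $R'$ at some $\ell$, then eventually $\he\tilde H'_{2\ell+1}-\he\tilde H'_{2\ell}\ge 2$, so $A_{2\ell+1}$ has dimension $\ge 2$ and, by the lemma, admits infinitely many dominating valuation rings; each yields a distinct $\nu^\dag_{2\ell+1,0}$, hence — completing it to a full valid family by Theorem~\ref{classification}, keeping the earlier components and choosing later ones by Chevalley — a distinct $\nu^\dag_-$ determining the same chain, contradicting uniqueness. Hence (\ref{eq:odd=even3}) holds for $R'$ far out, and the same argument using branches shows that in the equality case $A_{2\ell+1}$ is unibranch in the limit, hence by the lemma a rank one valuation ring; this gives the assertion on the odd-indexed rings. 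For even $i=2\ell$, $\nu^\dag_{i0}$ is a restriction of the unique $\nu^\dag_-$, hence determined, and a second extension of $\nu_\ell$ of the required form would again give a second $\nu^\dag_-$, so it is unique; moreover $rk\ \nu^\dag_{i0}=1$ since $\Delta_{i-1,0}$ is a nontrivial subgroup of $\frac{\Delta^\dag_{i-1}}{\Delta^\dag_i}$, which has rank one by Lemma~\ref{rank1}. Finally, for odd $i$ with $1\le i<2r$: were $\lim_{\mathcal T}\kappa(\tilde H'_{i-1})\to k_{\nu^\dag_{i-1}}$ transcendental one could change the value $\nu^\dag_i$ assigns to a transcendental element, and were it algebraic but admitting two extensions of $\nu^\dag_{i0}$ one could use either; either way one produces two $\nu^\dag_-$ determining $\{\tilde H'_i\}$, contradicting uniqueness, so the extension is algebraic and unique.

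\emph{Where the difficulty lies.} Once the unibranch lemma and its filtered-union version are in place, the architecture is routine; the two points needing genuine care are (i) the even-step bookkeeping — showing that the unique extension $\nu^\dag_{i0}$ of $\nu_\ell$ forces $\nu^\dag_i$ on all of $k_{\nu^\dag_{i-1}}$, which requires following residue fields through the $2r$-fold composition, i.e. proving the $\tilde H'$-analogue of Remark~\ref{sameresfield}; and (ii) the passage to the limit over $\mathcal T$, where the rings $\frac{{R'}^\dag_{\tilde H'_i}}{\tilde H'_{i-1}{R'}^\dag_{\tilde H'_i}}$ are no longer Noetherian and "unibranch" and "is a valuation ring" must be read through filtered unions of discrete valuation rings with compatible domination. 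The rest is an application of Theorem~\ref{classification}, Propositions~\ref{nu0unique} and~\ref{htstable}, and the standard structure theory of valuation rings.
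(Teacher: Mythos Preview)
The paper gives no proof of this proposition at all: the sentence immediately preceding it reads ``The next Proposition is an immediate consequence of Theorem~\ref{classification}.'' Your write-up is therefore already far more detailed than what the paper supplies, and the architecture you propose --- unpack $\nu^\dag_-$ via Theorem~\ref{classification} into its $2r$ components, pin down the odd components by the unibranch/dimension-one lemma, pin down the restrictions at even components by hypothesis, and argue the converse by producing competing extensions when a height gap exceeds one --- is exactly the intended route.

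Your concern (i), the even-step bookkeeping, is a genuine subtlety that the paper's one-line ``proof'' simply does not address. The point you are missing is not a new lemma but an induction on $r$ in the style of the proof of Proposition~\ref{uniqueness1} later in the section: one shows $k_{\nu^\dag_1}=\lim\kappa(\tilde H'_1)$ from the unibranch hypothesis, whence $\nu^\dag_2=\nu^\dag_{2,0}$, and then feeds $k_{\nu^\dag_2}$ into the inductive hypothesis for $\nu_2\circ\cdots\circ\nu_r$. The residue-field identification at even steps is handled there by Proposition~\ref{resmin} (the analogue of Remark~\ref{sameresfield} for general $\tilde H'_i$), which gives $k_{\nu^\dag_{2\ell,0}}=\lim\kappa(\tilde H'_{2\ell})$. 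With that in hand, each even $\nu^\dag_{2\ell}$ is completely determined once the preceding odd step has been chosen, and the only remaining freedom is exactly the odd-step extensions for $2<i<2r$, as claimed. So your plan is correct; the missing ingredient is to invoke Proposition~\ref{resmin} at the even steps and organize the argument as an induction on $r$ rather than a single pass through $i=1,\dots,2r$.

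Your concern (ii) about the non-noetherian limit is less serious than you fear: the cofinality hypothesis lets you work at each $R'$ separately (where everything is noetherian and Krull--Akizuki applies), and the tree condition guarantees compatibility; the limit ring is then a filtered union of DVRs dominating one another, hence a rank-one valuation ring, exactly as you sketched.
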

\begin{remark} We do not know of a simple criterion to decide when, given
  an algebraic field extension $K\hookrightarrow L$ and a valuation
  $\nu$ of $K$, is the extension of $\nu$ to $L$ unique. See
  \cite{HOS}, \cite{V2} for more information about this question and an
  algorithm for arriving at the answer using MacLane's key
  polynomials.
\end{remark}
Next we describe three classes of extensions of $\nu$ to
$\lim\limits_{\overset\longrightarrow{R'}}{R'}^\dag$, which are of
particular interest for applications, and which we call
\textbf{minimal}, \textbf{evenly minimal} and \textbf{tight} extensions.
\begin{definition} Let $\nu^\dag_-$ be an extension of $\nu$ to
  $\lim\limits_{\overset\longrightarrow{R'}}{R'}^\dag$ and let the
  notation be as above. We say that $\nu^\dag_-$ is \textbf{evenly
    minimal} if whenever $i=2\ell$ is even, the following two conditions hold:

(1)
\begin{equation}
\Delta_{i-1,0}=\frac{\Delta_{\ell-1}}{\Delta_\ell}.\label{eq:groupequal}
\end{equation}

(2) For an element $\overline\beta\in\frac{\Delta_{\ell-1}}{\Delta_\ell}$,
the $\nu^\dag_{i0}$-ideal of $\frac{R^\dag_{\tilde H_i}}{\tilde
H_{i-1}R^\dag_{\tilde H_i}}$ of value $\overline\beta$, denoted by
$\P^\dag_{\overline\beta ,\ell}$, is given by the formula
\begin{equation}
\P^\dag_{\overline\beta,\ell}=\left(\lim\limits_{\overset\longrightarrow{R'}}
\frac{\P'_{\overline\beta}{R'}^\dag_{\tilde H'_i}}{\tilde
  H'_{i-1}{R'}^\dag_{\tilde H'_i}}\right)\cap\frac{R^\dag_{\tilde
  H_i}}{\tilde H_{i-1}R^\dag_{\tilde H_i}}.\label{eq:valideal}
\end{equation}
We say that $\nu^\dag_-$ is \textbf{minimal} if $\tilde H'_i=H'_i$
for each $R'$ and each $i\in\{0,\dots,2r+1\}$. We say that
$\nu^\dag_-$ is \textbf{tight} if it is evenly minimal and
\begin{equation}
\tilde H'_i=\tilde H'_{i+1}\qquad\text{ for all even }i.\label{eq:odd=even2}
\end{equation}
\end{definition}
\begin{remark} (1) The valuation $\nu^\dag_{i0}$ is uniquely determined by
  conditions (\ref{eq:groupequal}) and (\ref{eq:valideal}). Recall
  also that if $i=2\ell$ is even and we have:
\begin{eqnarray}
\tilde H'_i&=&H'_i\qquad\text{ and}\label{eq:tilde=H1}\\
\tilde H'_{i-1}&=&H'_{i-1}\label{eq:tilde=H2}
\end{eqnarray}
then $\nu^\dag_{i0}$ is uniquely determined by $\nu_\ell$ by
Proposition \ref{nu0unique}. In particular, if $\nu^\dag_-$ is minimal
(that is, if (\ref{eq:tilde=H1})--(\ref{eq:tilde=H2}) hold for all
$i$) then $\nu^\dag_-$ is evenly minimal.

(2) The definition of evenly minimal extensions can be rephrased as follows in terms of the associated graded algebras of $\nu_\ell$ and $\nu^\dag_{i0}$. First, consider a homomorphism in $\mathcal T$ and the following diagram, composed of natural homomorphisms:
\begin{equation}\label{eq:CDevminimal}
\xymatrix{{\frac{\P'_{\overline\beta}}{\P'_{\overline\beta+}}\otimes_{R'}{R'}^\dag_{\tilde H'_i}}\ar[r]^-{\lambda'}&{\frac{\P'_{\overline\beta}{R'}^\dag_{\tilde H'_i}}{\P'_{\overline\beta+}{R'}^\dag_{\tilde H'_i}}} \ar[r]^-{\phi'}& {\frac{{\P'_{\overline\beta}}^\dag}{{\P'}_{\overline\beta+}^\dag}}\\
{\ }&{\ }&{\frac{\P_{\overline\beta}^\dag}{\P_{\overline\beta+}^\dag}}\ar[u]_-{\psi'}}
\end{equation}
It follows from Nakayama's Lemma that equality (\ref{eq:valideal}) is equivalent to saying that
\begin{equation}\label{eq:equalgraded}
\frac{\P_{\overline\beta}^\dag}{\P_{\overline\beta+}^\dag}=\lim\limits_{\overset\longrightarrow{R'}}{\psi'}^{-1}\left((\phi'\circ\lambda')\left(\frac{\P'_{\overline\beta}}{\P'_{\overline\beta+}}\otimes_{R'}\kappa\left(\tilde H'_i\right)\right)\right).
\end{equation}
Taking the direct sum in (\ref{eq:equalgraded}) over all $\overline\beta\in\frac{\Delta_{\ell-1}}{\Delta_\ell}$ and passing to the limit on the both sides, we see that the extension $\nu^\dag_-$ is evenly minimal if and only if we have the following equality of graded algebras:
$$
\gr_{\nu_\ell}\left(\lim\limits_{\overset\longrightarrow{R'}}\frac{R'}{P'_\ell}\right)\otimes_{R'}\kappa\left(\tilde H'_i\right)=\gr_{\nu^\dag_{i0}}\left(\lim\limits_{\overset\longrightarrow{R'}}\frac{{R'}^\dag_{\tilde H'_i}}{\tilde
  H'_{i-1}{R'}^\dag_{\tilde H'_i}}\right).
$$
\end{remark}
\begin{examples} The extension $\hat\nu$ of Example \ref{Example31} (page \pageref{Example31}) is
minimal, but not tight. The valuation $\nu$ admits a unique tight
extension $\hat\nu_2\circ\hat\nu_3$ to
$\lim\limits_{\overset\longrightarrow{R'}}\frac{\hat R'}{H'_1}$; the
valuation $\hat\nu$ is the composition of the discrete rank 1
valuation $\hat\nu_1$, centered in
$\lim\limits_{\overset\longrightarrow{R'}}\hat R'_{H'_1}$ with
$\hat\nu_2\circ\hat\nu_3$.

The extension $\hat\nu^{(1)}$ of Example \ref{Example32} (page \pageref{Example32}) is minimal. The
extension $\hat\nu^{(2)}$ is evenly minimal but not  minimal.
Neither $\hat\nu^{(1)}$ nor $\hat\nu^{(2)}$ is tight. The valuation
$\nu$ admits a unique tight extension $\hat\nu_2\circ\hat\nu_3$ to
$\lim\limits_{\overset\longrightarrow{R'}}\frac{R'}{\tilde H'_1}$,
where $\tilde H'_1=\left(y-\sum\limits_{j=1}^\infty c_jx^j\right)$;
the valuation $\hat\nu^{(2)}$ is the composition of the discrete
rank 1 valuation $\hat\nu_1$, centered in
$\lim\limits_{\overset\longrightarrow{R'}}\hat R'_{\tilde H'_1}$
with $\hat\nu_2\circ\hat\nu_3$.
\end{examples}
\begin{remark} As of this moment, we do not know of any examples of
  extensions $\hat\nu_-$ which are not evenly minimal. Thus, formally,
  the question of whether every extension $\hat\nu_-$ is evenly minimal
  is open, though we strongly suspect that counterexamples do
  exist.
\end{remark}
\begin{proposition}\label{resmin} Let $i=2\ell$ be even and let
  $\nu^\dag_{i0}$ be the extension of $\nu_\ell$ to
  $\lim\limits_{\overset\longrightarrow{R'}}\kappa(\tilde H'_{i-1})$,
  centered at the local ring
  $\lim\limits_{\overset\longrightarrow{R'}}\frac{{R'}^\dag_{\tilde
      H'_i}}{\tilde H'_{i-1}{R'}^\dag_{\tilde H'_i}}$, defined by
  (\ref{eq:valideal}).
Then
\begin{equation}
k_{\nu^\dag_{i0}}=\lim\limits_{\overset\longrightarrow{R'}}\kappa(\tilde
H'_i).\label{eq:resfield}
\end{equation}
\end{proposition}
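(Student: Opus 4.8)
The plan is to split the equality into its two inclusions. The inclusion $\lim\limits_{\overset\longrightarrow{R'}}\kappa(\tilde H'_i)\subseteq k_{\nu^\dag_{i0}}$ is free of charge: writing $A':=\frac{{R'}^\dag_{\tilde H'_i}}{\tilde H'_{i-1}{R'}^\dag_{\tilde H'_i}}$, the valuation $\nu^\dag_{i0}$ is centered at the local ring $\lim\limits_{\overset\longrightarrow{R'}}A'$, whose residue field is $\lim\limits_{\overset\longrightarrow{R'}}\kappa(\tilde H'_i)$, and $R_{\nu^\dag_{i0}}$ dominates this local ring, so its residue field embeds into $k_{\nu^\dag_{i0}}$. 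All the content is therefore in the opposite inclusion, and the idea is to mimic — and to extend from the minimal to the evenly minimal setting — the argument behind the residue field statement of Remark \ref{sameresfield}.

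To prove $k_{\nu^\dag_{i0}}\subseteq\lim\limits_{\overset\longrightarrow{R'}}\kappa(\tilde H'_i)$ I would first reduce, as always, to working inside the cofinal subtree of stable rings; by the attainment of limits over stable rings established in \S\ref{technical} (compare Corollary \ref{stablecontracts}), this lets me assume that for every stable $R'$ the limit in (\ref{eq:valideal}) is already attained, i.e.\ $\P^\dag_{\overline\beta,\ell}(A')=\frac{\P'_{\overline\beta}{R'}^\dag_{\tilde H'_i}}{\tilde H'_{i-1}{R'}^\dag_{\tilde H'_i}}$. Now take $\bar\xi\in k_{\nu^\dag_{i0}}$, $\bar\xi\neq0$; lift it to a unit $\xi$ of $R_{\nu^\dag_{i0}}$ (so $\nu^\dag_{i0}(\xi)=0$) and, using that the fraction field of $R_{\nu^\dag_{i0}}$ equals $\lim\limits_{\overset\longrightarrow{R'}}\kappa(\tilde H'_{i-1})=\operatorname{Frac}\!\left(\lim\limits_{\overset\longrightarrow{R'}}A'\right)$, write $\xi=a/b$ with $a,b$ nonzero elements of some $A'$ with $R'$ stable. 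Put $\beta:=\nu^\dag_{i0}(a)=\nu^\dag_{i0}(b)$. A short argument shows $\beta$ lies in the value semigroup $\Phi'_\ell:=\nu_\ell\!\left(\frac{R'}{P'_{\ell-1}}\setminus\{0\}\right)$: otherwise, since $\Phi'_\ell$ is well ordered, $\P'_\beta$ would coincide with $\P'_{\beta^*}$ for $\beta^*:=\min\{\delta\in\Phi'_\ell\mid\delta\ge\beta\}>\beta$, hence $\P^\dag_{\beta,\ell}(A')=\P^\dag_{\beta^*,\ell}(A')$, contradicting $\nu^\dag_{i0}(a)=\beta$.

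The decisive step is then to pass to a $\nu$-extension in which $\P'_\beta$ becomes principal. I would pick $a_1\in\P'_\beta$ with $\nu_\ell(\bar a_1)=\beta$ (so that $a_1$ realizes $\nu(\P'_\beta)$) and let $R''\in\cal T$ be the local blowing up of $R'$ along $\P'_\beta$ with respect to $\nu$, so that $\P'_\beta R''=a_1R''$ and hence $\P'_\beta{R''}^\dag_{\tilde H''_i}=a_1{R''}^\dag_{\tilde H''_i}$; consequently the image of $\P'_\beta{R'}^\dag_{\tilde H'_i}$ under the local map $A'\to A''$ lies in $a_1A''$. Since $a,b\in\P^\dag_{\beta,\ell}(A')=\frac{\P'_\beta{R'}^\dag_{\tilde H'_i}}{\tilde H'_{i-1}{R'}^\dag_{\tilde H'_i}}$, their images in $A''$ lie in $a_1A''$, so I can write $a=a_1a''$ and $b=a_1b''$ in $A''$. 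Then $\nu^\dag_{i0}(a'')=\nu^\dag_{i0}(b'')=0$; as $\nu^\dag_{i0}$ is centered at $A''$, its center in $A''$ is $m_{A''}$, so $a''$ and $b''$ are units of $A''$. Hence $\xi=a''/b''$ is a unit of $A''$ and $\bar\xi=\overline{a''}\,\overline{b''}^{-1}\in A''/m_{A''}=\kappa(\tilde H''_i)\subseteq\lim\limits_{\overset\longrightarrow{R'}}\kappa(\tilde H'_i)$, which is what was wanted.

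The hard part is precisely this last move: the residue field of a valuation is not directly accessible, and a priori $k_{\nu^\dag_{i0}}$ could be much larger than $\lim\limits_{\overset\longrightarrow{R'}}\kappa(\tilde H'_i)$. What makes it work is the confluence of three facts — that the relevant fraction field is the direct limit over $\cal T$, so every residue class is a ratio $a/b$ realized at a finite stage; that the evenly minimal hypothesis (\ref{eq:valideal}) pins the $\nu^\dag_{i0}$-ideals to the $\nu_\ell$-ideals $\P'_\beta$ of $R'$; and that $\cal T$ already contains the blowing up making $\P'_\beta$ principal. The routine point to watch — easy, given that the $\tilde H'_\bullet$ form trees of ideals and $R^\dag$ is flat over $R$ — is that extension to $R''$, localization at $\tilde H'_i$ and passage to the quotient by $\tilde H'_{i-1}$ all commute with one another, so that principality of $\P'_\beta R''$ really does force the corresponding ideal of $A''$ to be principal.
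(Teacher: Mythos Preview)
Your proof is correct and follows essentially the same route as the paper's: represent an arbitrary element of $k_{\nu^\dag_{i0}}$ as $a/b$ with $\nu^\dag_{i0}(a)=\nu^\dag_{i0}(b)=\beta$, pass to an $R'$ where both lie in $\frac{\P'_\beta{R'}^\dag_{\tilde H'_i}}{\tilde H'_{i-1}{R'}^\dag_{\tilde H'_i}}$, blow up $\P'_\beta$ to make it principal, factor out the generator, and read off the residue class in $\kappa(\tilde H''_i)$.

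One small over-claim to clean up: your appeal to stability and Corollary \ref{stablecontracts} to assert that the limit in (\ref{eq:valideal}) is already attained at every stable $R'$ is not justified here --- those results are about the implicit ideals $H_i$ and the ideals $\P_\beta R^\dag$, not about arbitrary admissible $\tilde H_i$. But you do not need this. The paper simply uses (\ref{eq:valideal}) as a limit formula: since $a,b$ lie in $\P^\dag_{\beta,\ell}$, there exists some $R'$ far enough in $\cal T$ with $a,b\in\frac{\P'_\beta{R'}^\dag_{\tilde H'_i}}{\tilde H'_{i-1}{R'}^\dag_{\tilde H'_i}}$, and then one blows up. Replacing your stability reduction with this direct use of the limit removes the issue without changing anything else.
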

\begin{proof} Take two elements
$x,y\in\lim\limits_{\overset\longrightarrow{R'}}\frac{{R'}^\dag_{\tilde
    H'_i}}{\tilde H'_{i-1}{R'}^\dag_{\tilde H'_i}}$, such that
$\nu_{i0}^\dag(x)=\nu_{i0}^\dag(y)$. We must show that the image of
$\frac xy$ in $k_{\nu^\dag_{i0}}$ belongs to
$\lim\limits_{\overset\longrightarrow{R'}}\kappa(\tilde
H'_i)$. Without loss of generality, we may assume that
$x,y\in\frac{R^\dag_{\tilde H_i}}{\tilde H_{i-1}{R}^\dag_{\tilde
    H_i}}$. Let $\beta=\nu_{i0}^\dag(x)=\nu_{i0}^\dag(y)$. Choose
$R'\in\cal T$ sufficiently far out in the direct system so that
$x,y\in\frac{\P'_\beta{R'}^\dag_{\tilde H'_i}}{\tilde
  H'_{i-1}{R'}^\dag_{\tilde H'_i}}$. Let $R'\rightarrow R''$ be the
blowing up of the ideal $\P'_\beta R'$. Then in
$\frac{\P''_\beta{R''}^\dag_{\tilde H''_i}}{\tilde
  H''_{i-1}{R''}^\dag_{\tilde H''_i}}$ we can write
\begin{eqnarray}
x&=&az\qquad\text{ and }\label{eq:xaz}\\
y&=&aw,
\end{eqnarray}
where $\nu_{i0}^\dag(a)=\beta$ and
$\nu_{i0}^\dag(z)=\nu_{i0}^\dag(w)=0$. Let $\bar z$ be the image of
$z$ in $\kappa(\tilde H''_i)$ and similarly for $\bar w$. Then the
image of $\frac xy$ in $k_{\nu^\dag_{i0}}$ equals $\frac{\bar z}{\bar
  w}\in\kappa(\tilde H''_i)$, and the result is proved.
  \end{proof}
\begin{remark}\label{minimalexist} Theorem \ref{classification} and the existence of the
  extension $\nu^\dag_{2\ell,0}$ of $\nu_\ell$ in the case when
  $\tilde H'_{2\ell}=H'_{2\ell}$ and $\tilde
  H'_{2\ell-1}=H'_{2\ell-1}$ guaranteed by Theorem \ref{primality1}
  (2) allow us to give a fairly explicit description of the totality
  of minimal extensions as compositions of $2r$ valuations and, in
  particular, to show that they always exist. Indeed, minimal
  extensions $\nu^\dag_-$ can be constructed at will, recursively in
  $i$, as follows. Assume that the valuations
  $\nu^\dag_1,\dots,\nu^\dag_{i-1}$ are already constructed. If $i$ is
  odd, let $\nu^\dag_i$ be an arbitrary valuation of the residue field
  $k_{\nu^\dag_{i-1}}$ of the valuation ring $R_{\nu^\dag_{i-1}}$. If
  $i=2\ell$ is even, let $\nu^\dag_{i0}$ be the extension of $\nu_\ell$ to
  $\lim\limits_{\overset\longrightarrow{R'}}\kappa(H'_{i-1})$,
  centered at the local ring
  $\lim\limits_{\overset\longrightarrow{R'}}
\frac{{R'}^\dag_{H'_i}}{H'_{i-1}{R'}^\dag_{H'_i}}$, whose existence
and uniqueness are guaranteed by Theorem \ref{primality1} (2) and
Proposition \ref{nu0unique}, respectively. Let $\nu^\dag_i$ be an
arbitrary extension of $\nu^\dag_{i0}$ to the field
$k_{\nu^\dag_{i-1}}$. It is clear that all the minimal extensions
$\nu^\dag_-$ of $\nu$ are obtained in this way.

In the next section we will use this remark to show that if
$R^\dag=\tilde R$ or $R^\dag=R^e$ then $\nu$ admits a unique extension
to $\frac{R^\dag}{H_0}$, which is necessarily minimal.
\end{remark}
We end this section by giving some sufficient conditions for the
uniqueness of $\nu^\dag_-$.
\begin{proposition}\label{uniqueness1} Suppose given an admissible chain of
  trees (\ref{eq:chaintree''}) of prime ideals of ${R'}^\dag$. For
  each $\ell\in\{0,\dots,r-1\}$, consider the set of all $R'\in\cal T$ such that
\begin{equation}
ht\ \tilde H'_{2\ell+1}- ht\ \tilde H'_{2\ell} \le 1\qquad\text{ for
  all even }i\label{eq:odd=even1}
\end{equation}
and, in case of equality, the 1-dimensional local ring $\frac{{R'}^\dag_{\tilde
    H'_{2\ell+1}}}{\tilde H'_{2\ell}{R'}^\dag_{\tilde H'_{2\ell+1}}}$
is unibranch (that is, analytically irreducible). Assume that for each
$\ell$ the set of such $R'$ is cofinal in $\cal T$.

Let $\nu^\dag_-$ be an extension of $\nu$ such that the $\tilde H'_i$
are prime $\nu^\dag_-$-ideals. Assume that $\nu^\dag_-$ is evenly
minimal. Then there is at most one such extension $\nu^\dag_-$ and
exactly one such $\nu^\dag_-$ if
\begin{equation}
\tilde H'_i=H'_i\quad\text{ for all }i.\label{eq:tildeH=H}
\end{equation}
(in the latter case $\nu^\dag_-$ is minimal by definition).
\end{proposition}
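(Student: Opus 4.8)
The plan is to reduce the statement to Theorem \ref{classification} by an induction running along the chain (\ref{eq:chaintree''}). Since the chain $\{\tilde H'_i\}$ is held fixed, Theorem \ref{classification} identifies an extension $\nu^\dag_-$ realizing it with the datum of its components $\nu^\dag_1,\dots,\nu^\dag_{2r}$; so it suffices to prove, by induction on $i$, that given $\nu$, the chain, and the hypothesis that $\nu^\dag_-$ is evenly minimal, each $\nu^\dag_i$ is uniquely determined. I would carry along the auxiliary assertion that $k_{\nu^\dag_i}=\lim\limits_{\overset\longrightarrow{R'}}\kappa(\tilde H'_i)$; this holds for $i=0$, where by convention $\nu^\dag_0$ is the trivial valuation of the field of fractions of $\lim\limits_{\overset\longrightarrow{R'}}\frac{{R'}^\dag}{\tilde H'_0}$.

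Suppose $i=2\ell+1$ is odd and the auxiliary assertion holds for $i-1$. Set $A^{R'}=\frac{{R'}^\dag_{\tilde H'_{2\ell+1}}}{\tilde H'_{2\ell}{R'}^\dag_{\tilde H'_{2\ell+1}}}$ and $A=\lim\limits_{\overset\longrightarrow{R'}}A^{R'}$; then $k_{\nu^\dag_{2\ell}}=\lim\limits_{\overset\longrightarrow{R'}}\kappa(\tilde H'_{2\ell})$ is exactly the fraction field of $A$, so by condition (2) of Theorem \ref{classification} the valuation $\nu^\dag_{2\ell+1}$ coincides with its own restriction $\nu^\dag_{2\ell+1,0}$ and is centered at $A$. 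Because ${R'}^\dag$ is catenary (being a completion, a henselization, or an essentially finite type local \'etale extension of the excellent ring $R'$), one has $\dim A^{R'}=\he\ \tilde H'_{2\ell+1}-\he\ \tilde H'_{2\ell}$, so the cofinality hypothesis of the Proposition says that for a cofinal set of $R'\in\cal T$ the ring $A^{R'}$ is either a field or a one-dimensional analytically irreducible excellent local domain; exactly as in Proposition \ref{uniqueness2} this makes $\nu^\dag_{2\ell+1,0}$ the unique valuation centered at $A$ (trivial when the $A^{R'}$ are cofinally fields, and otherwise governed by the unique maximal ideal of the integral closure), and it depends only on the chain. Moreover, since $\cal T$ contains all birational $\nu$-extensions, a further extension $R''$ replaces $A^{R'}$ by a dominating local ring closer to its integral closure (the one-dimensional analogue of the equality (\ref{eq:ZariskiRiemann})); passing to the limit, $A$ is a valuation ring of a (possibly non-discrete) rank $\le 1$ valuation, so $k_{\nu^\dag_{2\ell+1}}=A/\mathfrak m_A=\lim\limits_{\overset\longrightarrow{R'}}\kappa(\tilde H'_{2\ell+1})$ and the auxiliary assertion holds for $i$.

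Suppose $i=2\ell$ is even and the auxiliary assertion holds for $i-1$. Then $k_{\nu^\dag_{2\ell-1}}=\lim\limits_{\overset\longrightarrow{R'}}\kappa(\tilde H'_{2\ell-1})$ is precisely the field on which the restriction $\nu^\dag_{2\ell,0}$ is defined, so $\nu^\dag_{2\ell}=\nu^\dag_{2\ell,0}$. Since $\nu^\dag_-$ is evenly minimal, $\nu^\dag_{2\ell,0}$ satisfies (\ref{eq:groupequal}) and (\ref{eq:valideal}), and by the remark recording that these two conditions pin down $\nu^\dag_{2\ell,0}$ in terms of $\nu_\ell$ and the chain, $\nu^\dag_{2\ell}$ is uniquely determined; Proposition \ref{resmin} then gives $k_{\nu^\dag_{2\ell}}=k_{\nu^\dag_{2\ell,0}}=\lim\limits_{\overset\longrightarrow{R'}}\kappa(\tilde H'_{2\ell})$, which continues the induction. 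This proves that at most one evenly minimal extension $\nu^\dag_-$ realizes the given chain.

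For the last assertion, assume $\tilde H'_i=H'_i$ for all $i$. By Remark \ref{minimalexist} a minimal extension $\nu^\dag_-$ of $\nu$ always exists; it is evenly minimal (minimal extensions are evenly minimal), the chain of prime ideals it determines is $\{H'_i\}$ by the definition of minimality, and this chain is admissible by Propositions \ref{contracts}, \ref{H2l} and \ref{necessary} (the inclusions $H'_i\subset\tilde H'_i$ being trivial equalities here). The uniqueness just proved shows that this is the only such extension, and it is minimal by construction. The one genuinely delicate point in the whole argument is the limit/normalization step of the odd case --- upgrading ``$A^{R'}$ is cofinally zero- or one-dimensional and analytically irreducible along $\cal T$'' to ``$A$ is a valuation ring with residue field $\lim\limits_{\overset\longrightarrow{R'}}\kappa(\tilde H'_{2\ell+1})$''; granting that, everything else is bookkeeping with Theorem \ref{classification}, Proposition \ref{resmin} and Remark \ref{minimalexist}.
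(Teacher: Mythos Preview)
Your approach is essentially the same as the paper's. Both reduce to Theorem \ref{classification} and prove uniqueness of the components $\nu^\dag_i$ while carrying the auxiliary claim $k_{\nu^\dag_i}=\lim\limits_{\overset\longrightarrow{R'}}\kappa(\tilde H'_i)$; the only organizational difference is that the paper inducts on $r$ (peeling off $\nu^\dag_1$ and $\nu^\dag_2$ and invoking the rank-$(r-1)$ case for $\nu^\dag_3\circ\dots\circ\nu^\dag_{2r}$) while you induct directly on $i$. The even step is identical in both: $\nu^\dag_{2\ell}=\nu^\dag_{2\ell,0}$ by the residue-field hypothesis at level $2\ell-1$, uniqueness from (\ref{eq:groupequal})--(\ref{eq:valideal}), and Proposition \ref{resmin} for the residue field. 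For existence under (\ref{eq:tildeH=H}), the paper appeals to Theorem \ref{primality1}(2) rather than Remark \ref{minimalexist}, but these amount to the same thing.

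One remark on the point you flag as ``delicate'': the paper does not go through the claim that $A=\lim\limits_{\overset\longrightarrow{R'}}A^{R'}$ is itself a valuation ring. Instead it argues directly that for each $R'$ in the cofinal set, the restriction of $\nu^\dag_{2\ell+1}$ to the one-dimensional unibranch local ring $A^{R'}$ is the unique divisorial valuation centered there, with residue field $\kappa(\tilde H'_{2\ell+1})$, and then passes to the limit. This avoids having to argue that birational $\nu$-extensions of $R'$ in $\cal T$ eventually realize the normalization of $A^{R'}$, which is what your valuation-ring claim would require and which is not obvious from the hypotheses on $\cal T$.
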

\begin{proof} By Theorem
\ref{primality1} (2) and Proposition \ref{nu0unique}, if
(\ref{eq:tildeH=H}) holds then $\nu^\dag_-$ is minimal and for each even
$i$ the extension $\nu^\dag_{i0}$ exists and is unique. Therefore we
may assume that in all the cases $\nu^\dag_-$ is evenly minimal and that
$\nu^\dag_{i0}$ exists whenever (\ref{eq:tildeH=H}) holds.

The valuation $\nu^\dag_-$, if it exists, is a composition of $2r$ valuations:
$\nu^\dag_-=\nu^\dag_1\circ\nu^\dag_2\circ\dots\circ\nu^\dag_{2r}$,
subject to the conditions of Theorem \ref{classification}. We prove
the uniqueness of $\nu^\dag_-$ by induction on $r$. Assume the result is
true for $r-1$. This means that there is at most one evenly minimal
extension $\nu^\dag_3\circ\nu^\dag_4\circ\dots\circ\nu^\dag_{2r}$ of
$\nu_2\circ\nu_3\circ\dots\circ\nu_r$ to
$\lim\limits_{\overset\longrightarrow{R'}}\kappa(\tilde H'_2)$, and
exactly one in the case when (\ref{eq:tildeH=H}) holds. To complete
the proof of uniqueness of $\nu^\dag_-$, it is sufficient to show that
both $\nu_1^\dag$ and $\nu_2^\dag$ are unique and that the residue
field of $\nu_2^\dag$ equals
$\lim\limits_{\overset\longrightarrow{R'}}\kappa(\tilde H'_2)$.

We start with the uniqueness of $\nu_1^\dag$. If (\ref{eq:odd=even2})
holds then $\nu_1^\dag$ is the trivial valuation. Suppose, on the
other hand, that equality holds in (\ref{eq:odd=even1}). Then the
restriction of $\nu_1^\dag$ to each $R'\in\cal T$ such that the local
ring $\lim\limits_{\overset\longrightarrow{R'}}\frac{\hat R'_{\tilde
    H'_1}}{\tilde H'_0\hat R'_{\tilde H'_1}}$ is one-dimensional and
unibranch is the unique divisorial valuation centered in that ring (in
particular, its residue field is $\kappa(\tilde H'_1)$). By the assumed
cofinality of such $R'$, the valuation $\nu^\dag_1$ is unique and its
residue field equals
$\lim\limits_{\overset\longrightarrow{R'}}\kappa(\tilde H'_1)$. Thus,
regardless of whether or not the inequality in (\ref{eq:odd=even1}) is
strict, $\nu^\dag_1$ is unique and we have the equality of residue
fields
\begin{equation}
k_{\nu_1^\dag}=\lim\limits_{\overset\longrightarrow{R'}}\kappa(\tilde
H'_1)\label{eq:eqres}
\end{equation}
This equality implies that $\nu_2^\dag=\nu^\dag_{20}$. Now, the valuation
$\nu^\dag_2=\nu^\dag_{20}$ is uniquely determined by the conditions
(\ref{eq:groupequal}) and (\ref{eq:valideal}), and its residue field is
\begin{equation}
k_{\nu^\dag_2}=k_{\nu^\dag_{20}}=\lim\limits_{\overset\longrightarrow{R'}}
\kappa(\tilde H'_2).\label{eq:resfield1}
\end{equation}
by Proposition \ref{resmin}. Furthermore, by Theorem \ref{primality1}
exactly one such $\nu_2^\dag$ exists whenever (\ref{eq:tildeH=H})
holds. This proves that there is at most one possibility for
$\nu^\dag_-$: the composition of $\nu_1^\dag\circ\nu^\dag_2$ with
$\nu^\dag_3\circ\nu^\dag_4\circ\dots\circ\nu^\dag_{2r}$, and exactly
one if (\ref{eq:tildeH=H}) holds.
\end{proof}
\begin{proposition}\label{tight=scalewise} The extension $\nu^\dag_-$ is tight if and only if for each $R'$ in our direct
system the natural graded algebra extension $\gr_\nu
R'\rightarrow\gr_{\nu^\dag_-}{R'}^\dag$ is scalewise birational.
\end{proposition}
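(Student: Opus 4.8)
The plan is to unwind both notions—tightness of $\nu^\dag_-$ and scalewise birationality of the graded algebra extension $\gr_\nu R'\to\gr_{\nu^\dag_-}{R'}^\dag$—into statements about valuation ideals $\P'_{\overline\beta}$ versus their $\dag$-counterparts, and to show these statements coincide. Recall that tightness means $\nu^\dag_-$ is evenly minimal \emph{and} $\tilde H'_i=\tilde H'_{i+1}$ for all even $i$ (equation (\ref{eq:odd=even2})); by the second remark after the definition of tight extensions, even minimality is equivalent to the equality of graded algebras $\gr_{\nu_\ell}\left(\lim_{\to}\frac{R'}{P'_\ell}\right)\otimes_{R'}\kappa(\tilde H'_i)=\gr_{\nu^\dag_{i0}}\left(\lim_{\to}\frac{{R'}^\dag_{\tilde H'_i}}{\tilde H'_{i-1}{R'}^\dag_{\tilde H'_i}}\right)$ for each even $i=2\ell$. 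So I would first translate the condition $\tilde H'_{2\ell}=\tilde H'_{2\ell+1}$ into the statement that $\nu^\dag_-$ contributes no ``extra'' rank between scales $\ell-1$ and $\ell$ beyond what $\nu_\ell$ already provides, i.e. $rk\ \frac{\Delta^\dag_{2\ell-1}}{\Delta^\dag_{2\ell+1}}=1$ and $\Delta_{2\ell-1,0}=\frac{\Delta_{\ell-1}}{\Delta_\ell}$; combined with $\Delta^\dag_i\cap\Gamma=\Delta_\ell$ from (\ref{eq:Delta}) this forces $\Gamma^\dag=\Gamma$, which is exactly the ``$\hat\Gamma=\Gamma$'' half of scalewise birationality.

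Next I would prove the forward implication. Assume $\nu^\dag_-$ is tight. Take $x\in{R'}^\dag$ with $ord\ x=\nu^\dag_-(x)\in\Delta^\dag_\ell$ (identifying $\Delta^\dag_{2\ell}=\Delta^\dag_{2\ell+1}$ because of (\ref{eq:odd=even2})), and write $\overline\beta$ for its class in $\frac{\Delta^\dag_{2(\ell-1)}}{\Delta^\dag_{2\ell}}=\frac{\Delta_{\ell-1}}{\Delta_\ell}$ using tightness. By the even-minimality description (\ref{eq:valideal}) of $\nu^\dag_{2\ell,0}$ and its image in the graded algebra, the class of $x$ in $\gr_{\nu^\dag_-}{R'}^\dag$ lies in the image of $\gr_\nu R'\otimes_{R'}\kappa(\tilde H'_{2\ell})$; tracing through the identification $\tilde H'_{2\ell}=\tilde H'_{2\ell+1}=H'_{2\ell+1}$-type limit, one produces $y\in R'$ with $\nu(y)\in\Delta_{\ell-1}$ (so $ord\ y\in\Delta_{\ell-1}$) such that $\overline{xy}$ lies in $\gr_\nu R'$, hence $xy\in R'$ up to lower-order terms, which after the standard ``clear the denominators / use the blowing-up $R'\to R''$ where $\P'_\beta$ becomes principal'' maneuver (exactly as in the proof of Proposition \ref{resmin} and the proof of Theorem \ref{primality1}) yields an honest $y\in G=\gr_\nu R'$ with $ord\ y\in\Delta_{\ell-1}$ and $xy\in G$. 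That is scalewise birationality.

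For the converse, assume $\gr_\nu R'\to\gr_{\nu^\dag_-}{R'}^\dag$ is scalewise birational for every $R'$. Scalewise birationality already gives $\hat\Gamma=\Gamma^\dag=\Gamma$, hence $\frac{\Delta^\dag_{i-1}}{\Delta^\dag_i}$ has rank $1$ exactly at the even spots and is trivial at the odd spots between them, which forces $\tilde H'_{2\ell}=\tilde H'_{2\ell+1}$ via the correspondence between isolated subgroups of $\Gamma^\dag$ and the $\tilde H'_i$ (Lemma \ref{rank1} and the definition of the $\tilde H'_i$). It remains to deduce even minimality, i.e. (\ref{eq:groupequal}) and (\ref{eq:valideal}) for each even $i=2\ell$. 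Condition (\ref{eq:groupequal}), $\Delta_{i-1,0}=\frac{\Delta_{\ell-1}}{\Delta_\ell}$, follows from $\Gamma^\dag=\Gamma$ together with $\frac{\Delta_{\ell-1}}{\Delta_\ell}\subset\Delta_{i-1,0}\subset\frac{\Delta^\dag_{i-1}}{\Delta^\dag_i}=\frac{\Delta_{\ell-1}}{\Delta_\ell}$ (the last equality by tightness just established and $\Gamma^\dag=\Gamma$). For (\ref{eq:valideal}): one inclusion $\supseteq$ is automatic from the definitions (as in Proposition \ref{Hintilde}); for $\subseteq$, take $x$ with $\nu^\dag_{i0}(x)=\overline\beta$ and apply scalewise birationality to get $y\in\gr_\nu R'$ with $ord\ y\in\Delta_{\ell-1}$ and $xy\in\gr_\nu R'$, then observe that $\nu^\dag_{i0}(y)$ and $\nu^\dag_{i0}(xy)$ are determined by $\nu_\ell$ (they live in $\frac{\Delta_{\ell-1}}{\Delta_\ell}$ and agree with $\nu_\ell$ by Proposition \ref{nu0unique}), whence $\nu^\dag_{i0}(x)=\nu^\dag_{i0}(xy)-\nu^\dag_{i0}(y)$ is computed entirely from $\nu_\ell$ and the image of $x$ in the graded pieces $\frac{\P'_{\overline\beta}{R'}^\dag_{\tilde H'_i}}{\P'_{\overline\beta+}{R'}^\dag_{\tilde H'_i}}$, which is precisely the assertion (\ref{eq:valideal}) via Nakayama as in the remark after the definition.

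The main obstacle I anticipate is the bookkeeping in passing between ``$xy\in\gr_\nu R'$ in the direct limit'' and a genuine factorization at a fixed $R''$ far enough out in $\cal T$: one must choose the blowing up $R'\to R''$ so that the relevant valuation ideal becomes principal, check that $x,y$ really do become divisible by the chosen generator (this is where one invokes the stability/faithful flatness statements of \S\ref{technical}, in particular Corollary \ref{stablecontracts} and equality (\ref{eq:stab})), and verify that the quotients land outside the appropriate $\tilde H''$ so that their images in $\kappa(\tilde H''_i)$ are the reciprocals one wants. All of this is routine in the style of the proofs of Proposition \ref{resmin} and Theorem \ref{primality1}, but it is the part that requires care to state precisely; everything else is a matter of matching two lists of equivalent conditions. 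I would also keep in mind that the equivalence is asserted ``for each $R'$ in our direct system,'' so the argument must be phrased at the level of trees, which is harmless since all the ideals and graded algebras in sight are tree objects.
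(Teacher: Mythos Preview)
Your proposal is essentially correct and follows the same approach as the paper. Two minor points of comparison. First, in your forward direction (tight $\Rightarrow$ scalewise birational) you anticipate bookkeeping difficulties in combining the scale-by-scale information from even minimality into a single factorization; the paper handles this cleanly by induction on $r$: if $\nu^\dag_-(x)\in\Delta_1$ one applies the induction hypothesis to $\gr_{\nu^\dag_4\circ\dots\circ\nu^\dag_{2r}}\frac{{R'}^\dag}{\tilde H'_2}$, and otherwise one uses Proposition~\ref{resmin} to write $x=az$ in a suitable $R''$ with $z\in R''$ and $\nu^\dag_-(a)\in\Delta_1$, reducing to the previous case. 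This is exactly the ``clear the denominators / blow up'' maneuver you sketched, organized recursively. Second, two small slips: tightness gives $\tilde H'_{2\ell}=\tilde H'_{2\ell+1}$ but not $\tilde H'_{2\ell+1}=H'_{2\ell+1}$ (that would be minimality), so your ``$=H'_{2\ell+1}$-type limit'' remark is inaccurate though harmless; and your appeal to Proposition~\ref{nu0unique} in the converse is misplaced (that proposition concerns the case $\tilde H'_i=H'_i$), but all you actually need there is that $\nu^\dag_-$ extends $\nu$, so $\nu^\dag_{i0}(y)=\nu_\ell(y)$ for $y\in R'$, which is automatic.
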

\begin{remark}\label{rephrasing} Proposition \ref{tight=scalewise} allows us
  to rephrase Conjecture \ref{teissier} as follows: the valuation $\nu$
  admits at least one tight extension $\nu^\dag_-$.
\end{remark}
\begin{proof}\textit{(of Proposition \ref{tight=scalewise})} ``If'' Assume that for each $R'$ in our direct
system the natural graded algebra extension $\gr_\nu
R'\rightarrow\gr_{\nu^\dag_-}{R'}^\dag$ is scalewise birational. Then
\begin{equation}\label{eq:gamma=dag}
\Gamma^\dag=\Gamma.
\end{equation}
Together with (\ref{eq:Delta}) this implies that for each
$l\in\{1,\dots,r+1\}$ we have
$\Delta^\dag_{2\ell-2}=\Delta^\dag_{2\ell-1}=\Delta_{\ell-1}$ under
the identification (\ref{eq:gamma=dag}). Then
$\frac{\Delta^\dag_{2\ell-1}}{\Delta^\dag_{2\ell}}=(0)$, so for all
odd $i$ the valuation $\nu^\dag_i$ is trivial. This proves the
equality (\ref{eq:odd=even2}) in the definition of tight. It remains
to show that $\nu^\dag_-$ is evenly minimal. We will prove the even
minimality in the form of equality (\ref{eq:equalgraded}) for each
$\bar\beta\in\frac{\Delta_{\ell-1}}{\Delta_\ell}$. The right hand
side of (\ref{eq:equalgraded}) is trivially contained in the left
hand side; we must prove the opposite inclusion. To do that, take a
non-zero element
$x\in\frac{\P_{\overline\beta}^\dag}{\P_{\overline\beta+}^\dag}$. By
scalewise birationaliy, there exist non-zero elements $\bar y,\bar
z\in\gr_\nu R$, with $ord\ \bar y,ord\ \bar z\in\Delta_\ell$, such
that $x\bar y=\bar z$. Let $y$ be a representative of $\bar y$ in
$R$, and similarly for $z$. Let $R\rightarrow R'$ be the local
blowing up with respect to $\nu$ along the ideal $(y,z)$. Then, in
the notation of (\ref{eq:equalgraded}), we have
\begin{equation}\label{eq:equalgraded1}
x={\psi'}^{-1}\left(\left(\phi'\circ\lambda'\right)\left(\frac{\bar z}{\bar y}\otimes_{R'}1\right)\right)\in{\psi'}^{-1}\left((\phi'\circ\lambda')\left(\frac{\P'_{\overline\beta}}{\P'_{\overline\beta+}}\otimes_{R'}\kappa\left(\tilde H'_i\right)\right)\right).
\end{equation}
This proves (\ref{eq:equalgraded}). ``If'' is proved.

``Only if''. Assume that $\nu^\dag_-$ is tight (that is, it is evenly minimal and
(\ref{eq:odd=even2}) holds) and take $R'\in\cal T$. Then the valuation
$\nu_{2\ell+1}$ is trivial for all $\ell$, so
$\nu^\dag_-=\nu^\dag_2\circ\nu^\dag_4\circ\dots\circ\nu^\dag_{2r}$. We
must show that the graded algebra extension $\gr_\nu
R'\rightarrow\gr_{\nu^\dag_-}{R'}^\dag$ is scalewise birational. Again,
we use induction on $r$. Take an element $x\in{R'}^\dag$. If
$\nu^\dag_-(x)\in\Delta_1$ then
$\init_{\nu^\dag_-}x\in\gr_{\nu^\dag_4\circ\dots\circ\nu^\dag_{2r}}
\frac{{R'}^\dag}{\tilde H'_2}$, hence by the induction assumption
there exists $y\in R'$ with $\nu^\dag_-(x)\in\Delta_1$ and
$\init_{\nu^\dag_-}(xy)\in\gr_\nu\frac{R'}{P_1}$. In this case, there is
nothing more to prove. Thus we may assume that
$\nu^\dag_-(x)\notin\Delta_1$. It remains to show that there exists
$y\in R'$ such that $\init_{\nu^\dag_-}(xy)\in\gr_\nu R'$. Since the natural map sending each element of the ring to its image in the graded algebra behaves well with respect to multiplication and division, local
blowings up induce birational transformations of graded algebras, and it
is enough to find a local blowing up $R''\in{\cal T}(R')$ and $y\in
R''$ such that $\init_{\nu^\dag_-}(xy)\in\gr_\nu R''$.

Now, Proposition \ref{resmin} shows that there exists a local blowing
up $R'\rightarrow R''$ such that $x=az$ (\ref{eq:xaz}), with $z\in
R''$ and $\nu^\dag_2(a)=\nu^\dag_{2,0}(a)=0$. The last equality means
that $\nu^\dag_-(a)\in\Delta_1$, and the result follows from the
induction assumption, applied to $a$.
\end{proof}

The argument above also shows the following. Let
${\Phi'}^\dag=\nu^\dag_-\left({R'}^\dag\setminus\{0\}\right)$, take an
element $\beta\in{\Phi'}^\dag$ and let ${\cal P'}^\dag_\beta$ denote
the $\nu^\dag_-$-ideal of ${R'}^\dag$ of value $\beta$.
\begin{corollary}\label{blup1} Take an element $x\in{\cal P'}^\dag_\beta$. There
exists a local blowing up $R'\rightarrow R''$ such that
$\beta\in\nu(R'')\setminus\{0\}$ and $x\in{\cal P}''_\beta{R''}^\dag$.
\end{corollary}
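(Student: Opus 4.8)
Here we keep the standing hypothesis of Proposition \ref{tight=scalewise}, that $\nu^\dag_-$ is tight. By that Proposition (and its ``only if'' proof) this forces $\Gamma^\dag=\Gamma$, makes every odd-numbered component $\nu^\dag_{2j+1}$ trivial --- so that $\nu^\dag_-=\nu^\dag_2\circ\nu^\dag_4\circ\dots\circ\nu^\dag_{2r}$ is a composition of rank one valuations --- and, since tightness amounts to even minimality together with (\ref{eq:odd=even2}), gives for each even $i=2\ell$ the identification of the valuation ideals of $\nu^\dag_{i0}$ with limits over ${\cal T}$ of extensions of the corresponding $\nu_\ell$-ideals. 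The plan is to extract the Corollary from the induction on $r$ used in the ``only if'' part of the proof of Proposition \ref{tight=scalewise}. We may assume $\beta>0$ and fix $\ell\in\{0,\dots,r-1\}$ with $\beta\in\Delta_\ell\setminus\Delta_{\ell+1}$. As a first, purely order-theoretic reduction: every non-negative element of $\Gamma$ is the $\nu$-value of an element of $R_\nu=\lim\limits_{\overset\longrightarrow{R'}}R'$, and the hypothesis $x\in{\cal P'}^\dag_\beta$ (i.e. $\nu^\dag_-(x)\ge\beta$) is preserved by the injections ${R'}^\dag\hookrightarrow{R''}^\dag$ of Lemma \ref{factor}; hence, replacing $R'$ by a suitable ring of ${\cal T}(R')$, we may assume $\beta\in\nu(R'\setminus\{0\})$, a property that survives all further local blowings up.

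Now I would argue by induction on $r$, distinguishing the two cases that occur in the ``only if'' proof. If $\beta\in\Delta_1$, the $\nu^\dag_2$-component of $x$ plays no role: passing to $\frac{{R'}^\dag}{\tilde H'_2}$ (recall $\tilde H'_1=\tilde H'_2$), the image $\bar x$ lies in the valuation ideal of the rank $r-1$ composite $\nu^\dag_4\circ\dots\circ\nu^\dag_{2r}$ of value $\beta$, which by even minimality equals $\lim\limits_{\overset\longrightarrow{R'}}\left({\cal P}'_\beta{R'}^\dag\bmod\tilde H'_2\right)$; the inductive hypothesis, applied to the valuation $\nu_2\circ\dots\circ\nu_r$ centered in $\frac{R'}{P_1}$ and its tight extension $\nu^\dag_4\circ\dots\circ\nu^\dag_{2r}$, furnishes a local blowing up $R'\to R''$ with $\beta\in\nu\left(\frac{R''}{P''_1}\setminus\{0\}\right)$ and $x\in{\cal P}''_\beta{R''}^\dag+\tilde H''_2$; since $\beta\in\Delta_1$ one has $P''_1\subseteq{\cal P}''_\beta$, hence $\tilde H''_2\subseteq{\cal P}''_\beta{R''}^\dag$ once the limit defining $H''_2$ is attained, and therefore $x\in{\cal P}''_\beta{R''}^\dag$. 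If $\beta\notin\Delta_1$, then also $\nu^\dag_-(x)\notin\Delta_1$, so $x$ has non-trivial image modulo $\tilde H'_2$, and Proposition \ref{resmin}, applied with $i=2$, produces (after a local blowing up) a factorization $x=az$ with $z\in{R''}^\dag$, $\nu^\dag_-(z)\in\Delta_1$, and $a$ an element of $R''$ with $\nu^\dag_2$-value $\nu^\dag_-(x)\bmod\Delta_1$; applying the previous case to $z$ and multiplying back by $a$, and using the cofinality of the ideals ${\cal P}_{\overline\delta}$ among the ${\cal P}_\delta$, $\delta\in\Delta_\ell$ (Lemma 4 of Appendix 4 of \cite{ZS}), one obtains $x\in{\cal P}''_\beta{R''}^\dag$ after a further blowing up. The base case $r=1$ says precisely that the $\nu^\dag_-$-ideal of $\frac{{R'}^\dag}{\tilde H'_0}$ of value $\beta$ equals $\lim\limits_{\overset\longrightarrow{R'}}\left({\cal P}'_\beta{R'}^\dag\bmod\tilde H'_0\right)$, which is even minimality for $i=2$; as ${R'}^\dag$ is noetherian the limit is attained at a single $R''$, which (by the first reduction) may also be taken to realize $\beta$ as a $\nu$-value.

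The point that I expect to require the most care is the reconciliation of the two notions ``$\nu^\dag_-(x)\ge\beta$'' and ``$x\in{\cal P}''_\beta{R''}^\dag$'': the cofinality of the ${\cal P}_{\overline\delta}$ among the ${\cal P}_\delta$ goes only one way, so obtaining genuine membership in ${\cal P}''_\beta{R''}^\dag$ --- rather than in the a priori larger ideal ${{\cal P}''_\beta}^\dag$ --- is exactly what forces one to pass far out in ${\cal T}$, perform the blowings up dictated by Proposition \ref{resmin}, and apply even minimality layer by layer; this is also why the base case is not vacuous. A secondary, more bureaucratic, difficulty is keeping track of how $\beta$, the successive local blowings up, and the implicit trees $\tilde H'_i$ transform on passing to the residue ring $\frac{{R'}^\dag}{\tilde H'_2}$ in the inductive step, but this is routine given Propositions \ref{H2l} and \ref{behavewell} and the stability results of \S\ref{technical}.
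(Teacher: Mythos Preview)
Your approach mirrors the paper's: the paper offers no separate argument for Corollary~\ref{blup1}, merely asserting that ``the argument above also shows'' it, referring to the induction on $r$ in the ``only if'' half of Proposition~\ref{tight=scalewise}. Your two-case split and the use of Proposition~\ref{resmin} for the factorisation are exactly the ingredients of that argument.

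There is, however, a genuine gap in your Case~1. From the inductive step you correctly obtain $x\in{\cal P}''_\beta{R''}^\dag+\tilde H''_2$, and you then want $\tilde H''_2\subseteq{\cal P}''_\beta{R''}^\dag$. Your justification --- ``since $\beta\in\Delta_1$ one has $P''_1\subseteq{\cal P}''_\beta$, hence $\tilde H''_2\subseteq{\cal P}''_\beta{R''}^\dag$ once the limit defining $H''_2$ is attained'' --- conflates $\tilde H''_2$ with $P''_1{R''}^\dag$. For stable $R''$ one does have $H''_2=P''_1{R''}^\dag\subseteq{\cal P}''_\beta{R''}^\dag$, but tightness only gives $\tilde H''_2=\tilde H''_3\supseteq H''_3\supseteq H''_2$, and these containments are typically strict: already in Example~\ref{Example32} one has $\tilde H_2=H_3=(z,\,y-\sum c_jx^j)\supsetneq H_2=(z)$. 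So ``the limit defining $H''_2$'' being attained is not the relevant issue; you must still account for elements of $\tilde H''_2\setminus H''_2$.

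Such elements $h$ satisfy $\nu^\dag_-(h)\notin\Delta_1$, which puts them in the regime of your Case~2, not Case~1. Thus the two cases are intertwined: Case~1 reduces to the rank-$(r{-}1)$ statement \emph{plus} a rank-$r$ Case~2 statement for the generators of $\tilde H''_2$, while Case~2 reduces (via the factorisation $x=az$) to a rank-$r$ Case~1 statement for $z$. This is not circular, but it does require reorganising the induction --- for instance, prove Case~2 first (it needs only the lower-rank hypothesis and even minimality at level $i=2$), and then invoke it inside Case~1 to dispose of the $\tilde H''_2$ contribution. A secondary point you should not lose sight of: the factorisation coming from Proposition~\ref{resmin} and the formula (\ref{eq:valideal}) take place in the \emph{localised} ring ${R''}^\dag_{\tilde H''_2}$, so your ``$x=az$ with $z\in{R''}^\dag$'' is only valid after clearing a denominator $s\notin\tilde H''_2$; this $s$ also has $\nu^\dag_-$-value in $\Delta_1$ and must be absorbed by a further blowing up.
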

The next Proposition gives a sufficient condition for the uniqueness
of $\nu^\dag_-$ (this result is due to Heinzer and Sally \cite{HeSa}).
\begin{proposition}\label{HeSal} Assume that $K'$ is algebraic over $K$ for
  all $R'\in\cal T$ and that the following conditions hold:

(1) $ht\ H'_1\le1$

(2) $ht\ H'_1+rat.rk\ \nu=\dim\ R'$, where $R'$ is taken to be
sufficiently far out in the direct system.

Let $\nu^\dag_-$ be an extension of $\nu$ to a ring of the form
$\lim\limits_{\overset\longrightarrow{R'}}\frac{{R'}^\dag}{\tilde
  H'_0}$. Then either
\begin{eqnarray}
\tilde H'_0&=&H'_0\qquad\text{ or }\label{eq:tildeH=H0}\\
\tilde H'_0&=&H'_1.\label{eq:tildeH=H1}
\end{eqnarray}
The valuation $\nu$ admits a unique extension to
$\lim\limits_{\overset\longrightarrow{R'}}\frac{{R'}^\dag}{H'_0}$ and
a unique extension to
$\lim\limits_{\overset\longrightarrow{R'}}\frac{{R'}^\dag}{H'_1}$. The
first extension is minimal and the second is tight.
\end{proposition}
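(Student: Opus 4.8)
The plan is to fix a ring $R'\in\cal T$ far enough out in the direct system that $\he\ H'_i$ has stabilized for every $i$ (Corollary \ref{htstable1}) and that the equality in hypothesis~(2) holds; by Proposition \ref{behavewell} and the tree formalism the statements then transfer to the whole tree, so we may replace $R'$ by $R$. The engine is the Abhyankar inequality. Let $\nu^\dag_-$ be any extension of $\nu$ with support $\tilde H'_0$, and put $S:=R^\dag/\tilde H'_0$, a local domain with $R\hookrightarrow S$ on which $\nu^\dag_-$ is centered and which restricts $\nu$ on $K$. Since $R$ is excellent, $R^\dag$ is equidimensional and catenarian (this is the only place excellence enters, exactly as in Corollary \ref{Corollary511}), so $\dim\ S=\dim\ R-\he\ \tilde H'_0$. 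Hence $rat.rk\ \nu\le rat.rk\ \nu^\dag_-$ and $rat.rk\ \nu^\dag_-+tr.deg(k_{\nu^\dag_-}/k)\le\dim\ S$, which together with $\dim\ R=\he\ H'_1+rat.rk\ \nu$ gives
\[
\he\ \tilde H'_0+tr.deg(k_{\nu^\dag_-}/k)\le\he\ H'_1\le1.
\]

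From this I would read off the possible supports. If $\he\ \tilde H'_0=0$, then since $H'_0\subseteq\tilde H'_0$ (Proposition \ref{Hintilde}) and $H'_0$ is a minimal prime of $R^\dag$, we get $\tilde H'_0=H'_0$. If $\he\ \tilde H'_0=1$, then $\he\ H'_1=1$ and $tr.deg(k_{\nu^\dag_-}/k)=0$; here consider $\mu^\dag:=\nu^\dag_2\circ\cdots\circ\nu^\dag_{2r}$, centered on $R^\dag/\tilde H'_1$. Because $\Gamma\subseteq\Delta^\dag_1$ by (\ref{eq:Delta}), the component $\nu^\dag_1$ is trivial on $K$, so $\mu^\dag$ also restricts $\nu$ on $K$, and running the same estimate for $\mu^\dag$ and $\tilde H'_1$ yields $\he\ \tilde H'_1\le\he\ H'_1=1$. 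With $H'_1\subseteq\tilde H'_1$ and $\tilde H'_0\subseteq\tilde H'_1$, and since a containment of primes of equal finite height is an equality, we conclude $\tilde H'_0=\tilde H'_1=H'_1$. This proves the dichotomy (\ref{eq:tildeH=H0})--(\ref{eq:tildeH=H1}), and the same argument with $\mu^\dag$ shows that in both cases $\tilde H'_1=H'_1$.

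Next, uniqueness. The key structural input is that $H'_1$ lies in the formal fibre of $R\to R^\dag$ over $(0)$, and, $R$ being a $G$-ring, this fibre is regular --- exactly the argument of Proposition \ref{Corollary57}. Therefore $R^\dag_{H'_1}$ is a regular local ring, $H'_0$ is its unique minimal prime, and $R^\dag_{H'_1}/H'_0R^\dag_{H'_1}=R^\dag_{H'_1}$ is regular of dimension $\he\ H'_1\le1$, i.e.\ a field or a discrete valuation ring. Consequently the first component $\nu^\dag_1$ of \emph{any} admissible extension is forced: it is trivial when the support is $H'_1$ (then $\tilde H'_0=\tilde H'_1$, so $\Delta^\dag_1=\Gamma^\dag$) or when the support is $H'_0$ with $\he\ H'_1=0$, and it is the unique divisorial valuation of the above discrete valuation ring when the support is $H'_0$ with $\he\ H'_1=1$; in all cases $k_{\nu^\dag_1}=\lim\limits_{\overset\longrightarrow{R'}}\kappa(\tilde H'_1)$. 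I would then feed this into Proposition \ref{uniqueness1}, applied to the minimal chain $\tilde H'_i=H'_i$ in the first case and to the chain with $\tilde H'_0=\tilde H'_1=H'_1$ in the second: one checks that $\he\ \tilde H'_{2\ell+1}-\he\ \tilde H'_{2\ell}\le1$ holds cofinally in $\cal T$, with equality only possibly for $\ell=0$ in the support-$H'_0$ case, where the relevant $1$-dimensional ring is the discrete valuation ring above, hence unibranch; and that any extension realizing the chain is evenly minimal (minimal in the first case). Proposition \ref{uniqueness1} then gives uniqueness of $\nu^\dag_-$ in each case. Finally, the extension over $H'_0$ is minimal, while the extension over $H'_1$ is tight: the equality $rat.rk\ \nu^\dag_-=rat.rk\ \nu$ forced by the Abhyankar estimate, together with $rat.rk\ \nu^\dag_{2\ell}\ge rat.rk\ \nu_\ell$ and additivity of rational rank, forces every odd component $\nu^\dag_{2\ell+1}$ to be trivial --- equivalently, by Proposition \ref{tight=scalewise}, $\gr_\nu R'\to\gr_{\nu^\dag_-}{R'}^\dag$ is scalewise birational.

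The step I expect to be the main obstacle is the height bookkeeping concealed in the appeal to Proposition \ref{uniqueness1}. The single Abhyankar estimate above controls $\he\ \tilde H'_0$ and $\he\ \tilde H'_1$ at once, but verifying the hypotheses of Proposition \ref{uniqueness1} also requires $\he\ \tilde H'_{2\ell+1}-\he\ \tilde H'_{2\ell}\le1$ for \emph{all} $\ell$ and, in the support-$H'_0$ case, $\tilde H'_i=H'_i$ for every $i$ (equivalently, that even minimality holds in the precise sense of the definition, including the valuation-ideal formula (\ref{eq:valideal}) for the even components). This means applying the Abhyankar inequality at each level to the composed-down valuation $\nu^\dag_{2\ell+1}\circ\cdots\circ\nu^\dag_{2r}$ on $R^\dag/\tilde H'_{2\ell}$, in combination with Lemma \ref{idealheight}, Proposition \ref{htstable}, and the equidimensionality and catenarity of the complete local rings in play; care is needed because $\nu^\dag_-$ may have rank strictly bigger than $r=rk\ \nu$, so a naive induction on $r$ does not close up. The formula (\ref{eq:valideal}) itself should come from Proposition \ref{largeR2} after a suitable local blowing up, as in the proof of Theorem \ref{primality1}.
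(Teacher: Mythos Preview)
Your dichotomy argument is correct, though it takes a different route from the paper. The gap you yourself flag in the uniqueness part is real, and the paper closes it by applying Abhyankar \emph{level by level from the outset} rather than globally. For each even $i=2\ell$ the valuation $\nu^\dag_{i0}$ is centered in the noetherian local ring $\frac{{R'}^\dag_{\tilde H'_i}}{\tilde H'_{i-1}{R'}^\dag_{\tilde H'_i}}$, so $r_\ell:=rat.rk\ \nu_\ell\le rat.rk\ \nu^\dag_{i0}\le\he\ \tilde H'_i-\he\ \tilde H'_{i-1}$; summing telescopically,
\[
\dim R'=\sum_{i=1}^{2r}\bigl(\he\ \tilde H'_i-\he\ \tilde H'_{i-1}\bigr)\ \ge\ \he\ \tilde H'_1+\sum_{\ell=1}^r r_\ell\ \ge\ \he\ H'_1+rat.rk\ \nu\ =\ \dim R',
\]
so every inequality is an equality. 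This single chain delivers simultaneously the dichotomy (via $\tilde H'_1=H'_1$), the equalities $\tilde H'_{2\ell+1}=\tilde H'_{2\ell}$ for all $\ell\ge1$, and $\he\ \tilde H'_{2\ell}-\he\ \tilde H'_{2\ell-1}=r_\ell$. Running the same chain for the extension with $\tilde H'_i=H'_i$ (which exists by Theorem \ref{classification}) shows the $H'_i$ obey the identical height pattern, and then induction on $\ell$ from $\tilde H'_1=H'_1$, using $H'_i\subset\tilde H'_i$ and equality of heights in the equidimensional catenarian ring $\hat R'$, forces $\tilde H'_i=H'_i$ for all $i>0$ and for \emph{every} extension $\nu^\dag_-$. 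Your global estimate, while valid, cannot separate the contributions of the individual components, which is exactly what this step requires.

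The paper then does \emph{not} route through Proposition \ref{uniqueness1}. Your plan to do so needs, as a hypothesis, that an arbitrary extension is evenly minimal; you have not shown this, and proving it is essentially the same work as the direct argument. Once $\tilde H'_i=H'_i$ for $i\ge1$ is in hand, the paper argues directly: Proposition \ref{nu0unique} determines each $\nu^\dag_{2\ell,0}$ uniquely, Remark \ref{sameresfield} gives $k_{\nu^\dag_{2\ell,0}}=\lim\limits_{\to}\kappa(H'_{2\ell})$, the equality $H'_{2\ell+1}=H'_{2\ell}$ for $\ell\ge1$ makes the odd components trivial with the same residue field, and the component $\nu^\dag_1$ is handled exactly as you say (trivial or the unique divisorial valuation of the regular one-dimensional local ring $R^\dag_{H'_1}/H'_0$). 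Induction on $i$ then gives $k_{\nu^\dag_i}=\lim\limits_{\to}\kappa(H'_i)$, hence $\nu^\dag_i=\nu^\dag_{i0}$ throughout, and $\nu^\dag_-$ is uniquely determined; minimality and tightness are then immediate from the definitions.
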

\begin{proof} For $1\le\ell\le r$, let $r_\ell$ denote the rational
rank of $\nu_\ell$. Let $\nu^\dag_-$ be an extension of $\nu$ to a ring
of the form
$\lim\limits_{\overset\longrightarrow{R'}}\frac{{R'}^\dag}{\tilde
  H'_0}$, where $\tilde H'_0$ is a tree of prime ideals of ${R'}^\dag$
such that $\tilde H'_0\cap R'=(0)$.

By Corollary \ref{htstable1} $ht\ H'_i$ stabilizes for $1\le i\le 2r$
and $R'$ sufficiently far out in the direct system. From now on, we
will assume that $R'$ is chosen sufficiently far so that the stable
value of $ht\ H'_i$ is attained. Now, let $i=2\ell$. The valuation
$\nu^\dag_{i0}$ is centered in the local noetherian ring
$\frac{{R'}^\dag_{\tilde H'_i}}{\tilde H'_{i-1}{R'}^\dag_{\tilde
    H'_i}}$, hence by Abhyankar's inequality
\begin{equation}
rat.rk\ \nu^\dag_{i0}\le\dim\frac{{R'}^\dag_{\tilde H'_i}}{\tilde
  H'_{i-1}{R'}^\dag_{\tilde H'_i}}\le ht\ \tilde H'_i-ht\ \tilde
H'_{i-1}.\label{eq:abhyankar}
\end{equation}
Since this inequality is true for all even $i$, summing over all $i$ we obtain:
\begin{equation}
\begin{array}{rcl}
\dim R'&=&\dim\ {R'}^\dag=\sum\limits_{i=1}^{2r}(ht\ \tilde H'_i-ht\
\tilde H'_{i-1})\ge ht\ \tilde H'_1+\sum\limits_{\ell=1}^r(ht\ \tilde
H'_{2\ell}-ht\ \tilde H'_{2\ell-1})\ge\\
&\ge&ht\ H'_1+\sum\limits_{\ell=1}^rrat.rk\ \nu^\dag_{2\ell,0}\ge ht\
H'_1+\sum\limits_{\ell=1}^rr_\ell=ht\ H'_1+rat.rk\ \nu=\dim
R'.\label{eq:inequalities}
\end{array}
\end{equation}
Hence all the inequalities in (\ref{eq:abhyankar}) and
(\ref{eq:inequalities}) are equalities. In particular, we
have
$$
ht\ \tilde H'_1=ht\ H'_1;
$$
combined with Proposition \ref{Hintilde} this shows that
\begin{equation}
\tilde H'_1=H'_1.
\end{equation}
Together with the hypothesis (1) of the Proposition, this already
proves that at least one of (\ref{eq:tildeH=H0})--(\ref{eq:tildeH=H1})
holds. Furthermore, equalities in (\ref{eq:abhyankar}) and
(\ref{eq:inequalities}) prove that
$$
ht\ \tilde H'_i=ht\ \tilde H'_{i-1}
$$
for all odd $i>1$, so that
\begin{equation}
\tilde H'_i=\tilde H'_{i-1}\qquad\text{whenever $i>1$ is
  odd}\label{eq:oddiseven}
\end{equation}
and that
\begin{equation}
r_i=ht\ \tilde H'_i-ht\ \tilde H'_{i-1}\label{heightfixed}
\end{equation}
whenever $i$ is even.

Now, consider the special case when $\tilde H'_i=H'_i$ for $i\ge1$ and
$\tilde H'_0$ is as in
(\ref{eq:tildeH=H0})--(\ref{eq:tildeH=H1}). According to Proposition
\ref{nu0unique} for each even $i=2\ell$ there exists a unique
extension $\nu^\dag_{i0}$ of $\nu_l$ to a valuation of
$\lim\limits_{\overset\longrightarrow{R'}}\kappa(H'_{i-1})$, centered
in the local ring
$\lim\limits_{\overset\longrightarrow{R'}}
\frac{{R'}^\dag_{H'_{2\ell}}}{H'_{2\ell-1}}$. Moreover, we have
\begin{equation}
k_{\nu^\dag_{2\ell,0}}=\lim\limits_{\overset\longrightarrow{R'}}\kappa(H'_{2\ell})
\label{eq:knudag2l}
\end{equation}
by Remark \ref{sameresfield}. By Theorem \ref{classification}, there
exists an extension $\nu^\dag_-$ of $\nu$ to
$\lim\limits_{\overset\longrightarrow{R'}}\frac{{R'}^\dag}{\tilde
  H'_0}$ such that the $\{\tilde H'_i\}$ as above is the chain of
trees of prime ideals, determined by $\nu^\dag_-$. In particular,
(\ref{eq:oddiseven}) and (\ref{heightfixed}) hold with $\tilde H'_i$
replaced by $H'_i$.

Now (\ref{heightfixed}) and Proposition \ref{Hintilde} imply that for
\textit{any} extension $\nu^\dag_-$ we have $\tilde H'_i=H'_i$ for
$i>0$, so that the special case above is, in fact, the only case
possible. Furthermore, by (\ref{eq:oddiseven}) we have
$H'_{2\ell+1}=H'_{2\ell}$ for all $\ell\in\{1,\dots,r\}$. This implies
that for all such $\ell$ the valuation $\nu^\dag_{2\ell+1,0}$ is the
trivial valuation of
$\lim\limits_{\overset\longrightarrow{R'}}\kappa(H'_{2\ell})$; in particular,
\begin{equation}
k_{\nu^\dag_{2\ell+1,0}}=\lim\limits_{\overset\longrightarrow{R'}}\kappa(H'_{2\ell})
\label{eq:knudag2l+1}
\end{equation}
for all $\ell\in\{1,\dots,r-1\}$.

If $\tilde H'_0=H'_1=\tilde H'_1$ then the only possibility for
$\nu^\dag_{10}=\nu^\dag_1$ is the trivial valuation of
$\lim\limits_{\overset\longrightarrow{R'}}\kappa(H'_1)$; we have
\begin{equation}
k_{\nu^\dag_1}=k_{\nu^\dag_{10}}=\lim\limits_{\overset\longrightarrow{R'}}
\kappa(H'_1).\label{eq:knudag10}
\end{equation}
If $\tilde H'_0=H'_0$ then by the hypothesis (1) of the Proposition
and the excellence of $R$ the ring $\frac{{R'}^\dag_{H'_1}}{H'_0{R'}^\dag_{H'_1}}$ is
a regular one-dimensional local ring (in particular, unibranch), hence
the valuation $\nu^\dag_1=\nu^\dag_{10}$ centered at
$\lim\limits_{\overset\longrightarrow{R'}}
\frac{{R'}^\dag_{H'_1}}{H'_0{R'}^\dag_{H'_1}}$ is unique and
(\ref{eq:knudag10}) holds also in this case.

By induction on $i$, it follows from (\ref{eq:knudag2l}),
(\ref{eq:knudag2l+1}), the uniqueness of $\nu^\dag_{2\ell,0}$ and the
triviality of $\nu^\dag_{2\ell+1,0}$ for $\ell\ge1$ that $\nu_i^\dag$
is uniquely determined for all $i$ and
$k_{\nu^\dag_i}=\lim\limits_{\overset\longrightarrow{R'}}\kappa(H'_i)$. This
proves that in both cases (\ref{eq:tildeH=H0}) and
(\ref{eq:tildeH=H1}) the valuation
$\nu^\dag_-=\nu^\dag_1\circ\dots\circ\nu^\dag_{2r}$ is unique. The last
statement of the Proposition is immediate from definitions.
\end{proof}

A related necessary condition for the uniqueness of $\nu^\dag_-$ will be
proved in \S\ref{locuni1}.

\section{Extending a valuation centered in an excellent local domain
  to its henselization.}
\label{henselization}

Let $\tilde R$ denote the henselization of $R$, as above. The
completion homomorphism $R\rightarrow\hat R$ factors through the
henselization: $R\rightarrow\tilde R\rightarrow\hat R$. In this
section, we will show that $H_1$ is a minimal prime of $\tilde R$,
that $\nu$ extends uniquely to a valuation $\tilde\nu_-$ of rank $r$
centered at $\frac{\tilde R}{H_1}$, and that $H_1$ is the unique
prime ideal $P$ of $\tilde R$ such that $\nu$ extends to a valuation
of $\frac{\tilde R}P$. Furthermore, we will prove that $H_{2\ell+1}$
is a minimal prime of $P_\ell\tilde R$ for all $\ell$ and that these
are precisely the prime $\tilde\nu$-ideals of $\tilde R$.

Studying the implicit prime ideals of $\tilde R$ and the extension of $\nu$ to
$\tilde R$ is a logical intermediate step before attacking the formal
completion, for the following reason. As we will show in the next section, if
$R$ is already henselian in (\ref{eq:defin1}) then $\P'_\beta\hat
R'_{H'_{2\ell+1}}\cap\hat R=\P_\beta\hat R$ for all $\beta$ and $R'$
and thus we have
$H_{2\ell+1}=\bigcap\limits_{\beta\in\Delta_{\ell}}\left({\cal
    P}_\beta\hat R\right)$.

We state the main result of this section. In the case when $R^e$ is an
\'etale extension of $R$, contained in $\tilde R$, we use
(\ref{eq:defin3}) with $R^\dag=R^e$ as our definition of the implicit
prime ideals.
\begin{theorem}\label{hensel0} Let $R^e$ be a local \'etale extension of
  $R$, contained in $\tilde R$. Then:

(1) The ideal $H_{2\ell+1}$ is prime for $0\le l\le r$; it is a minimal prime of
$P_\ell R^e$. In particular, $H_1$ is a minimal prime of $R^e$. We
have $H_{2\ell}=H_{2\ell+1}$ for $0\le l\le r$.

(2) The ideal $H_1$ is the unique prime $P$ of $R^e$ such that there
    exists an extension $\nu^e_-$ of $\nu$ to $\frac{R^e}P    $; the
    extension $\nu^e_-$ is unique. The graded algebra
    $\gr_{\nu^e_-}\frac{R^e}{H_1}$ is scalewise birational to $\gr_\nu
    R$; in particular, $rk\ \nu^e_-=r$.

(3) The ideals $H_{2\ell+1}$ are precisely the prime $\nu^e_-$-ideals of
    $R^e$.
\end{theorem}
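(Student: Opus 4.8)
The plan is to reduce the whole statement to the machinery of \S\ref{technical}, \S\ref{Rdag} and \S\ref{extensions}; the only genuinely new ingredient needed here is the existence, inside $\cal T$, of $\nu$-extensions that are stable for the operation ${}^\dag=R^e$.

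For part (1), once we know that $\cal T$ contains, for each relevant $\ell$, an $(\ell+1)$-stable ring $R'\in{\cal T}(R)$, the primality of $H_{2\ell+1}$ (for $0\le\ell\le r-1$; the case $\ell=r$ is trivial, $H_{2r+1}=m R^e$) is exactly Theorem \ref{primality1}, and by Proposition \ref{behavewell} it descends to $R$. The remaining assertions of (1) are then formal. By Proposition \ref{contracts}, $H_{2\ell+1}\cap R=P_\ell$. Since $R\to R^e$ is \'etale, the fibre of $\spec\ R^e\to\spec\ R$ over $P_\ell$ is a finite discrete set, so any two primes of $R^e$ lying over $P_\ell$ are incomparable; consequently every prime of $R^e$ over $P_\ell$ — in particular $H_{2\ell+1}$, and (for $\ell=0$) $H_1$ — is a minimal prime of $P_\ell R^e$. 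Then Proposition \ref{H2l} gives $H_{2\ell}=H_{2\ell+1}$, since the unique minimal prime of $P_\ell R^e$ contained in $H_{2\ell+1}$ is $H_{2\ell+1}$ itself.

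The main obstacle is the existence of a ring in $\cal T$ stable for ${}^\dag=R^e$. It is enough to exhibit an $R'\in\cal T$ that is stable for henselization (${}^\dag=\tilde{\ }$); from this the stability for the given \'etale extension follows by a faithful-flatness argument, $\tilde R$ being faithfully flat over the \'etale neighbourhoods it contains, so that both conditions of Definition \ref{stable} for $R^e$ are weaker than the corresponding conditions for $\tilde R$. Thus the section rests on producing $\nu$-extensions in $\cal T$ stable for henselization. The guiding principle is Remark \ref{interchanging} (3): an excellent henselian local ring is automatically stable; what has to be shown is that the relevant ``henselian'' stability data — essentially the non-existence of the nontrivial algebraic residue extensions forbidden by Definition \ref{stable} — is already realized after finitely many local blowings up of $R$ along $\nu$, using that $R$ is excellent (a G-ring) and that $\tilde R$ is a filtered limit of finite-type \'etale neighbourhoods. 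I expect this verification to be the most delicate part of the argument.

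For parts (2) and (3), fix a stable $R'$ as above, so that all the $H'_i$ are prime. By Theorem \ref{classification} the valuation $\nu$ has an extension $\nu^e_-$ to $\frac{R^e}{H_0}=\frac{R^e}{H_1}$, and by Remark \ref{minimalexist} we may take it minimal. For uniqueness of the support: if $\nu$ extends to $\frac{R^e}{P}$ then $P\cap R=(0)$, so $P$ is one of the finitely many, pairwise incomparable primes of $R^e$ over $(0)$, and $P\supseteq H_0=H_1$ by the general theory. The first prime ideal $\tilde H'_1$ determined by $\nu^e_-$ satisfies $\tilde H'_1\cap R=(0)$, hence is also a prime of $R^e$ over $(0)$, and $H_1\subseteq\tilde H'_1$ by Proposition \ref{Hintilde}; incomparability forces $H_1=\tilde H'_1$, and then $H_1\subseteq P=\tilde H'_0\subseteq\tilde H'_1=H_1$ gives $P=H_1$. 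Running the same incomparability argument at every $P_\ell$ shows $\tilde H'_{2\ell}=\tilde H'_{2\ell+1}$ and, with Proposition \ref{Hintilde} once more, $\tilde H'_i=H'_i$ for all $i$; hence $\nu^e_-$ is minimal, every odd-indexed component $\nu^\dag_{2\ell+1}$ is the trivial valuation, and each even component $\nu^\dag_{2\ell}$ is forced, recursively in $i$, by Proposition \ref{nu0unique}, so $\nu^e_-$ is unique. Being minimal with $\tilde H'_{2\ell}=\tilde H'_{2\ell+1}$, $\nu^e_-$ is tight, so by Proposition \ref{tight=scalewise} the extension $\gr_\nu R'\to\gr_{\nu^e_-}\frac{{R'}^e}{H'_1}$ is scalewise birational for every $R'\in\cal T$, and in particular for $R'=R$; this yields $\Gamma^\dag=\Gamma$, hence $rk\ \nu^e_-=r$. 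Statement (3) follows at once: the prime $\nu^e_-$-ideals of $R^e$ are the ideals $\tilde H'_i=H'_i$, and since $H_{2\ell}=H_{2\ell+1}$ these are precisely the ideals $H_{2\ell+1}$, $0\le\ell\le r$.
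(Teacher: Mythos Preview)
Your overall architecture is exactly that of the paper: reduce everything to the existence of stable $\nu$-extensions in $\cal T$ for the operation ${}^\dag$, then invoke Theorem \ref{primality1} for primality, the zero-dimensionality of the \'etale fibres for minimality and for the equalities $H_{2\ell}=H_{2\ell+1}$ and $\tilde H'_i=H'_i$, and finally Propositions \ref{nu0unique}, \ref{uniqueness1} and \ref{tight=scalewise} for uniqueness, tightness and scalewise birationality. Your treatment of (2) and (3), once stability is granted, is clean and matches the paper's.

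The genuine gap is that you do not prove the existence of stable rings; you correctly identify this as ``the most delicate part'' but then stop. This is precisely the new content of \S\ref{henselization}, and it does not follow from soft considerations. The paper's argument has two concrete steps you are missing. First, one reduces to $R^e$ essentially of finite type over $R$ (taking direct limits at the end), so that $\kappa(P'_\ell)\otimes_R R^e$ is a \emph{finite} product of finite separable field extensions of $\kappa(P'_\ell)$. Second, one produces an $\ell$-stable $R'$ by a lexicographic descent: to each $R'$ one attaches the finite tuple of degrees $\left(\left[\bar K'_1:\kappa(P'_\ell)\right],\dots,\left[\bar K'_{\bar n'}:\kappa(P'_\ell)\right],n'\right)$ coming from the decomposition of $(T')^{-1}(\kappa(P'_\ell)\otimes_R R^e)$, shows this invariant is non-increasing under $\nu$-extensions, and shows it \emph{strictly} decreases unless $R'$ is already $\ell$-stable. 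The strict decrease uses, crucially, that one can always pass (via Lemma \ref{lift} and the Nagata property of excellent rings) to an $R'$ with $R'/P'_\ell$ normal, hence analytically irreducible; this kills extra factors in the product. None of this is visible from your sketch.

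A smaller point: your proposed reduction ``stability for $\tilde R$ $\Rightarrow$ stability for $R^e$'' is formally correct (the relevant tensor products for $R^e$ inject into those for $\tilde R$), but it runs counter to the paper's logical order and does not help you, since proving stability for $\tilde R$ directly is no easier --- the finiteness that makes the lexicographic argument terminate is exactly what one loses in passing from a finite-type $R^e$ to $\tilde R$. The paper goes the other way: finite-type $R^e$ first, general $R^e$ (and in particular $\tilde R$) by direct limit.
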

\begin{proof} By assumption, the ring $R^e$ is a direct limit of
local, strict \'etale extensions of $R$ which are essentially of finite
type. All the assertions (1)--(3) behave well under taking direct limits, so it
is sufficient to prove the Theorem in the case when $R^e$ is essentially of
finite type over $R$. From now on, we will restrict attention to this case.

The next step is to describe explicitly those local blowings up
$R\rightarrow R'$ for which $R'$ is $\ell$-stable. Their interest to us is that, according to Proposition \ref{largeR2},
if $R'$ is $\ell$-stable then for all $R''\in{\cal T}(R')$ and all $\beta\in\frac{\Delta_\ell}{\Delta_{\ell+1}}$, we have the
equality
\begin{equation}
{\cal P}''_\beta(R''\otimes_RR^e)\cap R^e={\cal P}_\beta R^e;\label{eq:contracts}
\end{equation}
in particular, the limit in (\ref{eq:defin3}) is attained, that is, we have the equality
\begin{equation}
H_{2\ell+1}=\bigcap\limits_{\beta\in\Delta_\ell}\left(\left({\cal P}'_\beta\left(R^e\otimes_RR'\right)_{M'}\right)
\bigcap R^e\right).\label{eq:defin4}
\end{equation}
\begin{lemma}\label{lift} Let $\frac{R}{P_\ell}\to T$ be a finitely generated extension of $\frac {R}{P_\ell}$, contained in
$\frac{R_\nu}{\bf m_\ell}$. Let
$$
{\bf q}=\frac{\bf m_\nu}{\bf m_\ell}\cap T.
$$
There exists a $\nu$-extension $R\to R'$ of $R$ such that $\frac{R'}{P'_\ell}=T_{\bf q}$.
\end{lemma}

\begin{proof} Write $T=\frac R{P_\ell}\left[\overline a_1,\ldots,\overline
a_k\right]$, with $\overline a_i\in\frac{R_\nu}{\bf m_\ell}$, that
is, $\nu_{\ell+1}\left(\overline a_i\right)\geq 0,\ 1\leq i\leq
k$. We can lift the $\overline a_i$ to elements $a_i$ in $R_\nu$
such that $\nu\left(a_i\right)\geq0$. Let us consider the ring
$R''=R\left[a_1,\ldots,a_k\right]\subset R_\nu$ and its
localization $R'=R''_{{\bf m}_\nu\cap R''}$. The ideal $P'_\ell$
is the kernel of the natural map $R'\rightarrow\frac{R_\nu}{\bf
m_\ell}$. Thus both $\frac{R'}{P'_\ell}$ and $T_{\bf q}$ are equal
to the $\frac R{P_l}$-subalgebra of $\frac{R_\nu}{\bf m_\ell}$,
obtained by adjoining $\overline a_1,\ldots,\overline a_k$ to
$\frac R{P_l}$ inside $\frac{R_\nu}{\bf m_\ell}$ and then
localizing at the preimage of the ideal $\frac{\bf m_\nu}{\bf
m_\ell}$. This proves the Lemma.
\end{proof}

Let us now go back to our \'etale extension $R\to R^e$.
\begin{lemma}\label{anirred1} Fix an integer $l\in\{0,\dots,r\}$. There exists a
  local blowing up $R\rightarrow R'$ along $\nu$ having the following
  property: let $P'_\ell$ denote the $\ell$-th prime $\nu$-ideal of $R'$. Then
  the ring $\frac{R'}{P'_\ell}$ is analytically irreducible; in particular, $\frac{R'}{P'_\ell}\otimes_R R^e$ is an integral
  domain.
\end{lemma}
\begin{remark} We are not claiming that there exists $R'\in\cal T$ such that $\frac{R'}{P'_\ell}$ is analytically
irreducible for all $\ell$ (and we do not know how to prove such a
claim), only that for each $\ell$ there exists an $R'$, which may
depend on $\ell$, such that $\frac{R'}{P'_\ell}$ is analytically
irreducible. On the other hand, below we will prove that there exists an $\ell$-stable $R'\in\cal T$. According to
Definition \ref{stable} (2) and Proposition \ref{largeR1}, such a stable $R'$ has the property that
$\kappa\left(P''_\ell\right)\otimes_R\left(R''\otimes_RR^e\right)_{M''}$ is a domain for all $R''\in{\cal T}(R')$. For a given $R''$, this property is weaker than the analytic irreducibility of $R''/P''_\ell$. The latter is equivalent to saying that 
$\kappa(P''_\ell)\otimes_R(R''\otimes_RR^\sharp)_{M''}$ is a domain for every local \'etale extension $R^\sharp$ of $R''$.
\end{remark}
\begin{proof}\textit{(of Lemma \ref{anirred1})} Since $R$ is an excellent local ring, every homomorphic image of $R$ is Nagata
\cite{Mat} (Theorems 72 (31.H), 76 (33.D) and 78 (33.H)). Let $\pi:\frac R{P_\ell}\rightarrow S$ be the normalization of
$\frac {R}{P_\ell}$. Then $S$ is a finitely generated $\frac {R}{P_\ell}$-algebra contained in $\frac{R_\nu}{\bf m_\ell}$,
to which we can apply Lemma \ref{lift}. We obtain a $\nu$-extension $R\to R'$ such that the ring
$\frac{R'}{P'_\ell}\cong\frac{R'}{P_\ell R'}$ is a localization of $S$ at a prime ideal, hence it is an excellent normal
local ring. In particular, it is analytically irreducible (\cite{Nag}, Theorem (43.20), p. 187 and Corollary (44.3), p. 189), as desired.
\end{proof}

Next, we fix $\ell\in\{0,\dots,r\}$ and study the ring $(T')^{-1}(\kappa(P'_\ell)\otimes_RR^e)$, in particular, the structure of
the set of its zero divisors, as $R'$ runs over ${\cal T}(R)$ (here $T'$ is as in Remark \ref{interchanging}). Since $R^e$
is separable algebraic, essentially of finite type over $R$, the ring $(T')^{-1}(\kappa(P'_\ell)\otimes_RR^e)$ is finite over
$\kappa(P'_\ell)$; this ring is reduced, but it may contain zero
divisors. In fact, it is a direct product of fields which are finite separable extensions of $\kappa(P'_\ell)$ because $R^e$
is separable and essentially of finite type over $R$.\par

Consider a chain $R\rightarrow R'\rightarrow R''$ of $\nu$-extensions in $\cal T$. Let
\begin{eqnarray}
\kappa(P_\ell)\otimes_RR^e&=&\prod\limits_{j=1}^nK_j\\
(T')^{-1}\left(\kappa\left(P'_\ell\right)\otimes_RR^e\right)&=&\prod\limits_{j=1}^{n'}K'_j\\
(T'')^{-1}\left(\kappa\left(P''_\ell\right)\otimes_RR^e\right)&=&\prod\limits_{j=1}^{n''}K''_j
\end{eqnarray}
be the corresponding decompositions as products of finite field extensions of $\kappa(P_\ell)$ (resp. $\kappa(P'_\ell)$,
resp. $\kappa(P''_\ell)$). We want to compare $(T')^{-1}\left(\kappa\left(P'_\ell\right)\otimes_RR^e\right)$ with
$(T'')^{-1}\left(\kappa\left(P''_\ell\right)\otimes_RR^e\right)$.
\begin{remark} The ring $\kappa\left(P'_\ell\right)\otimes_RR^e$ is itself a direct product of finite extensions of
$\kappa\left(P'_\ell\right)$; say $\kappa\left(P'_\ell\right)=\prod\limits_{j\in S'}K'_j$ for a certain set $S'$. In this
situation, localization is the same thing as the natural projection to the product of the $K'_j$ over a certain subset
$\{1,\dots,n'\}$ of $S'$. Thus the passage from $(T')^{-1}\left(\kappa\left(P'_\ell\right)\otimes_RR^e\right)$ to
$(T'')^{-1}\left(\kappa\left(P''_\ell\right)\otimes_RR^e\right)$ can be viewed as follows: first, tensor each $K'_j$ with
$\kappa\left(P''_\ell\right)$ over $\kappa\left(P'_\ell\right)$; then, in the resulting direct product of fields, remove
a certain number of factors.
\end{remark}
Let $\bar K'_1,\dots,\bar K'_{\bar n'}$ be the distinct isomorphism classes of finite extensions of
$\kappa\left(P'_\ell\right)$ appearing among $K'_1,\dots,K'_{n'}$, arranged in such a way that
$\left[\bar K'_j:\kappa\left(P'_\ell\right)\right]$ is non-increasing with $j$, and similarly for
$\bar K''_1,\dots,\bar K''_{\bar n''}$.
\begin{lemma}\label{decrease} We have the inequality
\begin{equation}
\left(\left[\bar K''_1:\kappa\left(P''_\ell\right)\right],\dots,\left[\bar K''_{\bar n''}:\kappa\left(P''_\ell\right)\right],
n''\right)\le
\left(\left[\bar K'_1:\kappa\left(P'_\ell\right)\right],\dots,\left[\bar K'_{\bar n'}:\kappa\left(P'_\ell\right)\right],
n'\right)\label{eq:lex}
\end{equation}
in the lexicographical ordering. Furthermore, either $R'$ is $\ell$-stable or there exists $R''\in\cal T$ such that strict
inequality holds in (\ref{eq:lex}).
\end{lemma}
\begin{proof} Fix a $q\in\{1,\dots,\bar n'\}$ and consider the tensor product $\bar
K'_q\otimes_R\kappa\left(P''_\ell\right)$. Since $\bar K'_q$ is separable over $\kappa\left(P'_\ell\right)$, the ring
$\bar K'_q\otimes_R\kappa\left(P''_\ell\right)=\prod\limits_{j\in S''_q}K''_j$ is a product of fields. Moreover, two cases
are possible:

(a) there exists a non-trivial extension $L$ of $\kappa\left(P'_\ell\right)$ which embeds both into
$\kappa\left(P''_\ell\right)$ and $\bar K'_q$. In this case
\begin{equation}
\left[K''_j:\kappa\left(P''_\ell\right)\right]<\left[\bar K'_q:\kappa\left(P'_\ell\right)\right]\quad\text{ for all }j\in
S''_q.\label{eq:strict}
\end{equation}
(b) there is no field extension $L$ as in (a). In this case $\bar K'_q\otimes_R\kappa\left(P''_\ell\right)$ is a field,
so
\begin{equation}
\#S''_q=1\label{eq:card1}
\end{equation}
and
\begin{equation}
\left[K''_j:\kappa\left(P''_\ell\right)\right]=\left[\bar K'_q:\kappa\left(P'_\ell\right)\right]\quad\text{ for }j\in
S''_q.\label{eq:equal}
\end{equation}
Now, if there exists $q\in\{1,\dots,\bar n'\}$ for which (a) holds, take the smallest such $q$. Then
(\ref{eq:strict})--(\ref{eq:equal}) imply that strict inequality holds in (\ref{eq:lex}). On the other hand, if (b) holds
for all $q\in\{1,\dots,\bar n'\}$ then (\ref{eq:card1}) and (\ref{eq:equal}) imply that
\begin{equation}
\left(\left[\bar K''_1:\kappa\left(P''_\ell\right)\right],\dots,\left[\bar K''_{\bar n''}:\kappa\left(P''_\ell\right)\right]
\right)=
\left(\left[\bar K'_1:\kappa\left(P'_\ell\right)\right],\dots,\left[\bar K'_{\bar n'}:\kappa\left(P'_\ell\right)\right]
\right)\label{eq:lex1}
\end{equation}
and $n''\le n'$, so again (\ref{eq:lex}) holds.

Finally, assume that $R'$ is not $\ell$-stable. If there exists
$R''\in\cal T$ and $q\in\{1,\dots,\bar n'\}$ for which (a) holds, then by the above we have strict inequality in
(\ref{eq:lex}) and there is nothing more to prove. Assume there are no such $R''$ and $q$. Then
$(T')^{-1}(\kappa(P'_\ell)\otimes_RR^e)$ is not a domain, so $n'>1$.

Take $R''\in{\cal T}(R')$ such that $\left(\frac{R''}{P''_l}\otimes_RR^e\right)_{M''}$ is an integral domain; such an $R''$
exists by Lemma \ref{anirred1}. Then $n''=1<n'$, as desired.
\end{proof}
\begin{corollary}\label{anirred2} There exists a stable $R'\in\cal T$. The limit in
(\ref{eq:defin3}) is attained for this $R'$.
\end{corollary}
\begin{proof} In view of Proposition \ref{largeR1}, it is sufficient to prove that there exists $R'\in\cal T$
which which is $\ell$-stable for all $\ell\in\{0,1,\dots,r\}$. First, we fix
$\ell\in\{0,1,\dots,r\}$. Lemma \ref{decrease} implies that there exists $R'\in\mathcal{T}(R)$ which is $\ell$-stable.

By Proposition \ref{largeR1}, repeating the procedure above for each $\ell$ we can successively enlarge $R'$ in such a
way that it becomes stable.

The last statement follows from Proposition \ref{largeR2}.
\end{proof}

We are now in the position to prove Theorem \ref{hensel0}.

By Theorem \ref{primality1} (1), $H_{2\ell-1}$ is prime. By Proposition \ref{contracts}, $H_{2\ell+1}$
maps to $P_\ell$ under the map $\pi^e:\spec\ R^e\rightarrow\spec\ R$.
Since this map is \'etale, its fibers are zero-dimensional, which
shows that $H_{2\ell+1}$ is a minimal prime of $P_\ell$. This proves (1) of Theorem \ref{hensel0}.

By Proposition \ref{Hintilde}, for $0\le i\le2r$, $\tilde H_i$ is a prime ideal of $R^e$, containing
$H_i$. Since the fibers of $\pi^e$ are zero-dimensional, we must have $\tilde H_i=H_i$, so $\tilde H_{2\ell}=\tilde
H_{2\ell+1}=H_{2\ell}=H_{2\ell+1}$ for $0\le\ell\le r$. In particular, $\tilde H_0=H_1$. This shows that the unique prime
$\tilde H_0$ of $R^e$ such that there exists an extension $\nu^e_-$ of $\nu$ to $\frac{R^e}{\tilde H_0}$ is $\tilde H_0=H_1$.
Now (2) of the Theorem is given by Proposition \ref{uniqueness1}.

(3) of Theorem \ref{hensel0} is now immediate. This completes the proof of Theorem \ref{hensel0}.
\end{proof}

We note the following corollary of the proof of (2) of Theorem
\ref{hensel0} and Corollary \ref{blup1}. Let $\Phi^e=\nu^e_-(R^e\setminus\{0\})$, take an element $\beta\in\Phi^e$ and let
${\cal P}^e_\beta$ denote the $\nu^e_-$-ideal of $R^e$ of value $\beta$.
\begin{corollary}\label{blup} Take an element $x\in {\cal P}^e_\beta$. There
exists a local blowing up $R\rightarrow R'$ such that
$\beta\in\nu(R')\setminus\{0\}$ and $x\in {\cal P}'_\beta{R'}^e$.
\end{corollary}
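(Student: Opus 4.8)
The plan is to obtain this as the special case $R^\dag=R^e$, with base ring $R$, of Corollary \ref{blup1}, once we observe that the unique extension $\nu^e_-$ furnished by Theorem \ref{hensel0} is \emph{tight}. (The standing hypothesis of \S\ref{extensions} --- that $\cal T$ contains a stable ring --- is in force here by Corollary \ref{anirred2}.)

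First I would verify that $\nu^e_-$ is tight. The proof of Theorem \ref{hensel0} (2) establishes, via Proposition \ref{Hintilde} and the fact that the fibres of $\spec\ R^e\to\spec\ R$ are zero-dimensional, that $\tilde H'_i=H'_i$ for all $R'\in\cal T$ and all $i$, and hence that $\tilde H'_{2\ell}=\tilde H'_{2\ell+1}$ for all $\ell$; thus $\nu^e_-$ is minimal (hence evenly minimal) and satisfies equation (\ref{eq:odd=even2}), i.e. $\nu^e_-$ is tight by definition. (Equivalently, one may combine the scalewise-birationality assertion of Theorem \ref{hensel0} (2) with Proposition \ref{tight=scalewise}.) Consequently, by Proposition \ref{tight=scalewise} applied with $R^\dag=R^e$, for every $R'\in\cal T$ the graded algebra extension $\gr_\nu R'\to\gr_{\nu^e_-}{R'}^e$ is scalewise birational --- which is precisely the hypothesis from which Corollary \ref{blup1} was deduced.

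It then remains to invoke Corollary \ref{blup1}, with its ``$R'$'' taken to be our $R$, its ``$R''$'' taken to be the local blowing up $R'$ being produced, and $R^\dag=R^e$: for $x\in{\cal P}^e_\beta$ (we may assume $\beta\ne0$, the statement being vacuous otherwise) this yields a local blowing up $R\to R'$ along $\nu$ with $\beta\in\nu(R')\setminus\{0\}$ and $x\in{\cal P}'_\beta{R'}^e$, the injectivity of $R^e\hookrightarrow{R'}^e$ (as in Lemma \ref{factor}) giving meaning to the last membership. The only point requiring care is that Corollary \ref{blup1} was stated inside the proof of Proposition \ref{tight=scalewise} and hence tacitly for a tight $\nu^\dag_-$; so the whole argument rests on the tightness of $\nu^e_-$ verified above. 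I do not anticipate any genuine obstacle here, since in the \'etale case minimality of $\nu^e_-$ together with the coincidence $H_{2\ell}=H_{2\ell+1}$ force tightness directly, and the present Corollary is then essentially a transcription of Corollary \ref{blup1}.
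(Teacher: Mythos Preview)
Your proposal is correct and follows exactly the route indicated in the paper, which states this as a corollary of the proof of Theorem~\ref{hensel0}(2) and Corollary~\ref{blup1}. You have simply made explicit the one point the paper leaves implicit---namely that $\nu^e_-$ is tight (either via $\tilde H'_i=H'_i$ and $H'_{2\ell}=H'_{2\ell+1}$, or equivalently via the scalewise-birationality clause of Theorem~\ref{hensel0}(2) combined with Proposition~\ref{tight=scalewise})---so that Corollary~\ref{blup1} applies with $R^\dag=R^e$ and its $R'$ taken to be $R$.
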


\section{The Main Theorem: the primality of implicit ideals.}
\label{prime}

In this section we study the ideals $H_j$ for $\hat R$ instead of $\tilde R$. The main result of this section is
\begin{theorem}\label{primality} The ideal $H_{2\ell-1}$ is prime.
\end{theorem}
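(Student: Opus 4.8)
The plan is to reduce Theorem \ref{primality} to Theorem \ref{primality1} of \S\ref{technical}, as announced in the introduction: it suffices to prove that $\cal T$ contains a $\nu$-extension $R'$ of $R$ which is stable for the operation $R^\dag=\hat R$, and then to invoke Theorem \ref{primality1}(1). Moreover, by Proposition \ref{largeR1} $\ell$-stability is inherited by every member of ${\cal T}(R')$, so it is enough to produce, for each $\ell\in\{0,\dots,r\}$ separately, an $\ell$-stable $R'\in\cal T$ for $R^\dag=\hat R$, and then to pass to a common enlargement. This reduces the whole theorem to the assertion that \textbf{for each $\ell$ there exists $R'\in\cal T$ which is $\ell$-stable with respect to $R^\dag=\hat R$.}

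For condition (1) of Definition \ref{stable} I would repeat the normalisation argument of Lemma \ref{anirred1}: applying Lemma \ref{lift} to the normalisation of $R/P_\ell$ (finite over $R/P_\ell$ because $R$ is excellent, hence Nagata) gives an $R'\in\cal T$ such that $R'/P'_\ell$ is an excellent \emph{normal} local domain. An excellent normal local ring is analytically normal, so $\hat R'/P'_\ell\hat R'=\widehat{R'/P'_\ell}$ is a domain; this is the feature of the completion case that is absent for a finite \'etale extension. Using $\hat R/P_\ell\hat R=\widehat{R/P_\ell}$ together with the faithfully flat factorisation of ${R'}^\dag$ from Lemma \ref{factor}, the ring $\kappa(P'_\ell)\otimes_R(R'\otimes_R\hat R)_{M'}$ is a localisation of $R'/P'_\ell\otimes_{R/P_\ell}\widehat{R/P_\ell}$, which in turn is a localisation of the domain $\widehat{R'/P'_\ell}$; being a localisation of a domain it is a domain, so (1) holds for this $R'$.

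The substantial part is condition (2) of Definition \ref{stable}, and here the plan is to fall back on the \'etale case treated in \S\ref{henselization}. With $\tilde R'$ denoting the henselisation of $R'$ we have local maps $R'\to\tilde R'\to\hat R'$ and inclusions $\tilde R'\subseteq(R'\otimes_R\hat R)_{M'}\subseteq\hat R'$; since an excellent Henselian local ring is algebraically closed in its formal completion (the fact underlying Remark \ref{interchanging}(3)), every element of $\kappa(P'_\ell)\otimes_R(R'\otimes_R\hat R)_{M'}$ that is algebraic over $\kappa(P'_\ell)$ already lies in the residue field of $\tilde R'$ attached to the preimage of $P'_\ell$, i.e. in $\lim\limits_{\overset\longrightarrow{R^e}}\kappa(P^e_\ell)$, the limit being taken over the essentially finite type \'etale $\nu$-extensions $R^e$ of $R'$ in $\cal T$. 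Consequently a nontrivial algebraic extension $L$ of $\kappa(P'_\ell)$ violating condition (2) for $R^\dag=\hat R$ would violate condition (2) for $R^\dag=R^e$ for some such $R^e$; but by Lemma \ref{decrease} and Corollary \ref{anirred2} a further $\nu$-extension makes $R'$ $\ell$-stable for every such $R^e$, so no such $L$ exists. Combining the two paragraphs, this $R'$ is $\ell$-stable for $R^\dag=\hat R$; iterating over $\ell$ by Proposition \ref{largeR1} produces a ring in $\cal T$ that is stable for the completion, and Theorem \ref{primality1}(1) then gives that $H_{2\ell-1}$ is prime.

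I expect the main obstacle to be making the reduction of condition (2) to the \'etale setting precise: one must track how the residue fields $\kappa(P^e_\ell)$ and $\kappa(P''_\ell)$ evolve under the (possibly non-birational) $\nu$-extensions permitted in $\cal T$, and verify that ``algebraically closed in the formal completion'' really applies to $\kappa(P'_\ell)\otimes_R(R'\otimes_R\hat R)_{M'}$ and not merely to $\hat R'$. The normalisation step used for (1) should be compatible with this, since birational blowing up of the already normal ring $R'/P'_\ell$ introduces no new finite extensions of $\kappa(P'_\ell)$; the delicate point is to show that the same holds after the extra \'etale enlargement forced by condition (2).
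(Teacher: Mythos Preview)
Your overall plan---reduce to Theorem \ref{primality1} by producing a stable $R'\in\cal T$ for $R^\dag=\hat R$---is the right target, and your treatment of condition (1) of Definition \ref{stable} via normalisation is essentially fine (modulo a small slip: the ring $\kappa(P'_\ell)\otimes_R(R'\otimes_R\hat R)_{M'}$ is not a localisation of $\widehat{R'/P'_\ell}$ but rather injects into it, which is all you need).  The real difficulty, as you suspected, is condition (2), and here your argument has a genuine gap.

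Lemma \ref{decrease} and Corollary \ref{anirred2} give $\ell$-stability for a \emph{single fixed} finite-type \'etale extension $R^e$: the descending-chain argument of Lemma \ref{decrease} terminates because the degrees involved are bounded by $[R^e:R]$.  Your sentence ``a further $\nu$-extension makes $R'$ $\ell$-stable for every such $R^e$'' asserts something much stronger---stability simultaneously for all such $R^e$---and this is not what \S\ref{henselization} proves.  Concretely: after you enlarge $R'$ to kill one offending $L$, the new $\kappa(P''_\ell)$ and the new ring $\kappa(P''_\ell)\otimes_R(R''\otimes_R\hat R)_{M''}$ may admit a new algebraic subextension $L'$ (coming from a different, larger $R^e$), and there is no bound on how many times this can happen.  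The process need not terminate within $\cal T$.

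The paper avoids this iteration entirely by a different reduction: it \emph{replaces $R$ by $\tilde R/\tilde H_1$}.  The key step is the Claim that the implicit ideals of $\hat R$ coincide (modulo $\tilde H_1\hat R$) with the implicit ideals attached to the henselian ring $\tilde R/\tilde H_1$ and the valuation $\tilde\nu_-$; this is established using Corollary \ref{blup} from \S\ref{henselization}.  Once $R$ is henselian, Remark \ref{interchanging}(3) applies directly: $\frac R{P_\ell}$ is henselian excellent, hence analytically irreducible and algebraically closed in its completion, so $\kappa(P_\ell)$ is algebraically closed in $\kappa(P_\ell)\otimes_R\hat R$ and condition (2) holds \emph{vacuously}---there is no nontrivial $L$ at all.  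No iteration is needed.  Thus the paper trades your termination problem for the (easier) task of checking that passing to the henselisation does not change the implicit ideals, which is exactly the content of the Claim.
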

\begin{proof} For the purposes of this proof, let $H_{2\ell-1}$
denote the implicit ideals of $\hat R$ and $\tilde H_{2\ell-1}$ the
implicit prime ideals of the henselization $\tilde R$ of $R$.

Let $S$ be a local domain. By \cite{Nag} (Theorem (43.20), p. 187) there exists bijective maps between the set of minimal
prime ideals of the henselization $\tilde S$ and the maximal ideals of the normalization $S^n$. If, in addition, $S$ is excellent, the two above sets also admit a natural bijection to the set of minimal primes of $\hat S$ \cite{Nag} (Corollary (44.3), p. 189). If $S$ is a henselian local domain, its only minimal prime is the (0) ideal, hence by the above the same is true of $\hat S$. Thus $\hat S$ is also a domain.

This shows that any excellent henselian local domain is analytically
irreducible, hence $\tilde H_{2\ell-1}\hat R$ is prime for all
$\ell\in\{1,\dots,r+1\}$. Let $\tilde\nu_-$ denote the unique
extension of $\nu$ to $\frac{\tilde R}{\tilde H_1}$, constructed in
the previous section. Let $H^*_{2\ell-1}\subset\frac{\tilde
R}{\tilde H_1}$ denote the implicit ideals associated to the
henselian ring $\frac{\tilde R}{\tilde H_1}$ and the valuation
$\tilde\nu_-$.
\medskip

\noi\textit{Claim.} We have $H^*_{2\ell-1}=\frac{H_{2\ell-1}}{\tilde H_1}$.
\medskip

\noi\textit{Proof of the claim:} For $\beta\in\Gamma$, let $\tilde
P_\beta$ denote the $\tilde\nu_-$-ideal of $\frac{\tilde R}{\tilde
H_1}$ of value $\beta$. For all $\beta$, we have
$\frac{P_\beta}{\tilde H_1}\subset\tilde P_\beta$, and the same
inclusion holds for all the local blowings up of $R$, hence
$\frac{H_{2\ell-1}}{\tilde H_1}\subset H^*_{2\ell-1}$. To prove the
opposite inclusion, we may replace $\tilde R$ by a finitely
generated strict \'etale extension $R^e$ of $R$. Now let
$\Phi^e=\nu^e_-\left(R^e\setminus\{0\}\right)$ and take an element
$\beta\in\Phi^e\cap\Delta_{\ell -1}$. By Corollary \ref{blup}, there
exists a local blowing up $R\rightarrow R'$ such that $x\in
P'_\beta{R'}^e$. Letting $\beta$ vary over
$\Phi^e\cap\Delta_{\ell-1}$, we obtain that if $x\in H^*_{2\ell-1}$
then $x\in\frac{H_{2\ell-1}}{\tilde H_1}$, as desired. This
completes the proof of the claim.
\medskip

The Claim shows that replacing $R$ by $\frac{\tilde R}{\tilde H_1}$ in Theorem
\ref{primality} does not change the problem. In other words, we
may assume that $R$ is a henselian domain and, in particular, that
$\hat R$ is also a domain. Similarly, the ring $\frac
R{P_\ell}\otimes_R\hat R\cong\frac{\hat R}{P_\ell}$ is a domain, hence
so is its localization $\kappa(P_\ell)\otimes_R\hat R$.

Since $R$ is a henselian excellent ring, it is algebraically closed
in $\hat R$ (\cite{Nag}, Corollary (44.3), p. 189 and Corollary
\ref{notnormal} of the Appendix); of course, the same holds for
$\frac R{P_\ell}$ for all $\ell$. Then $\kappa(P_\ell)$ is
algebraically closed in $\kappa(P_\ell)\otimes_R\hat R$. This shows
that the ring $R$ is stable. Now the Theorem follows from Theorem
\ref{primality1}. This completes the proof of Theorem
\ref{primality}.
\end{proof}

\section{Towards a proof of Conjecture \ref{teissier}, assuming local
  uniformization in lower dimension}
\label{locuni1}

Let the notation be as in the previous sections. In this section,
we assume that the Local Uniformization Theorem holds and propose an approach to proving Conjecture \ref{teissier}. We prove a Corollary of Conjecture \ref{teissier} which gives a sufficient condition for $\hat\nu_-$ to be unique, which also turns out to be necessary under the additional assumption that $\hat\nu_-$ is minimal. We will assume that all the $R'\in\mathcal T$ are birational to each other, so that all the fraction fields
$K'=K$ and the homomorphisms $R'\rightarrow R''$ are local
blowings up with respect to $\nu$. Finally, we assume that $R$
contains a field $k_0$ and a local $k_0$-subalgebra $S$
essentailly of finite type, over which $R$ is strictly \'etale. In
particular, all the rings in sight are equicharacteristic.

First, we state the Local Uniformization Theorem in the precise form
in which we are going to use it. \begin{definition}\label{lut} We say that
  \textbf{the embedded Local Uniformization theorem holds in }
  $\mathcal T$ if the following conditions are satisfied.

Take an integer $\ell\in\{1,\dots,r-1\}$. Let
$\mu_{\ell+1}:=\nu_{\ell+1}\circ\nu_{\ell+2}\circ\dots\circ\nu_r$. Consider
a tree $\{H'\}$ of prime ideals of $\frac{\hat R'}{P'_\ell}$ such that
$H'\cap\frac{R'}{P'_\ell}=(0)$ and a tight extension
$\hat\mu_{2\ell+2}$ of $\mu_{\ell+1}$ to
$\lim\limits_{\overset\longrightarrow{R'}}\frac{\hat R'}{H'}$.

(1) There exists a local blowing up $\pi:R\rightarrow R'$ in $\mathcal
T$, which induces an isomorphism at the center of $\nu_\ell$, such
that $\frac{R'}{P'_\ell}$ is a regular local ring.

(2) Assume that $\frac{R'}{P'_\ell}$ is a regular local ring. Then
there exists in $\mathcal T$ a sequence $\pi:R\rightarrow R'$ of local
blowings up along non-singular centers not containing the center of
$\nu_l$ such that $\frac{\hat R'}{H'}$ is a regular local ring.
\end{definition}
It is well known (\cite{A}, \cite{L}, \cite{Z}) that the embedded
Local Uniformization theorem holds if $R$ is an excellent local domain
such that either $char\ k=0$ or $\dim\ R\le3$ (to be precise, (1) of
Definition \ref{lut} is well known and (2) is an easy consequence of
known results). While the Local Uniformization theorem in full
generality is still an open problem, it is widely believed to hold for
arbitrary quasi-excellent local domains. Proving this is an active
field of current research in algebraic geometry. Proving local
uniformization for rings of arbitrary characteristic is one of the
intended applications of Conjecture \ref{teissier}. Note that in Definition \ref{lut} we require only
local uniformization of rings of dimension strictly less than $\dim\
R$; the idea is to use induction on $\dim\ R$ to prove local
uniformization of rings of dimension $\dim\ R$.

We begin by stating a strengthening of Conjecture \ref{teissier} (using Remark
\ref{rephrasing}):
\begin{conjecture}\label{teissier1} The valuation $\nu$ admits at least one
  tight extension $\hat\nu_-$. This tight extension $\hat\nu_-$ can be
  chosen to have the following additional property: for rings $R'$
  sufficiently far in the tree $\mathcal T$ we have the equality of
  semigroups $\hat\nu_-\left(\frac{\hat R'}{\tilde
      H'_0}\setminus\{0\}\right)=\nu(R'\setminus\{0\})$ and for
  $\beta\in\nu(R'\setminus\{0\})$ the $\hat\nu_-$-ideal of value $\beta$
  is $\frac{\mathcal P_\beta\hat R'}{\tilde H'_0}$. In particular, we
  have the equality of graded algebras $\gr_\nu
  R'=\gr_{\hat\nu_-}\frac{\hat R'}{\tilde H'_0}$.
\end{conjecture}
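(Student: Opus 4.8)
The plan is to exhibit one tight extension $\hat\nu_-$ of $\nu$ satisfying the additional semigroup/ideal property. By Proposition \ref{tight=scalewise} (equivalently Remark \ref{rephrasing}), producing a tight extension amounts to producing an extension for which $\gr_\nu R'\to\gr_{\hat\nu_-}\hat R'$ is scalewise birational for every $R'\in\mathcal T$; the stronger statement that, for $R'$ far out in $\mathcal T$, the $\hat\nu_-$-ideal of value $\beta$ equals $\frac{\mathcal P_\beta\hat R'}{\tilde H'_0}$ (whence the equalities of semigroups and of graded algebras) will then be read off from Corollary \ref{stableimplicit}, once all the $\tilde H'_i$ are taken equal to the implicit primes $H'_i$ and the rings in sight are known to be stable in the sense of \S\ref{technical}. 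I would build $\hat\nu_-$ by a descending recursion on $\ell$ patterned on Definition \ref{lut}: for $\ell=r-1,r-2,\dots,0$ one constructs a tree $\{H'\}$ of prime ideals of $\frac{\hat R'}{P'_\ell}$ with $H'\cap\frac{R'}{P'_\ell}=(0)$, together with a tight extension $\hat\mu_{2\ell+2}$ of $\mu_{\ell+1}:=\nu_{\ell+1}\circ\dots\circ\nu_r$ to $\lim\limits_{\overset\longrightarrow{R'}}\frac{\hat R'}{H'}$; the case $\ell=0$ produces $\tilde H'_0$ and $\hat\nu_-=\hat\mu_2$.

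The bottom of the recursion, $\ell=r-1$, is the rank-one problem for $\nu_r$. Here I would first invoke Definition \ref{lut}(1) to move inside $\mathcal T$ to rings with $\frac{R'}{P'_{r-1}}$ regular, and then apply the rank-one theory of \S\ref{archimedian} tree-wise to $\left\{\frac{R'}{P'_{r-1}}\right\}$, whose direct limit is the valuation ring of $\nu_r$: Theorem \ref{th53} furnishes, at each stage, the implicit prime of $\frac{\hat R'}{P'_{r-1}\hat R'}$ and the unique extension of $\nu_r$ to the quotient, and Remark \ref{Remark56} furnishes the graded-algebra identification; passing to the limit yields a rank-one, hence tight, $\hat\mu_{2r}$ with value group $\frac{\Delta_{r-1}}{\Delta_r}$. (For $r=1$ this is already the whole assertion, by Theorem \ref{th53} and Remark \ref{Remark56}.)

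For the step from $\mu_{\ell+1}$ over $P_\ell$ to $\mu_\ell=\nu_\ell\circ\mu_{\ell+1}$ over $P_{\ell-1}$: given the tree $\{H'\}$ and the tight $\hat\mu_{2\ell+2}$ already constructed, Definition \ref{lut}(2) lets me pass, compatibly along $\mathcal T$, to rings for which $\frac{\hat R'}{H'}$ is a regular local ring; in particular it is analytically irreducible, so the relevant rings are stable (Definition \ref{stable}), the odd implicit ideals are prime (Theorems \ref{primality1}, \ref{primality}), and the contraction identities $\mathcal P'_\beta{R'}^\dag\cap R^\dag=\mathcal P_\beta R^\dag$ of \S\ref{technical} apply. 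The valuation $\nu_\ell$ is again rank one, now sitting between levels $\ell-1$ and $\ell$, so one is faced with a rank-one extension problem over the regular ring $\frac{\hat R'}{H'}$; I would produce the smaller prime of $\frac{\hat R'}{P'_{\ell-1}}$ it requires and, invoking \S\ref{archimedian} once more — regularity guaranteeing, as in Proposition \ref{Corollary57}, that no excess value group is created — obtain the rank-one factor $\hat\nu_\ell$ with value group $\frac{\Delta_{\ell-1}}{\Delta_\ell}$ and residue field $\lim\limits_{\overset\longrightarrow{R'}}\kappa(H')$, and then splice $\hat\nu_\ell$ with $\hat\mu_{2\ell+2}$ by Theorem \ref{classification} to form $\hat\mu_{2\ell}$. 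Because at each step the rank-one factor was produced with value group exactly $\frac{\Delta_{j-1}}{\Delta_j}$, the resulting extension has $\Gamma^\dag=\Gamma$, so every odd-indexed layer of $\hat\nu_-$ is trivial and $\hat\nu_-$ is tight; and with $\tilde H'_i=H'_i$ throughout, Corollary \ref{stableimplicit} identifies the $\hat\nu_-$-ideal of value $\beta$ with $\frac{\mathcal P_\beta\hat R'}{\tilde H'_0}$ for $R'$ far out, which, together with $H'_1\cap R'=(0)$, yields the equality of semigroups and of graded algebras.

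The hard part will be the interplay between residue-field growth and the Local Uniformization input. Since $k_{\nu_\ell}=\kappa(\mathbf m_\ell)$ is in general strictly larger than $\kappa(P'_\ell)$ for every finite stage $R'\in\mathcal T$, each recursive step can only be carried out in the limit along $\mathcal T$, and the regularizing sequences of blow-ups provided by Definition \ref{lut}(2) must be arranged so as to be compatible with the tree structure — so that the constructed families of primes really are trees of ideals and $\ell$-stability is preserved under all morphisms $R'\to R''$ — without enlarging the value groups of the rank-one factors. A secondary difficulty, needed merely so that the recursive hypothesis is well posed over $\frac{R'}{P'_{\ell-1}}$, is keeping $\dim\frac{R'}{P'_\ell}$ and the pertinent heights constant along $\mathcal T$; this is where the standing assumption $\dim R'=\dim R$ and the height-stabilization results Proposition \ref{htstable} and Corollary \ref{htstable1} are used.
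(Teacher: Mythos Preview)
Your plan shares the paper's overall architecture --- descending recursion on $\ell$, producing at each step a tight extension $\hat\mu_{2\ell+2}$ of $\mu_{\ell+1}$ --- but it has a genuine gap at the inductive step, and in fact the paper itself does \emph{not} complete the proof: Conjecture~\ref{teissier1} remains a conjecture, and the paper's construction is conditional on two further conjectures (\ref{strongcontainment} and \ref{containment}) that it leaves open.

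The concrete error is the claim that one may take $\tilde H'_i=H'_i$ throughout. A minimal extension (Definition after Proposition~\ref{uniqueness2}) is evenly minimal, but it is \emph{not} tight unless $H'_{2\ell}=H'_{2\ell+1}$ for all $\ell$, which fails already in Example~\ref{Example31}: there the unique extension with $\tilde H'_i=H'_i$ has value group $\mathbf Z^3_{lex}$, strictly larger than $\Gamma=\mathbf Z^2_{lex}$, so $\hat\nu_1$ is nontrivial and the extension is minimal but not tight. Consequently your appeal to Corollary~\ref{stableimplicit} to identify the $\hat\nu_-$-ideal of value $\beta$ with $\frac{\mathcal P_\beta\hat R'}{\tilde H'_0}$ cannot go through: for minimal but non-tight extensions there are values $\beta\in\Gamma^\dag\setminus\Gamma$, and the graded algebras are not isomorphic. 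The paper instead constructs \emph{strictly larger} prime ideals $J'_{2\ell-1}\supset\frac{H'_{2\ell-1}}{H'_{2\ell-2}}$ (Lemma~\ref{contractsto0}), adding generators $v'_j=u'_j-\phi_j$ chosen so that the resulting $\tilde H'_{2\ell-1}$ equals $\tilde H'_{2\ell-2}$; the delicate point is choosing the lifts $\phi_j$ so that $J'_{2\ell-1}\cap\frac{R'}{P'_{\ell-1}}=(0)$, which occupies the bulk of \S\ref{locuni1}.

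Your phrase ``invoking \S\ref{archimedian} once more'' is exactly where the paper's argument runs out. Theorem~\ref{th53} applies to a \emph{fixed} rank-one valued domain; here one needs the candidate $\hat\nu_{2\ell}$-ideals $\hat{\mathcal P}''_{\bar\beta\ell}$ of (\ref{eq:validealmain}) to behave well under all tree morphisms $R'\to R''$ (Conjecture~\ref{strongcontainment}) and to satisfy the admissibility condition (\ref{eq:restrictionmain}) (Conjecture~\ref{containment}). Without these, one cannot verify that $\hat\nu_{2\ell}$ is multiplicative, nor that the chain $\{\tilde H'_i\}$ is admissible, so Theorem~\ref{classification} does not apply to splice the pieces. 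The paper is explicit that these two steps ``still remain conjectural''; your sketch does not supply them.
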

Below, we give an explicit construction of a valuation $\hat\nu_-$ whose existence is asserted in the Conjecture by describing the trees of ideals $\tilde H'_i$, $0\le i\le 2r$ and, for each $i$, a valuations $\hat\nu_i$ of the residue field $k_{\nu_{i-1}}$, such that $\hat\nu_-=\hat\nu_1\circ\dots\hat\nu_{2r}$. More precisely, for $\ell\in\{0,\dots,r-1\}$, we will construct, recursively in the descending order of $\ell$, a tree $J'_{2\ell+1}$ of prime ideals of $\frac{\hat R'}{H'_{2\ell}}$, $R'\in\mathcal{T}$, such that
$J'_{2\ell+1}\cap\frac{R'}{P_\ell}=(0)$, and an extension $\hat\mu_{2\ell+2}$ of $\mu_{\ell+1}$ to
$\lim\limits_{\overset\longrightarrow{R'\in\mathcal{T}}}\frac{\hat R'}{J'_{2\ell+1}\hat R'}$; the valuation
$\hat\mu_2$ will be our candidate for the desired tight extension $\hat\nu_-$ of $\mu_1=\nu$. Unfortunately, two steps in this construction still remain
conjectural, namely, proving that $\hat\mu_{2\ell+2}$ is, indeed, a valuation, and that it is tight (this is essentially the content of Conjectures \ref{strongcontainment} and \ref{containment} below). Once these conjectures are proved, our recursive construction will be complete and Conjecture \ref{teissier1} will follow by setting $\hat\nu_-=\hat\mu_2$.

Let us now describe the construction in detail. According to Corollary \ref{htstable1}, we may
assume that $ht\ H'_i$ is constant for each $i$ after replacing $R$ by
some other ring sufficiently far in $\mathcal T$. From now on, we will
make this assumption without always stating it explicitly.

By (1) of Definition \ref{lut}, applied successively to the trees of ideals
$$
P'_\ell\subset R',\quad\ell\in\{1,\dots,r-1\},
$$
there exists $R''\in\cal T$ such that $\frac{R''}{P''_\ell}$ is regular for all $\ell\in\{1,\dots,r-1\}$. Without loss of generality,
we may also assume that $R''$ is stable.

For $\ell\in\{1,\dots,r-1\}$ and $R''\in\mathcal{T}$, let
$\mathcal{T}_\ell(R'')$ denote the subtree of $\mathcal T$,
consisting of all the local blowings up of $R''$ along ideals not
contained in $P''_\ell$ (such local blowings up induce an
isomorphism at the point $P''_\ell\in\spec\ R''$). Below, we will sometimes
work with trees of rings and ideals indexed by
$\mathcal{T}_\ell(R'')$ for suitable $\ell$ and $R''$ (instead of
trees indexed by all of $\cal T$); the precise tree with which we
are working will be specified in each case.

For $\ell=r-1$, we define $J'_{2r-1}:=H'_{2r-1}$ and $\hat\mu_{2r}:=\hat\nu_{2r,0}$; according to Proposition
\ref{nu0unique}, $\hat\nu_{2r,0}=\hat\nu_{2r}$ is the unique extension of $\nu_r$ to
$\lim\limits_{\overset\longrightarrow{R'}}\frac{\hat R'}{H'_{2r-1}\hat R'}$.

Next, assume that $\ell\in\{1,\dots,r-1\}$, that the tree $J'_{2\ell+1}$ of prime ideals of $\frac{\hat
R'}{H'_{2\ell}\hat R'}$ and a tight extension $\hat\mu_{2\ell+2}$ of $\mu_{\ell+1}$ to
$\lim\limits_{\overset\longrightarrow{R'}}\frac{\hat R'}{J'_{2\ell+1}}$ are already constructed for
$R'\in\mathcal{T}$ and that $J'_{2\ell+1}\cap\frac{R'}{P_\ell}=(0)$. It remains to construct the
ideals $J'_{2\ell-1}\subset\frac{\hat R'}{H'_{2\ell-2}\hat R'}$ and a tight extension
$\hat\mu_{2\ell}$ of $\mu_\ell$ to $\lim\limits_{\overset\longrightarrow{R'}}\frac{\hat R'}{J'_{2\ell-1}}$ for
$R'\in\mathcal{T}$.

We will assume, inductively, that for all $R'\in\mathcal{T}$ the quantity $ht\ J'_{2\ell+1}$ is constant and the following conditions hold:
\begin{enumerate}
\item We have the equality of semigroups $\hat\mu_{2\ell+2}\left(\frac{\hat R'}{J'_{2\ell+1}}\setminus\{0\}\right)\cong
\mu_{\ell+1}\left(\frac{R'}{P'_\ell}\setminus\{0\}\right)$.
\item For all $\beta\in\mu_{\ell+1}\left(\frac{R'}{P'_\ell}\setminus\{0\}\right)$ the $\hat\mu_{2\ell+2}$-ideal of $\frac{\hat R'}{J'_{2\ell+1}}$ of  value $\beta$ is the extension to $\frac{\hat R'}{J'_{2\ell+1}}$ of the $\mu_{\ell+1}$-ideal of $\frac{R'}{P'_\ell}$ of value $\beta$.
\item In particular, we have a canonical isomorphism $gr_{\hat\mu_{2\ell+2}}\frac{\hat R'}{J'_{2\ell+1}}\cong
gr_{\mu_{\ell+1}}\frac{R'}{P'_\ell}$ of graded algebras.
\end{enumerate}
By (2) of Definition \ref{lut} applied to the prime ideals $J'_{2\ell+1}\subset\frac{\hat R'}{H'_{2\ell}\hat R'}$, there exists
$R'\in\cal T$ such that both $\frac{R'}{P'_\ell}$ and $\frac{\hat R'}{J'_{2\ell+1}}$ are regular. The fact that
$\frac{R'}{P'_\ell}$ is regular implies that so is $\frac{\hat R'}{P'_\ell\hat R'}$. In particular, $\frac{\hat
R'}{P'_\ell\hat R'}$ is a domain, so $H'_{2\ell}=P'_\ell\hat R'$. Take a regular system of parameters
$$
\bar u'=(\bar u'_1,\dots,\bar u'_{n_\ell})
$$
of $\frac{R'}{P'_\ell}$. Let $k'$ denote the common residue field of $R'$,
$\frac{R'}{P'_{\ell-1}}$ and $\frac{R'}{P'_\ell}$. Fix an isomorphism $\frac{R'}{P'_\ell}\cong k'[[\bar u']]$. Renumbering the variables, if necessary, we may assume that there exists $s_\ell\in\{1,\dots,n_\ell\}$ such that $\bar u'_1,\dots,\bar u'_{s_\ell}$ are $k'$-linearly independent modulo
$({m'}^2+J'_{2\ell+1})\frac{\hat R'}{H'_{2\ell}}$. Since $\frac{\hat R'}{J'_{2\ell+1}}$ is regular, the ideal $J'_{2\ell+1}$ is
generated by a set of the form $\bar v'=(\bar v'_{s_\ell+1},\dots,\bar v'_{n_\ell})$, where
$$
\bar v'_j=\bar u'_j-\bar\phi_j(\bar u'_1,\dots,\bar u'_{s_\ell}),\ \bar\phi_j(\bar u'_1,\dots,\bar u'_{s_\ell})\in k'[[\bar
u'_1,\dots,\bar u'_{s_\ell}]].
$$
Let $\bar w'=(\bar w'_1,\dots,\bar w'_{s_\ell})=(\bar
u'_1,\dots,\bar u'_{s_\ell})$. Let $z'$ be a minimal set of
generators of $\frac{P'_\ell}{P'_{\ell-1}}$. Let $k'_0$ be a
quasi-coefficient field of $R'$ (that is, a subfield of $R'$ over
which $k'$ is formally \'etale; such a quasi-coefficient field
exists by \cite{Mat}, moreover, since $R'$ is algebraic over a
finite type algebra over a field by hypotheses, $k'$ is finite
over $k'_0$). By the hypotheses on $R$ and since
$\frac{R'}{P'_\ell}$ is a regular local ring and $\bar u'$ is a minimal
set of generators of its maximal ideal $\frac{m'}{P'_\ell}$, there
exists an ideal $I\subset k'_0[z']$ such that
$\frac{R'}{P'_{\ell-1}}$ is an \'etale extension of
$\frac{k'_0[z',\bar u']_{(z',\bar u')}}I$. By assumptions, we have $ht\
P'_{\ell-1}<ht\ P'_\ell$, so $0<ht\ P'_\ell-ht\
P'_{\ell-1}=ht(z')-ht\ I$, in other words,
\begin{equation}
ht\ I<ht(z').\label{eq:Inotmaximal}
\end{equation}
Next, we prove two general lemmas about ring extensions.
\medskip

\begin{notation} Let $k_0$ be a field and $(S,m,k)$ a local noetherian
$k_0$-algebra. For a field extension
\begin{equation}
k_0\hookrightarrow L\label{eq:k0inktilde}
\end{equation}
such that $k\otimes_{k_0}L$ is a domain, let $S(L)$ denote the
localization of the ring $S\otimes_{k_0}L$ at the prime ideal
$m(S\otimes_{k_0}L)$.
\end{notation}

\begin{lemma}\label{noetherian} Let $k_0$, $(S,m,k)$ and $L$ be as above. The ring $S(L)$ is noetherian.
\end{lemma}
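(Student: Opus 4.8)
The plan is to reduce the statement to a standard fact: a finite type, or more generally suitable, algebra over a noetherian ring is noetherian, after which localization preserves noetherianity. First I would reduce to the case where $L$ is finitely generated over $k_0$. Indeed, $S(L)$ is a localization of $S\otimes_{k_0}L$, and $L$ is the direct limit of its finitely generated subextensions $L_\alpha$ over $k_0$; but the naive limit argument does not immediately give noetherianity, so the real point is to bound the data. Since $S$ is noetherian, its maximal ideal $m$ is finitely generated; pick generators $f_1,\dots,f_t$ of $m$. The key observation is that the local ring $S(L)=(S\otimes_{k_0}L)_{m(S\otimes_{k_0}L)}$ depends on $L$ only through the way $L$ interacts with the residue field $k$: concretely, $S(L)/m S(L)=k\otimes_{k_0}L$ (the fraction field, once we localize, of a domain by hypothesis), and the associated graded of $S(L)$ for its maximal-ideal-adic filtration is a base change of $\mathrm{gr}_m S$ by $k\otimes_{k_0}L$.

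Concretely I would argue as follows. Write $A:=S\otimes_{k_0}L$ and $\mathfrak p:=mA$. The ring $S(L)=A_{\mathfrak p}$. Now $A$ is flat over $S$ (being a base change along the flat, indeed faithfully flat, extension $k_0\to L$), so $A_{\mathfrak p}$ is flat over $S$. The maximal ideal of $A_{\mathfrak p}$ is $mA_{\mathfrak p}$, generated by the images of $f_1,\dots,f_t$, so $A_{\mathfrak p}$ is a noetherian-type local ring provided we know it is $m$-adically separated and that its graded pieces $m^iA_{\mathfrak p}/m^{i+1}A_{\mathfrak p}$ are finitely generated over $A_{\mathfrak p}/mA_{\mathfrak p}$. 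By flatness, $m^iA_{\mathfrak p}/m^{i+1}A_{\mathfrak p}\cong (m^i/m^{i+1})\otim_S A_{\mathfrak p}\cong (m^i/m^{i+1})\otimes_k (k\otimes_{k_0}L)$, which is finite over $k\otimes_{k_0}L=A_{\mathfrak p}/mA_{\mathfrak p}$ because $m^i/m^{i+1}$ is finite over $k$. Thus $\mathrm{gr}_{mA_{\mathfrak p}}A_{\mathfrak p}$ is a finitely generated algebra over a field, hence noetherian. Combined with $m$-adic separatedness (which follows from Krull's intersection theorem once we know the ring is, say, $m$-adically ideal-separated, or can be obtained from faithful flatness of $S\to A_{\mathfrak p}$ together with the separatedness of $S$), a standard lemma (if $\mathrm{gr}$ of a filtered ring is noetherian and the ring is complete or separated with respect to the filtration, the ring is noetherian) yields that $A_{\mathfrak p}=S(L)$ is noetherian.

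The main obstacle is the separatedness/completeness issue: the lemma ``$\mathrm{gr}$ noetherian $\Rightarrow$ ring noetherian'' is cleanest for complete filtered rings, and $S(L)$ need not be complete. The cleanest fix is to pass to completions: $\widehat{S(L)}$ is a base change of $\hat S$ of the type $\hat S\,\hat\otimes$ something, one shows $\widehat{S(L)}$ is noetherian directly (it is a quotient of a power series ring over $\hat S$ in finitely many variables if $k\otimes_{k_0}L$ is finite over $k$, which is exactly the situation when $L$ is finitely generated algebraic over $k_0$, as in the intended application where $k'$ is finite over the quasi-coefficient field), and then descend noetherianity from $\widehat{S(L)}$ to $S(L)$ using that $S(L)\to\widehat{S(L)}$ is faithfully flat. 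So the plan is: (i) reduce to $L$ finitely generated over $k_0$; (ii) reduce to $k\otimes_{k_0}L$ finite over $k$ by a further limit/transcendence-basis argument, writing a general finitely generated $L$ as a finite extension of a purely transcendental one and handling $k_0(x_1,\dots,x_n)$ by noting $S\otimes_{k_0}k_0(x_1,\dots,x_n)$ is a localization of the noetherian ring $S[x_1,\dots,x_n]$; (iii) in the finite case, compute $\widehat{S(L)}$ explicitly and invoke faithfully flat descent of noetherianity. I expect step (ii), isolating the purely transcendental part so that one genuinely lands inside a localization of a polynomial ring over the noetherian ring $S$, to be the step requiring the most care.
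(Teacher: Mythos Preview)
Your handling of the finitely generated case (steps (ii) and (iii)) is fine and matches what the paper dismisses in one line as ``obvious''. The gap is step (i): you never actually reduce to finitely generated $L$, and the two workarounds you propose both fail. The ``$\gr$ noetherian $\Rightarrow$ ring noetherian'' lemma genuinely needs completeness, as you note. Your fix --- descend noetherianity along $S(L)\to\widehat{S(L)}$ using faithful flatness --- is circular: for a local ring with finitely generated maximal ideal, flatness of the completion map is essentially equivalent to noetherianity of the ring, so you cannot invoke it here. (Also, your parenthetical that the intended application has $L$ finitely generated algebraic over $k_0$ is incorrect: in the paper's application the extension $L$ contains a purely transcendental piece of possibly infinite transcendence degree.)

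The paper's argument is close in spirit to your completion idea but avoids the circularity by constructing the target ring \emph{extrinsically}. Write $\hat S\cong k[[x]]/H$ and let $k_\infty$ be the fraction field of $k\otimes_{k_0}L$. The ring $B:=k_\infty[[x]]/(Hk_\infty[[x]])$ is visibly noetherian. For each finitely generated subextension $L_i\subset L$, the ring $S(L_i)$ is noetherian (your steps (ii)--(iii)), so its completion map $S(L_i)\to\widehat{S(L_i)}\cong k_i[[x]]/(Hk_i[[x]])$ is faithfully flat, and the coefficient extension $k_i[[x]]/(Hk_i[[x]])\to B$ is also faithfully flat. These maps are compatible with the transition maps of the direct system $\{S(L_i)\}$, so passing to the limit gives a map $S(L)\to B$. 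Because each finite stage satisfies $IB\cap S(L_i)=I$ for every ideal $I$ of $S(L_i)$, the same contraction property holds for ideals of $S(L)=\varinjlim S(L_i)$, and noetherianity descends from $B$. The point is that faithful flatness is established at the noetherian intermediate rings $S(L_i)$, where it is legitimate, and then pushed to the limit --- rather than asserted directly for $S(L)$.
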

\begin{proof} If the field extension (\ref{eq:k0inktilde})
is finitely generated, the Lemma is obvious. In the general case,
write $L=\lim\limits_{\overrightarrow{i}}L_i$ as a direct limit of
its finitely generated subextensions. For each $L_i$, let $k_i$
denote the residue field of $S(L_i)$; $k_i$ is nothing but the
field of fractions of $k\otimes_{k_0}L_i$. Write $\hat
S=\frac{k[[x]]}H$, where $x$ is a set of generators of $m$ and $H$
a certain ideal of $k[[x]]$. Then
$\widehat{S(L_i)}\cong\frac{k_i[[x]]}{Hk_i[[x]]}$. Given two
finitely generated extensions $L_i\subset L_j$ of $k_0$, contained
in $L$, we have a commutative diagram
$$
\begin{matrix}
S(L_j)&\overset{\pi_j}\rightarrow&\widehat{S(L_j)}&\cong&\frac{k_j[[x]]}{Hk_j[[x]]}&\\
\psi_{ij}\uparrow&\ &\uparrow&\ &\uparrow\phi_{ij}\\
S(L_i)&\overset{\pi_i}\rightarrow&\widehat{S(L_i)}&\cong&\frac{k_i[[x]]}{Hk_i[[x]]}
\end{matrix}
$$
where $\phi_{ij}$ is the map induced by the natural inclusion
$k_i\hookrightarrow k_j$ and the identity map of $x$ to itself.
Let $k_\infty=\lim\limits_{\overrightarrow{i}}k_i$. Then, for each
$i$, we have the obvious faithfully flat map
$\rho_i:\widehat{S(L_i)}\rightarrow\frac{k_\infty[[x]]}{Hk_\infty[[x]]}$,
defined by the natural inclusion $k_i\hookrightarrow k_\infty$ and
the identity map of $x$ to itself; the maps $\rho_i$ commute with
the $\phi_{ij}$. Thus, we have constructed a faithfully flat map
$\rho_i\circ\pi_i$ from each element of the direct system $S(L_i)$ to the fixed noetherian ring
$\frac{k_\infty[[x]]}{Hk_\infty[[x]]}$; moreover, the maps
$\rho_i\circ\pi_i$ are compatible with the homomorphisms
$\psi_{ij}$ of the direct system. This implies that the ring
$S(L)=\lim\limits_{\overrightarrow{i}}S(L_i)$ is noetherian.
\end{proof}

\begin{lemma}\label{IS(t)} Let $(S,m,k)$ be a local noetherian ring. Let $t$ be an arbitrary collection of independent variables.
Consider the rings $S[t]$ and $S(t):=S[t]_{mS[t]}$. Let $I$ be an
ideal of $S$. Then
\begin{equation}
IS(t)\cap S[t]=IS[t].\label{eq:IcapSt=I}
\end{equation}
\end{lemma}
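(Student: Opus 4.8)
The statement $IS(t)\cap S[t]=IS[t]$ is a flatness statement in disguise, and the plan is to reduce it to the faithful flatness of $S(t)$ over $S[t]$. First I would recall why $S(t)$ is faithfully flat over $S[t]$: it is a localization of $S[t]$ at the multiplicative set $S[t]\setminus mS[t]$, hence flat; and since $mS[t]$ is a (the unique relevant) maximal ideal of $S[t]$ lying over $m$, every maximal ideal of $S[t]$ containing $IS[t]$ survives in $S(t)$ — indeed $mS[t]$ itself is such an ideal once $I\subset m$, and the case $I=S$ is trivial — so the going-up/lying-over behavior needed for faithful flatness of the localization over the relevant locus holds. Actually the cleanest route avoids even discussing faithful flatness globally: for any ideal $J$ of a ring $A$ and any multiplicative set $T\subset A$, one always has $T^{-1}J\cap A=\{a\in A\mid ta\in J\text{ for some }t\in T\}$. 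So the whole content of (\ref{eq:IcapSt=I}) is the assertion that the image of $T$ in $S[t]/IS[t]$ (with $T=S[t]\setminus mS[t]$) consists of non-zero-divisors, equivalently that $T$ avoids every associated prime of $IS[t]$.

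So the key step is: \emph{every associated prime $\mathfrak p$ of $IS[t]$ in $S[t]$ is contained in $mS[t]$}, hence meets $S[t]\setminus mS[t]$ trivially, hence its complement contains $T$ and localization by $T$ is injective on $S[t]/IS[t]$. To see this, note $S[t]/IS[t]\cong (S/I)[t]$, a polynomial ring over $S/I$. The associated primes of the zero ideal of $(S/I)[t]$ are exactly the primes of the form $\mathfrak q[t]$ where $\mathfrak q$ ranges over the associated primes of $(0)$ in $S/I$ (this is the standard behavior of associated primes under polynomial extension; see e.g. the Bourbaki or Matsumura treatment of $\mathrm{Ass}$ in polynomial rings). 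Each such $\mathfrak q$ is an associated prime of $I$ in $S$, so $\mathfrak q\subset m$ because $S$ is local with maximal ideal $m$. Therefore the corresponding associated prime $\mathfrak q[t]$ of $IS[t]$, viewed back in $S[t]$, is contained in $mS[t]$. Consequently $T=S[t]\setminus mS[t]$ is disjoint from every associated prime of $IS[t]$, so $T$ contains no zero divisor modulo $IS[t]$, and the localization map $S[t]/IS[t]\to S(t)/IS(t)$ is injective. That injectivity is precisely (\ref{eq:IcapSt=I}).

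The main obstacle — really the only non-formal point — is the claim that $\mathrm{Ass}_{S[t]}(IS[t])$ consists exactly of the extensions $\mathfrak q[t]$ of the primes $\mathfrak q\in\mathrm{Ass}_S(I)$. When $S$ is noetherian and $t$ is a \emph{finite} set of variables this is classical, and the lemma's hypotheses ($S$ noetherian) make the finite case suffice if one is willing to reduce: an arbitrary collection $t$ is the directed union of its finite subcollections $t_\lambda$, and $IS(t)\cap S[t]=\bigcup_\lambda\bigl(IS(t_\lambda)\cap S[t_\lambda]\bigr)$ because any given element of $S[t]$ and any witnessing denominator from $S[t]\setminus mS[t]$ involve only finitely many of the variables. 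Thus the infinite case follows from the finite one by passing to the limit, and no new idea is needed. I would therefore write the proof by first disposing of the infinite case by this limiting remark, then treating finite $t$ via the associated-prime computation above, and concluding with the elementary identity $T^{-1}J\cap A=\{a: ta\in J\text{ for some }t\in T\}$ together with the disjointness of $T$ from $\mathrm{Ass}(IS[t])$.
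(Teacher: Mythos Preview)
Your argument is correct but proceeds along a genuinely different line from the paper's. The paper argues by an explicit coefficient computation: reducing first to a single variable $t$, it assumes $f\notin mS[t]$, $fg\in IS[t]$ but $g\notin IS[t]$, picks the least $n$ with $g\notin(I+m^n)S[t]$ (using Krull's intersection theorem in $S/I$), and then shows by looking at the highest-degree ``leading'' coefficients of $f$ modulo $m$ and of $g$ modulo $I+m^n$ that the corresponding coefficient of $fg$ lies outside $I+m^n$, a contradiction; the general case is handled by transfinite induction on the variable set. Your route instead reduces to finitely many variables by a direct-limit argument and then invokes the standard description of associated primes under polynomial extension, $\mathrm{Ass}_{S[t]}\bigl((S/I)[t]\bigr)=\{\mathfrak q[t]:\mathfrak q\in\mathrm{Ass}_S(S/I)\}$, to conclude that every associated prime of $IS[t]$ sits inside $mS[t]$, so the multiplicative set $S[t]\setminus mS[t]$ consists of non-zerodivisors on $S[t]/IS[t]$. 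Your approach is shorter and more conceptual, at the cost of importing the associated-primes-under-flat-base-change result; the paper's is entirely self-contained and elementary, at the cost of a slightly fiddly coefficient chase. Both are valid proofs of the lemma.
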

\begin{proof} First, assume the collection $t$ consists of a
single variable. Consider elements $f,g\in S[t]$ such that
\begin{equation}
f\notin mS[t]\label{eq:fnotinmSt}
\end{equation}
 and
 \begin{equation}
 fg\in IS[t].\label{eq:fginISt}
 \end{equation}
 Proving the equation (\ref{eq:IcapSt=I}) amounts to proving that
\begin{equation}
g\in IS[t].\label{eq:ginISt}
\end{equation}
We prove (\ref{eq:ginISt}) by contradiction. Assume that $g\notin IS[t]$. Then there exists $n\in\mathbb N$ such that
$g\notin(I+m^n)S[t]$. Take the smallest such $n$, so that
\begin{equation}
g\in\left(I+m^{n-1}\right)S[t]\setminus(I+m^n)S[t].\label{eq:setminus}
\end{equation}
Write $f=\sum\limits_{j=0}^qa_jt^j$ and
$g=\sum\limits_{j=0}^lb_jt^j$. Let
\begin{eqnarray}
l_0:&=&\max\{j\in\{0,\dots,l\}\ |\ b_j\notin I+m^n\}\quad\text{ and}\\
q_0:&=&\max\{j\in\{0,\dots,q\}\ |\ a_j\notin m\}.
\end{eqnarray}
Let $c_{l_0+q_0}$ denote the $(l_0+q_0)$-th coefficient of $fg$.  We have
$$
c_{l_0+q_0}=\sum\limits_{i+j=l_0+q_0}a_ib_j=a_{q_0}b_{l_0}+\sum\limits_{\begin{array}{c}i+j=l_0+q_0\\
i>q_0\end{array}}a_ib_j+\sum\limits_{\begin{array}{c}i+j=l_0+q_0\\ j>l_0\end{array}}a_ib_j.
$$
By definition of $l_0$ and $q_0$ and (\ref{eq:setminus}) we have:
\begin{eqnarray}
a_{q_0}b_{l_0}&\notin&I+m^n\quad\text{and}\\
\sum\limits_{\begin{array}{c}i+j=l_0+q_0\\ i>q_0\end{array}}a_ib_j+\sum\limits_{\begin{array}{c}i+j=l_0+q_0\\
j>l_0\end{array}}a_ib_j&\in&I+m^n.
\end{eqnarray}
Hence $c_{l_0+q_0}\notin I+m^n$, which contradicts
(\ref{eq:fginISt}). This completes the proof of Lemma \ref{IS(t)}
in the case when $t$ is a single variable. The case of a general
$t$ now follows by transfinite induction on the collection $t$.
\end{proof}

\begin{lemma}\label{contractsto0} There exist sets of representatives
$$
u'=(u'_1,\dots,u'_{n_\ell})
$$
of $\bar u'$ and $\phi_j$ of $\bar\phi_j$, $s_\ell<j\le n_\ell$, in $\frac{\hat R'}{H'_{2\ell-2}\hat R'}$, having the
following properties. Let
\begin{eqnarray}
w'=(w'_1,\dots,w'_{s_\ell})&=&(u'_1,\dots,u'_{s_\ell}),\\
v'=(v'_{s_\ell+1},\dots,v'_{n_\ell})&=&
(u'_{s_\ell+1}-\phi_{s_\ell+1},\dots,u'_{n_\ell}-\phi_{n_\ell})\label{eq:defv}.
\end{eqnarray}
Let $J'_{2\ell-1}=\frac{H'_{2\ell-1}}{H'_{2\ell-2}}+(v')\subset\frac{\hat R'}{H'_{2\ell-2}}$. Then
\begin{equation}
w'\subset\frac{R'}{P'_{\ell-1}}\label{eq:winR}
\end{equation}
and
\begin{equation}
J'_{2\ell-1}\cap\frac{R'}{P'_{\ell-1}}=(0).\label{eq:contractsto0}
\end{equation}
\end{lemma}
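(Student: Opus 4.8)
The plan is to pin down the representatives so that $(\ref{eq:winR})$ becomes automatic, then to transport $(\ref{eq:contractsto0})$ into the complete local \emph{domain} $D:=\frac{\hat R'}{H'_{2\ell-1}}$ (a domain by Theorem \ref{primality1}(1)), and finally to reduce the surviving assertion to a statement about polynomial and power series rings which Lemmas \ref{noetherian} and \ref{IS(t)} settle. For the representatives: the surjection $\frac{R'}{P'_{\ell-1}}\twoheadrightarrow\frac{R'}{P'_\ell}$ hits each $\bar u'_i$, so I lift every $\bar u'_i$ to an element $u'_i\in\frac{R'}{P'_{\ell-1}}$ and view it inside $\frac{\hat R'}{H'_{2\ell-2}}$ via the inclusion $\frac{R'}{P'_{\ell-1}}\hookrightarrow\frac{\hat R'}{H'_{2\ell-2}}$ coming from $H'_{2\ell-2}\cap R'=P'_{\ell-1}$; taking $w'=(u'_1,\dots,u'_{s_\ell})$ gives $(\ref{eq:winR})$ for free. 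For $s_\ell<j\le n_\ell$ I set $\phi_j:=\bar\phi_j(w')$, the power series $\bar\phi_j\in k'[[\bar w']]$ evaluated at the tuple $w'$, which converges because $\frac{\hat R'}{H'_{2\ell-2}}$ is complete and the $u'_i$ lie in its maximal ideal, and which reduces modulo $H'_{2\ell-2}$ to a representative of $\bar\phi_j(\bar w')$. With $v'_j=u'_j-\phi_j$ and $J'_{2\ell-1}=\frac{H'_{2\ell-1}}{H'_{2\ell-2}}+(v')$ as in the statement, I record that under $\frac{\hat R'}{H'_{2\ell-2}}\twoheadrightarrow\frac{\hat R'}{H'_{2\ell}}$ — which, $\frac{R'}{P'_\ell}$ being regular, identifies $\frac{\hat R'}{H'_{2\ell}}$ with $k'[[\bar u']]$ — the ideal $J'_{2\ell-1}$ goes onto $J'_{2\ell+1}$, so the construction is compatible with the recursion.

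For $(\ref{eq:contractsto0})$: since $H'_{2\ell-1}\cap R'=P'_{\ell-1}$, the subring $\frac{R'}{P'_{\ell-1}}$ of $\frac{\hat R'}{H'_{2\ell-2}}$ already meets $\frac{H'_{2\ell-1}}{H'_{2\ell-2}}$ only in $(0)$, so $(\ref{eq:contractsto0})$ is equivalent to the claim that, inside $D$, the image of $R_1:=\frac{R'}{P'_{\ell-1}}$ meets the ideal $\mathfrak a:=(\bar v'_{s_\ell+1},\dots,\bar v'_{n_\ell})D$ only in $(0)$, where $\bar v'_j=\bar u'_j-\bar\phi_j(\bar w')$. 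Let $\mathfrak q:=\frac{H'_{2\ell}}{H'_{2\ell-1}}$, a prime of $D$ with $D/\mathfrak q\cong k'[[\bar u']]$ and $\mathfrak q\cap R_1=P'_\ell/P'_{\ell-1}$. Killing $\mathfrak a+\mathfrak q$ substitutes $\bar u'_j=\bar\phi_j(\bar w')$ inside $k'[[\bar u']]$, so $D/(\mathfrak a+\mathfrak q)\cong k'[[\bar u']]/J'_{2\ell+1}\cong k'[[\bar w']]$, and the induced map $R_1\to D/(\mathfrak a+\mathfrak q)$ factors through $\frac{R'}{P'_\ell}\hookrightarrow k'[[\bar w']]$, which is injective by the inductive hypothesis $J'_{2\ell+1}\cap\frac{R'}{P'_\ell}=(0)$. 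Hence $\mathfrak a\cap R_1\subseteq(\mathfrak a+\mathfrak q)\cap R_1=P'_\ell/P'_{\ell-1}$, and what is left is to show the ``transverse'' directions $z'$ are not absorbed into $\mathfrak a$, i.e.\ that $\mathfrak a\cap R_1=(0)$.

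For this last point I would bring in the \'etale presentation $\frac{k'_0[z',\bar u']_{(z',\bar u')}}{I}\to R_1$ with $I\subset k'_0[z']$ and $\he\ I<\he(z')$: setting $S:=\frac{k'_0[z']_{(z')}}{I}$, the ring $R_1$ is, up to the faithfully flat strict \'etale extension, a localization of $S[\bar u']$, and after completing, the substitution $\bar u'_j\mapsto\bar\phi_j(\bar w')$ ($j>s_\ell$) places the image of $R_1$ inside a ring obtained from $S(L)$ — with $L$ the residue field extension arising along the tree, so that $S(L)$ is noetherian by Lemma \ref{noetherian} — by adjoining the $\bar w'$, which behave like transcendental-type indeterminates precisely because $\bar\phi_j$ involves the $\bar w'$ and no $z'$. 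Lemma \ref{IS(t)}, applied to $S(L)$ with the $\bar w'$ (together with the $z'$) in the role of the variables $t$, then yields the contraction $I S(L)[t]\cap S(L)[t]=I S(L)[t]$ and forces $\mathfrak a\cap R_1=(0)$, the inequality $\he\ I<\he(z')$ being exactly what keeps a genuine $z'$-direction present and unabsorbed. I expect this step to be the main obstacle: the assertion that substituting $\bar u'_j=\bar\phi_j(\bar w')$ for $j>s_\ell$ introduces no spurious relations among the finitely many algebra generators $z',\bar u'$ of $R_1$ is exactly the ``subanalytic versus analytic'' subtlety flagged after Corollary \ref{Corollary511}, and making it precise — carefully tracking $(m')$-adic orders through the substitution so that the hypotheses of Lemma \ref{IS(t)} genuinely apply, and invoking $\he\ I<\he(z')$ at the right moment — is where the real work lies. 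The case $\ell=1$, where $R_1=R'$ need not be analytically irreducible, I would first reduce to the above by the device of \S\ref{prime}, replacing $R$ by $\frac{\tilde R}{\tilde H_1}$, which replaces each $H_{2\ell-1}$ by $\frac{H_{2\ell-1}}{H_1}$ without altering the problem.
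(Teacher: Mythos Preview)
Your proposal has a genuine gap at the heart of the argument: you commit to the naive lift $\phi_j=\bar\phi_j(w')$ (equivalently, $h_j=0$ in $(z')$) and then try to show that this particular choice satisfies $(\ref{eq:contractsto0})$. The paper does not do this, and for good reason: there is no a~priori guarantee that the unperturbed $v'_j=u'_j-\bar\phi_j(u')$ avoid the countably many height-one primes $I_q$ of $\frac{\hat R'}{H'_{2\ell-1}+(v'_{s_\ell+1},\dots,v'_{j-1})}$ that arise as minimal primes of principal ideals $(f)$ with $0\ne f\in\frac{m'}{P'_{\ell-1}}$. If some $v'_j$ lands in such an $I_q$, then precisely that $f$ witnesses $J'_{2\ell-1}\cap\frac{R'}{P'_{\ell-1}}\ne(0)$. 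The paper's proof exploits the freedom in the statement (``there exist representatives'') by writing $\phi_j=\bar\phi_j(u')+h_j$ with $h_j\in(z')\frac{\hat R'}{H'_{2\ell-2}}$ and choosing the $h_j$, recursively in $j$, as convergent series built by a countable prime-avoidance so that $v'_j\notin\bigcup_q I_q$; the inequality $\he\ I<\he(z')$ enters here to ensure $(z')\not\subset I_q$, hence that a suitable perturbation exists at each step. Your approach discards exactly this degree of freedom, and your own hedging (``I expect this step to be the main obstacle'') is on target: without the $(z')$-perturbation the claimed transversality can simply fail.

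Your intended use of Lemmas~\ref{noetherian} and~\ref{IS(t)} also misreads their role. In the paper they appear only in the uncountable-residue-field case, and only to transport the already-proved countable result up through a transcendence basis $t$ of $k_0$ over a countable subfield $L_\infty$: the $t$ are genuine indeterminates over the coefficient field, and Lemma~\ref{IS(t)} concerns the polynomial ring $S[t]$ and its localization $S(t)$. You propose instead to take the $\bar w'$ (and $z'$) in the role of $t$, but these are elements of the maximal ideal of a complete local ring, the substitutions $\bar u'_j\mapsto\bar\phi_j(\bar w')$ involve power series rather than polynomials, and nothing ensures the $\bar w'$ are algebraically independent over the base you have in mind; Lemma~\ref{IS(t)} simply does not apply in that configuration. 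The actual architecture is: first a prime-avoidance construction when $k$ is countable, then a descent-to-countable-subfield argument, with Lemmas~\ref{noetherian} and~\ref{IS(t)} handling only the ascent back to $k_0$.
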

\begin{proof}\textit{(of Lemma \ref{contractsto0})} There is no problem choosing $w'$ to satisfy (\ref{eq:winR}).

As for (\ref{eq:contractsto0}), we first prove the Lemma under the
assumption that $k$ is countable. We choose the representatives $u'$
arbitrarily and let $\bar\phi_j(u')\in k[[u']]$ denote the formal
power series obtained by substituting $u'$ for $\bar u'$ in
$\bar\phi_j$. Any representative $\phi_j$ of $\bar\phi_j$,
$s_\ell<j\le n_\ell$ has the form $\phi_j=\bar\phi_j(u')+h_j$ with
$h_j\in(z')\frac{\hat R'}{H'_{2\ell-2}}$. We define the $h_j$
required in the Lemma recursively in $j$. Take
$j\in\{s_\ell+1,\dots,n_\ell\}$. Assume that
$h_{s_{\ell+1}},\dots,h_{j-1}$ are already defined and that
\begin{equation}
(v'_{s_\ell+1},\dots,v'_{j-1})\cap\frac{R'}{P'_{\ell-1}}=(0),\label{eq:j-1inter0}
\end{equation}
where we view $\frac{R'}{P'_{\ell-1}}$ as a subring of $\frac{\hat
R'}{H'_{2\ell-1}}$. Since the ring $\frac{R'}{P'_{\ell-1}}$ is
countable, there are countably many ideals in $\frac{\hat
R'}{H'_{2\ell-1}+(v'_{s_\ell+1},\dots,v'_{j-1})}$, not contained in
$\frac{(z')\hat R'}{H'_{2\ell-1}+(v'_{s_\ell+1},\dots,v'_{j-1})}$,
which are minimal primes of ideals of the form $(f)\frac{\hat
R'}{H'_{2\ell-1}+(v'_{s_\ell+1},\dots,v'_{j-1})}$, where $f$ is a
non-zero element of $\frac{m'}{P'_{\ell-1}}$. Let us denote these
ideals by $\{I_q\}_{q\in\mathbb N}$; we have
\begin{equation}
ht\ I_q=1\quad\text{ for all }q\in\mathbb N.\label{eq:ht=1}
\end{equation}
We note that
\begin{equation}
\frac{(z')\hat R'}{H'_{2\ell-1}+(v'_{s_\ell+1},\dots,v'_{j-1})}\not\subset I_q\quad\text{ for all }q\in\mathbb
N.\label{eq:znotin}
\end{equation}
Indeed, by (\ref{eq:Inotmaximal}) and (\ref{eq:j-1inter0}) we have $ht\ \frac{(z')\hat
R'}{H'_{2\ell-1}+(v'_{s_\ell+1},\dots,v'_{j-1})}\ge1$. In view of
(\ref{eq:ht=1}), containment in (\ref{eq:znotin}) would imply
equality, which contradicts the definition of $I_q$.

Since $H'_{2\ell-1}\subsetneqq H'_{2\ell}$ and $J'_{2\ell+1}\subsetneqq\frac{m'\hat R'}{H'_{2\ell}}$, we have
$$
\dim\frac{\hat R'}{H'_{2\ell-1}+(v'_{s_\ell+1},\dots,v'_{j-1})}\ge(ht\ H'_{2\ell}-ht\ H'_{2\ell-1})+ht\
\frac{m'\hat R'}{H'_{2\ell}}-(j-s_\ell-1)\ge
$$
\begin{equation}
(ht\ H'_{2\ell}-ht\ H'_{2\ell-1})+ht\ \frac{m'\hat R'}{H'_{2\ell}}-ht\ J'_{2\ell+1}+1\ge3.
\end{equation}
Let $\tilde u_j$ denote the image of $u'_j-\bar\phi_j(u')$ in $\frac{\hat
R'}{H'_{2\ell-1}+(v'_{s_\ell+1},\dots,v'_{j-1})}$. Next, we construct an element $\tilde h_j\in\frac{(z')\hat
R'}{H'_{2\ell-1}+(v'_{s_\ell+1},\dots,v'_{j-1})}$ such that
\begin{equation}
\tilde u_j-\tilde h_j\notin\bigcup\limits_{q=1}^\infty I_q.\label{eq:notinIq}
\end{equation}
The element $\tilde h_j$ will be given as the sum of an infinite
series $\sum\limits_{t=0}^\infty h_{jt}^t$ in $(z')\frac{\hat
R'}{H'_{2\ell-1}+(v'_{s_\ell+1},\dots,v'_{j-1})}$, convergent in
the $m'$-adic topology, which we will now construct recursively in
$t$. Put $h_{j0}=0$. Assume that $t>0$, that $h_{j0},\dots,h_{j,t-1}$ are already defined and that for
$q\in\{1,\dots,t-1\}$ we have $u'_j-\bar\phi_j(u')-\sum\limits_{l=0}^qh_{jl}\notin\bigcup\limits_{l=1}^qI_l$
and $h_{jq}\in(z')\bigcap\left(\bigcap\limits_{l=1}^{q-1}I_l\right)$.
If $u'_j-\bar\phi_j(u')-\sum\limits_{l=0}^{t-1}h_{jl}\notin I_t$, put $h_{jt}=0$. If
$u'_j-\bar\phi_j(u')-\sum\limits_{l=0}^{t-1}h_{jl}\in I_t$, let $h_{jt}$ be any element of
$(z')\bigcap\left(\bigcap\limits_{l=1}^{t-1}I_l\right)\setminus
I_t$ (such an element exists because $I_t$ is prime, in view of
(\ref{eq:znotin})). This completes the definition of $\tilde h_j$.
Let $h_j$ be an arbitrary representative of $\tilde h_j$ in
$\frac{\hat R'}{H'_{2\ell-2}}$.

We claim that
\begin{equation}
\left(H'_{2\ell-1}+(v'_{s_\ell+1},\dots,v'_j)\right)\cap\frac{R'}{P'_{\ell-1}}=(0).\label{eq:jinter0}
\end{equation}
Indeed, suppose the above intersection contained a non-zero element $f$. Then any minimal prime $\tilde I$ of the
ideal $(v'_j)\frac{\hat R'}{H'_{2\ell-1}+(v'_{s_\ell+1},\dots,v'_{j-1})}$ is also a minimal prime of
$(f)\frac{\hat R'}{H'_{2\ell-1}+(v'_{s_\ell+1},\dots,v'_{j-1})}$. Since
$v_j\notin\frac{(z')\hat R'}{H'_{2\ell-1}+(v'_{s_\ell+1},\dots,v'_{j-1})}$, we have $\tilde
I\not\subset\frac{(z')\hat R'}{H'_{2\ell-1}+(v'_{s_\ell+1},\dots,v'_{j-1})}$. Hence $\tilde I=I_q$ for some
$q\in\mathbb N$. Then $v_j\in I_q$, which contradicts (\ref{eq:notinIq}).

Carrying out the above construction for all
$j\in\{s_\ell+1,\dots,n_\ell\}$ produces the elements $\phi_j$
required in the Lemma. This completes the proof of Lemma
\ref{contractsto0} in the case when $k$ is countable.

Next, assume that $k$ is uncountable. Let $u'$ be chosen as above.

By assumption, $\frac{R'}{P'_{\ell-1}}$ contains a
$k_0$-subalgebra $S$ essentially of finite type, over which $\frac{R'}{P'_{\ell-1}}$ is strictly \'etale. Take a
countable subfield $L_1\subset k_0$ such that the algebra $S$ is defined already over $L_1$ (this
means that $S$ has the form
\begin{equation}
S=(S'_1\otimes_{L_1}k_0)_{m'_1(S'_1\otimes_{L_1}k_0)},\label{eq:R'/P'}
\end{equation}
where $(S'_1,m'_1,k'_1)$ is a local $L_1$-algebra essentially of
finite type). Next, let $L_1\subset L_2\subset...$ be an
increasing chain of finitely generated field extensions of $L_1$,
contained in $k_0$, having the following property. Let
$(S'_q,m'_q,k'_q)$ denote the localization of
$S'_1\otimes_{k'_1}L_q$ at the maximal ideal
$m'_1(S'_1\otimes_{k'_1}L_q)$. We require that
$$
k'_\infty:=\bigcup\limits_{q=1}^\infty k'_q
$$
contain all the coefficients of all the formal power series
$\bar\phi_{s_\ell+1},\dots,\bar\phi_{n_\ell}$ and such that the
ideal $\frac{H'_{2\ell-1}}{H'_{2\ell-2}}$ is generated by elements
of $\frac{k'_\infty[[z']]}{I_\infty}[[u']]$, where $I_\infty$ is
the kernel of the natural homomorphism
$k'_\infty[[z']]\rightarrow\frac{\hat R'}{H'_{2\ell-2}}$. Let
$H'_{2\ell-1,\infty}=\frac{H'_{2\ell-1}}{H'_{2\ell-2}}\cap
\frac{k'_\infty[[z']]}{I_\infty}[[u']]$. We have constructed an increasing chain
$S'_1\subset S'_2\subset...$ of local $L_1$-algebras essentially of finite type such that $k'_q$ is the
residue field of $S'_q$. Then
$S'_\infty:=\bigcup\limits_{q=1}^\infty S'_q$ is a local
noetherian ring whose completion is
$\frac{k'_\infty[[z',u']]}{\left(P'_{\ell-1}\cap
S'_\infty\right)k'_\infty[[z',u']]}$. Let $m'_\infty$ denote the
maximal ideal of $S'_\infty$. The above argument in the countable
case shows that there exist representatives
$\phi_{s_\ell+1},\dots,\phi_{n_\ell}$ of
$\bar\phi_{s_\ell+1},\dots,\bar\phi_{n_\ell}$ in $\frac{\hat
S'_\infty}{H'_{2\ell-2}\cap\hat S'_\infty}$ such that, defining
$v'=(v'_{s_\ell+1},\dots,v'_{n_\ell})$ as in (\ref{eq:defv}), we
have
\begin{equation}
\left((v')+H'_{2\ell-1,\infty}\right)\cap S'_\infty=(0).\label{eq:S'infty0}
\end{equation}
Let $L_\infty=\bigcup\limits_{q=1}^\infty L_q$ and let $t$ denote
a transcendence base of $k_0$ over $L_\infty$. Let the notation be
as in Lemma \ref{noetherian} with $k_0$ replaced by $L_\infty$.
For example, $S'_\infty(L_\infty(t))$ will denote the localization
of the ring $S'_\infty\otimes_{L_\infty}L_\infty(t)$ at the prime
ideal ideal $m'_\infty(S'_\infty\otimes_{L_\infty}L_\infty(t))$.

By (\ref{eq:S'infty0}),
\begin{equation}
\left((v')+H'_{2\ell-1,\infty}\right)\hat S'_\infty[t]\cap
S'_\infty[t]=(0).\label{eq:S'infty0t}
\end{equation}
Now Lemma \ref{IS(t)} and the fact that $S'_\infty[t]$ is a domain
imply that
\begin{equation}
\left((v')+H'_{2\ell-1,\infty}\right)\hat
S'_\infty(L_\infty(t))\cap
S'_\infty(L_\infty(t))=(0).\label{eq:capbarS=0}
\end{equation}
Next, let $\tilde L$ be a finite extension of $L_\infty(t)$,
contained in $k_0$; then $S'_\infty(\tilde L)$ is finite over
$S'_\infty(L_\infty(t))$. Since $\hat S'_\infty(\tilde L)$ is
faithfully flat over $\hat S'_\infty(L_\infty(t))$ and in view of
(\ref{eq:capbarS=0}), we have
$$
\left(\left((v')+H'_{2\ell-1,\infty}\right)\hat S'_\infty(\tilde
L)\cap S'_\infty(\tilde L)\right)\cap S'_\infty(L_\infty(t))=(0).
$$
Hence $ht\ \left((v')+H'_{2\ell-1,\infty}\right)\hat
S'_\infty(\tilde L)\cap S'_\infty(\tilde L)=0$. Since
$S'_\infty(\tilde L)$ is a domain, this implies that
\begin{equation}
\left((v')+H'_{2\ell-1,\infty}\right)\hat S'_\infty(\tilde L)\cap
S'_\infty(\tilde L)=(0).\label{eq:tildek=0}
\end{equation}
Since $k_0$ is algebraic over $L_\infty(t)$, it is the limit of
the direct system of all the finite extensions of $L_\infty(t)$
contained in it. We pass to the limit in (\ref{eq:tildek=0}). By
(\ref{eq:R'/P'}), we have $S=S'_\infty(k_0)$; we also note that
$\hat S=\frac{\hat R'}{H'_{2\ell-2}}$.

Since the natural maps $\hat S'_\infty(\tilde L)\rightarrow\hat
S'_\infty(k_0)$ are all faithfully flat, we obtain
\begin{equation}
\left((v')+H'_{2\ell-1,\infty}\right)\hat S'_\infty(k_0)\cap
S=(0).\label{eq:capS=0}
\end{equation}
Since $\hat S=\frac{\hat R'}{H'_{2\ell-2}}$ is also the formal
completion of $\hat S'_\infty(k_0)$, it is faithfully flat over
$\hat S'_\infty(k_0)$. Hence
\begin{equation}
J'_{2\ell-1}\cap\hat
S'_\infty(k_0)=\left((v')+H'_{2\ell-1,\infty}\right)\frac{\hat
R'}{H'_{2\ell-2}}\cap\hat
S'_\infty(k_0)=\left((v')+H'_{2\ell-1,\infty}\right)\hat
S'_\infty(k_0).\label{eq:capbarS}
\end{equation}
Combining this with (\ref{eq:capS=0}), we obtain
\begin{equation}
J'_{2\ell-1}\cap S=(0).\label{eq:capS(t)=0}
\end{equation}
Thus the ideal $J'_{2\ell-1}\cap\frac{R'}{P'_{\ell-1}}$ contracts
to $(0)$ in $S$. Since $\frac{R'}{P'_{\ell-1}}$ is \'etale over
$S$, this implies the desired equality (\ref{eq:contractsto0}).
This completes the proof of Lemma \ref{contractsto0}.
\end{proof}

Since $\frac{\hat R'}{H'_{2\ell}\hat R'}$ is a complete regular
local ring and $(w',v')$ is a set of representatives of a minimal
set of generators of its maximal ideal $\frac{m'\hat
R'}{H'_{2\ell}}$, there exists a complete local domain
$R'_\ell$ (not necessarily regular) such that $\frac{\hat
R'}{H'_{2\ell-1}}\cong R'_\ell[[w',v']]$. Consider the ring
homomorphism
\begin{equation}
R'_\ell[[w',v']]\rightarrow R'_\ell[[w']],\label{eq:homomorphisms}
\end{equation}
obtained by taking the quotient modulo $(v')$. By
(\ref{eq:homomorphisms}), the quotient of $\frac{\hat
R'}{H'_{2\ell-2}}$ by $J'_{2\ell-1}$ is the integral domain
$R'_\ell[[w']]$, hence $J'_{2\ell-1}$ is prime.

Consider a local blowing up $R'\rightarrow R''$ in $\mathcal T$. Because of the stability assumption on $R$, the ring $\frac{\hat
R''}{H''_{2\ell-2}}\otimes_R\kappa(P''_{l-1})$ is finite over $\frac{\hat R'}{H'_{2\ell-2}}\otimes_R\kappa(P'_{l-1})$; hence the ring $\lim\limits_{\overset\longrightarrow{R''\in\mathcal T}}\left(\frac{\hat R''}{H''_{2\ell-2}}\otimes_R\kappa(P''_{l-1})\right)$ is integral over $\frac{\hat R'}{H'_{2\ell-2}}\otimes_R\kappa(P'_{l-1})$. In particular, there exists a prime ideal in 
$$
\lim\limits_{\overset\longrightarrow{R''\in\mathcal T}}\left(\frac{\hat R''}{H''_{2\ell-2}}\otimes_R\kappa(P''_{l-1})\right),
$$
lying over $J'_{2\ell-1}\frac{\hat R'}{H'_{2\ell-2}}\otimes_R\kappa(P'_{l-1})$. Pick and fix one such prime ideal. Intersecting this ideal with $\frac{\hat R''}{H''_{2\ell-2}}$ for each $R''\in\mathcal T$, we obtain a tree $J''_{2\ell-1}$ of prime ideals of $\frac{\hat R''}{H''_{2\ell-2}}$, $R''\in\mathcal T$.

Our next task is to define the restriction of the valuation
$\hat\mu_{2\ell}$ to the ring $\frac{\hat R'}{J'_{2\ell-1}}$. By
the induction assumption, $\hat\mu_{2\ell+2}$ is already defined
on $\lim\limits_{\overset\longrightarrow{R'\in\mathcal{T}}}\frac{\hat R'}{J'_{2\ell+1}\hat R'}$.
For all \textit{stable} $R''\in\mathcal{T}$ we have the isomorphism $gr_{\hat\mu_{2\ell+2}}\frac{\hat R''}{J''_{2\ell+1}}\cong
gr_{\mu_{\ell+1}}\frac{R''}{P''_\ell}$ of graded algebras (in particular, $gr_{\hat\mu_{2\ell+2}}\frac{\hat
R''}{J''_{2\ell+1}}$ is scalewise birational to $gr_{\mu_{\ell+1}}\frac{R''}{P''_\ell}$ for any $R''\in\mathcal{T}$ and
$\hat\mu_{2\ell+2}$ has the same value group $\Delta_\ell$ as $\mu_{\ell+1}$).

Define the prime ideals $\tilde H''_{2\ell-2}=\tilde H''_{2\ell-1}$
to be equal to the preimage of $J''_{2\ell-1}$ in $\hat R''$
and $\tilde H''_{2\ell}=\tilde H''_{2\ell+1}$ the preimage of $J''_{2\ell+1}$ in $\hat R''$. By definition of tight extensions,
the valuation $\hat\nu_{2\ell+1}$ must be trivial. It remains to describe the valuation $\hat\mu_{2\ell}$ on $\frac{\hat R''}{J''_{2\ell-1}}$, $R''\in\mathcal T$. We will first define $\hat\nu_{2\ell}$ and then put $\hat\mu_{2\ell}=\hat\nu_{2\ell}\circ\hat\mu_{2\ell+2}$.

By definition of tight extensions, the value group of
$\hat\mu_{2\ell}$ must be equal to $\Delta_{\ell-1}$ and that of $\hat\nu_{2\ell}$ to $\frac{\Delta_{\ell-1}}{\Delta_\ell}$. For a positive element $\bar\beta\in\frac{\Delta_{\ell-1}}{\Delta_\ell}$, define the candidate for
$\hat\nu_{2\ell}$-ideal of $\frac{\hat R''_{\tilde H''_{2\ell}}}{\tilde H''_{2\ell-1}\hat R''_{\tilde H''_{2\ell}}}$ of value $\bar\beta$, denoted
by $\hat{\mathcal P}''_{\beta\ell}$, by the formula
\begin{equation}
\hat{\mathcal P}''_{\bar\beta\ell}=\frac{\P''_{\bar\beta}\hat R''_{\tilde H''_{2\ell}}}{\tilde H''_{2\ell-1}\hat R''_{\tilde H''_{2\ell}}}.\label{eq:validealmain}
\end{equation}
\begin{conjecture}\label{strongcontainment} The elements $\phi_j$ of Lemma \ref{contractsto0} can be chosen in such a way that the following condition holds. For each positive element $\beta\in\frac{\Delta_{\ell-1}}{\Delta_\ell}$ and each tree morphism $R'\rightarrow R''$ in $\mathcal T$, we have
$$
\hat{\mathcal P}''_{\beta\ell}\cap\hat R'_{\tilde H'_{2\ell}}=\hat{\mathcal P}'_{\beta\ell}.
$$
\end{conjecture}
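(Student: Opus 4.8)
The plan is to follow, almost verbatim, the template of Corollary \ref{stablecontracts} and Proposition \ref{largeR2}, transplanted from $R^\dag$ to the local rings $\frac{\hat R''_{\tilde H''_{2\ell}}}{\tilde H''_{2\ell-1}\hat R''_{\tilde H''_{2\ell}}}$, and to use the freedom in the choice of the $\phi_j$ granted by Lemma \ref{contractsto0} to eliminate the one obstruction that the stability hypothesis on $R$ does not already handle. First I would, without loss of generality, replace $R$ by a stable $R'\in\mathcal T$ over which $\frac{R'}{P'_\ell}$ and $\frac{\hat R'}{J'_{2\ell+1}}$ are regular (allowed by Corollary \ref{anirred2} and Definition \ref{lut}), and work throughout with the structural isomorphisms $\frac{\hat R'}{H'_{2\ell-1}}\cong R'_\ell[[w',v']]$ and $\frac{\hat R'}{J'_{2\ell-1}}\cong R'_\ell[[w']]$, and likewise for $R''$. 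One inclusion of the asserted equality is free: since $\P'_{\bar\beta}\subseteq\P''_{\bar\beta}$ and $\tilde H'_{2\ell-1}\subseteq\tilde H''_{2\ell-1}$ (the latter because $J''_{2\ell-1}$ lies over the extension of $J'_{2\ell-1}$), the natural map $\frac{\hat R'_{\tilde H'_{2\ell}}}{\tilde H'_{2\ell-1}\hat R'_{\tilde H'_{2\ell}}}\to\frac{\hat R''_{\tilde H''_{2\ell}}}{\tilde H''_{2\ell-1}\hat R''_{\tilde H''_{2\ell}}}$ carries $\hat{\mathcal P}'_{\beta\ell}$ into $\hat{\mathcal P}''_{\beta\ell}$. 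All the content is thus in the reverse inclusion.

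For the reverse inclusion I would factor $\hat R'\to\hat R''$ through $\bar R:=(\hat R'\otimes_{R'}R'')_{M''}$ as in Lemma \ref{factor}, so that $\hat R''$ is faithfully flat over $\bar R$ and it suffices to prove the contraction statement with $\hat R''$ replaced by $\bar R$. Then, passing to $\bar R\otimes_{R'}\kappa(P'_{\ell-1})$ and localizing appropriately, the stability of $R'$ guarantees via Definition \ref{stable}(1) and Proposition \ref{largeR1} that the ring in sight is a domain acquiring no new algebraic extension of $\kappa(\tilde H'_{2\ell-1})$. Exactly as in the proof of Proposition \ref{largeR2}, this forces the graded pieces $\P'_{\bar\beta}/\P'_{\bar\beta+}$ to inject into their analogues over $\bar R$; a cofinality argument (Lemma 4 of Appendix 4 of \cite{ZS}) upgrades the graded injectivity to the ideal-theoretic statement $\P''_{\bar\beta}\bar R\cap\hat R'=\P'_{\bar\beta}\hat R'$, and the same goes through after intersection over $\bar\beta\in\Delta_{\ell-1}$. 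Quotienting by $\tilde H''_{2\ell-1}$ and localizing at $\tilde H''_{2\ell}$ then yields precisely $\hat{\mathcal P}''_{\beta\ell}\cap\hat R'_{\tilde H'_{2\ell}}=\hat{\mathcal P}'_{\beta\ell}$, \emph{provided} one knows the single additional contraction
$$
\tilde H''_{2\ell-1}\cap\hat R'_{\tilde H'_{2\ell}}=\tilde H'_{2\ell-1}\hat R'_{\tilde H'_{2\ell}}.
$$
It is this last identity, and not the valuation-theoretic bookkeeping around it, where everything hinges on the choice of the $\phi_j$: $\tilde H''_{2\ell-1}$ is the preimage of the lying-over prime $J''_{2\ell-1}$, which a priori is only required to \emph{contain} the extension of $J'_{2\ell-1}=H'_{2\ell-1}+(v')$ rather than to be its strict transform.

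The main obstacle — and the reason the statement is only conjectural — is exactly this contraction of $\tilde H''_{2\ell-1}$, which can fail for the same reason that the strict transform of an implicit prime can be strictly smaller than the new implicit prime, i.e. the ``subanalytic versus analytic'' phenomenon $H^{\mathrm{str}}\subsetneqq H'$ and $\he\ H<\he\ H'$ illustrated after Corollary \ref{Corollary511}: the formal branch $v'=(u'_j-\phi_j)_j$ encodes a transcendental relation, and under a local blowing up its naive strict transform need no longer be a minimal prime of $P''_\ell\hat R''$. The route around this is to choose the $\phi_j$ ``generically'', generalizing the Baire-category construction in the proof of Lemma \ref{contractsto0}: in the countable residue field case one enumerates the countably many relevant height-one primes arising from the countably many blowups in $\mathcal T(R')$ and adds correction terms in $(z')$ avoiding each, then bootstraps to uncountable residue fields via the $S'_\infty(L_\infty(t))$ device of Lemmas \ref{noetherian} and \ref{IS(t)}. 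What is not yet resolved is that here one must avoid \emph{all} the blowups of $R'$ simultaneously and uniformly in $\beta$, rather than finitely many ideals of a single ring as in Lemma \ref{contractsto0}; showing that a sufficiently generic formal branch keeps its transform a minimal prime through every blowup — which is essentially equivalent to the companion assertion needed to prove that $\hat\mu_{2\ell}$ is a genuine valuation — is the crux and remains open.
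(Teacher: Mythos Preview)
This statement is labeled a \emph{Conjecture} in the paper, and the paper gives no proof of it: the authors say explicitly that this step, together with the companion Conjecture~\ref{containment}, ``still remain[s] conjectural,'' and they simply assume it in order to complete the recursive construction of $\hat\nu_-$. Your proposal recognizes this correctly --- you write that ``the statement is only conjectural'' and that the decisive contraction ``remains open.'' There is therefore no ``paper's own proof'' to compare against.

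The approach you outline --- transplanting the stability machinery of Proposition~\ref{largeR2} and Corollary~\ref{stablecontracts} to the localized quotients $\frac{\hat R''_{\tilde H''_{2\ell}}}{\tilde H''_{2\ell-1}\hat R''_{\tilde H''_{2\ell}}}$, reducing via the factorization of Lemma~\ref{factor} to a contraction in $\bar R=(\hat R'\otimes_{R'}R'')_{M''}$, and then isolating the residual obstruction as the single identity $\tilde H''_{2\ell-1}\cap\hat R'_{\tilde H'_{2\ell}}=\tilde H'_{2\ell-1}\hat R'_{\tilde H'_{2\ell}}$ --- is a sensible line of attack and is consistent with the spirit of the paper's techniques. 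Your diagnosis of the obstacle (the ``subanalytic versus analytic'' jump illustrated after Corollary~\ref{Corollary511}, and the fact that the Baire-category argument of Lemma~\ref{contractsto0} would need to be made uniform over all of $\mathcal T(R')$ rather than over a single ring) is plausible, but none of this appears in the paper; you are going beyond what the authors attempt.
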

\begin{conjecture}\label{containment} The elements $\phi_j$ of Lemma \ref{contractsto0} can be chosen in such a way that
\begin{equation}
\bigcap\limits_{\bar\beta\in\left(\frac{\Delta_{\ell-1}}{\Delta_\ell}\right)_+}
\left(\P'_{\bar\beta}+\tilde H'_{2\ell-1}\right)\hat R'_{\tilde
H'_{2\ell}}\subset\tilde H'_{2\ell-1}. \label{eq:restrictionmain}
\end{equation}
\end{conjecture}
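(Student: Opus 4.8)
The plan is to translate (\ref{eq:restrictionmain}) into a statement about the commutation of an infinite intersection with a quotient followed by a localization, and then to use the remaining freedom in the $\phi_j$ to force that commutation, in the spirit of the proof of Lemma \ref{contractsto0}.

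First I would pass to the quotient $S:=\hat R'/\tilde H'_{2\ell-1}$, which by the construction is isomorphic to $R'_\ell[[w']]$ and contains $R'/P'_{\ell-1}$ as a subring (the contraction $\tilde H'_{2\ell-1}\cap R'=P'_{\ell-1}$ is exactly (\ref{eq:contractsto0})). Let $\mathfrak q\subset S$ be the image of $\tilde H'_{2\ell}$, so that $S_{\mathfrak q}\cong\hat R'_{\tilde H'_{2\ell}}/\tilde H'_{2\ell-1}\hat R'_{\tilde H'_{2\ell}}$. Since $R'$ is stable, Corollary \ref{stableimplicit} together with the cofinality of the ideals $\P'_{\bar\beta}$ used there gives $\bigcap_{\bar\beta}\P'_{\bar\beta}\hat R'=H'_{2\ell-1}\subset\tilde H'_{2\ell-1}$. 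Thus, writing $\bar\P'_{\bar\beta}$ for the image of $\P'_{\bar\beta}\hat R'$ in $S$ — which is the $\nu_\ell$-valuation ideal of $R'/P'_{\ell-1}$ of value $\bar\beta$, extended to $S$ — the conjecture is equivalent to
\[
\bigcap_{\bar\beta\in(\Delta_{\ell-1}/\Delta_\ell)_+}\bar\P'_{\bar\beta}S_{\mathfrak q}=(0).
\]
Here the known fact is that $\bigcap_{\bar\beta}\bar\P'_{\bar\beta}=(0)$ inside $R'/P'_{\ell-1}$, because $\nu_\ell$ is a rank-one valuation of the noetherian domain $R'/P'_{\ell-1}$ with archimedean value group, so Lemma \ref{lemma36} applies; the two operations that can a priori destroy this, namely the passage from $R'/P'_{\ell-1}$ to $S$ and the localization at $\mathfrak q$, are precisely what has to be controlled. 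This is the same difficulty as the possible strictness of the inclusions in Corollary \ref{Corollary510}, the example with subanalytic (non-analytic) functions following Corollary \ref{Corollary511}, and Example \ref{Example34}.

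The only parameter still at our disposal is the tuple $(\phi_j)_{s_\ell<j\le n_\ell}$, which determines $\tilde H'_{2\ell-1}$ and hence the presentation of $S$; the prime $\mathfrak q$ comes from the tree $J'_{2\ell+1}$ fixed at the previous stage. The strategy is therefore: (i) reduce to the case of a countable residue field $k$, exactly as in the second half of the proof of Lemma \ref{contractsto0}, by descending to a countable subfield $L_1\subset k_0$ over which everything is defined and then bootstrapping via $S'_\infty(L_\infty(t))$ and its finite extensions, using Lemmas \ref{noetherian} and \ref{IS(t)}; (ii) in the countable case, enumerate the potential ``phantom'' configurations — roughly, pairs $(\bar x,\{s_{\bar\beta}\})$ with $0\ne\bar x\in S$ and $s_{\bar\beta}\notin\mathfrak q$, $s_{\bar\beta}\bar x\in\bar\P'_{\bar\beta}S$ for all $\bar\beta$ — and choose the coefficients of the power series $\phi_j$ recursively so as to avoid each of the (at most countably many) closed conditions that would permit such a configuration, exactly as the corrections $h_{jt}$ were chosen to avoid the height-one primes $I_q$ in the proof of Lemma \ref{contractsto0}; and (iii) check that this choice is compatible with (\ref{eq:contractsto0}) and with the height bounds of \S\ref{locuni1}.

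The main obstacle is step (ii): giving a workable description of the bad locus of $(\phi_j)$ and proving that, in the countable-residue-field case, it is a countable union of conditions, each of which can genuinely be avoided while keeping $S$ a domain and $\mathfrak q$ of the correct height. This is precisely the point at which the argument becomes conjectural. A promising alternative — and the reason Conjecture \ref{containment} is stated alongside Conjecture \ref{strongcontainment} — is to prove both at once: if the $\phi_j$ can be chosen so that the candidate valuation ideals (\ref{eq:validealmain}) are stable under $\nu$-extensions, then $\hat\mu_{2\ell}=\hat\nu_{2\ell}\circ\hat\mu_{2\ell+2}$ is a genuine tight valuation whose determined chain of prime ideals is $(\tilde H'_i)$, and (\ref{eq:restrictionmain}) then holds automatically, being nothing but the case $i=2\ell-2$ of Proposition \ref{necessary}.
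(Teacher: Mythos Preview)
The statement you are trying to prove is labeled \emph{Conjecture} in the paper, and the paper does not prove it: immediately after stating Conjectures \ref{strongcontainment} and \ref{containment}, the authors write ``For the rest of this section assume that Conjectures \ref{strongcontainment} and \ref{containment} are true.'' So there is no paper proof to compare your proposal against; you are attempting to settle an open problem in the paper.

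Your reduction in the first two paragraphs is correct and useful: passing to $S=\hat R'/\tilde H'_{2\ell-1}\cong R'_\ell[[w']]$ and rewriting (\ref{eq:restrictionmain}) as $\bigcap_{\bar\beta}\bar\P'_{\bar\beta}S_{\mathfrak q}=(0)$ is exactly the right reformulation, and you correctly identify the two operations (extension to $S$, then localization at $\mathfrak q$) that can kill the archimedean intersection property. Your reduction (i) to the countable case via the machinery of Lemmas \ref{noetherian} and \ref{IS(t)} is also plausible. But as you yourself say, step (ii) is the whole problem: you have not described what the ``bad locus'' of tuples $(\phi_j)$ actually looks like, nor shown it is a countable union of avoidable closed conditions. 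In the proof of Lemma \ref{contractsto0} the bad conditions were height-one primes $I_q$ coming from single elements $f\in R'/P'_{\ell-1}$, a genuinely countable and concrete family; here the ``phantom configurations'' involve infinite families $(s_{\bar\beta})_{\bar\beta}$ of elements of $S\setminus\mathfrak q$, and it is not at all clear that these reduce to countably many closed conditions on the $\phi_j$.

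Your final suggestion---that Conjecture \ref{strongcontainment} alone yields a tight valuation $\hat\mu_{2\ell}$, whence (\ref{eq:restrictionmain}) is the case $i=2\ell-2$ of Proposition \ref{necessary}---is circular as stated. Proposition \ref{necessary} presupposes an already-constructed valuation $\nu^\dag_-$ whose determined chain of ideals is $(\tilde H'_i)$. But the paper's construction of $\hat\nu_{2\ell}$ via (\ref{eq:mu2l}) needs Conjecture \ref{containment} precisely to guarantee that $\operatorname{Val}_\ell(x)$ has a maximum for every nonzero $x$; Conjecture \ref{strongcontainment} (stability of the ideals $\hat{\mathcal P}''_{\bar\beta\ell}$ under tree morphisms) controls compatibility across $\mathcal T$ and, via Corollary \ref{integraldomain}, multiplicativity, but it does not by itself rule out a nonzero $x\in S_{\mathfrak q}$ lying in every $\hat{\mathcal P}'_{\bar\beta\ell}$. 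So you cannot build $\hat\mu_{2\ell}$ from \ref{strongcontainment} alone and then invoke Proposition \ref{necessary}; you would need an independent argument that $\hat\nu_{2\ell}$ is well-defined, which is exactly what (\ref{eq:restrictionmain}) asserts.
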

For the rest of this section assume that Conjectures \ref{strongcontainment} and \ref{containment} are true.

For all $\bar\beta\in\left(\frac{\Delta_{\ell-1}}{\Delta_\ell}\right)_+$,
we have the natural isomorphism
$$
\lambda_{\bar\beta}:\frac{\P'_{\bar\beta}}{\P'_{\bar\beta+}}\otimes_{\kappa(P'_{\ell-1})}\kappa(\tilde
H'_{2\ell})\longrightarrow\frac{\hat{\mathcal P}'_{\bar\beta}}{\hat{\mathcal P}'_{\bar\beta+}}
$$
of $\kappa(\tilde H'_{2\ell})$-vector spaces. The following fact is an easy consequences of Conjecture \ref{strongcontainment}:
\begin{corollary}\label{integraldomain}\textbf{(conditional on Conjecture \ref{strongcontainment})} If the elements $\phi_j$ of Lemma \ref{contractsto0} can be chosen as in Conjecture \ref{strongcontainment} then the graded algebra
$$
\gr_{\nu_\ell}\frac{R'_{P'_\ell}}{P'_{\ell-1}R'_{P'_\ell}}\otimes_{\kappa(P'_{\ell-1})}\kappa(\tilde
H'_{2\ell})\cong\bigoplus\limits_{\bar\beta\in\left(\frac{\Delta_{\ell-1}}{\Delta_\ell}\right)_+}\frac{\hat{\mathcal P}'_{\bar\beta}}{\hat{\mathcal P}'_{\bar\beta+}}
$$
is an integral domain.
\end{corollary}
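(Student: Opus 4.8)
The plan is to deduce Corollary \ref{integraldomain} from Conjecture \ref{strongcontainment} by a direct limit argument together with the already-established primality of the ideals $J'_{2\ell+1}$ and the induction hypothesis on $\mu_{\ell+1}$. First I would observe that the graded algebra in question decomposes, degree by degree, as the direct sum $\bigoplus_{\bar\beta}\frac{\hat{\mathcal P}'_{\bar\beta}}{\hat{\mathcal P}'_{\bar\beta+}}$, each summand being a $\kappa(\tilde H'_{2\ell})$-vector space isomorphic via $\lambda_{\bar\beta}$ to $\frac{\P'_{\bar\beta}}{\P'_{\bar\beta+}}\otimes_{\kappa(P'_{\ell-1})}\kappa(\tilde H'_{2\ell})$; so the underlying module structure is that of $\gr_{\nu_\ell}\frac{R'_{P'_\ell}}{P'_{\ell-1}R'_{P'_\ell}}\otimes_{\kappa(P'_{\ell-1})}\kappa(\tilde H'_{2\ell})$, and the only thing to check is that the multiplication has no zero divisors.

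The key point is that, by Conjecture \ref{strongcontainment}, the formation of the ideals $\hat{\mathcal P}'_{\bar\beta\ell}$ behaves well under the tree morphisms $R'\to R''$: we have $\hat{\mathcal P}''_{\beta\ell}\cap\hat R'_{\tilde H'_{2\ell}}=\hat{\mathcal P}'_{\beta\ell}$. Thus if $\bar x,\bar y$ are nonzero homogeneous elements of the graded algebra, of degrees $\bar\beta,\bar\gamma\in\frac{\Delta_{\ell-1}}{\Delta_\ell}$ respectively, I would lift them to elements $x\in\hat{\mathcal P}'_{\bar\beta\ell}\setminus\hat{\mathcal P}'_{\bar\beta+,\ell}$, $y\in\hat{\mathcal P}'_{\bar\gamma\ell}\setminus\hat{\mathcal P}'_{\bar\gamma+,\ell}$. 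Passing to a local blowing up $R'\to R''$ along $\nu$ at which the valuation ideal $\P'_{\bar\beta}$ (resp. $\P'_{\bar\gamma}$) becomes principal — exactly as in the proof of Theorem \ref{primality1} and Proposition \ref{resmin} — one writes $x=a_1z$, $y=b_1w$ in $\frac{\hat R''_{\tilde H''_{2\ell}}}{\tilde H''_{2\ell-1}\hat R''_{\tilde H''_{2\ell}}}$ with $\nu_\ell(a_1)=\bar\beta$, $\nu_\ell(b_1)=\bar\gamma$, and $z,w$ units in value, i.e. not in $\frac{H''_{2\ell}\hat R''_{\tilde H''_{2\ell}}}{\tilde H''_{2\ell-1}\hat R''_{\tilde H''_{2\ell}}}$. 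Here I use that $J''_{2\ell-1}$ is prime (shown via the isomorphism $\frac{\hat R''}{H''_{2\ell-1}}\cong R''_\ell[[w',v']]$ and the quotient $R''_\ell[[w']]$) and that $\kappa(\tilde H''_{2\ell})$ is a field, so that $zw$ is again a unit in value; since $\nu_\ell$ is a valuation on $R''$ and $\tilde H''_{2\ell-1}\hat R''_{\tilde H''_{2\ell}}$ is prime, faithful flatness of $\hat R''$ over $R''$ gives $\left(\P''_{\bar\beta+\bar\gamma+}\hat R''_{\tilde H''_{2\ell}}:(a_1b_1)\hat R''_{\tilde H''_{2\ell}}\right)\subset H''_{2\ell}\hat R''_{\tilde H''_{2\ell}}$, whence $xy=a_1b_1zw\notin\hat{\mathcal P}''_{\bar\beta+\bar\gamma+,\ell}$. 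Now Conjecture \ref{strongcontainment}, applied to $\bar\beta+\bar\gamma$ and the morphism $R'\to R''$, pulls this back: $xy\notin\hat{\mathcal P}'_{\bar\beta+\bar\gamma+,\ell}$ (intersecting with $\hat R'_{\tilde H'_{2\ell}}$), so $\bar x\bar y\neq 0$ in the graded algebra and in fact has degree $\bar\beta+\bar\gamma$. Taking the direct limit over $R'\in\mathcal T$ — each summand stabilizes since heights stabilize and by Conjecture \ref{strongcontainment} the valuation ideals are compatible — yields the statement.

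The only place where anything beyond routine bookkeeping is needed is ensuring that the chosen representatives $x,y$ genuinely have the prescribed $\nu_\ell$-order after passing to $R''$; this is precisely where Conjecture \ref{strongcontainment} (the compatibility $\hat{\mathcal P}''_{\beta\ell}\cap\hat R'_{\tilde H'_{2\ell}}=\hat{\mathcal P}'_{\beta\ell}$) is indispensable, since without it the order could a priori jump up under blowing up, as in Examples \ref{Example33} and \ref{Example34}. Granting that conjecture, the argument is a verbatim adaptation of the proof of Theorem \ref{primality1}(1) carried out in the ring $\frac{\hat R'_{\tilde H'_{2\ell}}}{\tilde H'_{2\ell-1}\hat R'_{\tilde H'_{2\ell}}}$ rather than in $R'^\dag$, and the main obstacle — which is genuinely the content of the conjecture and not of the corollary — is precisely the good behaviour of the ideals $\hat{\mathcal P}'_{\beta\ell}$ under the tree morphisms, which is why the result is only conditional.
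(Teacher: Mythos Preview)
The paper does not actually supply a proof of this corollary; it merely asserts that it is ``an easy consequence'' of Conjecture~\ref{strongcontainment}. Your approach --- transplanting the proof of Theorem~\ref{primality1}(1) to the ring $\frac{\hat R'_{\tilde H'_{2\ell}}}{\tilde H'_{2\ell-1}\hat R'_{\tilde H'_{2\ell}}}$ and using the conjecture in place of Proposition~\ref{largeR2}(1) --- is the natural one and almost certainly what the authors intended.

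One point worth sharpening: you write that $z,w$ lie outside $\frac{H''_{2\ell}\hat R''_{\tilde H''_{2\ell}}}{\tilde H''_{2\ell-1}\hat R''_{\tilde H''_{2\ell}}}$ and then need this ideal to be prime. What is actually true in the construction (and what makes your appeal to ``$\kappa(\tilde H''_{2\ell})$ is a field'' work) is the stronger fact that $P''_\ell\hat R''_{\tilde H''_{2\ell}}+\tilde H''_{2\ell-1}=\tilde H''_{2\ell}$ after localization: indeed $\tilde H''_{2\ell-1}$ is generated over $H''_{2\ell-1}$ by lifts of the $\bar v'_j$, and these lifts reduce modulo $H''_{2\ell}$ to the generators of $J''_{2\ell+1}$, whence $H''_{2\ell}+\tilde H''_{2\ell-1}=\tilde H''_{2\ell}$. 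So $z,w$ are genuine \emph{units} in the localization, and $zw$ is a unit trivially. Also, your parenthetical justification for the primality of $J''_{2\ell-1}$ via the isomorphism $R''_\ell[[w',v']]$ is only literally valid for the specific $R'$ chosen in the construction; for general $R''\in\mathcal T$ the ideal $J''_{2\ell-1}$ is prime simply because it is obtained by intersecting a prime of the direct limit with $\hat R''/H''_{2\ell-2}$. These are phrasing issues rather than gaps.
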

\medskip

For a non-zero element $x\in\frac{\hat R'_{\tilde H'_{2\ell}}}{\tilde H'_{2\ell-1}}$, let $\operatorname{Val}_\ell(x)=\left\{\left.\beta\in\nu_\ell\left(\frac{R'}{P'_{\ell-1}}\setminus\{0\}\right)\ \right|\ x\in\hat{\mathcal P}'_{\beta\ell}\right\}$.
We define $\hat\nu_{2\ell}$ by the formula
\begin{equation}\label{eq:mu2l}
\hat\nu_{2\ell}(x)=\max\ \operatorname{Val}_\ell(x).
\end{equation}
Since $\nu_\ell$ is a rank 1 valuation, centered in a local noetherian domain $\frac{R'}{P'_{\ell-1}}$, the semigroup $\nu_\ell\left(\frac{R'}{P'_{\ell-1}}\setminus\{0\}\right)$ has order type $\mathbb N$, so by (\ref{eq:restrictionmain}) the set $Val_\ell(x)$ contains a maximal element. This proves that the valuation $\hat\nu_{2\ell}$ is well defined by the formula (\ref{eq:mu2l}),
and that we have a natural isomorphism of graded algebras
$$
\gr_{\nu_\ell}\frac{R'_{P'_\ell}}{P'_{\ell-1}R'_{P'_\ell}}\otimes_{\kappa(P'_{\ell-1})}\kappa(\tilde
H'_{2\ell})\cong\gr_{\hat\nu_{2\ell}}\frac{\hat R_{\tilde
H'_{2\ell}}}{\tilde H'_{2\ell-1}}.
$$
Since the above construction is valid for all $R\in\mathcal T$, $\hat\nu_{2\ell}$ extends naturally to a valuation centered in the
ring $\lim\limits_{\overset\longrightarrow{R'}}\frac{\hat
R''}{\tilde H''_{2\ell-1}\hat R''}$ (by abuse of notation, this extension will also be denoted by $\hat\nu_{2\ell}$).

 The extension $\hat\mu_{2\ell}$ of $\mu_\ell$ to
$\lim\limits_{\overset\longrightarrow{R''\in\mathcal{T}(R')}}\frac{\hat
R''}{\tilde H''_{2\ell-1}\hat R'}$ is defined by
$\hat\mu_{2\ell}=\hat\nu_{2\ell}\circ\hat\mu_{2\ell+2}$.

This completes the proof of Conjecture \ref{teissier1} (assuming Conjectures \ref{strongcontainment} and \ref{containment}) by descending induction on $\ell$.\hfill$\Box$\medskip

The next Corollary of Conjecture \ref{teissier1} gives necessary conditions for $\hat\nu_-$ to be uniquely determined by
$\nu$; it also shows that the same conditions are sufficient for $\hat\nu_-$ to be the unique minimal extension of $\nu$, that is, to satisfy
\begin{equation}
\tilde H'_i=H'_i,\quad0\le i\le 2r.\label{eq:tilde=nothing}
\end{equation}
Suppose given a tree $\left\{\tilde H'_0\right\}$ of minimal prime ideals of $\hat R'$ (in
particular, $R'\cap\tilde H'_0=(0)$). If the valuation $\nu$ admits an extension to a valuation $\hat\nu_-$ of
$\lim\limits_{\overset\longrightarrow{R'}}\frac{\hat R'}{\tilde H'_0}$, then $\tilde H'_0$ is the 0-th prime ideal of $\hat R'$, determined by $\hat\nu_-$. Since $\tilde H'_0$ is assumed to be a \textit{minimal} prime, we have $\tilde H'_0=H'_0$ by Proposition \ref{Hintilde}.
\begin{remark} Let the notation be as in Conjecture \ref{teissier1}. Denote the tree of prime ideals $\{\tilde H'_0\}$ by $\{H'\}$ for short. Consider a homomorphism
\begin{equation}
R'\rightarrow R''\label{eq:R'toR''}
\end{equation}
in $\mathcal T$. Assume that the local rings $R'$ and $\frac{\hat R'}{H'}$ are regular, and let $V=(V_1,\dots,V_s)$ be a minimal set of generators of $H'$. Then $V$ can be extended to a regular system of parameters for $\hat R'$. We have an isomorphism $\hat R'\cong\frac{\hat R'}{H'}[[V]]$. The morphism (\ref{eq:R'toR''}) induces an isomorphism $\hat R'_{H'}\cong \hat R''_{H''}$, so that $V$ induces a regular system of parameters of $\hat R''_{H''}$. In particular, the $H''$-adic valuation of $\hat R''_{H''}$ coincides with the $H'$-adic valuation of $\hat R'_{H'}$. On the other hand, we do not know, assuming that $R''$ and $\frac{\hat R''}{H''}$ are regular and $ht\ H''=ht\ H'$, whether $V$ induces a minimal set of generators of $H''$; we suspect that the answer is ``no''.
\end{remark}
\begin{corollary}\label{uncond1}\textbf{(conditional on Conjecture \ref{teissier1})} If the valuation $\nu$ admits a unique extension to a valuation $\hat\nu_-$ of $\lim\limits_{\overset\longrightarrow{R'}}\frac{\hat R'}{H'_0}$, then the following conditions hold:

(1) $ht\ H'_1\le 1$

(2) $H'_i=H'_{i-1}$ for all odd $i>1$.
\medskip
Moreover, this unique extension $\hat\nu_-$ is minimal.

Conversely, assume that (1)--(2) hold. Then there exists a unique minimal extension $\hat\nu_-$ of $\nu$ to $\lim\limits_{\overset\longrightarrow{R'}}\frac{\hat R'}{H'_0}$.
\end{corollary}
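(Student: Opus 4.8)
The plan is to prove the two implications separately, handling the converse direction (sufficiency of (1)--(2) for a unique \emph{minimal} extension) first, since it uses only results already in hand, and then the forward direction, where the tight extension furnished by Conjecture \ref{teissier1} is the essential new ingredient.

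For the converse, assume (1) and (2) and apply Proposition \ref{uniqueness1} with the admissible chain $\tilde H'_i=H'_i$ for all $i$; any extension realizing this chain is minimal by definition, hence evenly minimal by the remark following the definition of tight extensions. Condition (1) reads $\he H'_1-\he H'_0=\he H'_1\le 1$, and condition (2) reads $\he H'_{2\ell+1}-\he H'_{2\ell}=0$ for $\ell\ge 1$, so the inequality $\he\tilde H'_{2\ell+1}-\he\tilde H'_{2\ell}\le 1$ required by Proposition \ref{uniqueness1} holds for every $\ell\in\{0,\dots,r-1\}$. Equality occurs only for $\ell=0$ when $\he H'_1=1$; but $H'_1\cap R'=(0)$ puts $H'_1$ in the generic formal fibre, so by excellence (cf.\ Proposition \ref{Corollary57}) $\hat R'_{H'_1}$ is regular, i.e.\ a DVR, whence $\frac{\hat R'_{H'_1}}{H'_0\hat R'_{H'_1}}=\hat R'_{H'_1}$ is unibranch, as needed. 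Cofinality of the relevant set of $R'$ follows from Corollary \ref{htstable1}, and the existence and uniqueness of the even pieces $\nu^\dag_{i0}$ from Theorem \ref{primality1}(2) and Proposition \ref{nu0unique}. Proposition \ref{uniqueness1} then gives exactly one minimal extension.

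For the forward implication, note first that minimal extensions always exist (Remark \ref{minimalexist}); hence a unique extension of $\nu$ to $\lim\frac{\hat R'}{H'_0}$ must be the minimal one, which yields the ``moreover'' clause. The conditions (1) and (2) are then proved contrapositively. If $\he H'_1\ge 2$, then $\hat R'_{H'_1}$ (regular, since $H'_1$ lies in the generic formal fibre) has dimension $\ge 2$, hence carries infinitely many pairwise distinct rank one valuations centered in it (monomial valuations in a regular system of parameters with distinct weight vectors); taking each as the first component $\nu^\dag_1$ in the recipe of Remark \ref{minimalexist} produces distinct minimal extensions, contradicting uniqueness, so (1) holds. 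Now assume (1) but suppose $H'_{2\ell}\subsetneq H'_{2\ell+1}$ for some $\ell\ge 1$. By Conjecture \ref{teissier1}, $\nu$ admits a tight extension $\hat\nu^t_-$ with support $N=\tilde H'^t_0=\tilde H'^t_1$, and $\hat R'_N$ is regular since $N\cap R'=(0)$. If $N=H'_0$, then $\hat\nu^t_-$ is itself an extension to $\lim\frac{\hat R'}{H'_0}$ and is not minimal, because $\tilde H'^t_{2\ell}=\tilde H'^t_{2\ell+1}\supseteq H'_{2\ell+1}\supsetneq H'_{2\ell}$ (using Proposition \ref{Hintilde}); hence it differs from the minimal extension. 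If $N\supsetneq H'_0$, compose $\hat\nu^t_-$ with a valuation $\mu$ of $\kappa(N)$ centered at $\lim\frac{\hat R'_N}{H'_0\hat R'_N}$, as in the remark following Proposition \ref{Corollary57}; a computation of the isolated subgroups of the composite value group shows that for $\mu\circ\hat\nu^t_-$ one has $\tilde H'_{2\ell}=\tilde H'_{2\ell+1}$ for every $\ell\ge 1$, so again this extension differs from the minimal one. In both cases uniqueness fails, so (2) holds.

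The main obstacle is this last step. The case $N=H'_0$ is immediate, but in general one must justify the composition construction: namely that $\mu\circ\hat\nu^t_-$ genuinely restricts to $\nu$ on $R'$ (this is because every nonzero element of $R'$ maps to a unit of $\hat R'_N/H'_0$, so $\mu$ is trivial there), and that its chain of implicit prime ideals satisfies $\tilde H'_{2\ell}=\tilde H'_{2\ell+1}$, which is precisely what distinguishes it from the minimal extension. A secondary technical point, shared with the proof of Proposition \ref{HeSal}, is the passage to the limit over $\mathcal T$: one needs $\he N$ to stabilise for $R'$ far out so that $\mu$ may be chosen compatibly with the tree structure, which follows from the argument of Corollary \ref{htstable1}.
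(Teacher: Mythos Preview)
Your proof is essentially correct and closely parallels the paper's, but with two differences worth noting.

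For the converse, both you and the paper invoke Proposition~\ref{uniqueness1} with $\tilde H'_i=H'_i$; your version spells out the verification of the height and unibranch hypotheses, which the paper leaves implicit in the single sentence ``is nothing but Proposition~\ref{uniqueness1}''.

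For condition~(1) in the forward direction you take a shortcut the paper does not: you argue directly that if $\he H'_1\ge 2$ then Remark~\ref{minimalexist} already furnishes infinitely many minimal extensions (one for each choice of $\nu^\dag_1$ centered at $\lim\hat R'_{H'_1}$), so uniqueness fails without any appeal to Conjecture~\ref{teissier1}. The paper instead uses the tight extension $\hat\mu_-$ supplied by the conjecture, shows $\he H'\le 1$ for its support $H'$ via the non-uniqueness of the auxiliary valuation $\theta$, and then deduces $\he H'_1\le 1$ from the inclusion $H'_1\subset H'$. Your route makes (1) unconditional, which is a genuine simplification; the cost is that one must know many valuations of the \emph{limit} field $\lim\kappa(H'_0)$ centered at the \emph{limit} ring $\lim\hat R'_{H'_1}$ exist, a point both you and the paper handle rather informally.

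For condition~(2) your argument is the paper's, but organized less efficiently. The paper does not split into the cases $N=H'_0$ versus $N\supsetneq H'_0$: it simply composes $\hat\mu_-$ with \emph{any} valuation $\theta$ centered in $\hat R'_{H'}/H'_0$ having residue field $\kappa(H')$ (so $\theta$ is trivial when $H'=H'_0$), invokes uniqueness to identify the composite with $\hat\nu_-$, and then reads off $H'_{2\ell}=H'_{2\ell+1}$ for $\ell\ge 1$ directly from the fact that $\hat\nu_-$ is simultaneously minimal and has the same $i$-th prime ideals ($i\ge 1$) as the tight $\hat\mu_-$. Your contrapositive reformulation is logically equivalent, but the ``computation of isolated subgroups'' you flag as the main obstacle is exactly what the paper's one-line identification absorbs: once the composite \emph{is} $\hat\nu_-$, there is nothing left to compute.
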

\begin{proof} The fact that conditions (1), (2) and equations (\ref{eq:tilde=nothing}) determine $\hat\nu_-$ uniquely is nothing but Proposition \ref{uniqueness1}. Conversely, assume that there exists a unique extension $\hat\nu_-$ of $\nu$ to $\lim\limits_{\overset\longrightarrow{R'}}\frac{\hat R'}{H'_0}$. By Remark \ref{minimalexist}, there exist minimal extensions of $\nu$ to $\lim\limits_{\overset\longrightarrow{R'}}\frac{\hat R'}{H'_0}$, hence $\hat\nu_-$ must be minimal.

Next, by Conjecture \ref{teissier1}, there exists a tree of prime
ideals $\tilde H'$ with $H'\cap R'=(0)$ and a tight extension
$\hat\mu_-$ of $\nu$ to
$\lim\limits_{\overset\longrightarrow{R'}}\frac{\hat R'}{H'}$. The
ideals $H'$ are both the the 0-th and the 1-st ideals determined by
$\hat\mu_-$; in particular, we have
\begin{equation}
H'_0\subset H'_1\subset H'\label{eq:inH'}
\end{equation}
by Proposition \ref{Hintilde}. Now, take any valuation $\theta$,
centered in the regular local ring $\frac{R'_{H'}}{H'_0}$, such that
the residue field $k_\theta=\kappa(H')$. Then the composition
$\hat\mu_-\circ\theta$ is an extension of $\nu$ to
$\lim\limits_{\overset\longrightarrow{R'}}\frac{\hat R'}{H'_0}$,
hence
\begin{equation}
\hat\mu_-\circ\theta=\hat\nu_-\label{eq:mucirctheta}
\end{equation}
by uniqueness. For $i\ge1$, the $i$-th prime ideal, determined by
$\hat\mu_-\circ\theta=\hat\nu_-$ coincides with that determined by
$\hat\mu_-$. Since $\nu$ is minimal and $\hat\mu_-$ is tight, we
obtain condition (2) of the Corollary. Finally, if we had $ht\
H'>1$, there would be infinitely many choices for $\theta$,
contradicting \ref{eq:mucirctheta} and the uniqueness of
$\hat\nu_-$. Thus $ht\ H'\le1$. Combined with \ref{eq:inH'}, this
proves (1) of the Corollary. This completes the proof of Corollary
(\ref{uncond1}), assuming Conjecture \ref{teissier1}.
\end{proof}

%\oneappendix
\appendix{Regular morphisms and G-rings.}

In this Appendix we recall the definitions of regular homomorphism, G-rings and excellent and quasi-excellent rings. We also summarize some of their basic properties used in the rest of the paper.
\begin{definition}\label{regmor} (\cite{Mat}, Chapter 13, (33.A), p. 249) Let $\sigma:A\rightarrow B$
be a homomorphism of noetherian rings. We say that $\sigma$ is {\bf regular} if
it is flat, and for every prime ideal $P\subset A$, the ring $B\otimes_A\kappa(P)$ is
geometrically regular over $\kappa(P)$ (this means that for any finite field
extension $\kappa(P)\rightarrow k'$, the ring $B\otimes_Ak'$ is regular).
\end{definition}
\begin{remark} If $\kappa(P)$ is perfect, the ring $B\otimes_A\kappa(P)$ is geometrically regular over $\kappa(P)$ if and
only if it is regular.
\end{remark}
\begin{remark} It is known that a morphism of finite type is regular in the above sense if and only if it is smooth (that is, formally smooth in the sense of Grothendieck with respect to the discrete topology), though we do not use this fact in the present paper.
\end{remark}
Regular morphisms come up in a natural way when one wishes to pass to the formal completion of a local ring:
\begin{definition}\label{Gring} (\cite{Mat}, (33.A) and (34.A)) Let $R$ be a noetherian ring. For a maximal ideal $m$ of $R$, let $\hat R_m$ denote the $m$-adic completion of $R$. We say that $R$ is a {\bf G-ring} if for every maximal ideal
$m$ of $R$, the natural map $R\rightarrow\hat R_m$ is a regular homomorphism.
\end{definition}
The property of being a G-ring is preserved by localization and passing to rings essentially of finite type over $R$.
\begin{definition}\label{quasiexcellent} (\cite{Mat}, Definition 2.5, (34.A), p. 259) Let $R$ be a noetherian
ring. We say that $R$ is {\bf quasi-excellent} if the following two
conditions hold:

(1) $R$ is J-2, that is, for any scheme $X$, which is reduced and of
finite type over $\spec\ R$, $Reg(X)$ is open in the Zariski topology.

(2) For every maximal ideal $m\subset R$, $R_m$ is a G-ring.
\end{definition}
It is known \cite{Mat} that a \textit{local} G-ring is automatically J-2, hence automatically quasi-excellent. Thus for local rings ``G-ring'' and ``quasi-excellent'' are one and the same thing.
A ring is said to be \textbf{excellent} if it is quasi-excellent and
universally catenary, but we do not need the catenary condition in this paper.

Both excellence and quasi-excellence are preserved by localization and passing to rings of finite type over $R$
(\cite{Mat}, Chapter 13, (33.G), Theorem 77, p. 254). In
particular, any ring essentially of finite type over a field, $\mathbf Z$, $\mathbf Z_{(p)}$, $\mathbf Z_p$,
the Witt vectors or any other excellent Dedekind domain is
excellent. See \cite{Nag} (Appendix A.1, p. 203) for some examples of
non-excellent rings.

Rings which arise from natural constructions in algebra and geometry are
excellent. Complete and complex-analytic local rings are excellent (see
\cite{Mat}, Theorem 30.D) for a proof that any complete local ring is excellent).

Finally, we remark that the category of quasi-excellent rings is a natural one for doing algebraic geometry, since it is the
largest reasonable class of rings for which resolution of singularities can hold. Namely, let $R$ be a noetherian ring.
Grothendieck (\cite{EGA}, IV.7.9) proves that if all of the irreducible closed subschemes of $\spec\ R$ and all of their
finite purely inseparable covers admit resolution of singularities, then $R$ must be quasi-excellent.
Grothendieck's result means that the largest {\it class} of noetherian
rings, closed under homomorphic images and finite purely inseparable extensions, for which resolution of singularities
could possibly exist, is {\it quasi-excellent} rings.

We now summarize the specific uses we make of quasi-excellence in the present paper. We begin by recalling three results from \cite{Mat} and \cite{Nag}. As a point of terminology, we note that Nagata's ``pseudo-geometric'' rings are now commonly known as Nagata rings. Quasi-excellent rings are Nagata (\cite{Mat}, (33.H), Theorem 78).
\begin{theorem}\label{annormal}(\cite{Mat}, (34.C), Theorem 79) Let $R$ be an excellent normal local ring. Then $R$ is analytically normal (this means that its formal completion $\hat R$ is normal).
\end{theorem}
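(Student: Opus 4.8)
The plan is to deduce normality of $\hat R$ from Serre's criterion: a noetherian ring is normal if and only if it satisfies the conditions $(R_1)$ (regular in codimension one) and $(S_2)$. Since $R$ is noetherian, $\hat R$ is noetherian as well, so it is enough to verify that $\hat R$ satisfies $(R_1)$ and $(S_2)$.

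First I would restate the hypotheses in this language. Because $R$ is normal, Serre's criterion says that $R$ itself satisfies $(R_1)$ and $(S_2)$. Because $R$ is excellent --- in fact all that is needed is that $R$ is a G-ring in the sense of Definition \ref{Gring} --- the completion homomorphism $\sigma\colon R\to\hat R$ is regular in the sense of Definition \ref{regmor}; in particular $\sigma$ is flat and every fibre $\hat R\otimes_R\kappa(p)$, for $p$ a prime ideal of $R$, is geometrically regular over $\kappa(p)$, hence is a regular noetherian ring and a fortiori satisfies $(R_k)$ and $(S_k)$ for every $k$.

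Next I would invoke the ascent of the Serre conditions along a flat local homomorphism whose fibres satisfy the corresponding condition (see \cite{Mat}, or \cite{EGA}, IV, \S6.5). Explicitly, given a prime ideal $q$ of $\hat R$ with $p=q\cap R$, flatness yields $\operatorname{ht}q=\operatorname{ht}p+\dim(\hat R_q/p\hat R_q)$ and $\operatorname{depth}\hat R_q=\operatorname{depth}R_p+\operatorname{depth}(\hat R_q/p\hat R_q)$, while $\hat R_q$ is regular as soon as $R_p$ and the fibre ring $\hat R_q/p\hat R_q$ are both regular. Combining $(R_1)$ for $R$ with the regularity of all fibres gives $(R_1)$ for $\hat R$; combining $(S_2)$ for $R$ with $(S_2)$ (indeed regularity) of all fibres gives $(S_2)$ for $\hat R$. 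By Serre's criterion $\hat R$ is then normal. Equivalently, the argument can be summarised as: $R$ is normal and the formal fibres of $R$ are geometrically normal --- which holds since a G-ring has geometrically regular formal fibres --- hence $\hat R$ is normal.

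The step I expect to be the only genuine obstacle is the height-and-depth bookkeeping in the ascent statement for $(R_1)$ and $(S_2)$: one must use carefully that a flat local homomorphism $A\to B$ satisfies $\dim B=\dim A+\dim(B/m_AB)$ and $\operatorname{depth}B=\operatorname{depth}A+\operatorname{depth}(B/m_AB)$, together with the fact that $B$ is regular if and only if $A$ and $B/m_AB$ are. These are all standard, and the argument as a whole is essentially the proof of Theorem 79 in \cite{Mat}.
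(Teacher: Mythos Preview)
Your proposal is correct and is essentially the standard argument from \cite{Mat}, Theorem 79, which is exactly the reference the paper cites. Note that the paper itself does not supply a proof of this theorem: it is stated in the Appendix purely as a quotation of Matsumura's result, so there is no ``paper's own proof'' to compare against beyond the cited source, and your Serre-criterion-plus-ascent argument matches that source.
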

\begin{theorem}(\cite{Nag}, (43.20), p. 187) Let $R$ be a local integral domain, $\tilde R$ its Henselization and $R'$ its normalization. There is a natural one-to-one correspondence between the minimal primes of $\tilde R$ and the maximal ideals of $R'$.
\end{theorem}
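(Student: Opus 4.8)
The plan is to compute the normalization of $\tilde R$ in two different ways and match up the resulting factors. Throughout write $K=\operatorname{Frac}(R)$ and let $R^n$ denote the integral closure of $R$ in $K$ (the ring called $R'$ in the statement). Since the only use made of this result in the present paper is for excellent, hence Nagata, rings, I would assume $R$ Noetherian and Nagata; then $R^n$ is module-finite over $R$, in particular a semilocal Noetherian normal ring, say with maximal ideals $\mathfrak m_1,\dots,\mathfrak m_s$. (In full generality $R^n$ is still a Krull domain and the argument below can be run after localizing at each prime of $R^n$ over $\mathfrak m$, but I will not belabour this.)

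First I would record the structure of $\tilde R$: it is a filtered direct limit $\tilde R=\varinjlim R^e$ of pointed local-\'etale extensions of $R$, hence Noetherian local, Henselian, and reduced — a smooth (in particular \'etale) algebra over a reduced ring is reduced, and a filtered limit of reduced rings is reduced. Thus $\tilde R$ has finitely many minimal primes $\mathfrak q_1,\dots,\mathfrak q_m$ and
\[
(\tilde R)^n=\prod_{i=1}^m\bigl(\tilde R/\mathfrak q_i\bigr)^n,
\]
each factor being a normal Henselian local domain: $\tilde R/\mathfrak q_i$ is a Henselian local domain, and the integral closure of a Henselian local domain in its fraction field is again local, because every integral extension of a Henselian local ring is a product of Henselian local rings and a domain has no nontrivial idempotents.

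Next I would use that normalization commutes with Henselization. Since integral closure commutes with filtered direct limits and with \'etale base change (normality is preserved by \'etale base change, and for $R\to R^e$ \'etale the ring $R^n\otimes_RR^e$ is normal, integral over $R^e$, and has the same total quotient ring as $R^e$, hence equals $(R^e)^n$), one gets $(\tilde R)^n=\varinjlim(R^e)^n=\varinjlim\bigl(R^n\otimes_RR^e\bigr)=R^n\otimes_R\tilde R$. This ring is module-finite over the Henselian local ring $\tilde R$, hence a finite product of Henselian local rings indexed by its maximal ideals; and for each $j$ the localization $(R^n)_{\mathfrak m_j}$ is normal, so its Henselization is a normal Henselian local \emph{domain}, whence exactly one maximal ideal of $R^n\otimes_R\tilde R$ lies over $\mathfrak m_j$. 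Therefore
\[
R^n\otimes_R\tilde R=\prod_{j=1}^s\widetilde{(R^n)_{\mathfrak m_j}}.
\]

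Comparing the two displays exhibits one and the same reduced ring as a finite product of normal Henselian local domains in two ways; but such a decomposition is unique, since the factors are intrinsically the rings $A/\mathfrak p$ as $\mathfrak p$ runs over the minimal primes of $A:=(\tilde R)^n$. Hence $m=s$, and the resulting bijection between minimal primes of $\tilde R$ and maximal ideals of $R^n$ — sending a minimal prime $\mathfrak q$ of $\tilde R$ to the contraction to $R^n$ of the maximal ideal of the factor $(\tilde R/\mathfrak q)^n$ of $(\tilde R)^n$ — is the desired correspondence; it is manifestly natural with respect to local \'etale extensions of $R$. I expect the real work to be concentrated in the input ``normalization commutes with Henselization'', together with its refinement $R^n\otimes_R\tilde R=\prod_j\widetilde{(R^n)_{\mathfrak m_j}}$: this uses the description of $\tilde R$ as a colimit of \'etale neighborhoods, compatibility of normalization with \'etale base change, the Nagata hypothesis for module-finiteness of $R^n$, and the lemma that integral extensions of Henselian local rings split as products of Henselian local rings. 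Granting these, the remainder is the formal uniqueness of the decomposition of a reduced ring into a finite product of domains.
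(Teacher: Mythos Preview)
The paper does not give its own proof of this statement: it is quoted in the Appendix as a result from Nagata's book (\cite{Nag}, (43.20)) and used as a black box, so there is nothing in the paper to compare your argument against. Your sketch is the standard route (essentially the one in Nagata or in Raynaud's \emph{Anneaux locaux hens\'eliens}): compute the normalization of $\tilde R$ once as $\prod_i(\tilde R/\mathfrak q_i)^n$ indexed by minimal primes of $\tilde R$, once as $R^n\otimes_R\tilde R=\prod_j\widetilde{(R^n)_{\mathfrak m_j}}$ indexed by maximal ideals of $R^n$, and match factors by uniqueness of the decomposition of a reduced ring into a finite product of local domains. The ingredients you flag --- compatibility of normalization with \'etale base change, the Nagata hypothesis to make $R^n$ module-finite, and the fact that a finite algebra over a Henselian local ring is a product of Henselian local rings --- are exactly the right ones, and the argument is sound under the Nagata assumption you impose (which, as you note, covers every use made of the result in this paper).
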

\begin{proposition}\label{anirred}(\cite{Nag}, Corollary (44.3), p. 189) Let $R$ be a quasi-excellent analytically normal local ring. Then its Henselization $\tilde R$ is analytically irreducible and is algebraically closed in its formal completion.
\end{proposition}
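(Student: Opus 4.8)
The plan is to prove Proposition~\ref{anirred} by reducing everything to the regularity of the completion homomorphism — which is exactly what quasi-excellence provides — and then exploiting it after a finite base change.

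First I would record the soft consequences of the hypotheses. Since $R$ is analytically normal, $\hat R$ is a normal local ring, hence a normal domain; by faithful flatness of $R\to\hat R$ the ring $R$ is then itself a normal domain, and since the henselization $\tilde R$ is a filtered colimit of \'etale local $R$-algebras (each normal, as \'etale extensions of normal rings are normal), $\tilde R$ is a normal local domain as well. The completion of $\tilde R$ is canonically $\hat R$, so $\widehat{\tilde R}=\hat R$ is a normal domain; in particular $\tilde R$ is analytically irreducible (indeed analytically normal). It remains to show that $\tilde R$ is algebraically closed in $\hat R$, i.e. that every $\theta\in\hat R$ algebraic over $\operatorname{Frac}(\tilde R)$ lies in $\tilde R$. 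Replacing $R$ by $\tilde R$ — which is again henselian, normal, analytically normal, and, being the henselization of a quasi-excellent local ring, again quasi-excellent — I may assume $R$ is henselian and must prove that any $\theta\in\hat R$ algebraic over $K:=\operatorname{Frac}(R)$ already lies in $R$.

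So let $\theta\in\hat R$ be algebraic over $K$, set $L_0=K(\theta)\subseteq\operatorname{Frac}(\hat R)$ and $r=[L_0:K]$, and let $B$ be the integral closure of $R$ in $L_0$. Because $R$ is quasi-excellent it is a Nagata ring, so $B$ is module-finite over $R$; because $\hat R$ is normal, every element of $L_0$ integral over $R$ lies in $\hat R$, so $B\subseteq\hat R$ and $B$ is a domain; and because $R$ is henselian, the finite $R$-algebra $B$ is a finite product of local rings, hence — being a domain — is local. The crucial step is now the following: since $B$ is finite over $R$ one has $\widehat B=B\otimes_R\hat R$, and the homomorphism $B\to B\otimes_R\hat R$ is the base change along $R\to B$ of the regular homomorphism $R\to\hat R$, hence is itself regular; as a regular homomorphism preserves normality and $B$ is a normal local domain, $\widehat B$ is a normal local ring, hence a normal domain. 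Localizing at $R\setminus\{0\}$ and using the canonical identification $\widehat B\otimes_RK\cong L_0\otimes_K\hat R_K$, where $\hat R_K:=(R\setminus\{0\})^{-1}\hat R$ is a domain containing $K$, we conclude that $L_0\otimes_K\hat R_K$ is a domain.

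To finish, write $L_0=K[T]/(m(T))$ with $m$ the monic minimal polynomial of $\theta$ over $K$, so that $L_0\otimes_K\hat R_K=\hat R_K[T]/(m(T))$. Since $\theta\in\hat R\subseteq\hat R_K$ is a root of $m$, division by the monic polynomial $m$ gives $m(T)=(T-\theta)\,q(T)$ in $\hat R_K[T]$ with $\deg q=r-1$; if $r\ge 2$ the images of $T-\theta$ and of $q(T)$ in $\hat R_K[T]/(m(T))$ are nonzero but multiply to $0$, contradicting the fact just proved that this ring is a domain. Hence $r=1$, so $\theta\in K$, and since $K\cap\hat R=R$ by faithful flatness we get $\theta\in R$. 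Undoing the reduction shows that $\tilde R$ is algebraically closed in $\hat R$, which together with the first paragraph completes the proof. The one genuinely delicate point is the third paragraph: one needs \emph{regularity} of the completion map, not merely flatness (this is where quasi-excellence enters), and one must first pass to the henselian case so that the finite normal domain $B$ is \emph{local} — otherwise $\widehat B$ would be a nontrivial product and its normality would no longer force it to be a domain.
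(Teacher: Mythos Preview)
The paper does not give its own proof of this proposition; it is simply cited from Nagata's book (Corollary (44.3)) and then used as input to the proof of Corollary~\ref{notnormal}. So there is no paper-proof to compare against. Your argument supplies what the paper outsources, and it is correct.

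The structure you use is the natural one: the ``analytically irreducible'' part is immediate from $\widehat{\tilde R}=\hat R$ being normal, and for the algebraic-closure statement you reduce to the henselian case and then exploit both halves of quasi-excellence --- the Nagata property to make the integral closure $B$ of $R$ in $K(\theta)$ module-finite, and regularity of $R\to\hat R$ to make $\widehat B=B\otimes_R\hat R$ normal (regular base change preserves normality), hence a domain since $B$ is local. The contradiction via the factorization of the minimal polynomial in $\hat R_K[T]$ is clean. Two small remarks worth making explicit in a write-up: (i) the stability of regular morphisms under base change that you invoke needs the Noetherian hypotheses to be preserved, which they are here since $B$ is finite over $R$; (ii) the fact that the henselization of a quasi-excellent local ring is again quasi-excellent is itself nontrivial and deserves a reference (e.g.\ EGA~IV, 18.7.6). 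Your closing observation is exactly right: henselian is needed so that the finite domain $B$ is \emph{local}, and regularity (not mere flatness) of completion is needed so that normality passes to $\widehat B$.
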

From the above results we deduce
\begin{corollary}\label{notnormal} Let $(R,\mathbf m)$ be a Henselian excellent local domain. Then $R$ is analytically irreducible and is algebraically closed in $\hat R$.
\end{corollary}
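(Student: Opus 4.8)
The plan is to deduce the statement from Proposition \ref{anirred} applied not to $R$ itself but to its normalization. Let $\overline R$ denote the integral closure of $R$ in its fraction field $K$. Since $R$ is excellent it is Nagata, so $\overline R$ is a module-finite $R$-algebra; it is normal by construction. The first step is to check that $\overline R$ is a Henselian local ring. Because $R$ is Henselian it equals its own henselization $\tilde R$, whose only minimal prime is $(0)$; hence by the one-to-one correspondence between the minimal primes of $\tilde R$ and the maximal ideals of $\overline R$ (\cite{Nag}, (43.20), quoted in the Appendix) the ring $\overline R$ has a unique maximal ideal $\overline{\mathbf m}$. A local ring that is module-finite over a Henselian local ring is again Henselian, so $\overline R$ is Henselian; and since excellence is stable under passing to finite extensions, $\overline R$ is an excellent normal Henselian local domain. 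By Theorem \ref{annormal} it is analytically normal, and then Proposition \ref{anirred}, applied with $\overline R$ in the role of ``$R$'' (so that its henselization is itself), gives that $\overline R$ is analytically irreducible and algebraically closed in its formal completion $\widehat{\overline R}$.

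Next I would transfer analytic irreducibility down to $R$. Because $\overline R$ is a finite $R$-module and $\overline{\mathbf m}^N\subseteq\mathbf m\overline R$ for some $N$ (as $\overline R/\mathbf m\overline R$ is Artinian), the $\mathbf m$-adic and $\overline{\mathbf m}$-adic topologies on $\overline R$ agree, and the resulting completion $\widehat{\overline R}$ coincides with $\overline R\otimes_R\hat R$. Since $\hat R$ is flat over $R$, tensoring the inclusion $R\hookrightarrow\overline R$ with $\hat R$ produces an injection $\hat R\hookrightarrow\widehat{\overline R}$. By the previous paragraph $\widehat{\overline R}$ is a domain, hence so is $\hat R$; that is, $R$ is analytically irreducible.

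For the algebraic closedness assertion, take $x\in\hat R$ algebraic over $K$. Regarding $x$ inside $\widehat{\overline R}$, it is a fortiori algebraic over $\operatorname{Frac}(\overline R)$, so $x\in\overline R$ because $\overline R$ is algebraically closed in $\widehat{\overline R}$. Thus $x\in\overline R\cap\hat R$, the intersection taken inside $\widehat{\overline R}=\overline R\otimes_R\hat R$, and it remains to show $\overline R\cap\hat R=R$. This is a diagram chase using faithful flatness of $R\to\hat R$: apply $-\otimes_R\hat R$ to the exact sequence $0\to R\to\overline R\to\overline R/R\to 0$ to get the exact row $0\to\hat R\to\widehat{\overline R}\to(\overline R/R)\otimes_R\hat R\to 0$, mapped into from $0\to R\to\overline R\to\overline R/R\to 0$ by vertical maps that are all injective (the map $\overline R/R\to(\overline R/R)\otimes_R\hat R$ being injective by faithful flatness). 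If $x\in\overline R$ maps into $\hat R\subseteq\widehat{\overline R}$, then its image in $(\overline R/R)\otimes_R\hat R$ is zero, hence its image in $\overline R/R$ is zero, so $x\in R$. Therefore $x\in R$, completing the argument.

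The two points needing the most care are the local/Henselian status of $\overline R$ — which rests on the Nagata correspondence together with the fact that a local ring finite over a Henselian local ring is Henselian — and the identification $\overline R\cap\hat R=R$; the rest is a formal consequence of Theorem \ref{annormal}, Proposition \ref{anirred}, and flatness of completion, so I do not anticipate a genuine obstacle.
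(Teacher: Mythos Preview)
Your proof is correct and follows essentially the same approach as the paper: reduce to the normalization $\overline R$, use that it is a Henselian excellent normal local domain to which Theorem~\ref{annormal} and Proposition~\ref{anirred} apply, and then descend both conclusions to $R$ via the injection $\hat R\hookrightarrow\widehat{\overline R}=\overline R\otimes_R\hat R$ and faithful flatness. The only cosmetic difference is in the final step, where the paper writes $x=a/b$ with $a,b\in R$ and uses $(a)\hat R\subset(b)\hat R\Rightarrow(a)\subset(b)$, while you phrase the same faithful-flatness descent as the exact-sequence identity $\overline R\cap\hat R=R$.
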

\begin{proof} If, in addition, we assume $R$ to be normal, the result follows from Theorems \ref{annormal} and \ref{anirred}. In the general case, let $R'$ denote the normalization of $R$. Then $R'$ is a Henselian normal quasi-excellent local ring, so it satisfies the conclusions of the Corollary. Consider the commutative diagram
\begin{equation}\label{eq:CD}
\xymatrix{R\ar[d]_-\psi \ar[r]^-\phi & {\hat R}\ar[d]_-{\hat\psi}\\
R'\ar[r]^-{\phi'}&{\hat R'}}
\end{equation}
where $\hat R'$ stands for the formal completion of $R'$.
Since $R$ is Nagata, $R'$ is a finite $R$-module. Thus $\phi'$ coincides with the $\mathbf m$-adic completion of $R'$, viewed as an $R$-module. Hence $\hat R'\cong R'\otimes_R\hat R$. Since $\psi$ is injective and $\hat R$ is flat over $r$, the map $\hat\phi$ is also injective. Since $R'$ is analytically irreducible, $\hat R'$ is a domain, and therefore so is its subring $\hat R$. This proves that $R$ is analytically irreducible.

To prove that $R$ is algebraically closed in $\hat R$, take an element $x\in\hat R$, algebraic over $R$. Since all the maps in \ref{eq:CD} are injective, let us view all the rings involved as subrings of $\hat R'$. Since $R'$ is algebraically closed in $\hat R'$, we have $x\in R'$, in particular, we may write $x=\frac ab$ with $a,b\in R$. Now, since $(a)\hat R\subset(b)\hat R$ and $\hat R$ is faithfully flat over $R$, we have $(a)\subset(b)$ in $R$, so $x=\frac ab\in R$. This proves that $R$ is algebraically closed in $\hat R$. The Corollary is proved.
\end{proof}

Next we summarize, in a more specific manner, the way in which these results are applied in the present paper. The main applications are as follows.

(1) Let $R$ be an excellent local domain, $P$ a prime ideal of $R$ and $H_i\subset H_{i+1}$ two prime ideals of $\hat R$ such that
\begin{equation}
H_i\cap R=H_{i+1}\cap R=P.\label{eq:HicontractstoP}
\end{equation}
Then $\frac RP$ is also excellent. Definitions \ref{regmor}, \ref{Gring} and \ref{quasiexcellent} imply that the ring $\hat R\otimes_R\kappa(P)$ is geometrically regular over $P$, in particular, regular. Moreover, (\ref{eq:HicontractstoP}) implies that the ideal $\frac{H_{i+1}}{P\hat R}$ is a prime ideal of $\frac{\hat R}{P\hat R}$, disjoint from the natural image of $R\setminus P$ in $\frac{\hat R}{P\hat R}$. Thus the local ring $\frac{\hat R_{H_{i+1}}}{P\hat R_{H_{i+1}}}$ is a localization of $\hat R\otimes_R\kappa(P)$ at the prime ideal $H_{i+1}(\hat R\otimes_R\kappa(P))$ and so is a local ring, geometrically regular over $\kappa(P)$, in particular, a regular local ring and, in particular, a domain.

(2) Assume, in addition, that $H_i$ is a minimal prime of $P\hat R$. Since $\frac{\hat R_{H_{i+1}}}{P\hat R_{H_{i+1}}}$ is a domain, $H_i$ is the only minimal prime of $P\hat R$, contained in $H_{i+1}$. We have $P\hat R_{H_{i+1}}=H_i\hat R_{H_{i+1}}$.

%\begin{acknowledgements}
%B. Teissier is grateful for the hospitality of the RIMS in Kyoto, where a part of this project was completed.
%\end{acknowledgements}

%\affiliationone{
%   F. J. Herrera Govantes and M. A. Olalla Acosta\\
%   Departamento de \'Algebra\\
%   Facultad de Matem\'aticas\\
%   Apdo. de correos 1160\\
%   41080 Sevilla Spain
%   \email{jherrera@algebra.us.es\\
%   miguelolalla@algebra.us.es}}
%\affiliationtwo{
%   M. Spivakovsky\\
%   Institut de Math\'ematiques de Toulouse\\
%   UMR 5219 du CNRS,\\
%   Universit\'e Paul Sabatier\\
%   118, route de Narbonne\\
%   31062 Toulouse cedex 9, France.
%   \email{mark.spivakovsky@math.univ-toulouse.fr}}
%\affiliationthree{
%   B. Teissier\\
%   Equipe ``G\'eom\'etrie et Dynamique'',\\
%   Institut Math\'ematique de Jussieu,\\
%   UMR 7586 du CNRS\\
%   175 Rue du Chevaleret\\
%   F-75013 Paris, France.
%   \email{teissier@math.jussieu.fr}}

\end{document}